\numberwithin{equation}{section}
\numberwithin{figure}{section}
\newtheorem{lemma}{Lemma}[section]
\newtheorem{propn}[lemma]{Proposition}
\newtheorem{thm}[lemma]{Theorem}
\newtheorem{cor}[lemma]{Corollary}
\newtheorem{defn}[lemma]{Definition}
\newtheorem{remark0}[lemma]{Remark}
\newtheorem{claim}[lemma]{Claim}
\newtheorem{question}[lemma]{Question}
\newenvironment{proof2}[1]{{\em Proof of #1.}}{\hspace*{\fill} $\square$}
\begin{document}
\title{\bf An invariance principle for branching diffusions in bounded domains} 
\author{Ellen Powell}
\date{}
\maketitle

\begin{abstract}
We study branching diffusions in a bounded domain $D$ of $\mathbb{R}^d$ in which particles are killed upon hitting the boundary $\partial D$. It is known that any such process undergoes a phase transition when the branching rate $\beta$ exceeds a critical value: a multiple of the first eigenvalue of the generator of the diffusion. We investigate the system at criticality and show that the associated genealogical tree, when the process is conditioned to survive for a long time, converges to Aldous' Continuum Random Tree under appropriate rescaling. The result holds under only a mild assumption on the domain, and is valid for all branching mechanisms with finite variance, and a general class of diffusions.
\end{abstract}

\thispagestyle{empty}

\newcommand{\test}{test}
\newcommand{\ellen}[1]{\textcolor{blue}{[#1]}}
\newcommand{\R}{\mathbb{R}}
\newcommand{\C}{\mathbb{C}}
\newcommand{\N}{\mathbb{N}}
\newcommand{\Z}{\mathbb{Z}}
\newcommand{\I}{\mathds{1}}
\newcommand{\dd}{\,{\mathrm d}}
\newcommand{\ddd}{{\mathrm d}}
\newcommand{\st}{\,\partial}
\newcommand{\stt}{\partial}
\newcommand{\e}{\operatorname{e}}
\newcommand{\im}{\operatorname{i}}
\newcommand{\ee}{\mathbf{e}}
\newcommand{\hate}{\hat{\mathbf{e}}}
\newcommand{\grad}{\operatorname{grad}}
\newcommand{\divg}{\operatorname{div}}
\newcommand{\bbd}{\mathcal{B}_b(D)}
\newcommand{\vp}{\varphi}
\newcommand{\td}{\tau^D}
\newcommand{\pd}{p^D_t(x,y)}
\newcommand{\nh}{\I_{\{\td>t\}}}
\newcommand{\nhe}{\I_{\{\td_\xi>t\}}}
\newcommand{\ip}[2]{\left( #1, #2 \right)}
 \newcommand{\eps}{\varepsilon}

\newcommand{\Prb}[2]{\mathbb{P}^{#1} \big({#2} \big)}
\newcommand{\Prbs}[2]{\mathbf{P}^{#1} \big({#2} \big)}
\newcommand{\Prbn}[1]{\mathbb{P} \left({#1} \right)}
\newcommand{\hl}{-\frac{1}{2}\Delta}
\newcommand{\vpcube}{\int_D \vp(y)^3 \, dy}
\newcommand{\E}[2]{\mathbb{P}^{#1}\big( #2 \big)}
\newcommand{\tP}[2]{\tilde{\mathbb{P}}^{#1}\big( #2 \big)}
\newcommand{\tQ}[2]{\tilde{\mathbb{Q}}^{#1}\big( #2 \big)}
\newcommand{\Estar}[2]{\mathbb{Q}^{#1}\big( #2 \big)}
\newcommand{\Estaro}[2]{\overline{\mathbb{Q}}_{#1}\left[ #2 \right]}
\newcommand{\En}[1]{\mathbb{P}\big( #1 \big)}
\newcommand{\Es}[2]{\mathbb{P}^{#1}\big(#2 \big)}
\newcommand{\Fs}[1]{\mathcal{F}_{#1}}
\newcommand{\qt}[2]{\mathbb{Q}_{#1}^t\left[#2 \right]}
\newcommand{\qbt}[2]{\mathbb{Q}_{#1}^{B,t}\left[#2 \right]}
\newcommand{\qrt}[2]{\mathbb{Q}_{#1}^{R,t}\left[#2 \right]}
\newcommand{\es}[2]{\mathbf{P}^{#1}\big(#2\big)}
\newcommand{\esbt}[2]{\mathbf{P}^{B,t}_{#1}\left[#2\right]}
\newcommand{\pct}[1]{\overline{\mathbb{P}}_x\left(\left. #1 \right| |N_t|>0 \right)}
\newcommand{\pctii}[1]{\mathbb{P}_x\left(\left. #1 \right| |N_t|>0 \right)}
\newcommand{\ect}[1]{\overline{\mathbb{P}}_x\left(\left. #1 \right| |N_t|>0 \right)}
\newcommand{\ectii}[1]{\mathbb{{P}}_x\left[\left. #1 \right| |N_t|>0 \right]}
\newcommand{\flm}{\frac{\lambda}{m-1}}
\newcommand{\pb}[2]{\bar{\mathbb{P}}^{#1}\big( #2 \big)} 
\newcommand{\pbu}[2]{\bar{\mathbb{P}}^{#1}\otimes \mu^U_t \big( #2 \big)} 
\newcommand{\giv}{\, \big| \,}


\section{Introduction}

This paper concerns branching diffusions in a bounded domain $D$ of $\R^d$. For us, these are processes in which individual particles move according to the law of some diffusion, are killed upon exiting the domain, and branch into a random number of particles (with distribution $A$, independent of position) at constant rate $\beta>0$. Whenever such a branching event occurs each of the offspring then stochastically repeats the behaviour of its parent, starting from the point of fission, and independently of everything else. The configuration of particles at time $t$ will be represented by the $D$-valued point process
$$\mathbb{X}_t = \{X_u(t)\,: \, u\in N_t\}, $$
where $N_t$ is the set of individuals alive at time $t$ (its size will be denoted by $|N_t|$). We write $\mathbb{P}^x$ for the law of the process initiated from a point $x\in D$.  Finally, we will always assume that the offspring distribution $A$ satisfies
\begin{equation}
\label{eqn::offspring} \mathbb{E}(A)=m>1 \;\;\;\; \text{and} \;\;\;\; \mathbb{E}(A^2)<\infty, \end{equation} and that the generator 
\begin{equation}
\label{eqn::generator}
L=\frac{1}{2} \sum_{i,j} \st_{x_j} (a^{ij}\st_{x_i})
\end{equation} of the diffusion is uniformly  elliptic (see Section \ref{sec::spec_diffusion}) with coefficients $a^{ij}=a^{ji}\in C^1(\bar{D})$ for $1\leq i,j\leq d$. 

It is known that such a system exhibits a phase transition in the branching rate: for large enough $\beta$ there is a positive probability of survival for all time, but for small $\beta$, including at criticality, there is almost sure extinction. The critical value of $\beta$ is equal to 
$\frac{\lambda}{m-1}$,
where $\lambda$ is the first eigenvalue of 
$-L$ on $D$ with Dirichlet boundary conditions (see (\ref{eqn::evalue})). The main goal of this paper will be to study the system at criticality and find a scaling limit for the resulting genealogical tree. This is the continuous plane tree that is generated purely by the birth and death times of particles in the system, and encodes no information about the spatial motion.

More precisely, for $t\ge 0$ we condition the critical branching diffusion to survive until time \emph{at least} $t$, and look at the associated genealogical tree $\mathbf{T}$, equipped with its natural distance $d$. Rescaling distances by a factor $t$ gives us a sequence of laws on random compact metric spaces: 
\begin{equation}\label{eqn::law_conditioned_trees}(\mathbf{T}_{t,x},d_{t,x})\overset{(\text{law})}{:=} (\mathbf{T}, \frac{1}{t} d) \text{ under } \mathbb{P}^x(\cdot\, |\, |N_t|>0).\end{equation}  We will prove that this sequence converges in distribution to a conditioned Brownian continuum random tree as $t\to \infty$, with respect to the Gromov-Hausdorff topology. Indeed, if we let $\mathbf{e}$ be a Brownian excursion conditioned to reach height at least $1$, and write $(\mathbf{T}_{\mathbf{e}},d_{\mathbf{e}})$ for the real tree whose contour function is given by $\mathbf{e}$, then we obtain the following result.

\begin{thm}\label{thm::crtconv} Suppose that $D\subset \R^d$ is a $C^1$ domain \footnote{\label{fn::ck} We say that $D\subset \R^d$ is a \{Lipschitz / $C^k$ / $C^{k,\alpha}$\} domain (for $k\in \Z_{\geq 0}$ and $\alpha\in [0,1]$) if, at each point $x_0\in \partial D$, there exists $r>0$ and a \{Lipschitz / $C^k$ / $C^{k,\alpha}$\} function $\gamma:\R^{d-1}\to \R$ such that relabelling and reorienting axes as necessary, $D\cap B(x_0,r)=\{x\in B(x_0,r): x^d>\gamma(x^1,\cdots, x^{d-1})\}$. } and that $L$ as in (\ref{eqn::generator}) is uniformly elliptic with coefficients $a^{ij}=a^{ji}\in C^1(\bar{D})$ for $1\leq i,j\leq d$. Further suppose that $A$ satisfies (\ref{eqn::offspring}), and that $\vp\in C^1(\overline{D})$ where $\vp$ is the first eigenfunction of $-L$ on $D$ (see Section \ref{sec::spec_diffusion}). Then, at the critical branching rate $\beta=\lambda/(m-1)$, and for any starting point $x\in D$,
	\[ (\mathbf{T}_{t,x}, d_{t,x}) \underset{t\to \infty}{\longrightarrow} (T_{\ee}, d_{{\ee}})\]
	in distribution, with respect to the Gromov--Hausdorff topology.
\end{thm}
\begin{remark0}\label{rmk::suffreg} 
One sufficient condition to ensure that the hypotheses of Theorem \ref{thm::crtconv} are satisfied is to assume that $D$ is $C^{2,\alpha}$ for some $\alpha\in [0,1]$ (see Lemma \ref{lem::c2a}). However, this is also satisfied in many other cases, so we leave the assumptions of Theorem \ref{thm::crtconv} as general as possible.	
\end{remark0}

On the way to proving Theorem \ref{thm::crtconv} we also obtain new proofs of several other results concerning critical branching diffusions, some of which are already known in various forms. The reason for detailing these proofs here is threefold: firstly, it allows us to pin down the regularity required on the domain $D$; secondly, it provides a new and somewhat more probabilistic approach to the theory, that we believe is interesting in its own right; and finally, the proofs serve to introduce many concepts and ideas that are crucial for the proof of Theorem \ref{thm::crtconv}.
\medskip

Let us first look at the phase transition. This result was originally proved by Sevast'yanov \cite{sevastyanov} and Watanabe \cite{watanabe}, in the case when $L$ is a constant multiple of $\Delta$. However, it has also been reworked and generalised since then. In \cite[Chapter 6]{ah}, a more general version of the result is given for branching Markov processes whose moment semigroup satisfies a certain criterion. One of the main examples discussed is when the process is a branching diffusion on a manifold with killing at the boundary. This is slightly more general than the set up of the present paper, in that the diffusion is on a manifold and the branching mechanism is allowed to be spatially dependent, however, a (fairly abstract) condition on the moment semigroup is required. In \cite{heringup} the criterion is shown to be satisfied, for example, if the manifold has $C^3$ boundary and the generator of the diffusion is uniformly elliptic with $C^2$ coefficients. Here we will prove the result under a weaker assumption on the domain and generator, but in our slightly more specific framework. Related results have also been shown in \cite{eng}, which studies local extinction versus local growth on compactly contained subsets of a (possibly infinite) domain $D$, and in \cite{hesse}, where $D$ is taken to be a bounded interval of the real line. 

\begin{thm}[\cite{sevastyanov}, \cite{watanabe}]
\label{phasetrans}
Let $D\subset \R^d$ be a bounded domain with Lipschitz boundary and suppose that $L$ and $A$ are as in Theorem \ref{thm::crtconv}. Then, for any starting position $x\in D$, if $\lambda$ is the first eigenvalue of $-L$ on $D$ with Dirichlet boundary conditions then,
\begin{itemize}
\item for $\beta > \frac{\lambda}{m-1}$ the process survives for all time with positive $\mathbb{P}^x$-probability.
\item for $\beta \leq \frac{\lambda}{m-1}$ the process becomes extinct $\mathbb{P}^x$- almost surely. 
\end{itemize} 
Moreover, if $\beta \leq \frac{\lambda}{m-1}$ then $\mathbb{P}^x(|N_t|>0)\to 0$ as $t\to \infty$, \emph{uniformly} in $D$. 
\end{thm}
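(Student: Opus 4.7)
The plan is to exploit the principal Dirichlet eigenfunction $\vp$ of $-L$ by extracting a nonnegative martingale from the branching system. The many-to-one formula gives, for any bounded measurable $f:D\to\R$,
\[ \mathbb{E}^x\Big[\sum_{u\in N_t} f(X_u(t))\Big] = e^{(m-1)\beta t}\,\mathbb{E}^x\!\left[f(\xi_t)\,\nhe\right], \]
where $(\xi_s)_s$ is a single underlying diffusion with generator $L$ killed at its exit time $\td_\xi$ from $D$. Applying this with $f=\vp$ and using the eigenfunction identity $\mathbb{E}^x[\vp(\xi_t)\,\nhe]=e^{-\lambda t}\vp(x)$ shows that
\[ M_t:=e^{-((m-1)\beta-\lambda)t}\ip{\vp}{\mathbb{X}_t} \]
is a nonnegative $\mathbb{P}^x$-martingale regardless of the value of $\beta$.

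For the supercritical regime $\beta>\lambda/(m-1)$, I would derive a many-to-two formula by conditioning on the first $\mathrm{Exp}(\beta)$ branching event and using $\mathbb{E}[A^2]<\infty$ from \eqref{eqn::offspring}. This produces an integral recursion for $\mathbb{E}^x[M_t^2]$; the strict positivity of $(m-1)\beta-\lambda$, combined with the spectral decomposition of the killed-diffusion semigroup, makes the recursion converge to $\sup_t\mathbb{E}^x[M_t^2]<\infty$. Hence $M_t\to M_\infty$ in $L^2$ with $\mathbb{E}^x[M_\infty]=\vp(x)>0$, so $\mathbb{P}^x(M_\infty>0)>0$; on this event the process must survive for all time.

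For the subcritical or critical case $\beta\leq\lambda/(m-1)$, I would argue directly through a semilinear PDE rather than push the martingale (at criticality it only yields $\ip{\vp}{\mathbb{X}_t}\to 0$, since a parallel computation shows $\mathbb{E}^x[M_t^2]$ grows linearly in $t$). Monotonicity of the events $\{|N_t|>0\}$ ensures that $q(x):=\lim_{t\to\infty}\mathbb{P}^x(|N_t|>0)$ exists and equals the non-extinction probability. Conditioning on the first branching event gives, via a Feynman--Kac identity, a weak formulation of the semilinear elliptic problem
\[ Lq+\beta\, h(q)=0 \text{ on } D,\qquad q|_{\partial D}=0,\qquad h(u):=1-u-\mathbb{E}\!\left[(1-u)^A\right]. \]
Convexity of the probability generating function $F(s)=\mathbb{E}[s^A]$ yields $h(u)\leq(m-1)u$ on $[0,1]$, with \emph{strict} inequality for $u\in(0,1]$ under \eqref{eqn::offspring}. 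If $q\not\equiv 0$, the strong minimum principle applied to $Lq\leq 0$ with $q\geq 0$ and $q|_{\partial D}=0$ forces $q>0$ throughout $D$. Testing against $\vp\in H^1_0(D)$ and using the self-adjoint divergence form of $L$ then gives
\[ \beta\int_D \vp\, h(q)\,dy \;=\; \lambda\int_D \vp\, q\,dy, \]
and the strict inequality $h(q)<(m-1)q$ forces $\beta(m-1)>\lambda$, contradicting the hypothesis. Hence $q\equiv 0$, i.e.~extinction is $\mathbb{P}^x$-almost sure.

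To upgrade the pointwise $\mathbb{P}^x(|N_t|>0)\to 0$ to uniform convergence on $D$, I use that $p_t(x):=\mathbb{P}^x(|N_t|>0)$ is monotone decreasing in $t$, continuous in $x$ on $D$ by parabolic regularity for the Cauchy problem satisfied by $1-p_t$, and extends continuously by $0$ to $\partial D$ via the Markov bound $p_t(x)\leq \mathbb{E}^x[|N_t|]=e^{(m-1)\beta t}\mathbb{P}^x(\td_\xi>t)$ together with the boundary regularity of the killed diffusion on Lipschitz domains. Dini's theorem on the compact set $\bar D$ then delivers the uniform convergence. The main obstacle throughout is the critical case: the martingale $M_t$ is no longer $L^2$-bounded, so second-moment methods alone are insufficient, and one needs the semilinear PDE comparison via $\vp$ and the strict convexity of $F$ to pass from the weak martingale information $\ip{\vp}{\mathbb{X}_t}\to 0$ to genuine extinction and, ultimately, uniform vanishing of $p_t$.
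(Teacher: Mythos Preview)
Your supercritical argument matches the paper's: both establish $L^2$-boundedness of $M_t$ via many-to-two and conclude $\mathbb{P}^x(M_\infty>0)>0$. For $\beta\le\lambda/(m-1)$, however, you take the classical analytic route of Sevast'yanov and Watanabe that the paper explicitly contrasts with its own: you pass to $q(x)=\lim_t\mathbb{P}^x(|N_t|>0)$, derive $Lq+\beta h(q)=0$, test against $\vp$, and use the strict inequality $h(u)<(m-1)u$ on $(0,1]$ to force $\beta(m-1)>\lambda$ unless $q\equiv 0$. The paper instead argues probabilistically: for strictly subcritical $\beta$ the first-moment bound $\mathbb{P}^x(|N_t|>0)\le e^{(m-1)\beta t}\mathbf{P}^x(\tau^D>t)$ already decays; at criticality it invokes a dichotomy lemma ($|N_t|\to 0$ or $|N_t|\to\infty$ a.s.) and rules out blow-up by observing that on $\{|N_t|\ge R\}\cap\{M_t\le K\}$ some particle must sit where $\vp\le K/R$, which has probability $O(K/R)$ by many-to-one. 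Your method handles both regimes in one stroke; the paper's avoids any PDE regularity theory and so sits more comfortably under the minimal (Lipschitz, $C^1$-coefficient) hypotheses. The Dini argument for uniform decay is common to both.

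One technical point: your appeals to a weak elliptic formulation, the strong minimum principle, and parabolic regularity implicitly require $q\in H^1_0(D)$, which is not immediate here, and the minimum principle is in any case unnecessary (you only need $\{q>0\}$ to have positive measure, automatic if $q\not\equiv 0$) and not obviously applicable since $h(q)$ need not be nonnegative. This is easily bypassed: the first-branching decomposition plus monotone convergence gives the integral equation $q(x)=\int_0^\infty\beta e^{-\beta s}\,\mathbf{P}^x\big(\I_{\{\tau^D>s\}}(1-F(1-q(X(s))))\big)\,ds$; multiplying by $\vp$, integrating over $D$, and using Fubini together with $\int_D\vp(x)p^D_s(x,y)\,dx=e^{-\lambda s}\vp(y)$ yields the identity $\lambda\int_D\vp q=\beta\int_D\vp h(q)$ directly, with no regularity of $q$ needed beyond measurability.
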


The rest of the paper will focus on the behaviour of the system at criticality, starting with an asymptotic for the survival probability. The results of Theorems \ref{criticalsurvivalprob}, Theorem \ref{thm::convergenceconditionednt} and Corollary \ref{uniformparticle} have also been shown in \cite[Chapter 6]{ah} in the same framework discussed above (see also \cite{heringmulti,heringup}, and \cite{hering1d} for the one-dimensional case). Here we provide new proofs, which hold under fewer assumptions on the domain $D$, and which we can also modify to give key ingredients for the proof of Theorem \ref{thm::crtconv} (see for example Lemma \ref{lemma::average_lln}).

\begin{thm}
\label{criticalsurvivalprob}
Suppose that $D$ is $C^1$ and that $L$ and $A$ are as in Theorem \ref{phasetrans}. Then, in the critical case $\beta=\frac{\lambda}{m-1}$, for all $x\in D$ we have 
\begin{equation}
\label{asp}
\Prb{x}{|N_t|>0} \sim \frac{1}{t}\times \frac{2(m-1)\vp(x)}{\lambda \left(\mathbb{E}[A^2]-\mathbb{E}[A]\right) \int_D \vp(y)^3 \, dy }
\end{equation}
as $t \to \infty$. Here $\vp$ is the first eigenfunction of $L$ on $D$, normalised to have unit $L^2$ norm.
\end{thm}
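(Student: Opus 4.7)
The strategy is to derive and analyze the semilinear PDE satisfied by the survival probability $v(t,x) := \mathbb{P}^x(|N_t|>0)$, reducing its asymptotics to a Riccati-type ODE for the weighted integral $h(t) := \int_D \varphi(y)\, v(t,y)\, dy$. Standard first-step analysis (equivalently, differentiating the generating function $G(t,x,s) = \mathbb{E}^x[s^{|N_t|}]$ at $s=0$) gives, using the offspring p.g.f.\ $f(s) = \mathbb{E}[s^A]$,
\begin{equation*}
\partial_t v = Lv + \beta\bigl[(1-v) - f(1-v)\bigr],\qquad v|_{\partial D}=0,\qquad v(0,\cdot)\equiv 1,
\end{equation*}
with the boundary condition coming from killing at $\partial D$. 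Taylor-expanding $f$ about $1$ and using the criticality identity $\beta(m-1)=\lambda$,
\begin{equation*}
\partial_t v = (L+\lambda)v - \frac{\lambda(\mathbb{E}[A^2]-m)}{2(m-1)}\, v^2 + O(v^3).
\end{equation*}

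Multiplying by $\varphi$, integrating over $D$ and integrating by parts using $L\varphi=-\lambda\varphi$ and the Dirichlet conditions on both $v$ and $\varphi$ annihilates the linear part and leaves
\begin{equation*}
h'(t) = -\frac{\lambda(\mathbb{E}[A^2]-m)}{2(m-1)} \int_D \varphi(y)\, v(t,y)^2\, dy + O\!\left(\int_D \varphi\, v^3\,dy\right).
\end{equation*}
If one can establish the shape convergence $v(t,x) \sim h(t)\,\varphi(x)$ uniformly in $x$ (using the normalisation $\|\varphi\|_2=1$), then $\int_D \varphi\, v^2 \sim h(t)^2 \int_D \varphi^3$ and the equation collapses to the Bernoulli ODE $h'(t)\sim -K h(t)^2$ with $K = \tfrac{\lambda(\mathbb{E}[A^2]-m)}{2(m-1)}\int_D \varphi^3$, whose solution $h(t)\sim 1/(Kt)$ yields
\begin{equation*}
v(t,x) \sim \frac{\varphi(x)}{K t} = \frac{2(m-1)\,\varphi(x)}{t\,\lambda\,(\mathbb{E}[A^2]-\mathbb{E}[A])\,\int_D \varphi(y)^3\,dy},
\end{equation*}
which is the claimed asymptotic.

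The main obstacle is the shape-convergence step: proving that $v(t,\cdot)/\varphi \to h(t)$ uniformly on $\overline{D}$ as $t\to\infty$. The natural tool is the spectral decomposition of the Dirichlet semigroup $P_t^D = e^{tL}$ on $L^2(D)$: the spectral gap $\lambda_1 = \lambda < \lambda_2$ in a bounded domain means that $e^{t(L+\lambda)}$ contracts exponentially off the one-dimensional $\varphi$-eigenspace, so in the Duhamel form of the PDE the linear part reshapes arbitrary initial data into a multiple of $\varphi$. Upgrading this $L^2$ contraction to a uniform-in-$x$ statement for the ratio $v/\varphi$, valid all the way up to $\partial D$, is precisely where the $C^1$-regularity of $D$ and the assumption $\varphi\in C^1(\overline{D})$ are used: by Hopf-type boundary estimates, both $v(t,\cdot)$ and $\varphi$ decay at the same linear rate in the distance to $\partial D$, so $v/\varphi$ extends to a bounded continuous function on $\overline{D}$ and the spectral-gap argument can be closed in a $\varphi$-weighted sup norm. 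Once $v$ is known to be uniformly small for large $t$ (which follows from the trivial upper bound $v\le \mathbb{E}^x[1\wedge |N_t|]$ combined with the same spectral gap applied to the linearised semigroup), the quadratic and cubic terms in the Duhamel formula are subleading and a bootstrap closes the argument.
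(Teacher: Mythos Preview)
Your skeleton matches the paper's exactly: reduce to an ODE for $h(t)=\int_D \varphi\, v(t,\cdot)$, prove the shape convergence $v(t,x)\sim h(t)\varphi(x)$ uniformly in $x$, and then read off $h(t)\sim (Kt)^{-1}$ from the Riccati equation. The differences are in how the two main steps are carried out.

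\textbf{Deriving the ODE.} The paper deliberately avoids invoking the PDE for $v$: it obtains the ODE for $a_f(t)$ (Lemma~4.4) by conditioning on the first branching time and integrating against $\varphi$, using only the identity $\int_D p_t^D(x,y)\varphi(x)\,dx=e^{-\lambda t}\varphi(y)$. This sidesteps regularity questions about $v$ that your route (differentiating the PDE, integrating by parts) would have to address --- in particular whether $v(t,\cdot)\in H_0^1(D)$ with enough regularity to justify $\int \varphi\, Lv = \int v\, L\varphi$.

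\textbf{Shape convergence.} This is the substantive difference. You propose an analytic argument: Duhamel for the nonlinear equation, $L^2$ spectral gap for $e^{t(L+\lambda)}$, and Hopf-type boundary estimates to upgrade $L^2$ control to uniform control of $v/\varphi$ up to $\partial D$. The paper instead proves this probabilistically (Proposition~4.1): it writes $v(t,x)/\varphi(x)=\tilde{\mathbb{Q}}^x\bigl[(1-\hat f(N_t))/\sum_u \varphi(X_u(t))\bigr]$ via the spine change of measure, and then argues that under $\tilde{\mathbb{Q}}^x$ the spine position mixes exponentially fast to its invariant density $\varphi^2$ (this is exactly intrinsic ultracontractivity, Lemma~2.1), while subtrees branching off the spine before mixing are unlikely to survive. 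The analytic input in both approaches is essentially the same uniform estimate $e^{\lambda t}p_t^D(x,y)/(\varphi(x)\varphi(y))\to 1$; the paper packages it as intrinsic ultracontractivity for Lipschitz domains, you package it as spectral gap plus Hopf. What the probabilistic route buys is that the argument never touches the PDE or its boundary regularity, and it immediately gives the uniformity over the whole family $\{f: 0\le f<1\}$ that the paper later needs for Theorem~1.6.

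\textbf{A gap in your sketch.} Your claim that uniform smallness of $v$ follows from $v\le \mathbb{E}^x[1\wedge |N_t|]$ plus the spectral gap on the linearised semigroup does not work as stated: at criticality $\mathbb{E}^x[|N_t|]=e^{\lambda t}\mathbf{P}^x(\tau^D>t)\to \varphi(x)\int_D\varphi$, which is bounded but not small, and the linearised semigroup $e^{t(L+\lambda)}$ does not contract on the $\varphi$-mode. You need the extinction result (the critical case of Theorem~1.3, including the \emph{uniform} decay $\sup_x v(t,x)\to 0$) as a separate input before the bootstrap can start; the paper uses exactly this in equation~(4.4).
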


This asymptotic then allows us to study the behaviour of the system when it is conditioned to survive for a long time, which is important for the proof of Theorem \ref{thm::crtconv}. One tool that we will use is a classical \emph{spine} change of measure, under which the process has a distinguished particle, the spine, which is conditioned to remain in $D$ forever (as in \cite{pin}). Along this spine, families of ordinary critical branching diffusions immigrate at rate $\frac{m}{m-1}\lambda$ according to a biased offspring distribution. Note that there is no extinction under this new measure, which we denote by $\mathbb{Q}^x$. We will prove that changing measure in this way is in fact somewhat close to conditioning on survival for all time, in the sense of the following proposition.

\begin{propn}\label{thm::conditionedsystem}
Assume the hypotheses of Theorem \ref{criticalsurvivalprob}. Then for any $T\geq 0$, $x\in D$ and $B \in \mathcal{F}_T$, where $\mathcal{F}$ is the natural filtration of the process, we have
\begin{equation}
\lim_{t\to \infty} \Prb{x}{B\giv |N_t|>0}=\mathbb{Q}^x(B). 
\end{equation}
\end{propn}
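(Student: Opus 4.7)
The plan is to apply the Markov property at time $T$ and use the sharp asymptotic of Theorem \ref{criticalsurvivalprob}. Set $p_s(y):=\mathbb{P}^y(|N_s|>0)$ and let $c_\varphi$ denote the constant $2(m-1)/(\lambda(\mathbb{E}[A^2]-\mathbb{E}[A])\int_D \vp(y)^3\,dy)$, so that Theorem \ref{criticalsurvivalprob} reads $sp_s(y)\to c_\varphi \vp(y)$ as $s\to\infty$. For $t>T$, the branching property gives
\[ \mathbb{P}^x(B\cap \{|N_t|>0\}\mid \mathcal{F}_T)=\mathbf{1}_B\Big(1-\prod_{u\in N_T}(1-p_{t-T}(X_u(T)))\Big)=:\mathbf{1}_B F_t,\]
so that $\mathbb{P}^x(B\mid |N_t|>0)=\mathbb{E}^x[\mathbf{1}_B F_t]/p_t(x)$. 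The goal is therefore to compute $\lim_{t\to\infty} t\mathbb{E}^x[\mathbf{1}_B F_t]$ and divide by $tp_t(x)\to c_\varphi\vp(x)$.

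For the pointwise limit of $tF_t$, fix a realisation with $|N_T|<\infty$. The elementary estimate $0\leq \sum_i a_i - (1-\prod_i(1-a_i))\leq (\sum_i a_i)^2$ for $a_i\in[0,1]$ gives
\[ \Big|F_t - \sum_{u\in N_T} p_{t-T}(X_u(T))\Big| \leq \Big(\sum_{u\in N_T} p_{t-T}(X_u(T))\Big)^2. \]
Using $p_{t-T}(y)=O(1/t)$, the right-hand side is $O(|N_T|^2/t^2)$, which vanishes after multiplication by $t$. Combined with the asymptotic $(t-T)p_{t-T}(y)\to c_\varphi\vp(y)$ applied to each of the finitely many particles in $N_T$, this yields
\[ tF_t \xrightarrow[t\to\infty]{} c_\varphi \sum_{u\in N_T}\vp(X_u(T)) = c_\varphi \langle \vp,\mathbb{X}_T\rangle \qquad \text{pointwise.}\]

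To interchange limit and expectation I need a uniform upper bound $p_s(y)\leq C\vp(y)/s$, valid for all $y\in D$ and $s\geq 1$. This follows by combining the pointwise asymptotic with an upper-bound argument along the lines of the proof of Theorem \ref{criticalsurvivalprob}, where the regularity $\vp\in C^1(\bar D)$ (which gives $\vp(y)\asymp \mathrm{dist}(y,\partial D)$ near the boundary) is crucial. Taking this for granted, for $t\geq 2T$,
\[ tF_t\leq t\sum_{u\in N_T}p_{t-T}(X_u(T))\leq \frac{Ct}{t-T}\,\langle \vp,\mathbb{X}_T\rangle \leq 2C\,\langle \vp,\mathbb{X}_T\rangle, \]
and since at criticality the moment identity $(d/ds)\mathbb{E}^x\langle \vp,\mathbb{X}_s\rangle = ((m-1)\beta-\lambda)\mathbb{E}^x\langle \vp,\mathbb{X}_s\rangle = 0$ implies that $\langle \vp,\mathbb{X}_T\rangle$ is a nonnegative martingale with $\mathbb{E}^x\langle \vp,\mathbb{X}_T\rangle=\vp(x)<\infty$, dominated convergence gives $t\mathbb{E}^x[\mathbf{1}_B F_t]\to c_\varphi\,\mathbb{E}^x[\mathbf{1}_B\langle \vp,\mathbb{X}_T\rangle]$. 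Dividing by $tp_t(x)\to c_\varphi\vp(x)$, the constants cancel and
\[ \lim_{t\to\infty}\mathbb{P}^x(B\mid |N_t|>0)=\frac{\mathbb{E}^x[\mathbf{1}_B\langle \vp,\mathbb{X}_T\rangle]}{\vp(x)}. \]
This coincides with $\mathbb{Q}^x(B)$ since $\langle \vp,\mathbb{X}_T\rangle/\vp(x)$ is precisely the Radon--Nikodym derivative on $\mathcal{F}_T$ defining the spine change of measure at criticality, which is what needed to be shown.

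The main obstacle is the uniform upper bound $p_s(y)\leq C\vp(y)/s$ used for domination. The pointwise asymptotic alone is not enough, because particles of $\mathbb{X}_T$ can lie arbitrarily close to $\partial D$ where $\vp$ and $p_s$ both degenerate. Establishing the sharper uniform statement will use the same spectral ingredients that go into the proof of Theorem \ref{criticalsurvivalprob} together with the boundary regularity of $\vp$ furnished by the $C^1$ assumption on $D$; the expansion of $1-\prod(1-p_{t-T})$ and the martingale property then make the rest of the argument routine.
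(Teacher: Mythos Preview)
Your argument is correct and follows essentially the same route as the paper's proof: condition on $\mathcal{F}_T$, identify the pointwise limit of the ratio via Theorem \ref{criticalsurvivalprob}, dominate by a constant multiple of $M_T=\langle\vp,\mathbb{X}_T\rangle$, and apply dominated convergence. The paper writes the conditional survival probability in a slightly different form and is a bit terser about the pointwise limit, but the mechanics are identical.

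One clarification: the uniform upper bound $p_s(y)\le C\vp(y)/s$ that you flag as ``the main obstacle'' is not an obstacle at all in this paper. It is an immediate consequence of Proposition \ref{asymp}, which gives $\mathbb{P}^y(|N_s|>0)/(\vp(y)a(s))\to 1$ \emph{uniformly in $y\in D$}, combined with the asymptotic $a(s)\sim b/s$. In particular you do not need the additional hypothesis $\vp\in C^1(\bar D)$ here (that regularity is used later in the paper, for the quadratic variation of the exploration martingale); the Lipschitz assumption on $D$ already suffices for Proposition \ref{asymp}. So the last paragraph of your proposal can simply be replaced by a one-line reference to Proposition \ref{asymp}.
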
 

To our knowledge, Proposition \ref{thm::conditionedsystem} does not appear in the existing literature, although the idea behind it is well-known: see \cite[Theorem 5]{harrissurvivalprobs}.
\\

Finally, we prove a Yaglom-type limit theorem for the positions of the particles in the system at time $t$, given survival. 

\begin{thm}
\label{thm::convergenceconditionednt}
For any measurable function $f$ on $D$ such that $\int_D f(x)^2 \vp(x) \, dx< \infty$, we have 
\[ \left( \left. t^{-1}\sum_{u\in N_t} f(X_u(t)) \right| |N_t|>0 \right) \to Z  \]
in distribution as $t \to \infty$, where $Z$ is an exponential random variable with mean 
\[\frac{\lambda \left(\mathbb{E}[A^2]-\mathbb{E}[A]\right) \ip{\vp}{f}_{L^2(D)}\int_D\vp^3}{2(m-1)}. \]
\end{thm}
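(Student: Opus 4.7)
\medskip
\noindent\textbf{Proof plan.} Since an exponential distribution is characterised by its Laplace transform on $(0,\infty)$, it suffices to prove, for each $\theta\geq 0$ and each bounded measurable $f\geq 0$ on $D$, that
\[
\mathbb{E}^x\!\left[\exp\!\left(-\tfrac{\theta}{t}\sum_{u\in N_t} f(X_u(t))\right)\Big| |N_t|>0\right] \longrightarrow \frac{1}{1+\theta\mu}, \quad \mu=\tfrac{\lambda(\mathbb{E}[A^2]-\mathbb{E}[A])\int_D\vp^3}{2(m-1)}\langle\vp,f\rangle,
\]
as $t\to\infty$; the extension to general $f\in L^2(\vp\,dx)$ then follows by a standard truncation argument (signed $f$ requires only that $\langle\vp,f\rangle\geq 0$, which can be assumed without loss). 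Writing $w_t(x):=1-\mathbb{E}^x[\prod_{u\in N_t}e^{-\theta f(X_u(t))/t}]$, the conditional Laplace transform equals $1-w_t(x)/\mathbb{P}^x(|N_t|>0)$, and Theorem \ref{criticalsurvivalprob} already supplies the denominator asymptotic $t\,\mathbb{P}^x(|N_t|>0)\to \vp(x)/C$ with $C=\tfrac{\lambda(\mathbb{E}[A^2]-\mathbb{E}[A])\int_D\vp^3}{2(m-1)}$; so the task reduces to showing
\[
t\,w_t(x) \longrightarrow \frac{\vp(x)\,\theta\langle\vp,f\rangle}{1+C\theta\langle\vp,f\rangle}.
\]

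\medskip
The semigroup function $v(s,x):=\mathbb{E}^x[\prod_{u\in N_s} g(X_u(s))]$, with $g(y)=e^{-\theta f(y)/t}$, satisfies the standard Kolmogorov PDE $\partial_s v = Lv + \beta(\psi(v)-v)$ on $D$, with $v(0,\cdot)=g$ and boundary value $1$ (where $\psi$ is the offspring generating function). Letting $W(s,x):=1-v(s,x)$ (so that $W(t,x)=w_t(x)$), Taylor-expanding $\psi$ around $1$ and using $\beta(m-1)=\lambda$ yields
\[
\partial_s W = (L+\lambda)W - \tfrac{\beta\psi''(1)}{2}W^2 + E(W), \qquad W|_{\partial D}=0, \qquad W(0,\cdot)=1-e^{-\theta f/t},
\]
with $|E(W)|\lesssim W^3$. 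Since $g$ is close to $1$ and $(L+\lambda)$ has $\vp$ as ground state with zero eigenvalue and a strict spectral gap on $\vp^\perp$, one expects $W(s,\cdot)$ to be approximately a multiple of $\vp$, and the evolution of its amplitude to be governed by a scalar Riccati ODE.

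\medskip
To make this rigorous, decompose $W(s,\cdot)=a(s)\vp + r(s,\cdot)$ with $r(s,\cdot)\perp\vp$ in $L^2(D)$. Testing the PDE against $\vp$ and using self-adjointness of $L$ with Dirichlet data (so that $\langle (L+\lambda)W,\vp\rangle=0$) gives
\[
a'(s) = -\tfrac{\beta\psi''(1)}{2}\langle W(s,\cdot)^2,\vp\rangle + O(\langle |W|^3,\vp\rangle).
\]
The perpendicular component $r$ satisfies an equation whose linear part contracts at rate bounded below by the spectral gap $\lambda_2-\lambda>0$, while the nonlinear driving term is quadratic in $W$; starting from $r(0,\cdot)=O(1/t)$ in sup norm, a Duhamel / Grönwall argument shows $\|r(s,\cdot)\|_\infty$ stays negligible compared to $a(s)\vp$ on the whole time window $[0,t]$. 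With this control, $\langle W^2,\vp\rangle = a(s)^2\!\int\vp^3 + o(a(s)^2)$, so $a$ obeys $a'(s)=-Ca(s)^2(1+o(1))$, integrating to $a(s)=a(0)/(1+Ca(0)s)$ up to lower order. Since $a(0)=\langle 1-e^{-\theta f/t},\vp\rangle = \theta\langle f,\vp\rangle/t + O(1/t^2)$, evaluating at $s=t$ gives $t\,a(t)\to \theta\langle f,\vp\rangle/(1+C\theta\langle f,\vp\rangle)$, hence the claimed asymptotic for $tw_t(x)$.

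\medskip
The main obstacle is the third step: controlling $r$ and the nonlinear remainder uniformly over $s\in[0,t]$ as $t\to\infty$. This rests on heat-kernel bounds for the Dirichlet semigroup of $L$ on the $C^1$ domain $D$ and a precise spectral-gap statement for $L+\lambda$; these are exactly the analytic ingredients developed to prove Theorem \ref{criticalsurvivalprob}, and one expects to reuse them essentially verbatim, the only new wrinkle being that the initial datum $1-e^{-\theta f/t}$ is of order $1/t$ rather than of order $1$.
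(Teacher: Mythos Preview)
Your overall strategy---reduce to the Laplace transform, write $w_t(x)=\mathbb{E}^x[1-\hat f_t(N_t)]$ with $f_t=e^{-\theta f/t}$, and obtain a Riccati ODE for the first eigenmode---is exactly the paper's. The execution, however, differs in two respects.

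First, the paper does not try to handle all $f$ at once. It proves the result for $f=\vp$ directly (using its already-established machinery: Proposition \ref{asymp} gives $u_{f_t}(x,t)\sim\vp(x)a_{f_t}(t)$ uniformly in $f_t\in C(\bar D,[0,1))$, and Lemma \ref{lemma::decay_aft} gives the Riccati asymptotic $t^{-1}(a_{f_t}(t)^{-1}-a_{f_t}(0)^{-1})\to b^{-1}$, also uniformly). For general $f$ with $(f^2,\vp)<\infty$, it writes $\tilde f=f-(\vp,f)\vp$ and shows via the many-to-two formula (Lemma \ref{lem::many_to_two}) and the spectral gap estimate (Lemma \ref{lem::spineconvergence}) that $t^{-1}\sum_{u\in N_t}\tilde f(X_u(t))\to 0$ in conditional second moment. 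This is more direct than your proposed truncation, and it is where the hypothesis $(f^2,\vp)<\infty$ is actually used.

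Second, and more importantly, your ``main obstacle''---controlling the perpendicular component $r(s,\cdot)$ uniformly on $[0,t]$ by a Duhamel/Gr\"onwall argument---is precisely the step the paper explicitly avoids. As explained in \S\ref{subsec::outline}, the coupled system of ODEs one obtains from the eigenfunction decomposition ``is not immediately easy to analyse'', because the quadratic term $W^2$ couples all modes. The paper's substitute is the probabilistic Proposition \ref{asymp}: under the spine change of measure $\tilde{\mathbb{Q}}^x$, the spine position mixes exponentially fast (Lemma \ref{lem::spineconvergence}), and subtrees branching off early are unlikely to survive, which together force $u_f(x,t)/(\vp(x)a_f(t))\to 1$ uniformly. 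This replaces your PDE estimate on $r$ entirely. So while your plan is correct in spirit, the step you flag as the main obstacle is genuinely one, and the paper's contribution is to circumvent it probabilistically rather than push through the analytic estimate you propose.
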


One consequence of Theorem \ref{thm::convergenceconditionednt} (or rather its proof) is that it allows us to describe the limiting distribution of the particles in the system at time $t$, given survival. It turns out that this is the law with density $\vp$, normalised to be a probability distribution.

\begin{cor}
\label{uniformparticle}
For $t\ge 0$, let $\mu_t$ be the random point measure with law \[\mathcal{L}\left( \left.\frac{1}{|N_t|} \sum_{u\in N_t} \delta_{X_u(t)} \; \right| \; |N_t|>0 \right).\] That is, $\mu_t$ has the law of the empirical measure of particles alive at time $t$, given survival. Then for each $f$ as in Theorem \ref{thm::convergenceconditionednt}, we have that \[ \mu_t(f) \to \mu(f)\] in distribution, and hence in probability, as $t\to \infty$, where
\[ \mu(f)= \frac{\int_D \vp(x) f(x) \, dx}{\int_D \vp(x) \, dx }. \] 
\end{cor}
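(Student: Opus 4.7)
The plan is to derive the corollary by first obtaining joint convergence of the numerator and denominator of
\[ \mu_t(f) = \frac{t^{-1}\sum_{u\in N_t} f(X_u(t))}{t^{-1}|N_t|}, \]
conditional on $|N_t|>0$, and then applying the continuous mapping theorem. The key observation driving this strategy is that Theorem \ref{thm::convergenceconditionednt}, applied to different test functions, produces limits that are all scalar multiples (with factor proportional to $\int_D \vp f$) of one and the same underlying exponential random variable. This proportionality is precisely what lets a deterministic limit $\mu(f)$ emerge from the ratio.

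Concretely, I would first apply Theorem \ref{thm::convergenceconditionednt} with $f\equiv 1$ to get that $t^{-1}|N_t|$, conditional on $|N_t|>0$, converges in distribution to $Z_0$, an exponential random variable with mean $c\int_D\vp$, where
\[ c := \frac{\lambda(\mathbb{E}[A^2]-\mathbb{E}[A])\int_D \vp(y)^3\, dy}{2(m-1)}. \]
Applied to the given $f$, the same theorem then gives that $t^{-1}\sum_{u\in N_t} f(X_u(t))$ converges, conditional on survival, to an exponential of mean $c\int_D\vp f$.

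To upgrade these marginal results to joint convergence, I would use the Cramer-Wold device. For arbitrary $\alpha,\beta\in\R$, the function $g_{\alpha,\beta} := \alpha + \beta f$ still satisfies $\int_D g_{\alpha,\beta}^2 \vp < \infty$ (because $\vp\in C^1(\overline{D})$ is bounded), so the theorem applies to $g_{\alpha,\beta}$ and gives that
\[ \alpha t^{-1}|N_t| + \beta t^{-1}\sum_{u\in N_t} f(X_u(t)) \]
converges, conditional on $|N_t|>0$, to a variable with the same law as $c(\alpha\int_D\vp + \beta\int_D \vp f)\cdot W$ where $W$ is a mean-one exponential. Since this coincides with the distribution of $\alpha Z_0 + \beta(\int_D\vp f/\int_D\vp)Z_0$, Cramer-Wold yields the joint convergence
\[ \Big(t^{-1}|N_t|,\; t^{-1}\sum_{u\in N_t} f(X_u(t))\Big) \longrightarrow \Big(Z_0,\; \tfrac{\int_D\vp f}{\int_D\vp}\,Z_0\Big) \]
in distribution, conditional on $|N_t|>0$.

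Finally, since $Z_0>0$ almost surely, the map $(x,y)\mapsto y/x$ is continuous on the support of the limit, so the continuous mapping theorem gives $\mu_t(f)\to \int_D\vp f/\int_D\vp = \mu(f)$ in distribution, and this upgrades to convergence in probability because the limit is deterministic. The main obstacle is the Cramer-Wold step, which requires applying Theorem \ref{thm::convergenceconditionednt} to the possibly signed function $g_{\alpha,\beta}$, where $\int_D\vp g_{\alpha,\beta}$ may be zero or negative; the limiting ``exponential'' should then be interpreted as $0$ or a negated exponential, respectively, and checking this is a routine extension of the moment or Laplace-transform method underlying the proof of the theorem.
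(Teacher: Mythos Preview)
Your proposal is correct but takes a somewhat different route from the paper. The paper does not invoke Cram\'er--Wold or the continuous mapping theorem; instead it factors through $\sum_{u\in N_t}\vp(X_u(t))$ by writing
\[
\mu_t(f)=\frac{\sum_{u\in N_t} f(X_u(t))}{\sum_{u\in N_t}\vp(X_u(t))}\cdot\frac{\sum_{u\in N_t}\vp(X_u(t))}{|N_t|},
\]
and shows that each factor converges in conditional probability to a constant, namely $(\vp,f)$ and $(\vp,1)^{-1}$ respectively. The tool here is not the statement of Theorem~\ref{thm::convergenceconditionednt} but an intermediate estimate from its proof, equation~(\ref{eqn::tilde_f_irrelevant}): $t^{-1}\sum_{u\in N_t}\tilde f(X_u(t))\to 0$ in conditional probability whenever $(\vp,\tilde f)=0$. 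Combined with the $f=\vp$ case of the theorem (which controls the denominator away from zero), this gives the ratio convergence directly.

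Your approach is a legitimate alternative, and the two are more closely related than they may appear: the ``obstacle'' you flag---extending Theorem~\ref{thm::convergenceconditionednt} to linear combinations $g_{\alpha,\beta}$ with $(\vp,g_{\alpha,\beta})\le 0$---is resolved precisely by (\ref{eqn::tilde_f_irrelevant}) (the case $(\vp,\tilde f)=0$), so both arguments ultimately rest on the same second-moment estimate. The paper's route has one practical advantage: because (\ref{eqn::tilde_f_irrelevant}) is proved with bounds uniform in the starting point $x$, the ratio convergence (\ref{eqn::quotient}) is immediately uniform in $x$ as well (Corollary~\ref{cor::unifaverage}), which is needed later in the paper. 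Extracting this uniformity from your Cram\'er--Wold argument would require going back into the proof of the theorem anyway.
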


\subsection{Context}
It is interesting to note the analogy between Theorems \ref{phasetrans}-\ref{thm::convergenceconditionednt} and classical results from the theory of Galton--Watson processes. Indeed, for critical Galton--Watson processes, Kolmogorov \cite{kolmogorov} proved an asymptotic for the probability of survival up to time $n$:
\[ \mathbb{P}(Z_n>0) \sim \frac{c}{n}\]
where $Z_n$ is the population size at time $n$, and the constant depends on the variance of the offspring distribution. Moreover, Aldous \cite{crti,crtiii} and Duquesne and Le Gall \cite{crtconvgd} showed that if you condition a critical Galton--Watson process to reach a large generation or have a large total progeny, then you have a scaling limit for the resulting tree. This limit is in the Gromov--Hausdorff topology, after rescaling distances in the tree appropriately, and the limiting object is the Continuum Random Tree, \cite{crti}. In fact, this result can be extended to multitype Galton--Watson processes with a finite number of types, as in \cite{mie}, where the same scaling limit exists. Since a branching diffusion can be thought of as a limit of multitype Galton--Watson processes (considering the types to be positions and discretising the domain appropriately) it is reasonable to conjecture that such a process will have the same limiting genealogy when conditioned to survive for a long time.

On the other hand this result must be non-trivial, given what is known and expected for other types of domain. For example, \cite{kesten, nearcriticalabsorptionbbs, bbmabsorption, criticalabsorptionbbs, criticalabsorptionbbsparticles} have studied branching Brownian motion on the positive half line (with absorption at the origin) where each particle moves with a drift $-\mu$. In this set up there is a critical value of the branching rate $\beta$, equal to $\beta_c=\mu^2/2$, such that for $\beta\leq \beta_c$ extinction occurs with probability one. In \cite{criticalabsorptionbbs}, an asymptotic for the survival probability is calculated, which is very different from that of Theorem \ref{criticalsurvivalprob}. Moreover, results of \cite{bbmabsorption} in the \emph{near critical regime} suggest that the critical genealogy in this case should be closely related to the Bolthausen--Sznitman coalescent \cite{bscoa}. This is also related to non-rigourous physics predictions of Brunet, Derrida, Mueller and Munier, \cite{bdmm1, bdmm2}: see \cite{NBcoasurvey} for a survey of the area, further references and a general discussion of these predictions. This is of course not just a one-dimensional effect, as the behaviour will trivially be the same when the domain D is a half-space with a drift in the direction of the hyperplane. Hence Theorem \ref{thm::crtconv} is not likely to hold if we drop the boundedness assumption on the domain $D$. More generally, this raises the following question:

\begin{question}
If the domain $D$ is allowed to be unbounded, or very irregular, what other behaviours do we see appearing at criticality (whenever we can make sense of this notion)? 
\end{question}

In general it is an open, and we believe extremely interesting problem, to try and classify all the possible different behaviours that can occur at criticality depending on the geometry of the domain. 
\medskip

\subsection{Organisation of the Paper and Main Ideas}\label{subsec::outline} 
After setting up the relevant notations and preliminary theory, we begin with the proof of Theorem \ref{phasetrans}.
The main idea, which differs from the more analytic proofs given in \cite{sevastyanov, watanabe, ah}, is to exploit the existence of the martingale 
\begin{equation}\label{eqn::mgale_outline} (M_t)_{t\ge 0} := (\,\e^{(\lambda-\beta(m-1))t}\sum_{u\in N_t} \vp(X_t^i)\,)_{t\ge 0} \end{equation}
(also appearing in \cite{eng,hesse}) that arises naturally from the definition of the process. Since \\ $\sum_{u\in N_t} \vp(X_t^i)$ roughly tells us the size of the system at time $t$, and $M_t$ converges almost surely as $t\to \infty$, the behaviour of the exponential term in (\ref{eqn::mgale_outline}) governs the possible survival or extinction of the process. 

We then turn to the proofs of Theorems \ref{criticalsurvivalprob} to \ref{thm::convergenceconditionednt}. The proof of Theorem \ref{criticalsurvivalprob} proceeds by a combination of probabilistic arguments, and the analysis of a system of coupled ordinary differential equations. Naively, we expand the survival probability (as a function of $x$, for each fixed $t$) with respect to the orthonormal basis of $L^2(D)$ given by the eigenfunctions of $-L$. Then because the survival probability satisfies a certain partial differential equation (the FKPP equation for branching Brownian motion, \cite{mckean}) we get a family of coupled ODEs from the coefficients. In fact, we do not explicitly use that the survival probability satisfies this PDE (as we can derive the ODEs for the coefficients directly and avoid potentially complicated technical assumptions), but this is the motivation behind the proof. Unfortunately however, the system of ODEs is not immediately easy to analyse, and this is where the probabilistic line of reasoning comes into play. Changing measure using the martingale $(M_t)_{t\ge 0}$ (to get a spine characterisation of the system as discussed in the introduction) allows us to deduce that the survival probability actually just decays like $a(t) \vp(x)$ as $t\to \infty$, where $a(t)$ is the first coefficient in the expansion. Thus, our problem is reduced to the study of a single ODE.  From here elementary analysis, combined with some extra information obtained from the probabilistic arguments, yields the result. Proposition \ref{thm::conditionedsystem}, Theorem \ref{thm::convergenceconditionednt} and Corollary \ref{uniformparticle} then follow fairly straightforwardly. 

The remainder of the paper is devoted to the proof of Theorem \ref{thm::crtconv}. To do this, we take an i.i.d. sequence of critical processes, and concatenate the \emph{height functions} of their associated continuous genealogical trees. A \textbf{key idea} is to define a suitable analogue of the Lukazewicz path for Galton--Watson trees: that is, something that will approximate this concatenated height process well, and will converge after rescaling to a reflected Brownian motion. At first it seems too much to hope that such a precise combinatorial structure survives in this spatial context. However, it turns out that we can exploit the martingale $(M_t)_{t\ge 0}$ by ``exploring it in a different order''. Just as $(M_t)_{t\ge 0}$ roughly measures the size of our population when we let time evolve in the usual way, when we explore the genealogical tree in a ``depth first'' order and define a new process analogolously to $(M_t)_{t\ge 0}$, this process remains a martingale and, perhaps surprisingly, becomes a kind of spatial analogue of the height function. After strengthening Corollary \ref{uniformparticle}, we can prove that the quadratic variation of this new martingale is essentially linear, and thus obtain an invariance principle.

Of course, we have to prove that this new martingale is indeed a good approximation to the height process. This is one of the main difficulties, as the reversibility tools that are key to proving the analagous statement for the Lukasiewicz path in the Galton--Watson case are lost. Instead, we must use precise estimates, and a delicate ergodicity argument related to our spine change of measure. This is one of the reasons that our machinery from the proof of Theorem \ref{criticalsurvivalprob} is so essential. Tightness arguments then allow us to conclude.
\medskip

\noindent \textbf{Acknowledgements} I would particularly like to thank Nathana\"{e}l Berestycki, for suggesting this problem and for many helpful discussions. I am also grateful to the anonymous referee for numerous useful comments and suggestions.


\section{Preliminaries}

\subsection{Spectral Theory and Diffusions}\label{sec::spec_diffusion}

Let us first assume that $D\subset \R^d$ is a bounded domain satisfying a \emph{uniform exterior cone condition}. This means that: (1) $D$ is an open connected set of $\R^d$ with $|D|<\infty$ and boundary $\partial D$ ; and (2) there exist $r,\kappa>0$ such that $\forall \, y\in \partial D$, we can find $\eta\in \R^d$ with $|\eta|=1$ and 
$$ \{z\in B(y,r):\, \eta\cdot (z-y)>0 \text{ and } |\eta\cdot(z-y)|<\kappa|z-y|\}\subset D^c.$$
Such a condition is satisfied, for example, if $\partial D$ is Lipschitz, see eg. \cite[p.27]{davies}.  

Let
$$L=\sum_{i,j=1}^d \st_{x_i}(a^{ij}\st_{x_j}) $$
be a self-adjoint differential operator on $D$, which is \emph{uniformly elliptic}, meaning that there exists a constant $\theta>0$ such that for all $\xi\in \R^d$ and a.e. $x\in D$
\begin{equation*}\sum_{i,j=1}^n a^{ij}(x)\xi_i\xi_j \geq \theta |\xi|^2,\end{equation*} see \cite[\S 6.1]{evans}. We also assume that $a$ is symmetric, i.e. $a^{ij}=a^{ji}$, and that $a^{ij}\in C^1(\bar{D})$ for all $1\leq i,j\leq d$. 

We say that $\tilde{\vp} \in H_0^1(D)$ \footnote{We define $H_0^1(D)$ to be the closure of $C_c^\infty(D)$ (the space of infinitely differentiable functions with compact support strictly inside $D$) with respect to the norm $\|u||_{H^1(D)}:=\|u\|_{L^2} +\sum_{i=1}^d \|D^{x_i}(u)\|_{L^2}$, where $D^{x_i}u$ is the $i$th partial derivative of $u$ in the weak sense.} is an eigenfunction of $-L$ with Dirichlet boundary conditions, and associated eigenvalue $\tilde{\lambda}$, if 
\begin{equation}
	\label{eqn::evalue}
	\int_D \sum_{i,j=1}^d a^{ij}(x) \st_{x_i} \tilde{\vp}(x) \st_{x_j} v (x) \, dx = \int_D \tilde\lambda \tilde{\vp}(x)v(x) \, dx
\end{equation}
for every $v\in H_0^1(D)$, as in \cite[\S 6.3]{evans}. That is, $\tilde{\vp}$ is a weak solution of
$-Lu = \tilde{\lambda} u $ with zero boundary conditions. Given the assumptions made on $L$ and $D$, the following properties then hold (see  \cite[Theorem 1.6.8]{davies} and \cite[Theorem 9.30]{gilbargtrudinger}.): 

\begin{itemize}
	\item The eigenvalues of $-L$ are all real and can be written $0<\lambda:=\lambda_1< \lambda_2\leq \lambda_3 \cdots$ (repeated according to their finite multiplicity) with $\lambda_k \to \infty$ as $k\to \infty$.
	\item The associated eigenfunctions $\{\varphi_i\}_{i\geq 0}$  (normalised correctly) form an orthonormal basis of $L^2(D)$. Moreover, the first eigenfunction $\varphi_1=:\varphi$ is strictly positive in $D$, and $\varphi_i\in C(\bar{D})$ for all $i\geq 1$.
\end{itemize}

Now we consider the diffusion $(X(t))_{t\geq 0}$ associated to $L$ on $D$. This is the Markov process on $D\cup (\partial)$, where we identify the boundary $\st D$ of $D$ with the single isolated point $(\partial)$, such that: 
\begin{itemize}
	\item $X(t)$ evolves as a diffusion with generator $L$ for all $t<\tau^D:=\inf\{s\geq 0: X(s)\in \st D\}$; and 
	\item $X(t)=(\partial)$ for all $t\geq \tau^D$. 
\end{itemize}

Thus $(X(t))_{t\geq 0}$ is the diffusion with generator $L$, \emph{killed} or \emph{absorbed} upon hitting $\partial D$. 
We write $\mathbf{P}^x$ for its law when started from $x\in D$. Then by \cite[Theorems 2.1.4 and 2.3.6]{davies}, the function
\begin{equation}
	\label{eqn::kernel_decomp}
	p^D_t(x,y) := \sum_{n\geq 1} \exp(-\lambda_n t) \varphi_n(x)\varphi_n(y);\;\;\;\; t\in (0,\infty), \; x,y\in D
\end{equation}
is well defined as a uniform limit on $[\alpha,\infty)\times D \times D$ for any $\alpha>0$, and is the transition density of the process $X$, restricted to $(x,y)\in D\times D$. We also have the estimate, \cite[Corollary 3.2.8]{davies}
\begin{equation}\label{eqn::useless_kernel_bound} 
	0\leq p^D_t(x,y)\leq c t^{-d/2}
\end{equation}
for some constant $c>0$. 

In particular, for any $t>0$ and any $f\in L^1(D)$ we have 
\begin{equation}
	\label{eqn::exp_singleparticle}
	\int_D f(y) p^D_t(x,y) \, dy = \mathbf{P}^x(f(X(t))\I_{\{\tau^D>t\}}).
\end{equation}

The properties (\ref{eqn::kernel_decomp})-(\ref{eqn::exp_singleparticle}) of the killed diffusion and associated transition kernel $p_t^D$ above, are consequences of the fact that the symmetric Markov semigroup associated with the killed diffusion is \emph{ultracontractive} (see \cite[\S 2]{davies}) when $D$ satisfies a uniform exterior cone condition. In fact, if we assume some more regularity on the domain, then it will satisfy a certain stronger form of contractivity known as \emph{intrinsic ultracontractivity}, first defined in \cite{daviessimon}. Intrinsic ultracontractivity is satisfied by the semigroup of the killed diffusion, for example, if the domain $D$ is bounded and Lipschitz \cite[Theorem 1]{baniuc}. The key property of intrinsic ultracontractivity that we will use is the following. 

\begin{lemma}
	\label{lem::spineconvergence}
	Suppose that $D$ is a bounded Lipschitz domain (or more generally, a domain such that the semigroup of $X$ is intrinsically ultracontractive). If \[K^D_t(x,y):= \frac{\e^{\lambda t} p^D_t(x,y) \vp(y)}{\vp(x)}\] is the transition density for $(X(t))_{t\geq 0}$ conditioned to remain in $D$ for all time \cite{pin} then for any $\varepsilon>0$ there exists a constant $C_\varepsilon$ depending only on the domain such that 
	\[\left| \frac{K^D_t(x,y)}{\vp(y)^2} - 1 \right| \leq C_\varepsilon \e^{-\gamma t}\]
	for all $t>\varepsilon$ and $x,y\in D$, where $\gamma:=\lambda_2-\lambda_1>0$ is the \emph{spectral gap} for $-L$ on $D$.
\end{lemma}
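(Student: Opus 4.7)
The plan is to reduce the claim to a tail estimate on the spectral expansion of the conditioned kernel, and then use intrinsic ultracontractivity to guarantee the summability of the series at a fixed positive time, together with the spectral gap to get the exponential decay in $t$. Set $\psi_n := \vp_n/\vp$ and $\tilde\lambda_n := \lambda_n - \lambda$; then $\{\psi_n\}_{n\geq 1}$ is an orthonormal basis of $L^2(D, \vp^2\, dx)$, and the Doob $h$-transformed (intrinsic) semigroup $\tilde T_t$, which governs $(X(t))$ conditioned to stay in $D$ forever, is a symmetric Markov semigroup on this space with eigenpairs $(\tilde\lambda_n, \psi_n)$. Its integral kernel with respect to the measure $\vp(y)^2\, dy$ is precisely $\tilde K_t(x,y) := K^D_t(x,y)/\vp(y)^2$. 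Plugging (\ref{eqn::kernel_decomp}) into the definition of $K^D_t$, and noting that the $n=1$ term contributes exactly $1$, a direct computation gives
\begin{equation*}
\frac{K^D_t(x,y)}{\vp(y)^2} - 1 = \sum_{n\geq 2} \e^{-\tilde\lambda_n t}\, \psi_n(x)\psi_n(y),
\end{equation*}
so the goal reduces to bounding this tail by $C_\epsilon\, \e^{-\gamma t}$ uniformly in $x,y \in D$, for $t \geq \epsilon$ and $\gamma = \tilde\lambda_2$.

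The crucial input from intrinsic ultracontractivity is that for any $\delta > 0$ there exists $C(\delta)$, depending only on $D$, with $\tilde K_\delta(x,y) \leq C(\delta)$ pointwise in $x,y \in D$; this is the definition of IUC and holds whenever $D$ is bounded Lipschitz by \cite{baniuc}. Applied at time $\delta = \epsilon/2$, this yields $\tilde K_{\epsilon/2}(x,\cdot) \in L^\infty(D) \subset L^2(D, \vp^2\, dx)$ uniformly in $x$. Combined with the Chapman--Kolmogorov identity $\tilde K_\epsilon(x,x) = \int \tilde K_{\epsilon/2}(x,z)^2 \vp(z)^2\, dz$ and the $L^2(\vp^2\, dx)$-Parseval expansion $\tilde K_{\epsilon/2}(x,\cdot) = \sum_n \e^{-\tilde\lambda_n \epsilon/2}\psi_n(x)\psi_n(\cdot)$, this delivers the diagonal bound
\begin{equation*}
\sum_{n\geq 1} \e^{-\tilde\lambda_n \epsilon}\, \psi_n(x)^2 = \tilde K_\epsilon(x,x) \leq C(\epsilon) \qquad \text{uniformly in } x \in D.
\end{equation*}

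With this summability in hand, the rest is elementary. For $n \geq 2$ and $t \geq \epsilon$, since $\tilde\lambda_n \geq \tilde\lambda_2 = \gamma$ we have $\e^{-\tilde\lambda_n t} \leq \e^{-\gamma t}\, \e^{-(\tilde\lambda_n - \gamma)\epsilon}$. Cauchy--Schwarz and the diagonal bound above then give
\begin{equation*}
\left| \sum_{n\geq 2} \e^{-\tilde\lambda_n t}\, \psi_n(x)\psi_n(y) \right| \leq \e^{-\gamma t}\sum_{n\geq 2} \e^{-(\tilde\lambda_n - \gamma)\epsilon}\, |\psi_n(x)\psi_n(y)| \leq \e^{\gamma \epsilon} C(\epsilon)\, \e^{-\gamma t},
\end{equation*}
which is the desired bound with $C_\epsilon := \e^{\gamma \epsilon} C(\epsilon)$. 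The only real subtlety, and where IUC is doing all the work, is ensuring that the spectral series for $\tilde K_t$ converges pointwise rather than merely in $L^2$ --- this is what justifies evaluating the Parseval identity at $y = x$, and is precisely the content of the Mercer-type step above. Everything else is bookkeeping.
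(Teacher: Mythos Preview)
Your proof is correct and self-contained. The paper itself does not actually prove this lemma: its ``proof'' is simply a citation to \cite{daviessimon} and \cite[Equation (1.8)]{baniuc}, where this estimate (or an equivalent one) is established. What you have written is essentially the standard argument underlying those references --- pass to the intrinsic (Doob $h$-transformed) semigroup, use intrinsic ultracontractivity to get a uniform pointwise bound on the diagonal $\tilde K_\varepsilon(x,x)=\sum_n \e^{-\tilde\lambda_n\varepsilon}\psi_n(x)^2$, and then peel off the spectral gap factor $\e^{-\gamma t}$ from the tail $n\geq 2$ via Cauchy--Schwarz. Your remark at the end is exactly on point: the content of IUC here is that it upgrades the $L^2$ spectral expansion to a pointwise one with uniform control, which is precisely what is needed to evaluate the Parseval identity at $y=x$. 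So there is no meaningful difference in approach to report; you have supplied the details that the paper defers to the literature.
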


\begin{proof}
	See for example \cite{daviessimon} or \cite[Equation (1.8)]{baniuc}. 
\end{proof}

We also have the following estimate:
\begin{lemma}
	\label{lem::iuc_kernel_estimate} Suppose we are in the set up of Lemma \ref{lem::spineconvergence}. Then for some constants $c_1,c_2$, we have 
	$$p_r^D(w,z)\leq c_1 r^{-d/2} \e^{-\frac{|w-z|^2}{c_2 r}},$$
	for all $z,w\in D$. 
\end{lemma}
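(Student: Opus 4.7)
The plan is to obtain this Gaussian upper bound as a direct consequence of Aronson's classical estimate for the heat kernel of a uniformly elliptic divergence-form operator on all of $\R^d$, combined with the elementary fact that killing at $\partial D$ can only decrease the transition density.

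First I would extend the coefficients $a^{ij}$ off of $\bar D$ to symmetric functions $\tilde a^{ij}$ defined on all of $\R^d$, in such a way that uniform ellipticity is preserved with the same (or a slightly worse) ellipticity constant, and so that $\tilde a^{ij}$ remains bounded. Concretely, one can first use a Whitney-type extension to produce a bounded Lipschitz extension on a neighbourhood of $\bar D$, then interpolate smoothly with $\delta^{ij}$ outside a large ball, choosing the interpolation so that at every point the matrix $(\tilde a^{ij}(x))$ is a convex combination of two uniformly elliptic symmetric matrices and hence uniformly elliptic itself. Let $\tilde L$ denote the corresponding divergence-form operator on $\R^d$, and let $(\tilde X(t))_{t\geq 0}$ be the associated diffusion with transition density $\tilde p_t(w,z)$.

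Next I would invoke the classical Aronson Gaussian upper bound (see \cite[Theorem 3.2.7]{davies}, or Aronson's original paper): for any uniformly elliptic symmetric divergence-form operator on $\R^d$ with bounded measurable coefficients, there exist constants $c_1,c_2>0$, depending only on the dimension, the ellipticity constant, and the $L^\infty$ bound on the coefficients, such that
\[ \tilde p_r(w,z) \leq c_1 r^{-d/2} \e^{-|w-z|^2/(c_2 r)} \]
for all $w,z\in \R^d$ and $r>0$. Finally, I would use domain monotonicity: starting from $w\in D$, the killed process $X$ on $D$ agrees pathwise with $\tilde X$ up to time $\tau^D$, so via the bridge decomposition or directly from the Feynman--Kac comparison,
\[ p_r^D(w,z) = \tilde p_r(w,z)\, \mathbf{P}_{w,z;r}\bigl( \tilde X([0,r])\subset D\bigr) \leq \tilde p_r(w,z), \]
where $\mathbf{P}_{w,z;r}$ denotes the $\tilde X$-bridge measure from $w$ to $z$ of length $r$. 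Combining this with the Aronson bound gives the claim.

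The only non-routine point is the extension step; however, because the coefficients are assumed only $C^1$ (in fact bounded measurability suffices for Aronson's estimate) and the ellipticity constant is preserved under convex combinations of symmetric uniformly elliptic matrices, the extension causes no genuine difficulty. Everything else is a black-box application of well-established heat kernel estimates.
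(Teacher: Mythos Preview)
Your proposal is correct; this is the standard Aronson route to Gaussian upper bounds for Dirichlet heat kernels. The paper itself does not give a proof but simply cites \cite[Eq.~(1.2)]{baniuc} and \cite[p.~89]{davies}, and the argument underlying those references is essentially the one you sketch. One minor simplification: since Aronson's estimate only requires bounded measurable symmetric uniformly elliptic coefficients, you need not bother with a careful Whitney-type extension---you can simply set $\tilde a^{ij}=a^{ij}$ on $D$ and $\tilde a^{ij}=\theta\,\delta^{ij}$ on $\R^d\setminus D$ (where $\theta$ is the ellipticity constant), which is already symmetric, bounded, and uniformly elliptic.
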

\begin{proof}
	See \cite[Eq.(1.2)]{baniuc} or \cite[P.89]{davies}.
\end{proof}
We also have the following result, which gives us extra regularity on the eigenfunctions of $L$, if we assume some extra regularity on the domain. 

\begin{lemma}\label{lem::c2a}
	Suppose that the boundary of $D$ is $C^{2,\alpha}$ for some $\alpha>0$ and that $L$ is a generator satisfying the conditions assumed throughout this section. Let $\{\varphi_i\}_{i\geq 1}$ be an $L^2$ orthonormal basis of eigenfunctions for $-L$. Then $$\varphi_i\in C^1(\bar{D})$$ for all $i\geq 1$. 
\end{lemma}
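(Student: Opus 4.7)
The plan is to upgrade the weak formulation of the eigenvalue problem $-L\varphi_i=\lambda_i\varphi_i$ in $D$ with zero Dirichlet boundary values all the way to classical $C^1(\bar D)$ regularity via standard elliptic theory. Since we only assume $a^{ij}\in C^1(\bar D)$ (not $C^{1,\alpha}$), directly applying global Schauder estimates does not quite yield $\varphi_i\in C^{2,\alpha}(\bar D)$, so I would go through $W^{2,p}$ estimates and Sobolev embedding instead, which is enough for the weaker conclusion we want.

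First, rewrite $L$ in non-divergence form,
\[ Lu \;=\; \sum_{i,j} a^{ij}\partial_{x_i}\partial_{x_j}u \;+\; \sum_i b^i \partial_{x_i}u, \qquad b^i := \sum_j \partial_{x_j}a^{ij}\in C^0(\bar D), \]
so the leading coefficients are uniformly continuous on $\bar D$ and the drift coefficients are bounded. By De Giorgi--Nash--Moser theory applied to the divergence-form eigenvalue equation (together with $\varphi_i\in H^1_0(D)$ and the zero boundary trace), one obtains $\varphi_i\in L^\infty(D)$, so that $\lambda_i\varphi_i\in L^p(D)$ for every $p\in(1,\infty)$. Since $\partial D$ is $C^{2,\alpha}\subset C^{1,1}$, standard global $W^{2,p}$ estimates for non-divergence form elliptic equations with continuous leading coefficients and bounded lower-order terms (e.g.\ Gilbarg--Trudinger Theorem 9.13) then give $\varphi_i\in W^{2,p}(D)\cap W^{1,p}_0(D)$ for every finite $p$. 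Choosing $p>d$ and invoking the Morrey--Sobolev embedding $W^{2,p}(D)\hookrightarrow C^{1,1-d/p}(\bar D)$ produces the desired $\varphi_i\in C^1(\bar D)$.

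The main obstacle is pushing regularity up to the boundary: the interior $W^{2,p}$ estimate needs only uniform ellipticity and continuity of the leading coefficients, and is routine, but extending it to $\bar D$ requires flattening $\partial D$ with a $C^{2,\alpha}$ diffeomorphism, checking that the transformed operator retains uniform ellipticity and the necessary coefficient regularity, and then patching local estimates via a finite boundary cover. All of this is packaged into the cited $W^{2,p}$ theorem; the $C^{2,\alpha}$ assumption on $\partial D$ is slightly stronger than strictly needed ($C^{1,1}$ would already suffice), but it is exactly the standing hypothesis of the lemma.
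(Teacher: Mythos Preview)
Your argument is correct. The paper's own proof is simply a one-line citation to \cite[Theorem 6.31]{gilbargtrudinger}, i.e.\ global Schauder regularity, whereas you go through the $W^{2,p}$ route (Gilbarg--Trudinger Chapter~9) and Morrey embedding. Your route is in fact more robust under the stated hypotheses: with only $a^{ij}\in C^1(\bar D)$, the non-divergence drift $b^i=\sum_j\partial_{x_j}a^{ij}$ is merely continuous, not $C^\alpha$, so a direct invocation of the $C^{2,\alpha}$ Schauder theorem would strictly speaking require an extra hypothesis (or an appeal to divergence-form Schauder theory). The $W^{2,p}$ estimates only need continuity of the top-order coefficients and boundedness of the lower-order ones, which you have, and $C^{1,1}$ boundary, which is implied by the $C^{2,\alpha}$ assumption. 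The price you pay is a slightly weaker conclusion ($\varphi_i\in C^{1,\beta}(\bar D)$ for every $\beta<1$, rather than $C^{2,\alpha}$), but that is exactly what the lemma asks for. One small point worth making explicit in your write-up: before applying the $W^{2,p}$ a~priori estimate you need $\varphi_i$ to be a strong solution, which follows from the $H^2$ regularity theory for divergence-form equations with Lipschitz coefficients on a $C^{1,1}$ domain; you implicitly rely on this when you jump from weak solution to the non-divergence formulation.
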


\begin{proof}
	\cite[Theorem 6.31]{gilbargtrudinger}
\end{proof}

\subsection{Branching diffusions}\label{sec::bds}
As stated in the introduction, we can view a branching diffusion in $D\subset \R^d$ as a point process 
$$\mathbb{X}_t=\{X_u(t): u\in N_t\}$$ taking values in $D$. This is often all that we will need to speak about, but since we are eventually interested in the genealogy of such processes, it will be helpful at various points to to view them as elements of a larger state space: the space of \emph{marked trees}. The set up described in this section closely follows \cite{spineapproach,manytofew}.

We begin by recalling the \emph{Ulam--Harris labelling} system. Let $$\Omega:= \{\emptyset\}\cup \bigcup_{n\in \N} (\N)^n $$ be the set of finite labels on $\N=\{1,2,3...\}$. A subset $T\subset \Omega$ is called a \emph{tree} if: 
\begin{itemize}
	\item $\emptyset\in T$;
	\item $u,v\in \Omega$ and $uv\in T$ implies that $u\in T$; and
	\item for all $u\in T$ there exists $A_u\in \N \cup \{0\}$ such that $uj\in T \iff 1\leq j \leq A_u$. 
\end{itemize} 
We will refer to elements $u\in T$ as \emph{particles} or \emph{individuals} in $T$. We think of the element $\emptyset$ as representing an \emph{initial ancestor}, and individuals $u\in T$ as describing its descendants. For example, if $u\in T$ is given by the label $(2,1)$ then $u$ would be the first child of the second child of $\emptyset$. For  $u,v\in \Omega$ we write $uv$ for the concatenation of the words $u$ and $v$, so for example if $u=(1,2,3),v=(2,1)$, then $uv=(1,2,3,2,1)$. We also set $u\emptyset=\emptyset u = u$ for all $u\in \Omega$. We say that $v$ is an \emph{ancestor} of $u$ (written $v\prec u$) if there exists $w\in \Omega$ such that $vw=u$, and write $|u|$ for the length (or \emph{generation}) of $u$, where $|u|=n$ if $u\in \N^n$. Then the above \emph{tree} condition simply means that: $T$ has an initial ancestor or \emph{root} $\emptyset$; $T$ contains all of the ancestors of all of its individuals; and finally, each individual $u\in T$ has a finite (possibly $0$) number $A_u$ of children, labelled in a consecutive way. We write $\mathbb{T}$ for the set of trees.

We will want to consider \emph{marked trees}, where the marks will correspond to the behaviour of particles in our branching diffusion. If we have a tree $T \in \mathbb{T}$, we will mark each $u\in T$ with a lifetime $l_u\in [0,\infty)$, and a motion in $D$, 
$$X_u: [s_u-l_u, s_u)\to D,$$
where $s_u = \sum_{v\leq u} l_v$ is the \emph{death} time of the particle $u$. 

We write 
$$ \mathcal{T} := \{(T,l, X)=(T,(l_u)_{u\in T},(X_u)_{u\in T}):\, T \in \mathbb{T}, \text{ and } l_u\in [0,\infty) \, , \, X_u: [s_u-l_u, s_u)\to D \text{ for all } u\in T\}$$
for the set of all marked trees on $D$. With an abuse of notation, if we have a marked tree $(T,l, X)$ and $u\in T$ we will also sometimes extend the definition of $X_u$ to the whole of the ancestry of $u$. That is we will set $X_u(t)$ to be equal to $X_v(t)$ if $t\notin [s_u-l_u,s_u)$, and $v$ is the unique ancestor of $u$ alive at time $t$. If $u$ has no children, we write $X_u(t)=(\dagger)$ for all $t\geq s_u$, where $(\dagger)$ is an additional cemetery point that we introduce for use later on. Finally, we write $$N_t:=\{u\in T: t\in [s_u-l_u, s_u)\}$$ for the set of particles alive in $T$ at time $t$, and let $|N_t|$ be the number of such particles. As in the introduction, we let 
$$\mathbb{X}_t := \{ X_u(t): \, u\in N_t\}$$ be the point process on $D$ corresponding to the marked tree $(T, l, X)$.

\subsubsection{Probability measures on marked trees}

Let $(\mathcal{F}_t)_{t\geq 0}$ be the filtration on the space of marked trees defined by 
$$\mathcal{F}_t:= \sigma\big( \{(u,X_u, l_u): \, s_u\leq t\}\cup\{(u,X_u(s)): s\in [s_u-l_u,t],\, t\in [s_u-l_u, s_u) \} \big) \;\; \text{for} \; t\ge 0 $$
and set $$\mathcal{F}_\infty=\sigma(\cup_{t\geq 0} \mathcal{F}_t).$$
Then $(\mathcal{F}_t)_{t\geq 0}$ is the natural filtration associated with the point process $(\mathbb{X}_t)_{t\geq 0}$. We let $\mathbb{P}^x$ be the probability measure on $(\mathcal{T}, \mathcal{F}_\infty)$ such that:
\begin{itemize}
	\item $X_\emptyset(t)$ evolves under the law $\mathbf{P}^x$ described in Section \ref{sec::spec_diffusion} for $0\leq t \leq l_{\emptyset}$, where $l_\emptyset= \tau^D_\emptyset \wedge \upsilon_\emptyset $, $\tau^D_\emptyset$ is the first time that $X_\emptyset$ hits $\partial D$ and $\upsilon_\emptyset \sim \text{Exp}(\beta)$ is an exponential random variable independent of $X_\emptyset$. 
	\item $A_\emptyset = 0$ on the event that $l_\emptyset=\tau^D_\emptyset$. On the complementary event, $A_\emptyset$ is distributed as an independent copy of the offspring distribution $A$.
	\item At any branching event where a positive number of children are born, all children repeat stochastically, and independently, the behaviour of their parent, starting from the point of fission.
\end{itemize}

That is, $\mathbb{P}^x$ is the law of the system described in the introduction, with offspring distribution $A$ and constant branching rate $\beta>0$. 

\subsubsection{The many-to-few formulae}

One particularly useful property of the branching diffusions considered in this section are the so-called \emph{many-to-few} formulae, which allow one to calculate certain expectations for the system with relative ease. We state the two simplest cases here; for the more general formula, see for example \cite[Lemma 1]{manytofew}. 

\begin{lemma}[Many-to-one]
	\label{lem::many_to_one}
	Suppose that $f$ is a measurable function on the Borel sets of $D$. Then 
	$$\mathbb{P}^x \big(\sum_{u\in N_t} f(X_u(t))\big)=\e^{(m-1)\beta t} \mathbf{P}^x(f(X(t))\nh) $$
	where we recall that $m=\mathbb{E}(A)$.
\end{lemma}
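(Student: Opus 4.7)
My plan is to prove the identity via a renewal-type integral equation together with a Gr\"onwall uniqueness argument. Write $u(t,x):=\mathbb{P}^x\bigl(\sum_{v\in N_t} f(X_v(t))\bigr)$ and $H(t,x):=\mathbf{P}^x[f(X(t))\nh]$, so the claim becomes $u=\e^{(m-1)\beta t}H$. I would treat the case of bounded $f$ first, and pass to general measurable $f$ at the end by monotone convergence.

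The key step is to condition on the first event of the ancestor particle $\emptyset$, whose lifetime is $l_\emptyset=\upsilon_\emptyset\wedge \td_\emptyset$ with $\upsilon_\emptyset\sim\mathrm{Exp}(\beta)$ independent of the underlying diffusion path. On $\{l_\emptyset>t\}$ (no event by time $t$, probability $\e^{-\beta t}$ conditional on the path) the sum consists only of $f(X_\emptyset(t))\nh$ and contributes $\e^{-\beta t}H(t,x)$; first-event killing at the boundary contributes zero since the ancestor dies with no offspring; and a first branching at time $s\in(0,t)$ (density $\beta\e^{-\beta s}ds$ intersected with $\{\td>s\}$) produces $A_\emptyset\sim A$ offspring, each starting an independent copy of the process from $X_\emptyset(s)$, so by the branching property the conditional contribution of these subtrees is $m\,u(t-s,X_\emptyset(s))$, using $\mathbb{E}(A)=m$. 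Assembling these pieces,
\begin{equation}\label{eqn::m21renewal}
u(t,x)=\e^{-\beta t}H(t,x)+m\beta\int_0^t\e^{-\beta s}\mathbf{P}^x\bigl[\I_{\{\td>s\}}\,u(t-s,X(s))\bigr]\,ds.
\end{equation}
I then substitute the candidate $u^*(t,x):=\e^{(m-1)\beta t}H(t,x)$ into the right-hand side. The strong Markov property of $X$ at time $s$ gives $\mathbf{P}^x[\I_{\{\td>s\}}H(t-s,X(s))]=H(t,x)$, so the right-hand side of (\ref{eqn::m21renewal}) reduces to $H(t,x)\bigl[\e^{-\beta t}+m\beta\e^{(m-1)\beta t}\int_0^t\e^{-m\beta s}\,ds\bigr]$, and the elementary integration checks that this equals $\e^{(m-1)\beta t}H(t,x)=u^*(t,x)$.

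It remains to prove uniqueness of bounded solutions of (\ref{eqn::m21renewal}). Taking $f\equiv 1$ in the derivation yields the scalar estimate $M(t):=\sup_x\mathbb{E}^x|N_t|\leq 1+m\beta\int_0^t M(s)\,ds$, so $M(t)\leq\e^{m\beta t}$ by Gr\"onwall; hence $u$ is bounded on compact time intervals whenever $f$ is. The difference $w:=u-u^*$ then satisfies the homogeneous version of (\ref{eqn::m21renewal}), giving $\|w(t,\cdot)\|_\infty\leq m\beta\int_0^t\|w(r,\cdot)\|_\infty\,dr$; iterating forces $\|w(t,\cdot)\|_\infty\leq \|w\|_\infty(m\beta t)^n/n!\to 0$, so $w\equiv 0$. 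Finally for general measurable $f$, split into positive and negative parts and approximate each by $f\wedge K$, then let $K\to\infty$ using monotone convergence on both sides of the many-to-one identity (with both sides allowed to take the value $+\infty$). The main (mild) obstacle is sequencing these steps: uniqueness requires the a priori growth control on $|N_t|$, and the extension to unbounded $f$ requires care that the monotone limits preserve the equality in $[0,\infty]$.
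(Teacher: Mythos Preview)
Your argument is correct. The renewal equation \eqref{eqn::m21renewal} is derived properly by conditioning on the first event, the verification that $u^*=\e^{(m-1)\beta t}H$ solves it is a clean application of the Markov property for the killed diffusion, and the Gr\"onwall uniqueness step is sound once one knows $\sup_x\mathbb{E}^x|N_t|<\infty$ a priori (which, as you note, follows by domination by a pure continuous-time Galton--Watson process with no killing, or by iterating the integral inequality directly).

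The paper does not actually prove this lemma: it simply cites \cite[Lemma~1]{manytofew} and refers to \cite[\S4.1--4.2]{manytofew} for how the general many-to-few formula specialises to the many-to-one and many-to-two statements. Your approach is therefore more self-contained than what appears in the paper. The renewal/first-branching-event method you use is one of the classical routes to many-to-one (the other standard route being a direct induction on the generation structure, or equivalently a spine change of measure as in the cited reference); both give the result with comparable effort, but yours avoids introducing the spine machinery, which is a reasonable trade-off for this particular lemma.
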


\begin{lemma}[Many-to-two]
	\label{lem::many_to_two}
	Suppose that $f$ and $g$ are measurable functions on the Borel sets of $D$. Then 
	\begin{align*}& \mathbb{P}^x\big(\sum_{u\in N_t} f(X_u(t))\sum_{v\in N_t} g(X_v(t))\big) \\
		& =\e^{(m-1)\beta t}\mathbf{P}^x\big(f(X(t))g(X(t))\I_{\{\tau^D>t\}}\big)+\beta (\mathbb{E}(A^2)-\mathbb{E}(A)) \int_0^t \e^{(2t-s)(m-1)\beta} \mathbf{P}^x\big( \I_{\{\tau^D>s\}} q(X(s),t-s)\big) \, ds \end{align*}
	where $q(y,r)=\mathbf{P}^y(f(X(r))\I_{\{\tau^D>r\}})\mathbf{P}^y(g(X(r))\I_{\{\tau^D>r\}})$
	for $y\in D$ and $r\geq 0$.
\end{lemma}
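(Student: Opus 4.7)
The plan is to split
\[
\sum_{u\in N_t}\sum_{v\in N_t} f(X_u(t))g(X_v(t)) = \sum_{u\in N_t}(fg)(X_u(t)) + \sum_{\substack{u,v\in N_t\\ u\neq v}} f(X_u(t))g(X_v(t))
\]
and treat the two pieces separately. The diagonal sum yields the first term of the claimed formula upon a direct application of Lemma~\ref{lem::many_to_one} to the function $fg$.

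For the off-diagonal piece, I would decompose each ordered pair $(u,v)$ with $u\neq v$ according to its most recent common ancestor $w$, which is a particle that died at some time $s_w\in(0,t)$ and produced $A_w\geq 2$ offspring, two distinct ones of which, say $wi$ and $wj$, are ancestors of $u$ and $v$ respectively. Hence
\[
\sum_{\substack{u,v\in N_t\\ u\neq v}} f(X_u(t))g(X_v(t)) = \sum_{\substack{w\in T\\ s_w\leq t}}\sum_{\substack{1\leq i,j\leq A_w\\ i\neq j}} \Bigl(\sum_{\substack{u\in N_t\\ wi\prec u}} f(X_u(t))\Bigr)\Bigl(\sum_{\substack{v\in N_t\\ wj\prec v}} g(X_v(t))\Bigr).
\]
By the branching property, conditional on $\mathcal{F}_{s_w}$ and on $A_w$, the subtrees rooted at the distinct children of $w$ are independent branching diffusions started at $X_w(s_w)$ and run for time $t-s_w$. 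Applying Lemma~\ref{lem::many_to_one} to each factor, the conditional expectation of a single $(i,j)$-term equals $F_{t-s_w}(X_w(s_w))G_{t-s_w}(X_w(s_w))$, where $F_r(y):=\e^{(m-1)\beta r}\mathbf{P}^y(f(X(r))\I_{\{\td>r\}})$ and $G_r$ is defined analogously for $g$. Summing over the $A_w(A_w-1)$ distinct ordered pairs and averaging over $A_w$ (which is independent of the history and of $X_w(s_w)$) contributes a factor $\mathbb{E}[A^2]-\mathbb{E}[A]$.

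It then remains to evaluate the expected sum over all branching events $w$ with $s_w\leq t$ of $F_{t-s_w}(X_w(s_w))G_{t-s_w}(X_w(s_w))$. For this I would invoke the Campbell-type identity for the compensator of the branching point process: for any suitable measurable $h$,
\[
\mathbb{P}^x\Bigl(\sum_{\substack{w\in T\\ s_w\leq t}} h(s_w,X_w(s_w))\Bigr) = \int_0^t \beta\,\mathbb{P}^x\Bigl(\sum_{w\in N_s} h(s,X_w(s))\Bigr)\, ds.
\]
Taking $h(s,y):=F_{t-s}(y)G_{t-s}(y)$ and applying Lemma~\ref{lem::many_to_one} once more to the inner sum gives
\[
\beta\bigl(\mathbb{E}[A^2]-\mathbb{E}[A]\bigr)\int_0^t \e^{(m-1)\beta s}\,\mathbf{P}^x\bigl(\I_{\{\td>s\}}\,F_{t-s}(X(s))G_{t-s}(X(s))\bigr)\, ds.
\]
Expanding $F_{t-s}(y)G_{t-s}(y)=\e^{2(m-1)\beta(t-s)}q(y,t-s)$ combines the three exponentials into $\e^{(2t-s)(m-1)\beta}$ and produces exactly the second term of the claimed formula.

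The main obstacle is justifying the Campbell identity above, since the sum is over an unbounded random index set indexed by the whole tree. I would establish it by induction on the number of branching events before time $t$, obtained by conditioning on the first branching event of the root ancestor and using the branching property to reduce to the same statement for each independent subtree. Alternatively, one can entirely bypass the MRCA decomposition and instead condition on the first event affecting the root (boundary exit, branching, or survival past $t$) to derive an integral equation for $H(x,t):=\mathbb{P}^x\bigl(\sum_u f(X_u(t))\sum_v g(X_v(t))\bigr)$, which, using $\mathbb{E}[a\,H+a(a-1)FG]=mH+(\mathbb{E}[A^2]-\mathbb{E}[A])FG$ to absorb a random number of children, one then verifies by uniqueness is solved by the claimed expression.
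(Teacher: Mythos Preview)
Your argument is sound and is a valid direct proof. Note, however, that the paper does not actually prove this lemma itself: it simply cites \cite[Lemma 1]{manytofew}, where the many-to-few formula is established via a change of measure with \emph{multiple spines}. Your route is the classical MRCA decomposition plus a compensator (Campbell) identity for the point process of branching events, which is more elementary and self-contained; the spine approach in \cite{manytofew} is more systematic and generalises immediately to $k$-fold products.

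One place in your write-up deserves more care: when you ``average over $A_w$'' to extract the factor $\mathbb{E}[A^2]-\mathbb{E}[A]$ before applying the Campbell identity, you cannot simply appeal to independence of $A_w$ from the history, because the very index set $\{w:\ s_w\le t\}$ you are summing over depends on the earlier $A$'s. The clean justification is to work with the \emph{marked} point process of branching events $(s_w,X_w(s_w),A_w)$, whose compensator is $\beta\sum_{u\in N_s}\delta_{X_u(s)}(dy)\,\mathbb{P}(A\in dk)\,ds$; then
\[
\mathbb{P}^x\Bigl(\sum_{\text{branch pts }w,\ s_w\le t} A_w(A_w-1)\,h(s_w,X_w(s_w))\Bigr)
=\bigl(\mathbb{E}[A^2]-\mathbb{E}[A]\bigr)\int_0^t \beta\,\mathbb{P}^x\Bigl(\sum_{u\in N_s}h(s,X_u(s))\Bigr)ds
\]
follows in one step. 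Also, your displayed Campbell identity as written sums over all $w\in T$ with $s_w\le t$, which would include boundary-exit deaths; make explicit that the sum is over branching events only (as you say in words), since only those occur at rate $\beta$. With these clarifications your argument is complete.
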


For the proof of the above lemmas, see \cite[Lemma 1]{manytofew}, which is stated in a more general setting. For an explanation of how this general statement gives the lemmas above, see \cite[\S 4.1, \S4.2]{manytofew}.

\subsubsection{The continuous genealogical plane tree}\label{sec::trees}

If we have a marked tree $(T,l, X)\in \mathcal{T}$ corresponding to a branching diffusion, we will also want to associate with it a \emph{continuous} genealogical tree $\mathbf{T}$. This tree (which we emphasise is \emph{different} from $T$) is the main object of Theorem \ref{thm::crtconv}: it is the plane tree with branch lengths given by the lifetimes of particles in the system. 

We first need to give a few definitions. A metric space $(\mathbf{T},d)$ is said to be a real tree if, for all $v,w\in \mathbf{T}$ the following two conditions hold \cite{legallrandomtrees}:
\begin{enumerate}[(1)]
	\item There exists a unique isometric map $\phi_{v,w}:[0,d(v,w)]\to \mathbf{T}$ with $\phi_{v,w}(0)=v$ and $\phi_{v,w}(d(v,w))=w$.
	\item Any continuous injective map $[0,1]\to \mathbf{T}$ that joins $v$ and $w$ has the same image as $\phi_{v,w}$.
\end{enumerate}
One way to define is a real tree is the following: take a continuous function $C:[0,\infty)\to [0,\infty)$ with $C(0)=0$ and define a ``distance" function on $[0,\infty)\times [0,\infty)$ by 
\[d_C(s,t)=C(s)+C(t)-2\min_{r\in[s,t]}C(r)\]
whenever $s\leq t$. It is easy to verify that this defines a pseudometric on $[0,\infty)$. Thus, quotienting by the equivalence relation $\sim$ that identifies points with $d_C(\cdot,\cdot)=0$ we obtain a metric space $(\mathbf{T}_C,d_C) := ([0,\infty)/\sim, d_C).$ One can prove, see for example \cite{legallrandomtrees}, that this metric space is a real tree. The function $C$ is called the \emph{contour function} of the tree. 

In our set up, if we have a marked tree $(T,l,X)\in \mathcal{T}$ we let $(\mathbf{T}(T,l,X),d(T,l,X))=(\mathbf{T}_{C(T,l,X)},d_{C(T,l,X)})$ be the real tree with contour function $(C(T,X,l)(t))_{t\geq0}=:(C(t))_{t\geq 0}$ described as follows. Let $\{\emptyset=:u_0, u_1, u_2, \cdot... u_{|T|}\}$ be the set of labels of $T$ in \emph{depth first} order. This is the ordering on $T$ such that $u$ is less than $v$ iff at the first coordinate where the labels of $u$ and $v$ differ, the coordinate of $u$ is less than that of $v$. For any two individuals $w,w'\in T$ we let $w\wedge w'$ denote their most recent common ancestor: that is, the (unique) $u$ with $|u|$ largest, such that $u\prec w$ and $u\prec w'$. We can then define, for $j\geq 0$ such that $u_j\nprec u_{j+1}$: \begin{itemize} \item $r_j:=s_{u_j}-(s_{u_j\wedge u_{j+1}}-l_{u_j\wedge u_{j+1}})$ to be the length of time between the birth time of $u_j\wedge u_{j+1}$ and the death time of $u_j$, and
	\item  $r_j':=l_{u_{j+1}}-(s_{u_j\wedge u_{j+1}}-l_{u_j\wedge u_{j+1}})$ to be the length of time between the birth  time of $u_{j} \wedge u_{j+1}$ and the birth time of $u_{j+1}$. 
\end{itemize} We set $r_j=r_j'=0$ if $u_j\prec u_{j+1}$. Let $R_j=\sum_{k<j} l_k + r_k + r_k'$, and for $t\in [0,R_{j+1}-R_j)$  set 
\begin{equation*}C(t+R_j)-C(R_j)= \begin{cases} 
		t & \text{if }t\in [0,l_j) \\
		2l_j-t & \text{if }t\in [l_j,l_j+r_j) \\
		- 2r_j + t & \text{if }t\in [l_j+r_j, l_j+r_j+r_j').\\
\end{cases} \end{equation*}
That is: $C(t)$ is positive and linear with unit speed on $[R_j,R_j+l_j)$ and $[R_j+l_j+r_j,R_j+l_j+r_j+r_j')$, and is negative and linear with unit speed on $[R_j+l_j, R_j+l_j+r_j)$. Finally, for $t\in [R_{|T|}, R_{|T|}+l_{|T|}+s_{|T|}]$ we let $C(t)$ interpolate linearly between $C(R_{|T|})$ and $0$. The definition of the function $C$ is probably clearest from a picture: we draw a tree with branch lengths corresponding to lifetimes of individuals in the system, and traverse it (with backtracking) at speed one. $C(t)$ measures how ``high" we are in the tree at time $t$. 

\begin{figure}[h]
	\centering
	\includegraphics[scale=0.5]{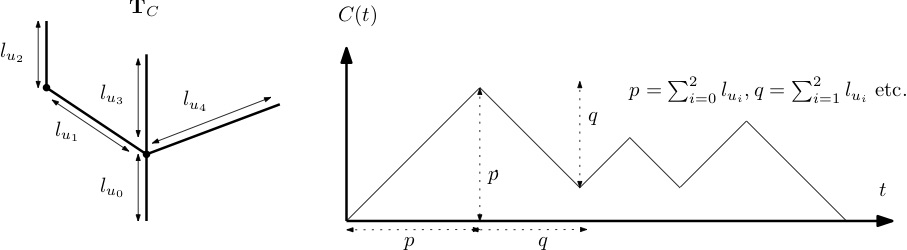}
	\caption{An example of the continuous tree $\mathbf{T}=\mathbf{T}_C$ generated by a branching diffusion. Here $u_0=\emptyset, u_1=(1), u_2=(1,1), u_3=(2)$ and $u_4=(3)$. Every branch of $\mathbf{T}$ corresponds to a particle $u\in T$, and this branch has length $l_u$. }
\end{figure}

\subsection{Martingales}\label{sec::mgales}

Suppose that $D\subset \R^d$ is bounded satisfying a uniform exterior cone condition, and $\mathbf{P}^x$, $(X(t))_{t\geq 0}$, $\tau^D$ and $p_t^D$ are as in Section \ref{sec::spec_diffusion}. Then by (\ref{eqn::kernel_decomp}), and using the fact that the eigenfunctions $\{\vp_i\}$ form an orthonormal basis of $L^2(D)$ we see that for any $f\in L^2(D)$ and $t>0$
\begin{equation}
	\label{eqn::f_decomp} \es{x}{f(X(t))\nh} = \sum_1^\infty \exp(-\lambda_i t) \vp_i(x) \ip{\vp_i}{f}
\end{equation}
where we write $\ip{f}{g}:=\int_D f(x)g(x) \, dx$ here and throughout the paper. In particular, 
\begin{equation} \label{eqn::exp_single_particle_martingale} 
	\mathbf{P}^x \left[\vp_i(X(t))\nh\right]=\e^{-\lambda_i t} \vp_i(x) \end{equation}
for all $x \in D.$ One consquence of this is that the process 
\begin{equation}
	\label{eqn::single_particle_mgale}
	\big(\exp(\lambda t)\vp(X(t))\nh\big)_{t\ge 0}
\end{equation} is a (positive) martingale under $\mathbf{P}^x$. Furthermore, this \emph{single-particle} martingale implies the existence of a martingale for the entire branching diffusion under $\mathbb{P}^x$. Indeed, a straightforward application of the Markov property for the branching diffusion and Lemma \ref{lem::many_to_one} yields the following.

\begin{lemma}
	\label{martingales}
	The process
	\[\left(M_t = \e^{(\lambda-\beta(m-1))t}\sum_{u\in N_t} \vp(X_u(t)) \right)_{t\ge 0}\]
	is a positive martingale under $\mathbb{P}^x$, for each $x\in D$.
	It therefore converges $\mathbb{P}^x-$almost surely as $t\to \infty$ to an almost surely finite limit $M_\infty$.
\end{lemma}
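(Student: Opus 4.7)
The plan is a direct verification using the branching Markov property, followed by the many-to-one formula and the single-particle eigenfunction identity. Positivity is immediate, and almost sure convergence is then automatic from the martingale convergence theorem.

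First, I would fix $0\leq s\leq t$ and compute $\mathbb{P}^x[M_t\mid \mathcal{F}_s]$. By the branching Markov property, conditional on $\mathcal{F}_s$ the system evolves as a collection of independent copies of the branching diffusion, one rooted at each $X_u(s)$ for $u\in N_s$. Hence
\begin{equation*}
\mathbb{P}^x\Big(\sum_{v\in N_t}\vp(X_v(t))\,\Big|\,\mathcal{F}_s\Big)=\sum_{u\in N_s}\mathbb{P}^{X_u(s)}\Big(\sum_{w\in N_{t-s}}\vp(X_w(t-s))\Big).
\end{equation*}

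Next I would apply the many-to-one formula (Lemma \ref{lem::many_to_one}) with $f=\vp$, followed by the eigenfunction identity (\ref{eqn::exp_single_particle_martingale}), to compute each inner expectation:
\begin{equation*}
\mathbb{P}^{y}\Big(\sum_{w\in N_{t-s}}\vp(X_w(t-s))\Big)=\e^{(m-1)\beta(t-s)}\mathbf{P}^{y}\big(\vp(X(t-s))\I_{\{\td>t-s\}}\big)=\e^{((m-1)\beta-\lambda)(t-s)}\vp(y).
\end{equation*}
Summing over $u\in N_s$ and multiplying by $\e^{(\lambda-\beta(m-1))t}$ gives exactly $M_s$, proving the martingale property. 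Positivity follows at once since $\vp>0$ on $D$ and $X_u(t)\in D$ for all $u\in N_t$, $t\geq 0$.

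Finally, since $M_t\geq 0$ is a martingale, Doob's martingale convergence theorem gives the existence of an almost sure limit $M_\infty$, which is $\mathbb{P}^x$-a.s. finite (e.g.\ by Fatou, $\mathbb{P}^x(M_\infty)\leq \vp(x)<\infty$). There is no real obstacle here: the only point requiring care is a clean invocation of the branching Markov property at time $s$, and this is standard given the construction of $\mathbb{P}^x$ on marked trees in Section \ref{sec::bds}.
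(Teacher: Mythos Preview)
Your proof is correct and follows exactly the approach the paper indicates: the paper states just before the lemma that it is ``a straightforward application of the Markov property for the branching diffusion and Lemma \ref{lem::many_to_one}'', and that is precisely what you carry out, combined with the single-particle identity (\ref{eqn::exp_single_particle_martingale}).
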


This martingale is the natural analogue of the well-known martingale $(Z_n/m^n)_{n\in \N}$ for Galton--Watson processes (with offspring mean $m$). Variants of the martingale for general branching processes have been studied extensively in the literature: see for example \cite{Biggins, Lyons} for the branching random walk case and \cite{chauvinrouault, Kyprianou, eng, harrissurvivalprobs, hesse} for branching Brownian motion, among many others. 

\subsection{Spine theory}
It turns out that a helpful approach in many parts of this paper will be to study the behaviour of the system under a change of measure. Precisely, the change of measure defined by the $\mathbb{P}^x$ martingale $(M_t)_{t\ge 0}$ from the previous section. To give a useful description of this, we need to view our process on a yet larger state space: the space of \emph{marked trees with spines}. This is a classical technique first introduced in \cite{chauvinrouault}, and since used extensively by many authors \cite{hesse,spineapproach,spineroberts,manytofew}. For a thorough exposition of the subject, we refer the reader to \cite{spineapproach}, and the overview in this section will closely follow that given in \cite{spineapproach,manytofew}.

Suppose we have a marked tree $(T,l,X)\in \mathcal{T}$. A spine $\psi$ on $(T,l,X)$ is a subset of $T\cup(\dagger)$ (where we recall that ($\dagger$) is an isolated cemetery point) such that:
\begin{itemize}
	\item $\emptyset\in \psi$ and $|\psi \cap (N_t \cup (\dagger))|=1$ for all $t$;
	\item $v\in \psi$ and $u\prec v$ implies that $u\in \psi$; and 
	\item if $v\in \psi$ and $A_v>0$, then there exists a unique $1\leq j \leq A_v$ with $vj\in \psi$. If $A_v=0$, then $\psi\cap N_t=\emptyset$ for all $t\geq s_v$. 
\end{itemize}
We write $$\tilde{\mathcal{T}}:=\{(T,l,X,\psi): (T,l,X) \in \mathcal{T}, \text{ and } \psi \text{ is a spine on } (T,l,X)\}$$ for the space of marked trees with spines. Given $(T,l,X,\psi)\in \tilde{\mathcal{T}}$ we let $\psi_t:=v$ be the unique element $v\in \psi \cap (N_t\cup (\dagger))$ and write $\xi_t:=X_v(t)$ for its position. With a slight abuse of notation, we set $X_v(t)=(\dagger)$ if $\xi_t=(\dagger)$ and also in this case say that $u\prec \psi_t$ if $u\in N_s$ and $u=\psi_s$ for some $s<t$.

\subsubsection{Filtrations}\label{sec::filtrations}

There are several different filtrations we can place on $\tilde{\mathcal{T}}$. We give brief descriptions of each of these below : see \cite{spineroberts} for more rigorous definitions.
\begin{itemize}
	\item $(\mathcal{F}_t)_{t\geq 0}$ is the natural filtration of the branching process as before, and does not contain any information about spines. We write $\mathcal{F}_\infty := \sigma (\cup_{t\geq 0}\Fs{t})$.  
	\item $(\tilde{\Fs{t}})_{t\geq 0}$ is the natural filtration of the branching process plus the spine. We write $\tilde{\Fs{\infty}}:=\sigma(\cup_{t\geq 0} \tilde{\Fs{t}})$. 
	\item $(\mathcal{G}_t)_{t\geq 0}:=(\sigma(\xi_s: s<t))_t$ is the filtration generated by the spatial motion of the spine. We write $\mathcal{G}_\infty:=\sigma (\cup_{t\geq 0} \mathcal{G}_t).$
	\item $(\tilde{\mathcal{G}}_t)_{t\geq 0}:= \sigma(\mathcal{G}_t\cup \{v\in \psi_s: 0\leq s \leq t\}\cup \{A_u: u<\psi_t\} )$ is the filtration that knows everything about the spine until time $t$: which individuals are in the spine, their motions, fission times, and family sizes at fission times along the spine. We write $\tilde{\mathcal{G}}_\infty := \sigma(\cup_{t\geq 0}\tilde{\mathcal{G}}_t)$. 
\end{itemize}

\subsubsection{Probability measures}

We first want to define the probability measure $\tilde{\mathbb{P}}^x$ on $(\tilde{\mathcal{T}},\tilde{\mathcal{F}}_\infty)$ under which, informally speaking, the law of the tree $(T,l,X)$ is the same as under  $\mathbb{P}^x$, and then the spine is chosen by picking one of the children uniformly at random at every branching event. More rigorously, if $Y$ is an $\tilde{\mathcal{F}}_t$-measurable random variable, then $Y$ can be written \cite{manytofew} as 
\begin{equation}
	\label{eqn::decomp_tildefmeasY}
	\sum_{v\in N_t\cup(\dagger)}Y(v) \I_{\{\psi_t=v\}} \end{equation}
where $Y(v)$ is measurable with respect to $\mathcal{F}_t$. Given this representation, we define the measure $\tilde{\mathbb{P}}^x$ on $(\tilde{\mathcal{T}},\tilde{\mathcal{F}}_\infty)$ by setting for each $\tilde{\mathcal{F}}_t$-measurable $Y$:
\begin{equation}
	\label{eqn::def_ptilde}
	\tilde{\mathbb{P}}^x(Y)=\mathbb{P}^x\big( \sum_{v\in N_t}( Y(v)\prod_{u<v} \frac{1}{A_u}) + Y((\dagger))\sum_{w\in D_t} \prod_{u<w} \frac{1}{A_u} \big),
\end{equation}
where $D_t:=\{w\in \cup_{s\leq t}N_s : A_w=0\}$. Note that $\tilde{\mathbb{P}}^x|_{\mathcal{F}_\infty}= \mathbb{P}^x$. 

\subsubsection{Change of measure}\label{sec::com}

Recall from Section \ref{sec::mgales} that $\e^{\lambda t}\frac{\vp(X(t))}{\vp(x)} \nh$ defines a mean-one martingale under $\mathbf{P}^x$. This implies, see \cite{spineapproach}, that
$$\zeta_t:= \I_{\{A_{\psi_s}>0 \; \forall s<t \}}\nhe\e^{(\lambda -\beta(m-1))t} \frac{\vp(\xi_t)}{\vp(x)} \prod_{v<\psi_t} A_v , $$
where $\tau^D_{\xi}$ is the first time that $\xi$ leaves the domain $D$,
is a mean-one $\tilde{\Fs{t}}$-martingale under $\tilde{\mathbb{P}}^x$. In fact, it is easy to check that $
\frac{M_t}{\vp(x)}= \tilde{\mathbb{P}}^x\big(\zeta_t\, |\, \mathcal{F}_t\big).$
Thus, if we define a new probability measure $\tilde{\mathbb{Q}}^x$ on $(\tilde{\mathcal{T}}, \tilde{\mathcal{F}}_\infty)$ via the martingale change of measure 
\begin{equation}
	\label{eqn::com_enhanced}
	\left.\frac{d\tilde{\mathbb{Q}}^x}{d\tilde{\mathbb{P}}^x}\right|_{\tilde{\mathcal{F}}_t} := \zeta_t,
\end{equation}
then we have, defining $\mathbb{Q}^x:=\tilde{\mathbb{Q}}^x|_{\mathcal{F}_\infty}$, that 
\begin{equation}
	\label{eqn::com_simple}
	\left.\frac{d\mathbb{Q}^x}{d\mathbb{P}^x}\right|_{\mathcal{F}_t} := \frac{M_t}{\vp(x)}. 
\end{equation}
We have the following description for how the branching diffusion with a distinguished spine behaves under $\tilde{\mathbb{Q}}^x$ (see for example \cite{chauvinrouault} or \cite{spineapproach}): 
\begin{itemize} 
	\item we begin with one particle at position $x$, which is the spine particle;
	\item the spine particle evolves as if under the changed measure 
	\begin{equation}\label{eqn::com_singleparticle}
		\left.\frac{d\mathbf{Q}^x}{d\mathbf{P}^x}\right|_{\mathcal{G}_t}:= \nhe \e^{\lambda t}\frac{\vp(\xi_t)}{\vp(x)};
	\end{equation}
	\item the spine particle branches at rate $m\beta$ and is replaced by a number of children having the size-biased distribution $\hat{A}$, where $$\mathbb{P}(\hat{A}=k)=\frac{k}{m}\mathbb{P}(A=k); \text{ and}$$
	\item given that there are $k$ children born at such a branching event, one is chosen uniformly at random to be the spine and stochastically repeats the behaviour of its parent. Non-spine particles initiate independent branching diffusions with law $\mathbb{P}^y$, from the point $y$ of fission.
\end{itemize}

We now make a number of remarks about this. Firstly, note that the spine particle under $\tilde{\mathbb{Q}}_x$ never leaves the domain $D$, and that it always has a positive number of children. This means that the branching diffusion under $\tilde{\mathbb{Q}}$ never becomes extinct, and that the spine particle is never in the cemetery state. Also note that under the change of measure (\ref{eqn::com_singleparticle}), the spine particle evolves as a diffusion with transition kernel $K^D_t(x,y)$: the kernel we defined in Lemma \ref{lem::spineconvergence}. Its motion is that of the diffusion $X(t)$ under $\mathbf{P}^x$, but \emph{conditioned to remain in $D$ for all time}, see \cite{pin}. In particular, Lemma \ref{lem::spineconvergence} tells us that if the domain $D$ is Lipschitz, its position converges quickly to an equilibrium distribution with density $\vp^2$. Finally, we record that (by an easy calculation):
\begin{equation}
	\label{eqn::spine_prob}
	\tilde{\mathbb{Q}}^x(v=\psi_t | \mathcal{F}_t)= \frac{\vp(X_v(t))}{\sum_{u\in N_t}\vp(X_u(t))}
\end{equation}
for $v\in N_t$. 

\section{The Phase Transition}
In this section we will provide a proof of Theorem \ref{phasetrans}. 
\\

\begin{proof2}{Theorem \ref{phasetrans}}
	First suppose that $\beta>\lambda/(m-1)$. Then we know by Lemma \ref{martingales} that $M_t \to M_\infty$ almost surely. Thus, by definition of $M_t$, we can conclude the proof in this case as soon as we can show that $\E{x}{M_\infty>0}>0$. However, for this it is enough to show that $(M_t)_t$ is uniformly integrable, and in fact, an elementary calculation using Lemma \ref{lem::many_to_two} gives that if $\beta>\lambda/(m-1)$, then $(M_t)_t$ is uniformly bounded in $L^2(\mathbb{P}^x)$. We leave this calculation to the reader. 
	
	Next suppose that $\beta<\lambda/(m-1)$. We write
	\begin{equation}
		\label{eqn::subcritical_survival_prob}
		\E{x}{|N_t|>0}\leq \E{x}{|N_t|}=\e^{(m-1)\beta}\mathbf{P}^x(\tau^D>t),\end{equation}
	where the second equality comes from Lemma \ref{lem::many_to_one}. Observe that by Lemma \ref{lem::spineconvergence}, we also have the existence of a constant $C$, depending only on $D$, such that 
	\begin{equation}
		\label{eqn::exp_kernel_bound}
		p_t^D(x,y)\leq C\vp(x)\vp(y)\e^{-\lambda t}
	\end{equation}
	for all $t\geq 1$. Since $\mathbf{P}^x(\tau^D>t)=\int_D p^D_t(x,y) \, dy$, this implies that when $\beta<\lambda/(m-1)$, the right hand side of (\ref{eqn::subcritical_survival_prob}) converges to $0$ as $t\to \infty$ (uniformly in $x\in D$). Hence we have almost sure extinction in this case. 
	
	Finally, we deal with the critical case $\beta=\lambda/(m-1)$. We make use of the following lemma, which can be found in \cite[Lemma 2.1]{watanabe}.
	
	\begin{lemma}[\cite{watanabe}]
		\label{0orinfty}
		For all $x\in D$ 
		\[\Prb{x}{|N_t| \to 0\text{ or } |N_t| \to \infty \text{ as } t\to \infty}=1.\]
	\end{lemma}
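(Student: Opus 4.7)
The plan is to prove this as a zero-one law: on the event that $|N_t|$ stays bounded infinitely often, the population must in fact die out. Equivalently, if extinction fails, then $|N_t|\to\infty$. The heart of the argument is a uniform lower bound on the probability that a single branching diffusion, started from any $x\in D$, becomes extinct within a fixed time window.

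First I would establish: there exist $T>0$ and $\delta>0$ such that $\mathbb{P}^x(|N_T|=0)\geq \delta$ for every $x\in D$. To see this, consider the joint event that the initial particle has no branching event before time $T$ (probability $\mathrm{e}^{-\beta T}$, independent of its spatial motion) and that it is absorbed at $\partial D$ before time $T$. On the first event, the process is just a single killed diffusion, and the extinction probability reduces to $\mathbf{P}^x(\tau^D \leq T)$. Using the kernel bound (\ref{eqn::exp_kernel_bound}), which was derived from Lemma \ref{lem::spineconvergence}, we get $\mathbf{P}^x(\tau^D>T)\leq C\varphi(x)\mathrm{e}^{-\lambda T}\int_D\varphi(y)\dd y$, and since $\varphi$ is bounded on $\bar{D}$ we may pick $T$ large enough to make this $\leq 1/2$ uniformly in $x$. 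Combining yields $\delta := \mathrm{e}^{-\beta T}/2$.

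Next, fix an integer $k\geq 1$ and consider the event
\[A_k := \{|N_t|\leq k \text{ for arbitrarily large }t\}\cap\{|N_t|>0\text{ for all }t\geq 0\}.\]
On $A_k$, define stopping times $T_0:=0$ and $T_{n+1}:=\inf\{t\geq T_n+T : |N_t|\leq k\}$, which are a.s. finite. By the strong Markov property at $T_n$, conditionally on $\mathcal{F}_{T_n}$, the continuations of the (at most $k$) particles alive at time $T_n$ are independent branching diffusions, each started from some point of $D$. By the previous step, each is extinct by time $T_n+T$ with probability at least $\delta$, so the conditional probability that \emph{all} are extinct by $T_n+T$ is at least $\delta^k>0$. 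Hence we have infinitely many conditionally independent Bernoulli trials with success probability bounded below by $\delta^k$; by the second Borel--Cantelli lemma at least one of them succeeds a.s., contradicting $|N_t|>0$ for all $t$. Therefore $\mathbb{P}^x(A_k)=0$.

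Finally, I would take the union over $k\in\mathbb{N}$: the event $\{\liminf_{t\to\infty}|N_t|<\infty\}\cap\{|N_t|>0\text{ for all }t\}$ is exactly $\bigcup_k A_k$, so it has $\mathbb{P}^x$-probability zero. This gives $\{|N_t|\not\to\infty\}\subseteq\{|N_t|\to 0\}$ almost surely, which is the lemma. The main obstacle is really the uniform-in-$x$ lower bound of Step 1: because $\varphi$ vanishes on $\partial D$, one cannot bound $\mathbb{P}^x(|N_T|=0)$ from below by anything involving $\varphi(x)$ in the denominator. The correct direction to use the kernel estimate is the upper bound on $\mathbf{P}^x(\tau^D>T)$, which is uniform because $\varphi$ is bounded \emph{above}; once this is in hand the rest is a routine strong-Markov plus Borel--Cantelli argument.
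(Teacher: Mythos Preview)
Your proof is correct and is essentially the same as the paper's (and Watanabe's original): establish a uniform-in-$x$ lower bound on the probability that a single particle's subtree dies out in a fixed time window, then iterate via the strong Markov property to get geometric decay of $\mathbb{P}^x(A_k)$. The only differences are cosmetic: the paper fixes the window at length $1$ (citing Lemma~\ref{lem::spineconvergence} for $\inf_y\mathbf{P}^y(\tau^D\leq 1)>0$) rather than choosing $T$ large via (\ref{eqn::exp_kernel_bound}), and tracks exact hitting and leaving times of the level $k$ rather than your times $T_{n+1}=\inf\{t\geq T_n+T:|N_t|\leq k\}$.
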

	
	The proof we give is the same as that in \cite{watanabe} (we include it only to show that it still works with our current assumptions on $L$ and $D$.) 
	\begin{proof}
		Since $|N_t|$ is integer-valued, and $\{|N_t|=0\}\Rightarrow \{|N_r|=0 \, \forall r\geq t\}$, it is sufficient to prove that 
		$\Prb{x}{|N_t|=k \; \text{i.o.}}=0$
		for every $k\in \mathbb{N}$. Fix $k$ and define a sequence of hitting and leaving times $(L_n,H_n)_{n\geq 1}$, by letting $L_1$ be the first time $t$ that $|N_t|\ne 1$, and $H_1$ be the first time (necessarily after $L_1$) that $|N_t|=k$. Then inductively, let $L_n$ be the first time after $H_{n-1}$ that $|N_t|\ne k$, and $H_n$ the first time after $L_n$ that $|N_t|=k$. We have to show that $\Prb{x}{H_n<\infty}\to 0$ as $n\to \infty$. 
		Set 
		$$\upsilon=\inf_{y\in D} \mathbf{P}^y(\tau^D\leq 1) >0$$
		which is strictly positive by Lemma \ref{lem::spineconvergence}, and let $p>0$ be the probability that an $Exp(\beta)$ random variable is bigger than $1$. 
		Then we have that  $\Prb{x}{H_1<\infty} \leq \mathbb{P}^x(|N_{L_1}|> 0) \leq (1-p\upsilon)$ (since the probability that the initial particle exits the domain without branching is greater than or equal to $p\upsilon$). Then inductively, using the Markov property at each time $H_j$, we see that $\Prb{x}{H_n < \infty} \leq \Prb{x}{|N_{L_n}|>0}\le (1-p\upsilon)(1-(p\upsilon)^k)^{n-1}\to 0$. This completes the proof.
	\end{proof}
	
	With this in hand, to prove that we have almost sure extinction in the critical case, it is enough to show that $\mathbb{P}^x(|N_t|\to \infty \text{ as } t\to \infty)=0$. To do this, we use the fact that $M_t=\sum_{u\in N_t}\vp(X_u(t))$ converges almost surely to $M_\infty<\infty.$ Then the idea is that if $|N_t|$ is very big then $M_t$ should be big as well, and this will give a contradiction. 
	
	Consider the event $A_{t,K,R}:=\{\{|N_t|\geq R\}\cap \{M_t\le K\}\}$, for $R,K\geq 1$. On this event, by definition of $M_t$, it must be the case that $N_t$ is non-empty, and that $\vp(X_u(t))\leq K/R$ for some $u\in N_t$. In other words, we have $A_{t,K,R} \subset \{\sum_{u\in N_t}\I_{\{\vp(X_u(t))\leq K/R\}}\geq 1\}$. We compute that 
	\begin{equation*}
		\mathbb{P}^x\big(\sum_{u\in N_t}\I_{\{\vp(X_u(t))\leq K/R\}}\geq 1)\leq \mathbb{P}^x\big( \sum_{u\in N_t}\I_{\{\vp(X_u(t))\leq K/R\}}\big) \leq \e^{(m-1)\beta t} \int_{y\in D:\vp(y)\leq K/R }p^D_t(x,y)\, dy 
	\end{equation*}
	where the second inequality follows from Lemma \ref{lem::many_to_one}. Finally, using (\ref{eqn::exp_kernel_bound}),  we see that this is less than $cK/R$ whenever $t\geq 1$, for some constant $c$ depending only on $D$. 
	
	Now we are ready to conclude. We have 
	\begin{align}\label{eqn::critical_phasetrans_proof}
		\E{x}{|N_t|\to \infty} & = \E{x}{\{|N_t|\to \infty\}\cap \{M_\infty<\infty\}} \\ \nonumber 
		& = \E{x}{\cup_K \cap_R \cup_T \cap_{t\geq T} A_{t,K,R}},
	\end{align}
	and we have just shown that $\E{x}{\cap_{t\geq T}A_{t,K,R}}\leq \E{x}{A_{T,K,R}}\leq cK/R$ for all $T\geq 1$. Taking limits on the right hand side of (\ref{eqn::critical_phasetrans_proof}) we see that $\E{x}{|N_t|\to \infty}=0$.

	To complete the proof of Theorem \ref{phasetrans} we must show that the decay of the survival probability in the critical case is uniform in $D$. 
	To do this we will show that if we set 
	\begin{equation}
		\label{eqn::def_u}
		u(x,t):= \begin{cases}
			\mathbf{P}^x(|N_t|>0) & x\in D \\
			0 & x\in \st D\\
		\end{cases}
	\end{equation} 
	then $u(x,t)$ is a continuous function in $\bar{D}$ for all $t>0$. Then since the $u(x,t)$ are decreasing in $t$ and converge to the continuous function $0$ as $t\to \infty$ for each $x \in \bar{D}$, by Dini's theorem \cite[Theorem 7.13]{rudin}), the decay must indeed be uniform. 
	
	To prove this, first we fix $t>0$ and $x\in D$, and show that $u(x,t)$ is continuous at $x$. Indeed, given any $\eps>0$, we can pick $T>0$ such that if $Z\sim \text{Exp}(\lambda/(m-1))$ then $\mathbb{P}(Z<T)\leq \eps/4$. This means that for any $y\in D$, by conditioning on whether or not the first branching time is less than or equal to $T$, we have 
	\begin{equation*}
		|u(x,t)-u(y,t)|\leq \eps/2 + \int_D |p^D_T(x,w)-p^D_T(y,w)|u(w,t-T) \, dw.
	\end{equation*}
	Then the continuity follows, since for fixed $T$ the second term on the right hand side will be less than $\eps/2$ whenever $|y-x|$ is small enough. This last claim holds since $p_T^D(\cdot,w)$ is continuous at $x$ for each $w\in D$ (e.g. by (\ref{eqn::kernel_decomp})), and by dominated convergence.  
	
	So to complete the proof, we just need to show that $u(y,t)\to 0$ as $y\to \st D$. However, this follows since  
	\begin{eqnarray*}
		\Prb{y}{|N_t|>0} & \leq & 1 - \Prb{y}{\text{the process becomes extinct before the first branching time}} \\
		&\leq & 1 - \mathbb{P}(Z>s)\mathbf{P}^y(\tau^D\leq s)
	\end{eqnarray*}
	for any $s>0$, where $Z$ is the same random variable described above. 
	The last line can be made arbitrarily small by first taking $s$ to 0, and then $y\to \partial D$. Thus $u(\cdot, t)\in C(\overline{D})$ for each $t>0$.  
\end{proof2}

We finish this section by recording a useful lemma in the critical case.

\begin{lemma}\label{lem::exp_nt2}
	For any $T>0$ there exists $C_T>0$ such that
	$\E{x}{|N_t|^2}\leq tC_T \vp(x) $ for all $t\geq T$ and $x\in D$. 
\end{lemma}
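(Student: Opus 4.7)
The plan is to apply the many-to-two formula (Lemma \ref{lem::many_to_two}) with $f=g\equiv 1$, and then carefully control the resulting integral using the eigenfunction bounds already established.

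At criticality $\beta=\lambda/(m-1)$, so $(m-1)\beta=\lambda$, and Lemma \ref{lem::many_to_two} gives
\[
\mathbb{P}^x(|N_t|^2) = \e^{\lambda t}\mathbf{P}^x(\tau^D>t) + \beta c_A\int_0^t \e^{(2t-s)\lambda}\mathbf{P}^x\bigl(\I_{\{\tau^D>s\}}\,\mathbf{P}^{X(s)}(\tau^D>t-s)^2\bigr)\,ds,
\]
where $c_A=\mathbb{E}(A^2)-\mathbb{E}(A)$. The first term is trivial: the kernel estimate $p_t^D(x,y)\leq C\vp(x)\vp(y)\e^{-\lambda t}$ used in the proof of Theorem \ref{phasetrans} gives $\mathbf{P}^x(\tau^D>t)\leq C'\vp(x)\e^{-\lambda t}$ for $t\geq 1$, hence the first term is bounded by a constant times $\vp(x)$.

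For the integral, the natural strategy is to split $[0,t]$ into three pieces: $[0,1]$, $[1,t-1]$ and $[t-1,t]$ (assuming $t\geq 2$, which we may do since the case $T\leq t\leq 2$ can be absorbed into $C_T$). On the middle piece both $s$ and $t-s$ exceed $1$, so we may use $p_s^D(x,y)\leq C\vp(x)\vp(y)\e^{-\lambda s}$ and $\mathbf{P}^y(\tau^D>t-s)\leq C\vp(y)\e^{-\lambda(t-s)}$ simultaneously. Plugging these in, the exponential factors in $s$ cancel exactly (this is the whole point of criticality), and the integrand collapses to the constant $C^3\vp(x)\int_D\vp(y)^3\,dy$; integrating in $s$ then produces a term of order $t\,\vp(x)$. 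On the piece $[t-1,t]$ we only use $\mathbf{P}^y(\tau^D>t-s)\leq 1$ together with the exponential kernel bound for $p_s^D$, yielding a bounded contribution times $\vp(x)$.

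The small-$s$ piece $[0,1]$ is the one place where some care is needed, since the eigenfunction bound for $p_s^D$ is only available for $s\geq 1$. Here I would instead absorb one factor of $\vp$ from $\mathbf{P}^y(\tau^D>t-s)$ using $\|\vp\|_\infty$, and then recognise what is left as a single-particle expectation:
\[
\mathbf{P}^x\bigl(\vp(X(s))^2\I_{\{\tau^D>s\}}\bigr)\leq \|\vp\|_\infty\,\mathbf{P}^x\bigl(\vp(X(s))\I_{\{\tau^D>s\}}\bigr)=\|\vp\|_\infty\,\e^{-\lambda s}\vp(x),
\]
by the single-particle eigenfunction identity (\ref{eqn::exp_single_particle_martingale}). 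Once again the exponential factors cancel and we obtain a contribution bounded by a constant times $\vp(x)$. Summing the three pieces yields the desired bound $C_T\,t\,\vp(x)$ for $t\geq T$. The main technical obstacle is precisely this small-time piece, where the eigenfunction expansion of the transition kernel is unavailable; the trick of peeling off a factor of $\vp$ to reduce to the single-particle martingale identity is what makes the estimate close cleanly.
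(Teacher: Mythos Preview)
Your proof is correct and follows the same approach as the paper: many-to-two, splitting the time integral into three regions, and handling the small-$s$ piece by peeling off one factor of $\vp$ via $\|\vp\|_\infty$ to reduce to the single-particle martingale identity (\ref{eqn::exp_single_particle_martingale}). The only cosmetic difference is that the paper splits at $T/2$ and $t-T/2$ rather than at $1$ and $t-1$, which avoids your ``absorb the case $T\le t\le 2$ into $C_T$'' step; if you want to make that step rigorous, just note that Lemma \ref{lem::spineconvergence} gives the eigenfunction kernel bound for $t>\varepsilon$ with an $\varepsilon$-dependent constant, so you can replace your split points $1,\,t-1$ by $T/2,\,t-T/2$ throughout.
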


\begin{proof} This essentially follows from the many-to-two lemma, along with a couple of estimates that have been developed in this section. In the proof, $c_T, c_T'$ and $c_T''$ all represent constants depending only on $T$.  
	
	First, by (\ref{eqn::exp_kernel_bound}) we know that there exists $c_T$ such that for all $r\geq T/2$ and $y\in D$, 
	\begin{equation}\label{eqn::big_time_stayindomain}
		\mathbf{P}^y(\tau^D>r) \leq c_T \vp(y) \e^{-\lambda r}.\end{equation}
	We can also write, by Lemma \ref{lem::many_to_two}
	\begin{equation*}
		\E{x}{|N_t|^2} = \e^{\lambda t} \mathbf{P}^x(\tau^D>t) + \frac{\lambda}{m-1} (\mathbb{E}(A^2)-\mathbb{E}(A)) \int_0^t \e^{\lambda(2t-s)}\mathbf{P}^x(\I_{\{\tau^D>s\}} \mathbf{P}^{X(s)}(\I_{\{\tau^D>t-s\}})^2) \, ds
	\end{equation*}
	where by (\ref{eqn::big_time_stayindomain}) the first term on the right hand side of this expression is bounded by $c_T\vp(x)$. Furthermore, using (\ref{eqn::big_time_stayindomain}) again, the integrand in the second term is bounded by $c_T' \vp(x)$ for some $c_T'$, whenever $s\geq T/2$ and $t-s\ge T/2$. To conclude, notice that if $s<T/2$ then $t-s$ must be $\ge T/2$. Therefore, the integrand on this region is bounded by $c_T''\e^{\lambda s}\mathbf{P}^x(\vp(X_s)\I_{\{\tau^D>s\}})$ (for some $c_T''$), which is less than or equal to $c_T''\vp(x)$ by (\ref{eqn::exp_single_particle_martingale}). Finally, if $t-s<T/2$, then again we must have $s\ge T/2$, and the integrand on this region (also using that using that $\e^{\lambda (2t-s)}\leq \e^{\lambda T} \e^{\lambda s}$ in this case) is less than $\e^{\lambda T} \vp(x)$. Putting all of this together gives the lemma. 
\end{proof}


\section{Survival at Criticality: Proof of Theorem \ref{criticalsurvivalprob}}

Throughout this section, we will work in the critical case $\beta=\lambda/(m-1)$.


\subsection{Asymptotics for the Survival Probability}

We will first prove that the survival probability $\E{x}{|N_t|>0}$ decays asymptotically like $\vp(x)a(t)$, where 
\[a(t):=\int_D \Prb{z}{|N_t|>0} \vp(z) \, dz.\] In fact, we prove a more general Proposition (see below) from which we will obtain this as a special case. 

\begin{propn}
	\label{asymp}
	Suppose that the conditions of Theorem \ref{phasetrans} are satisfied. For a measurable function $f$ on $D$ with $0\leq f <1$, set \begin{equation*}
		a_f(t):= \int_D \vp(x) \E{x}{1-\hat{f}(N_t)} \, dx;
		\;\;\; \hat{f}(N_t):= \begin{cases}
			\prod_{u\in N_t} f(X_u(t)) & \text{ if } N_t\ne \emptyset \\
			1 & \text{ if } N_t=\emptyset.
		\end{cases}
	\end{equation*}
	Then 
	\begin{equation}
		\left| \frac{\E{x}{1-\hat{f}(N_t)}}{\vp(x)a_f(t)}-1 \right|\to 0
	\end{equation}
	as $t\to \infty$, uniformly in $x\in D$ and $f$ with $0\leq f < 1$. 
\end{propn}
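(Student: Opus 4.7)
The plan is to combine a spectral expansion of $u_f(x,t) := \mathbb{E}^x[1-\hat f(N_t)]$ with the spine change of measure. Writing $u_f(\cdot, t) = \sum_{i\geq 1} c_i(t)\vp_i$ in the eigenbasis of $-L$, we have $c_1(t) = a_f(t)$, so the conclusion is exactly that the $i=1$ mode dominates uniformly in $x \in D$ and $f$. As motivation, $u_f$ satisfies the semilinear PDE
$$\partial_t u_f = Lu_f + \lambda u_f - \tfrac12 \beta(\mathbb{E}[A^2]-m)\, u_f^2 + O(u_f^3),$$
obtained from the standard branching Kolmogorov equation (for $v := 1-u_f$) by Taylor expansion around $u_f \equiv 0$ and using $\beta(m-1)=\lambda$ at criticality. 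Projecting onto $\vp_i$ gives coupled ODEs $c_i'(t) = (\lambda - \lambda_i) c_i(t) + r_i(t)$, with $r_i$ quadratic in $u_f$; for $i \geq 2$ the spectral gap $\gamma = \lambda_2-\lambda > 0$ damps the homogeneous part, and $c_1$ has no linear decay.

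The decisive uniform-in-$x$ input is the spine representation from Section \ref{sec::com},
$$\frac{u_f(x,t)}{\vp(x)} = \mathbb{Q}^x\!\left[\frac{1-\hat f(N_t)}{M_t}\right].$$
Concretely, I would apply the branching Markov property at an intermediate time $s$ (to be sent to $\infty$ with $s/t \to 0$) to write
$$u_f(x,t) = 1 - \mathbb{E}^x\Big[\prod_{v\in N_s}\big(1 - u_f(X_v(s), t-s)\big)\Big],$$
and expand by inclusion-exclusion. The leading (linear) term is $\mathbb{E}^x[\sum_{v\in N_s} u_f(X_v(s), t-s)] = e^{\lambda s}\int_D p^D_s(x,y)\, u_f(y, t-s)\,dy$ by the many-to-one formula; intrinsic ultracontractivity (Lemma \ref{lem::spineconvergence}) gives $e^{\lambda s} p^D_s(x,y) = \vp(x)\vp(y)(1+O(e^{-\gamma s}))$ uniformly in $x, y$, so this term equals $\vp(x)\, a_f(t-s)(1+O(e^{-\gamma s}))$. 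The quadratic and higher corrections from inclusion-exclusion are bounded above by $\|u_f(\cdot, t-s)\|_\infty^2 \cdot \mathbb{E}^x[|N_s|^2] \leq Cs\vp(x)\, \|u_f(\cdot, t-s)\|_\infty^2$ using Lemma \ref{lem::exp_nt2}. Uniform smallness $\|u_f(\cdot, t)\|_\infty \to 0$ (uniform in $f$) follows from the trivial bound $u_f(x, t) \leq \mathbb{P}^x(|N_t|>0)$ and the uniform extinction already established at the end of Theorem \ref{phasetrans} via Dini.

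Combining the two estimates yields, uniformly in $x \in D$ and in $f$ with $0 \leq f < 1$,
$$\frac{u_f(x,t)}{\vp(x)\, a_f(t)} = \frac{a_f(t-s)}{a_f(t)}\big(1 + O(e^{-\gamma s})\big) + O\!\left(\frac{s\,\|u_f(\cdot, t-s)\|_\infty^2}{a_f(t)}\right).$$
The main obstacle is the apparent circularity in sending the right-hand side to $1$: it would be immediate from the $1/t$ asymptotic of Theorem \ref{criticalsurvivalprob}, which we have not yet proved. I would break the circle using the ODE for $c_1$ alone: the a priori bound $u_f \leq C\vp$ (from the many-to-one formula together with (\ref{eqn::exp_kernel_bound})) gives $|r_1(t)| \lesssim \|u_f(\cdot, t)\|_\infty\, a_f(t)$, whence $|a_f'(t)|/a_f(t) = O(\|u_f(\cdot, t)\|_\infty) \to 0$; this yields both the slow-variation $a_f(t-s)/a_f(t) \to 1$ for fixed $s$ and, by a bootstrap, the improved a priori bound $\|u_f(\cdot, t)\|_\infty = O(a_f(t))$ that tames the nonlinear remainder. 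Choosing $s = s(t) \to \infty$ sufficiently slowly then concludes.
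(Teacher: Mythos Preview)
Your approach is correct and reaches the same conclusion, but the route to the \emph{lower} bound differs substantively from the paper's. Both you and the paper obtain the upper bound the same way: linearise $1-\prod(1-u_f(X_v))\le \sum u_f(X_v)$, apply many-to-one, and use intrinsic ultracontractivity to get $u_f(x,t)\le \vp(x)a_f(t-s)(1+O(\e^{-\gamma s}))$. For the lower bound, the paper works under the spine measure $\tilde{\mathbb Q}^x$: it splits $N_t$ into descendants of the spine after time $t_0$ and the rest, uses that the spine has mixed (Lemma~\ref{lem::spineconvergence}) for the first piece, and bounds the second by the number of spine branches times the survival probability. This yields the clean error $C\big(\e^{-\gamma t_0}+t_0\sup_w\mathbb P^w(|N_{t-t_0}|>0)\big)$, which is controlled directly from the uniform extinction in Theorem~\ref{phasetrans}; Proposition~\ref{asymp} is then self-contained, and in particular the paper deduces $a_f(t)/a_f(t-t_0)\to 1$ by integrating this very estimate against $\vp^2$, rather than via the ODE.

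You instead use the second Bonferroni inequality together with Lemma~\ref{lem::exp_nt2}, which gives an error $O\big(s\,\|u_f(\cdot,t-s)\|_\infty^2/a_f(t-s)\big)$. To close the argument you need $\|u_f(\cdot,t)\|_\infty=O(a_f(t))$, and your bootstrap via the ODE for $a_f$ (essentially Lemma~\ref{lem::aeqn}) is correct: $|a_f'|\le C\|u_f\|_\infty a_f$ together with $\|u_f\|_\infty\le p(t)\to 0$ gives slow variation $a_f(t-1)/a_f(t)\le e^{Cp(t-1)}$, and combining with the upper bound $\|u_f(\cdot,t)\|_\infty\le C\vp\,a_f(t-1)$ yields the desired estimate. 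Uniformity in $f$ is preserved throughout since every bound factors through $p(t)$ or $a_0(t)$. So your proof works, at the cost of importing the ODE (which the paper proves separately and only uses \emph{after} Proposition~\ref{asymp}); the paper's spine argument avoids this dependency and also sets up the probabilistic machinery used later, whereas your route is more elementary and stays closer to classical moment methods.
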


\begin{remark0}
	When $f=0$ we see that $\E{x}{1-\hat{f}(N_t)}=\E{x}{|N_t|>0}$ and write $a_0(t):=a(t)$. 
\end{remark0}

\begin{remark0}\label{rmk::af_ne_0}
	Note that $a_f(t)\ne 0$ for all $t\geq 0$. Indeed for any $t\geq 0$, $1-\hat{f}(N_t)$ is a positive ($\mathcal{F}_t$-measurable) random variable, and for every $x\in D$ there is a set of strictly positive $\mathbb{P}^x$ measure on which $1-\hat{f}(N_t) >0$ (i.e. on the event that $|N_t|>0$). Thus, $\E{x}{1-\hat{f}(N_t)}>0$ for every $x\in D$. Since this is a measurable function of $x$, and $\vp(x)$ is also measurable and strictly positive in $D$, we obtain that $a_f(t)>0$. 
\end{remark0}

\begin{proof}
	The idea for the proof of this is to write, recalling the changes of measure from Section \ref{sec::com},
	\begin{equation}\label{eqn::key_asymptotic}\frac{\Prb{x}{1-\hat{f}(N_t)}}{\vp(x)}=\mathbb{Q}^x\big(\frac{1-\hat{f}(N_t)}{\sum_{u\in N_t} \vp(X_u(t))}\big)=\tQ{x}{\frac{1-\hat{f}(N_t)}{\sum_{u\in N_t} \vp(X_u(t))}}
	\end{equation} see (\ref{eqn::com_simple}), and then show that the right hand side essentially does not depend on $x$ for large $t$. The intuition behind this is that under $\tilde{\mathbb{Q}}^x$, the position of the \emph{spine} particle will converge very quickly to equilibrium. Moreover, contributions to $(1-\hat{f}(N_t))/\sum_{u\in N_t} \vp(X_u(t))$ from subprocesses branching off the spine before its position has become well mixed are unlikely to occur, as these have the law of branching diffusions under $\mathbb{P}^{\cdot}$, which we know are unlikely to survive for a long time. 
	
	To turn this into a rigorous proof, we first pick $1\leq t_0< t$ (we may assume $t>1$) and decompose \begin{equation*}N_t=N_{t}^1\cup N_{t}^2:=\{u\in N_t: \psi_{t_0}\prec u\}\cup \{u\in N_t: \psi_{t_0}\nprec  u\}.
	\end{equation*} Then we see using (\ref{eqn::key_asymptotic}) that 
	\begin{equation*}
		\frac{\E{x}{1-\hat{f}(N_t)}}{\vp(x)}\geq \tQ{x}{\frac{1-\hat{f}(N_t^1)}{\sum_{u\in N_t^1}\vp(X_u(t))} \I_{\{N_t^2=\emptyset\}}}=\tQ{x}{\tQ{x}{\frac{1-\hat{f}(N_t^1)}{\sum_{u\in N_t^1}\vp(X_u(t))} \I_{\{N_t^2=\emptyset\}}\, \big| \, \tilde{\mathcal{G}}_{t_0}}}
	\end{equation*}
	where we recall the definition of the $\sigma$-algebra $\tilde{\mathcal{G}}_{t_0}$ from Section \ref{sec::filtrations}, that knows everything about the spine's motion $(\xi_u)_{u\leq t_0}$ and about branching points along the spine up to time $t_0$. Using the description of the behaviour of the system under $\tilde{\mathbb{Q}}^x$, we see that this is equal to 
	\begin{equation}\label{eqn::lower_bound_af}
		\tQ{x}{\tQ{\xi_{t_0}}{\frac{1-\hat{f}(N_{t-t_0})}{\sum_{u\in N_{t-t_0}}\vp(X_u(t-t_0)) }} \prod_{v\prec \psi_{t_0}} (\E{X_v(s_v)}{|N_{t-s_v}|=0})^{A_v-1}}
	\end{equation}
	where recalling the notation from Section \ref{sec::trees}, this product is over all branching points along the spine before time $t_0$ (at times $s_v$, and positions $X_v(s_v)$ ($=\xi(s_v)$), with $A_v$ children being born). Now we make the simple bound
	\begin{equation*}
		1-\sum_{v\prec \psi_{t_0}}A_v \sup_{w\in D}\E{w}{|N_{t-t_0}|>0} \leq \prod_{v\prec \psi_{t_0}} (\E{X_v(s_v)}{|N_{t-s_v}|=0})^{A_v-1}
	\end{equation*}
	which tells us, by independence of $\xi_{t_0}$ and $\sum_{v\prec \psi_{t_0}}A_v$, that (\ref{eqn::lower_bound_af}) is greater than or equal to 
	\begin{equation*}
		\left[\tQ{x}{\tQ{\xi_{t_0}}{\frac{1-\hat{f}(N_{t-t_0})}{\sum_{u\in N_{t-t_0}}\vp(X_u(t-t_0)) }}}\right]\times \left[ 1-\sup_{w\in D} \E{w}{|N_{t-t_0}|>0} \tQ{x}{\sum_{v\prec \psi_{t_0}}A_v}\right].
	\end{equation*}
	We make a few observations. Firstly, $\tQ{x}{\sum_{v\prec\psi_{t_0}}A_v}\leq c t_0$ for some constant $c$ depending only on the branching diffusion. Secondly, 
	\begin{equation*}
		\tQ{x}{\tQ{\xi_{t_0}}{\frac{1-\hat{f}(N_{t-t_0})}{\sum_{u\in N_{t-t_0}}\vp(X_u(t-t_0)) }}}=\int_D K^D_{t_0}(x,y)\frac{\E{y}{1-\hat{f}(N_{t-t_0})}}{\vp(y)} \, dy
	\end{equation*}
	by (\ref{eqn::key_asymptotic}) again, and the fact that the spine particle has transition density $K^D$ (defined in Lemma \ref{lem::spineconvergence}). Using Lemma \ref{lem::spineconvergence}, which tells us that $K_{t_0}^D(x,y)$ converges to $\vp(y)^2$ exponentially fast, uniformly in $x$ and $y$, we see that the right hand side of the above is greater than or equal to $(1-c'\e^{-\gamma t_0})a_f(t-t_0)$, for another constant $0<c'<\infty$, where $\gamma>0$ is the spectral gap for $L$ on $D$. Overall, we obtain that 
	\begin{equation*}
		\frac{\E{x}{1-\hat{f}(N_t)}}{\vp(x)a_f(t-t_0)} \geq 1 - C (t_0\sup_{w\in D}\E{x}{|N_t|>0}+ \e^{-\gamma t_0})
	\end{equation*}
	where we emphasise that the constant $C$ depends only on the branching diffusion, and not on $x$ or $f$. 
	
	We now move on to the upper bound. This is simpler and we do not need to use the spine change of measure. Indeed we can write 
	\begin{align*}
		\E{x}{1-\hat{f}(N_t)} = \E{x}{\E{x}{1-\hat{f}(N_t)\, \big| \Fs{t_0}}} & = \E{x}{1-\prod_{u\in N_{t_0}} (1-\E{X_u(t_0)}{1-\hat{f}(N_{t-t_0})}} \\
		& \leq \E{x}{\sum_{u\in N_{t_0}} \E{X_u(t_0)}{1-\hat{f}(N_{t-t_0})} } ,\end{align*}
	where the last inequality follows because $0\leq f <1$. Then applying the Many-to-one Lemma \ref{lem::many_to_one} and dividing through by $\vp(x)$ we obtain 
	\begin{equation*}
		\frac{\E{x}{1-\hat{f}(N_t)}}{\vp(x)}\leq \int_D \frac{\e^{\lambda t_0}p_{t_0}^D(x,y)}{\vp(x)\vp(y)}\vp(y)\E{y}{1-\hat{f}(N_{t-t_0})} \, dy.
	\end{equation*}
	Finally, by Lemma \ref{lem::spineconvergence} again, we see this is bounded above by $a_f(t-t_0)\big(1+c'\e^{-\gamma t_0}\big)$, and so combining the upper and lower bounds we have 
	\begin{equation}\label{eqn::final_decay_asymptotic}
		\left| \frac{\E{x}{1-\hat f(N_t)}}{\vp(x)a_f(t-t_0)}-1 \right| \leq C \left(\e^{-\gamma t_0} + t_0 \sup_{w\in D} \E{w}{|N_{t-t_0}|>0} \right)
	\end{equation}
	where again $C$ is a constant not depending on $x$ or $f$.
	
	From here we can conclude. Because we know by Theorem \ref{phasetrans} that $\sup_{w\in D} \E{w}{|N_{s}|>0} \to 0$ as $s\to \infty$, we can pick a function $1\leq t_0(t)<t$ (defined for all $t>1$) such that the right hand side of (\ref{eqn::final_decay_asymptotic}) tends to $0$ as $t\to \infty$. Thus, to complete the proof we need only show that $a_f(t)\sim a_f(t-t_0(t))$ as $t\to \infty$. However, this follows since 
	\begin{equation}
		\left | \frac{a_f(t)}{a_f(t-t_0(t))}-1 \right|
		= \left| \int_D \left(\frac{\Prb{x}{1-\hat f(N_t)}}{\vp(x) a_f(t-t_0(t))}-1 \right) \vp(x)^2 \, dx \right| 
	\end{equation}
	which converges to $0$, uniformly in $f$, by (\ref{eqn::final_decay_asymptotic}) and the choice of $t_0(t)$.
\end{proof}


\subsection{Asymptotics for $a(t)$}

To complete the proof of Theorem \ref{criticalsurvivalprob}, we need to show that $a_0(t)=:a(t)\sim bt^{-1}$ as $t\to \infty$, where 
\begin{equation}
	\label{eqn::defnb}
	b:=\frac{2(m-1)}{\lambda (1,\vp^3) (\mathbb{E}(A^2)-\mathbb{E}(A))}.
\end{equation} The idea is the following: if we write $\E{x}{|N_t|>0}=u(x,t)$ as before, then $u$ will satisfy a certain partial differential equation in $D$ (this is the FKPP equation if $L=\frac{1}{2}\Delta$). This will give us an ordinary differential equation that is satisfied by $a(t)$, and consequently we will be able to deduce the desired asymptotic. In fact, we will never explicitly use the fact that $u$ is a solution of this PDE (and instead derive the equation for $a$ directly) but this is the motivation behind our approach. Again we state the relevant lemma in a more general setting, as this will be helpful later on. We let $u_f(x,t)=\E{x}{1-\hat{f}(N_t)}$ for any measurable $0\leq f<1$ on $D$, and note that $u_0$ corresponds to $u$ defined above.

\begin{lemma}\label{lem::aeqn} Suppose that $0\leq f< 1$ and $f\in C(\bar{D})$. Then $a_f(t):=\int_D \vp(x)u_f(x,t)$ is differentiable for all $t>0$ and 
	\begin{equation} \label{eqn::aeqn} \frac{da_f}{dt}(t)= -\frac{\lambda}{m-1} \int_D (G(1-u_f(x,t))+mu_f(x,t)-1)\vp(x) \, dx\end{equation}
	where $G(s)=\mathbb{E}(s^A)$ is the generating function for $A$. 
\end{lemma}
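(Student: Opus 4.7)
The plan is to derive a nonlinear integral equation for $u_f$ by conditioning on the first branching event, integrate against $\vp$ to exploit the self-adjointness of $p_t^D$ together with the first eigenvalue identity (\ref{eqn::exp_single_particle_martingale}), and then differentiate the resulting closed-form expression.

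First, I would condition on the first branching time $T_1 \sim \mathrm{Exp}(\beta)$ (independent of the spatial motion). Splitting according to whether $T_1 > t$ (in which case $N_t = \{\emptyset\}$ on $\{\tau^D > t\}$ and $N_t = \emptyset$ on $\{\tau^D \leq t\}$) or $T_1 = s \leq t$ (in which case $A$ children at $X(s)$ launch independent copies provided $\tau^D > s$), the branching property combined with the identity $\mathbb{E}[(1-u_f(y,t-s))^A] = G(1-u_f(y,t-s))$ yields
\[
u_f(x,t) = e^{-\beta t}\,\mathbf{P}^x\!\big[(1-f(X(t)))\I_{\{\tau^D>t\}}\big] + \int_0^t \beta e^{-\beta s}\,\mathbf{P}^x\!\big[\I_{\{\tau^D>s\}}\big(1-G(1-u_f(X(s),t-s))\big)\big]\,ds.
\]

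Next I would multiply by $\vp(x)$ and integrate over $D$. By symmetry of $p^D_s$ together with (\ref{eqn::exp_single_particle_martingale}), one has $\int_D \vp(x)\,\mathbf{P}^x[g(X(s))\I_{\{\tau^D>s\}}]\,dx = e^{-\lambda s}\int_D g(y)\vp(y)\,dy$ for any bounded measurable $g$, and Fubini applies since $0 \leq u_f \leq 1$. After the substitution $r = t-s$ in the time integral, these reductions collapse the equation into the explicit formula
\[
a_f(t) = e^{-(\beta+\lambda)t}\bigg[C_0 + \beta \int_0^t e^{(\beta+\lambda)r} J(r)\,dr\bigg],
\]
where $C_0 = \int_D (1-f(y))\vp(y)\,dy$ and $J(r) := \int_D \big(1-G(1-u_f(y,r))\big)\vp(y)\,dy$.

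Finally, to differentiate for every $t > 0$ (not merely a.e.) I need continuity of $J$. For each fixed $y$, $u_f(y,r) = \mathbb{P}^y(1-\hat f(N_r))$ is continuous in $r$: almost surely no branching or boundary-exit event occurs at any prescribed $r_0 > 0$, so $\hat f(N_r) \to \hat f(N_{r_0})$ and bounded convergence applies. Dominated convergence then gives continuity of $J$. Differentiating the closed-form expression yields $a_f'(t) = -(\beta+\lambda)a_f(t) + \beta J(t)$, and using the critical identity $\beta + \lambda = m\beta$ (equivalent to $\beta = \lambda/(m-1)$) the right-hand side rearranges to exactly $-\tfrac{\lambda}{m-1}\int_D(G(1-u_f(x,t)) + m u_f(x,t) - 1)\vp(x)\,dx$. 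The only mildly delicate step is the careful bookkeeping in deriving the integral equation (keeping track of the vanishing contribution on $\{\tau^D \leq t\}$); the hypothesis $f \in C(\bar D)$ is not heavily used in this argument, serving mainly to keep the integrand well-behaved near the boundary.
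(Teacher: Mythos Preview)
Your proof follows essentially the same route as the paper's: condition on the first branching time to get an integral equation for $u_f$, integrate against $\vp$ using the self-adjointness of $p_t^D$ and the eigenvalue identity, change variables $s\leftrightarrow t-s$, and differentiate the resulting closed form via the fundamental theorem of calculus once continuity of the integrand is established.

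The one place where you diverge is in the justification of continuity of $r\mapsto u_f(y,r)$. The paper conditions on $\mathcal{F}_{t\wedge s}$, applies the many-to-one lemma, and reduces to showing $|f(z)-\mathbb{P}^z(\hat f(N_r))|\to 0$ as $r\to 0$; this last step uses the heat-kernel bound from Lemma~\ref{lem::iuc_kernel_estimate} together with uniform continuity of $f$ on $\bar D$. Your argument is more direct: for fixed $r_0>0$, almost surely no branching or killing event occurs at exactly $r_0$, so $N_r$ is locally constant and the positions vary continuously, giving $\hat f(N_r)\to \hat f(N_{r_0})$ pathwise. This is cleaner, but note that it \emph{does} use continuity of $f$ (to pass from $X_u(r)\to X_u(r_0)$ to $f(X_u(r))\to f(X_u(r_0))$), so your closing remark that $f\in C(\bar D)$ ``is not heavily used'' undersells its role in your own argument.
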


\begin{proof}
	The idea is to rewrite $u_f(x,t)=\E{x}{1-\hat{f}(N_t)}$, by conditioning on the first branching time for the system. This gives us that $a_f(t)$ is equal to 
	\begin{align*} & \int_D \vp(x)\e^{-\frac{\lambda t}{m-1}}\mathbf{P}^x(\nh (1-f(X(t))))\, dx \\ + & \int_D \vp(x) \int_0^t \frac{\lambda }{m-1} \e^{-\frac{\lambda s}{m-1}} \mathbf{P}^x\big(\I_{\{\tau^D>s\}} (1-G(1-u_f(X(s),t-s)))  \big) \, ds dx.
	\end{align*}
	The key fact that allows us to simplify this expression is that for all $y\in D$ and $t>0$ we have $\int_D p_t^D(x,y)\vp(x)\, dx= \e^{-\lambda t} \vp(y)$ by (\ref{eqn::f_decomp}) (and \eqref{eqn::kernel_decomp}, which shows that $p_t^D$ is symmetric in $x$ and $y$). Hence, by Fubini, we have 
	\begin{align}\label{eqn::a_nice_eq}
		a_f(t)= & \e^{-\lambda t (\frac{1}{m-1}+1)} \int_D\vp(y)(1-f(y)) \, dy \\ \nonumber &+ e^{-\lambda t (\frac{1}{m-1}+1)} \int_0^t \int_D \frac{\lambda}{m-1}\e^{\lambda s(\frac{1}{m-1}+1) } \vp(y) (1-G(1-u_f(y,s))) \, dy ds
	\end{align}
	where we have also made the change of variables $s\leftrightarrow t-s$ in the integral. Now we claim that for every $y\in D$, $s\mapsto u_f(y,s)$ is a continuous function on $[0,\infty)$. Assuming this is true (we will prove it momentarily) we see by dominated convergence (and the definition of $G$) that the time integrand in (\ref{eqn::a_nice_eq}) is continuous in $s$. Thus, by the fundamental theorem of calculus, $a_f(t)$ is differentiable and 
	\begin{equation*}
		\frac{da_f(t)}{dt} = -\lambda(\frac{1}{m-1}+1) a_f(t) + \frac{\lambda}{m-1} \int_D \vp(y)(1-G(1-u_f(y,t))) \, dy.
	\end{equation*}
	Writing $a_f(t)=\int_D \vp(x) u_f(x,t) \, dx$ to make the right hand side of the above into a single integral over $D$, the result follows.
	
	So, we only need to prove the claim concerning continuity. Fix $t\in [0,\infty)$ and $y\in D$. Then for any $s\in [0,\infty)$
	\begin{equation*}
		|u_f(y,s)-u_f(y,t)|=|\E{y}{\hat{f}(N_t)-\hat{f}(N_s)}|=|\E{y}{\I_{\{|N_{t\wedge s}|>0\}} \big( \prod_{u\in N_{t\wedge s}}  f(X_u(t\wedge s))-\prod_{u\in N_{t\wedge s}}\E{X_u(t\wedge s)}{\hat{f}(N_{|t-s|})}\big)}|
	\end{equation*} 
	where for the second equality we have conditioned on $\mathcal{F}_{t\wedge s}$. By the triangle inequality (and using the fact that $f<1$), this is less than 
	\begin{equation}\label{eqn::cont_uf}
		\E{y}{\sum_{u\in N_{t\wedge s}} |f(X_u(t\wedge s))-\E{X_u(t\wedge s)}{\hat{f}(N_{|t-s|})}|},
	\end{equation}
	which we can also write as 
	\begin{equation*}
		\e^{\lambda(t\wedge s)} \mathbf{E}^y\big( \I_{\{\tau^D>t\wedge s\}}\big|f(X(t\wedge s))-\E{X_u(t\wedge s)}{\hat{f}(N_{|t-s|})}\big| \big)=\e^{\lambda(t\wedge s)} \int_D p_{t\wedge s}^D(y,z) |f(z)-\E{z}{\hat{f}(N_{|t-s|})} | \, dz
	\end{equation*}
	by the many-to-one lemma. Therefore by dominated convergence, since by Lemma \ref{lem::spineconvergence} we can bound $p_r^D(y,z)$ above by something integrable in $z$, uniformly in  $r>t/2$ for example, it is enough to show that $$|f(z)-\E{z}{\hat{f}(N_r)}|\to 0$$ pointwise in $z\in D$ as $r\to 0$. 
	
	To do this, we bound  \begin{equation}\label{eqn::continuity_uf} \E{z}{f(z)-\hat{f}(N_r)}\leq (1-\e^{-\lambda/(m-1)r}) + f(z)\mathbf{P}^z(\tau^D\leq r) + \int_D p_r^D(z,w)|f(z)-f(w)| \, dw, \end{equation} which follows since the probability of the first branching time being less than $r$ is equal to $(1-\e^{(m-1)^{-1}\lambda r})$, and on this event $|f(z)-\hat{f}(N_r)|\leq 1$. Clearly the first two terms of (\ref{eqn::continuity_uf}) go to $0$ as $r\to 0$. Moreover so does the third. Indeed, by continuity of $f$, for any $\eps>0$ we can choose $\eta=\eta(\eps)$ such that $|w-z|<\eta \Rightarrow |f(w)-f(z)|<\eps$, and hence the third term can be made less than $\eps + \mathbf{P}^z(|X(r)-z|>\eta)$. By continuity of the process $X$, we can then choose $r$ small enough that this is less than $2\eps$. 
\end{proof}

Now let us see how this allows us to conclude the proof of Theorem \ref{criticalsurvivalprob}. \\

\begin{proof2}{Theorem \ref{criticalsurvivalprob}} 
	First we observe that since $A$ has finite variance, $s^{-2}(G(1-s)+ms-1)\to \frac{1}{2}(\mathbb{E}[A^2]-\mathbb{E}[A])$ as $s\to 0$ (by dominated convergence). This means, by Lemma \ref{lem::aeqn}, that we can write, for $a(t)=a_0(t)$ and denoting $\dot{a}(t)=\frac{da}{dt}(t)$:
	\begin{equation*}
		\frac{\dot{a}(t)}{a(t)^2}= -b^{-1} (1,\vp^3)^{-1} \int_D \vp(x) \frac{u^2(x,t)}{a(t)^2}(1+E(x,t)) \, dx
	\end{equation*}
	where $E(x,t)\to 0$ uniformly in $x$ as $t\to \infty$ (here we have also used that the survival probability decays uniformly to $0$ in $D$). Now by Proposition \ref{asymp}, we can conclude that 
	\begin{equation*}
		\frac{d}{dt}(\frac{1}{a(t)})=-\frac{\dot{a}(t)}{a(t)^2}=b^{-1} +  \hat{E}(t)
	\end{equation*}
	where $\hat{E}(t)\to 0$ as $t\to \infty$. Using the fundamental theorem of calculus, \cite[Theorem 6.21]{rudin} (note we only need integrability of the derivative), we deduce the result.
\end{proof2} 
\\


\section{The Conditioned System}

Theorem \ref{criticalsurvivalprob} allows us to study the law of our branching diffusions conditioned to survive for a long time in much greater depth. One aspect of the limiting behaviour is captured by what happens to the law of the process run up to some fixed time $T$, if it is then conditioned to survive until a much larger time $t$. It turns out that this limiting description is given precisely by the evolution of the process under $\mathbb{Q}^x$, as described in Section \ref{sec::com}.  
\medskip 

\begin{proof2}{Proposition \ref{thm::conditionedsystem}}
	Recall, we would like to prove that for any $T\geq 0$, $x\in D$ and $B\in \mathcal{F}_T$, we have that
	\[\lim_{t\to \infty} \Prb{x}{B||N_t|>0}=\mathbb{Q}^x(B).\]
	Conditioning on $\mathcal{F}_T$, we see that 
	\[
	\Prb{x}{B||N_t|>0} = \frac{\E{x}{\I_B\Prb{x}{|N_t|>0|\mathcal{F}_T} }}{\Prb{x}{|N_t|>0}} :=\frac{\E{x}{\I_B Y}}{\Prb{x}{|N_t|>0}}
	\]
	where we have defined, ordering $N_T=\{u_1,\cdots, u_{|N_T|}\}$ (for example with the depth first order described in Section \ref{sec::trees}),
	\[
	Y := \Prb{x}{|N_t|>0\,|\,\mathcal{F}_T}
	= \sum_{i=1}^{|N_T|} \Prb{X_{u_i}(T)}{|N_{t-T}|>0} \left(\prod_{j>i} \Prb{X_{u_j}(t)}{|N_{t-T}|=0}\right).
	\]
	Then, from the asymptotic for the survival probability, Theorem \ref{criticalsurvivalprob}, and the fact that $\frac{t}{t-T}\to 1$ as $t\to \infty$, it follows that 
	\[\frac{\I_B Y}{\Prb{x}{|N_t|>0}} \underset{t \to \infty}{\to} \frac{\sum_{u\in N_T} \vp(X_u(T))}{\vp(x)}\I_B=\frac{M_T}{M_0}\I_B\]
	almost surely, as $t\to \infty$. Moreover, we have that
	$Y \leq  \sum_{u\in N_T} \Prb{X_u(T)}{|N_{t-T}|>0} \leq C \frac{M_T}{t-T} $
	for all large enough $t$ and some $C>0$ depending only on the diffusion. This means
	we can dominate $\I_B Y/\Prb{x}{|N_t|>0}$ by an integrable random variable, namely a constant multiple of $M_T$. The dominated convergence theorem then
	provides the result.
\end{proof2}

Given the asymptotic for the survival probability, it is also not too much work to prove Theorem \ref{thm::convergenceconditionednt}, which gives some limiting information on the positions of particles at time $t$, given survival. Recall we would like to show that for any $f$ with $\ip{f^2}{\vp}<\infty$ (this condition is required because we will apply the many-to-two Lemma in the proof) we have 
\[ ( t^{-1}\sum_{u\in N_t}f(X_u(t)) \giv |N_t|>0 ) \to Z \] 
in distribution as $t \to \infty$, where $Z \sim \text{Exp}(b (\vp,f)^{-1})$ and $b=\frac{2(m-1)}{\lambda(1,\vp^3)(\mathbf{E}(A^2)-\mathbf{E}(A))}$ as before.
\\

We first need an auxilliary Lemma.

\begin{lemma}\label{lemma::decay_aft}
	Recall the definition of $a_f(t)$ from Lemma \ref{lem::aeqn}. Then 
	$$\frac{1}{t}(\frac{1}{a_f(t)}-\frac{1}{a_f(0)}) \to b^{-1},$$ uniformly over all $f\in C(\bar{D},[0,1)):= \{f\in C(\bar{D})\, : \, 0\leq f <1\}$.
\end{lemma}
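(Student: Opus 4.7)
The plan is to follow the ODE strategy used in the proof of Theorem \ref{criticalsurvivalprob}, now tracking uniformity in $f$. Writing $F_f(t) := 1/a_f(t)$, Lemma \ref{lem::aeqn} gives
\[
\dot F_f(t) \;=\; \frac{\lambda}{(m-1)\,a_f(t)^2}\int_D g(u_f(x,t))\,\vp(x)\,dx, \qquad g(s):=G(1-s)+ms-1.
\]
Since $\mathbb{E}[A^2]<\infty$, $g$ is $C^2$ near $0$ with $g(0)=g'(0)=0$ and $g''(0)=G''(1)=\mathbb{E}[A(A-1)]$, so $g(s) = \tfrac{1}{2}G''(1)\,s^2(1+o(1))$ as $s\to 0$.

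The first substantive step is to upgrade the convergence $\dot F_f(t)\to b^{-1}$ (established for $f=0$ in the proof of Theorem \ref{criticalsurvivalprob}) to hold uniformly in $f\in C(\bar D,[0,1))$. Since $u_f(x,t)\le u(x,t)\to 0$ uniformly in $x$ (Theorem \ref{phasetrans}), the convergence $u_f\to 0$ is uniform in both $x$ and $f$; hence the Taylor remainder $g(u_f) - \tfrac{1}{2}G''(1)u_f^2$ is $o(u_f^2)$ uniformly in $x$ and $f$. Combining with Proposition \ref{asymp}, which supplies $u_f(x,t)/(\vp(x)a_f(t))\to 1$ uniformly in $x$ and $f$, yields
\[
\int_D g(u_f)\,\vp\,dx \;=\; \tfrac{1}{2}G''(1)\,a_f(t)^2 \int\vp^3\,(1+E_f(t)), \qquad \sup_f |E_f(t)|\;\xrightarrow[t\to\infty]{}\;0,
\]
and therefore $\dot F_f(t) = b^{-1}(1+o_f(1))$ uniformly in $f$.

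I would then integrate: given $\varepsilon>0$, pick $T_0$ so that $|\dot F_f(s) - b^{-1}|<\varepsilon$ for all $s\ge T_0$ and all $f$, and split
\[
\frac{1}{t}\Bigl(\frac{1}{a_f(t)} - \frac{1}{a_f(0)}\Bigr) \;=\; \frac{1}{t}\int_{T_0}^t \dot F_f(s)\,ds \;+\; \frac{F_f(T_0) - F_f(0)}{t}.
\]
The first term equals $b^{-1}+O(\varepsilon)$ as $t\to\infty$, uniformly in $f$, so everything reduces to proving $\sup_f\bigl(F_f(T_0) - F_f(0)\bigr)<\infty$ for each fixed $T_0$; dividing this by $t$ then kills it.

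This uniform control of the initial piece is, I expect, the main technical obstacle. The natural first move is the monotonicity $g(u_f)\le g(u)$ (valid since $g'(s) = m - G'(1-s)\ge 0$), which yields $-\dot a_f\le -\dot a$ pointwise and hence $a_f(0) - a_f(T_0)\le a(0) - a(T_0)$, a finite constant depending only on $T_0$. Converting this to a uniform bound on $F_f(T_0) - F_f(0) = (a_f(0) - a_f(T_0))/(a_f(0)\,a_f(T_0))$ requires a matching lower bound on $a_f(T_0)$ that remains useful as $a_f(0)\to 0$. I would try to obtain one by comparing $u_f$ with the solution $v_f(x,s) := e^{\lambda s}\int_D p_s^D(x,y)(1-f(y))\,dy$ of the linearised equation: the identity $\int_D v_f(x,s)\vp(x)\,dx = a_f(0)$ (from $\int \vp(x)p_s^D(x,y)\,dx = e^{-\lambda s}\vp(y)$ by self-adjointness) together with Lemma \ref{lem::spineconvergence} force $v_f(x,s)\le C\vp(x)a_f(0)$ uniformly in $x$ and $f$ for $s$ bounded away from $0$, and combined with the pointwise comparison $u_f\le v_f$ (from $1-\prod f_i \le \sum(1-f_i)$) this gives a uniform upper bound on $\dot F_f(s)$ on $[\epsilon, T_0]$, reducing the problem to the short interval $[0,\epsilon]$.
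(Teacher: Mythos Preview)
Your approach is the same as the paper's: show that $\dot F_f(t)=b^{-1}+\hat E_f(t)$ with $\hat E_f(t)\to 0$ uniformly in $f$ (using that $u_f\le u_0\to 0$ uniformly on $D$, together with the uniform-in-$f$ convergence of Proposition~\ref{asymp}), and then integrate. The paper's proof is terser than yours: after recording the uniform convergence of $\hat E_f$, it simply says ``the result then follows by the fundamental theorem of calculus'' and stops.

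You go beyond the paper in flagging that the integration step requires $\sup_f\bigl(F_f(T_0)-F_f(0)\bigr)<\infty$, and the paper does not address this point. Your concern is legitimate: $\dot F_f(0^+)$ is \emph{not} uniformly bounded over $C(\bar D,[0,1))$, since taking $1-f$ concentrated near a shrinking ball makes $\int(1-f)^2\vp\big/\bigl(\int(1-f)\vp\bigr)^2$ arbitrarily large. Your proposed fix (the linearised bound $u_f\le v_f$ combined with Lemma~\ref{lem::spineconvergence}) correctly handles $[\epsilon,T_0]$, but you stop at ``reducing the problem to the short interval $[0,\epsilon]$'' without resolving it, so your argument is incomplete at exactly the point where it goes beyond the paper. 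It is worth noting that for the only place the lemma is actually used (the proof of Theorem~\ref{thm::convergenceconditionednt}, with $f_t=e^{-\alpha\vp/t}$), the initial piece is harmless: there $1-f_t\sim(\alpha/t)\vp$, so $\dot F_{f_t}(0^+)$ stays bounded and $F_{f_t}(T_0)-F_{f_t}(0)$ is uniformly controlled by a direct computation.
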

\begin{proof}
	Note that $a_f(t)>0$ for all $t$, by Remark \ref{rmk::af_ne_0}. Exactly as in the proof of Theorem \ref{criticalsurvivalprob}, we obtain that 
	\begin{equation*}
		\frac{d}{dt}(\frac{1}{a_f(t)})=b^{-1}+\hat{E}_f(t)
	\end{equation*}
	where $\hat{E}_f(t)\to 0$ as $t\to \infty$, uniformly in $f\in C(\bar{D},[0,1))$. The uniformity comes from the fact that $u^f(x,t) \leq \E{x}{|N_t|>0}$ tends to $0$ uniformly over $D \times C(\bar{D},[0,1))$, and that the convergence in Proposition \ref{asymp} is uniform over $C(\bar{D},[0,1))$. The result then follows by the fundamental theorem of calculus. 
\end{proof}

\begin{proof2}{Theorem \ref{thm::convergenceconditionednt}}
	We first complete the proof in the special case $f=\vp$. We will show that for any $\alpha>0$ 
	\begin{equation}
		\label{eqn::convergence_conditioned_laplace}
		\E{x}{ \e^{-\frac{\alpha}{t}\sum_{u\in N_t} \vp(X_u(t))}\,\big|\, |N_t|>0}\to \frac{b}{b+\alpha}
	\end{equation}
	uniformly in $x$ as $t\to \infty$ (which is enough, by L\'{e}vy's continuity theorem for the Laplace transform). 
	
	Fix $\alpha>0$. To prove (\ref{eqn::convergence_conditioned_laplace}), we first observe that if we define $f_t(x)=\e^{-\frac{\alpha}{t} \vp(x)}$, then 
	\begin{equation*}
		\E{x}{ \e^{-\frac{\alpha}{t}\sum_{u\in N_t} \vp(X_u(t))}\,\big|\, |N_t|>0}= 1-\frac{u_{f_t}(x,t)}{\E{x}{|N_t|>0}}.
	\end{equation*}
	By Proposition \ref{asymp} and Theorem \ref{criticalsurvivalprob}, we also know that 
	\begin{equation*}\frac{u_{f_t}(x,t)}{\vp(x)a_{f_t}(t)} \to 1 \text{ and } \frac{b\vp(x)}{t\E{x}{|N_t|>0}}\to 1
	\end{equation*} as $t\to \infty$, uniformly in $x\in D$. Note that it doesn't matter that $f_t$ depends on $t$, since the convergence in Proposition \ref{asymp} is uniform over $C(\bar{D},[0,1))$. Hence, we are required to show that 
	$$ \frac{1}{ta_{f_t}(t)} \to \frac{1}{b}+\frac{1}{\alpha} $$
	as $t\to \infty$. However, this follows directly from Lemma \ref{lemma::decay_aft} and the fact that $t(1-\e^{-\frac{\alpha}{t}\vp(z)})\to \alpha\vp(z)$ as $t\to \infty$ for any $z\in D$.

	To deal with general $f$ such that $(f^2,\vp)<\infty$, we write $\tilde{f}=f-\ip{\vp}{f}\vp$. We will show that 
	for any $\eps>0$ 
	\begin{equation}\label{eqn::tilde_f_irrelevant} \mathbb{P}^x (|t^{-1}\sum\nolimits_{u\in N_t} \tilde{f}(X_u(t))|>\eps\giv |N_t|>0) \to 0 \end{equation}
	as $t\to \infty$, uniformly in $x$. This clearly implies the result by writing $f=\tilde{f}+(\vp,f)\vp$ and applying the special case of Theorem \ref{thm::convergenceconditionednt} (with $\vp$) that we have just proved. 
	
	To prove (\ref{eqn::tilde_f_irrelevant}), it is enough by conditional Markov's inequality to show that $\mathbb{P}^x\big((t^{-1}\sum_{u\in N_t}\tilde{f}(X_u(t)))^2\, | \, |N_t|>0\big)\to 0$ as $t\to \infty$, uniformly in $x$. Moreover, the many-to-two lemma, Lemma  \ref{lem::many_to_two}, tells us that 
	\begin{align}\label{eqn::huge_horrible} &\mathbb{P}^x\big((t^{-1}\sum_{u\in N_t}\tilde{f}(X_u(t)))^2\, | \, |N_t|>0\big)  = \frac{\vp(x)}{t \mathbb{P}^x(|N_t|>0)} \\ \nonumber 
		& \times \left(\frac{\e^{\lambda t} \mathbf{P}^x\big(\tilde{f}(X(t))^2\I_{\{\td>t\}}\big)+\frac{\mathbb{E}(A^2)-\mathbb{E}(A)}{m-1}\int_0^t \lambda \e^{\lambda (2t-s)}\mathbf{P}^x\big(\I_{\{\td>s\}}\left[\mathbf{P}^{X(s)}\big(\I_{\{\td>t-s\}} \tilde{f}(X(t-s))\big)\right]^2\big) \, ds}{t \vp(x)} \right)\end{align}
	where we know by Theorem \ref{criticalsurvivalprob} that the expression outside of the brackets on the right hand side is uniformly bounded in $t$ ($\ge 1$, say) and $x$. We also know, by (\ref{eqn::exp_kernel_bound}), that for all $r>1$ and some constant $C$ depending only on the branching diffusion, we have
	\begin{equation} \label{eqn::uniformparticlebound1}\frac{\e^{\lambda r} \mathbf{P}^x\big(\tilde{f}(X(r))^2\I_{\{\td>r\}}\big)}{\vp(x)} \leq C(\vp,\tilde{f}^2), \end{equation} and by Lemma \ref{lem::spineconvergence}, plus Cauchy--Schwarz and the fact that  that $(\vp,\tilde{f})=0$, that
	\begin{align} \label{eqn::uniformparticlebound2}\frac{\e^{\lambda r} \mathbf{P}^x\big(\tilde{f}(X(r))\I_{\{\td>r\}}\big)}{\vp(x)} & = \int_D \big( \frac{K_r^D(x,y)}{\vp(y)^2}-1 \big) \tilde{f}(y)\vp(y) \, dy + \int_D \tilde{f}(y)\vp(y) \, dy \nonumber \\ & \leq C\e^{-\gamma r}(\vp,\tilde{f}^2)^{1/2}. \end{align} 
Finally, we observe that for $s>1$, by Cauchy--Schwarz and (\ref{eqn::exp_kernel_bound}) again,
	\begin{align*}\label{eqn::bound_small_interval}
		\vp(x)^{-1}\mathbf{P}^x\big(\I_{\{\td>s\}}\left[\mathbf{P}^{X(s)}\big(\I_{\{\td>t-s\}} \tilde{f}(X(t-s))\big)\right]^2\big) & \leq \vp(x)^{-1} \mathbf{P}^x\big(\I_{\{\td>s\}}\mathbf{P}^{X(s)}\big(\I_{\{\td>t-s\}} \left[\tilde{f}(X(t-s))\right]^2\big)\big) \\ \nonumber  
		& = \vp(x)^{-1}\mathbf{P}^x\big(\nh \big[\tilde{f}(X(t))\big]^2) \\ \nonumber 
		& \leq C \e^{-\lambda t}(\vp, \tilde{f}^2)
	\end{align*}
	where $C$ is another, possibly different, constant. This tells us that 
	\begin{equation*}
		\frac{\int_{t-1}^t \lambda \e^{\lambda (2t-s)} \mathbf{P}^x\big(\I_{\{\td>s\}}\left[\mathbf{P}^{X(s)}\big(\I_{\{\td>t-s\}} \tilde{f}(X(t-s))\big)\right]^2\big) \, ds}{\vp(x)t}\to 0
	\end{equation*}
	as $t\to \infty$, uniformly in $x$. Using \eqref{eqn::uniformparticlebound1} and \eqref{eqn::uniformparticlebound2} on the remaining parts of the second line of (\ref{eqn::huge_horrible}), together with the fact that \begin{equation} \label{eqn::useful_ends_of_integrals} \mathbf{P}^x(\I_{\{\tau^D>s\}} \vp(X(s))^2)\leq \sup_{w\in D}|\vp(w)| \mathbf{P}^x( \I_{\{\tau^D>s\}}\vp(X(s))) \leq \vp(x)\sup_{w\in D}|\vp(w)| \e^{-\lambda s}\end{equation} for all $s>0$, it is easy to conclude that $\mathbb{P}^x\big((t^{-1}\sum_{u\in N_t}\tilde{f}(X_u(t)))^2\, | \, |N_t|>0\big)$ does indeed tend to $0$, uniformly in $x$.
\end{proof2}


We conclude by explaining how one can obtain Corollary \ref{uniformparticle} from here, which describes the asymptotic distribution of a particle picked at random from the population, given survival.
\medskip

\begin{proof2}{Corollary \ref{uniformparticle} }
	To prove the Corollary we first show that for any $f$ with $(\vp,f^2)<\infty$
	\begin{equation}\label{eqn::quotient} \mathbb{P}_x\left( \left|\frac{\sum_{u\in N_t}f(X_u(t))}{\sum_{u\in N_t}\vp(X_u(t))} - (\vp,f)\right|>\eps \, \big| \, |N_t|>0\right)\to 0 \end{equation} as $t\to \infty$. Defining $\tilde{f}$ as in the proof of Theorem \ref{thm::convergenceconditionednt}, the left hand side of (\ref{eqn::quotient}) is equal to 
	\begin{align}\label{eqn::quotient2} \mathbb{P}^x\left(\ \left|\frac{t^{-1}\sum_{u\in N_t}\tilde{f}(X_u(t))}{t^{-1}\sum_{u\in N_t}\vp(X_u(t))}\right|>\eps\, \big|\, |N_t|>0\right)\leq \,& \mathbb{P}^x\left(t^{-1}\sum_{u\in N_t}\tilde{f}(X_u(t))>\delta\, \big| \,|N_t|>0 \right) \\ \nonumber
		+\, &\mathbb{P}^x\left(t^{-1}\sum_{u\in N_t}\vp(X_u(t))<\frac{\delta}{\eps} \, \big| \, |N_t|>0 \right)    \end{align}
	for any $\delta>0$. From (\ref{eqn::tilde_f_irrelevant}) and Theorem \ref{thm::convergenceconditionednt}, if we take a limit as $t\to \infty$ on the right hand side, we are left with simply the probability that an exponential random variable is less than $\delta/\eps$. Taking $\delta\to 0$ proves (\ref{eqn::quotient}). The corollary then follows by applying the above with both $f$ and the constant function $1$, and writing $\sum f(X_u(t))/|N_t| = \sum f(X_u(t))/\sum \vp(X_u(t)) \times \sum \vp(X_u(t))/|N_t|.$ 
\end{proof2}
\medskip

Note that by (\ref{eqn::tilde_f_irrelevant}), for any fixed $\delta$ the first term on the right hand side of (\ref{eqn::quotient2}) converges to $0$ uniformly in $x$. Moreover, by Markov's inequality we can write
\begin{equation*}
	\mathbb{P}^x\left(t^{-1}\sum_{u\in N_t}\vp(X_u(t))<\frac{\delta}{\eps} \, \big| \, |N_t|>0 \right)  \leq \e^{1/\eps} \E{x}{ \e^{-\frac{1}{\delta t} \sum_{u\in N_t} \vp(X_u(t))} \, \big|\, |N_t|>0}
\end{equation*}
which by (\ref{eqn::convergence_conditioned_laplace}) converges to $\e^{1/\eps} b(b+\delta^{-1})^{-1}$ as $t\to \infty$, uniformly in $x$. This implies the following:

\begin{cor}\label{cor::unifaverage} For $f$ as in Corollary \ref{uniformparticle}, and any $\eps>0$, the convergence to 0 in (\ref{eqn::quotient}) is uniform in the starting position $x$.
\end{cor}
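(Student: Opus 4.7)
The plan is to work directly with the decomposition given in (\ref{eqn::quotient2}), tracking uniformity in $x$ for each of the two terms on the right-hand side. Given $\eps > 0$, the goal is to show that for any $\eta > 0$, one can choose $t$ large enough (independently of $x$) so that both terms are at most $\eta/2$ uniformly in $x \in D$.

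For the first term, $\mathbb{P}^x( t^{-1}\sum_{u\in N_t}\tilde f(X_u(t)) > \delta \mid |N_t|>0)$, the uniform convergence to $0$ (for any fixed $\delta > 0$) is already supplied by equation (\ref{eqn::tilde_f_irrelevant}) in the proof of Theorem \ref{thm::convergenceconditionednt}. So one simply needs to quote this, noting that the bounds derived there via the many-to-two lemma (cf.\ (\ref{eqn::huge_horrible})) are all uniform in $x$, because the prefactor $\vp(x)/(t\,\mathbb{P}^x(|N_t|>0))$ is uniformly bounded by Theorem \ref{criticalsurvivalprob}, and the remaining ingredients are deterministic exponential/kernel bounds.

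For the second term, I would first apply the exponential Markov inequality
\[
\mathbb{P}^x\!\left(t^{-1}\sum_{u\in N_t}\vp(X_u(t)) < \tfrac{\delta}{\eps}\,\big|\,|N_t|>0\right)
\leq \e^{1/\eps}\,\mathbb{P}^x\!\left(\e^{-\frac{1}{\delta t}\sum_{u\in N_t}\vp(X_u(t))}\,\big|\,|N_t|>0\right),
\]
and then invoke (\ref{eqn::convergence_conditioned_laplace}) with parameter $\alpha = 1/\delta$. Crucially, the convergence in (\ref{eqn::convergence_conditioned_laplace}) was established uniformly in $x$ — this is precisely why Proposition \ref{asymp} and Theorem \ref{criticalsurvivalprob} were proved with uniform-in-$x$ estimates. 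Consequently the right-hand side converges uniformly in $x$ to $\e^{1/\eps}\, b/(b + \delta^{-1})$ as $t \to \infty$.

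To finish, fix $\eps>0$ and choose $\delta>0$ small enough so that $\e^{1/\eps}\, b/(b+\delta^{-1}) < \eta/4$. Then pick $t_0 = t_0(\eps, \eta)$ (not depending on $x$) large enough that, for all $t \geq t_0$, the first term is at most $\eta/2$ for all $x$ and the second term is at most $\eta/2$ for all $x$. I do not foresee a serious obstacle here: the entire argument is a careful bookkeeping of uniformity, and every ingredient required (uniform survival asymptotics, uniform Laplace transform convergence, and the uniform-in-$x$ bounds in the proof of Theorem \ref{thm::convergenceconditionednt}) has already been put in place. The only mild subtlety is making sure $\delta$ is chosen \emph{before} $t_0$, and that none of the earlier uniform bounds silently depend on $x$ through $\vp(x)$ — but this is exactly the reason the prefactor $\vp(x)/(t\mathbb{P}^x(|N_t|>0))$ was isolated and shown bounded via Theorem \ref{criticalsurvivalprob}.
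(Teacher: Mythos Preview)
Your proposal is correct and follows essentially the same approach as the paper: both arguments control the first term of (\ref{eqn::quotient2}) via the uniform-in-$x$ convergence in (\ref{eqn::tilde_f_irrelevant}), and the second term via the exponential Markov inequality together with the uniform Laplace transform convergence (\ref{eqn::convergence_conditioned_laplace}), then let $\delta\to 0$. The paper's proof is slightly terser but the ideas and ingredients are identical.
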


We will use this to prove a stronger version of Corollary \ref{uniformparticle}. Corollary \ref{uniformparticle} tells us that the average value of $f(X_v(t))$ among all vertices $v\in N_t$, given survival, converges to $\ip{f}{\vp}/\ip{1}{\vp}$. The next lemma will tell us that in fact we need only look at the average over a large enough subset of $N_t$ (we will see precisely what this means in a moment). First we need some notation. Recall from Section \ref{sec::bds} that we can view our branching diffusion as a marked tree $(T,l,X)\in \mathcal{T}$. This means that for every $t$ such that $N_t\ne \emptyset$, we can write $N_t=\{w_1(t),w_2(t),\cdots, w_{|N_t|}(t)\},$ where the indices correspond to the depth first ordering of the particles (see Section \ref{sec::trees}). For any $1\le M\le |N_t|$ we can then define the set
\begin{equation*}
	N_{t,M} = \{w_i(t) : 1\leq i \leq M\},
\end{equation*}
and have $|N_{t,M}|=M$.

\begin{lemma}
	\label{lemma::average_lln} Let $f$ be a bounded measurable function on $D$. Then for any $\eps,\rho>0$ and $x\in D$
	\[\mathbb{P}^x \big( B_t^{f,\eps}(\rho) \, \big| \, |N_t|>0 \big) :=\mathbb{P}^x\left(\left.\big\{\rho t \leq |N_t|\big\} \, \cap\,\big\{ \bigcup\nolimits_{\rho t\leq M\leq |N_t|} \big\{\big|\frac{\sum_{u\in N_{t,M}} f(X_u(t))}{M}-\frac{\ip{f}{\vp}}{\ip{1}{\vp}}\big|>\eps  \big\} \big\}\, \right| \, |N_t|>0\right)\]
	converges to $0$
	as $t\to \infty$. 
\end{lemma}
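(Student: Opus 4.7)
The plan is a two-step reduction: first a grid argument to replace the uniform-in-$M$ statement by pointwise control at finitely many deterministic values of $M$, and then a subtree decomposition combined with Corollary \ref{cor::unifaverage} to handle each grid point.

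For the grid reduction, since $f$ is bounded, as $M$ increases by one the numerator $\sum_{u \in N_{t,M}} f(X_u(t))$ changes by at most $\|f\|_\infty$ and the denominator changes by at most $1$. Choosing a grid $M_j = \rho t + j \eta t$ with $\eta = \eta(\eps, \rho, \|f\|_\infty)$ small, the quantity inside the lemma's event varies by at most $\eps/2$ across $M \in [M_j, M_{j+1}]$. By a union bound over the $O(1/\eta)$ grid points, it suffices to show the pointwise statement: for each deterministic $M_\star = ct$ with $c \in [\rho, 1/\rho]$, the probability of failure at $M = M_\star$ vanishes as $t \to \infty$, uniformly in the starting point $x$.

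To handle a deterministic grid value $M_\star$, I would condition on $\mathcal{F}_s$ for an intermediate time $s = s(t)$ satisfying $s \to \infty$ and $s = o(t)$. The particles $v_1, \ldots, v_{|N_s|}$ alive at time $s$ (listed in depth-first order) each initiate an independent branching subtree given their positions, and in the depth-first order on $N_t$ the descendants from different subtrees appear in consecutive blocks, sorted by DFS order on $N_s$. By Proposition \ref{thm::conditionedsystem}, the trace of $\mathbb{P}^x(\,\cdot \mid |N_t| > 0)$ on $\mathcal{F}_s$ is close for large $t$ to the spine measure $\mathbb{Q}^x$, under which the positions at time $s$ are in the equilibrium $\vp^2\,dx$ up to an $O(\e^{-\gamma s})$ error by Lemma \ref{lem::spineconvergence}, and the expected number of non-spine subtrees surviving to time $t$ is $O(s/t) = o(1)$ by Theorem \ref{criticalsurvivalprob}. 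Thus with high probability only the spine subtree is alive at time $t$, and Corollary \ref{cor::unifaverage} applied to each surviving subtree, with uniform control in the starting position $X_{v_i}(s) \in D$, gives that its empirical average of $f$ is within $\eps/4$ of $\mu := \ip{f}{\vp}/\ip{1}{\vp}$ with high probability. Writing the tail sum at $M_\star$ as a sum of full-subtree contributions (those lying entirely past position $M_\star$ in DFS order) plus a partial-subtree contribution (from the subtree straddling $M_\star$) gives control modulo the partial term.

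The main obstacle is precisely that partial-subtree term: in the typical survival scenario essentially all of $N_t$ sits inside a single (spine) subtree, so controlling the tail at $M_\star$ inside that subtree is a rescaled copy of the original problem. I see two viable routes. One is to iterate the splitting at geometric scales $s, 2s, 4s, \ldots$, producing a hierarchical decomposition in which every surviving block is small enough to be handled directly by Corollary \ref{cor::unifaverage}. The second, more in line with the strategy advertised in Section \ref{subsec::outline}, is to exploit that the DFS-indexed process $M \mapsto \sum_{u \in N_{t,M}} \vp(X_u(t))$ carries a (sub)martingale structure with respect to a DFS filtration: a Doob-type maximal inequality then upgrades the pointwise concentration at grid points to uniform-in-$M$ concentration in one stroke, bypassing any iteration. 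Combined with the grid reduction, either route yields the lemma.
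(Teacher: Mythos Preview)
Your grid reduction is sound (modulo the minor point that $|N_t|/t$ is unbounded, so the grid should cover $[\rho,C]$ for a large $C$ chosen via Theorem~\ref{thm::convergenceconditionednt}, not just $[\rho,1/\rho]$). The subtree decomposition is also the right idea, but your choice of splitting time $s=o(t)$ is the wrong end of the interval, and this is precisely why you hit the obstacle you describe: conditioned on survival to time $t$, essentially all of $N_t$ descends from the single spine particle at time $s$, so the ``partial subtree'' is the entire population and nothing has been gained.

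Neither of your proposed fixes is a complete argument as stated. Iterating at geometric scales $s,2s,4s,\ldots$ does not obviously help: at each scale the same phenomenon recurs (one dominant subtree carries almost everything), and making the pieces genuinely small would require a growing number of levels, so the union bound needs quantitative decay rates you have not established. The Doob route is too vague: the process $M\mapsto \sum_{u\in N_{t,M}}\vp(X_u(t))$, indexed by particle count at the \emph{fixed} level $t$, carries no evident (sub)martingale structure with respect to a DFS-in-$M$ filtration; its increments $\vp(X_{w_i}(t))$ are highly dependent and there is no obvious compensator. The DFS martingale $\bar S_t$ of Definition~\ref{def::st} is indexed by exploration time of the whole tree, not by particle count at a fixed height, and does not transfer to this setting.

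The paper's argument is to split at a \emph{late} time $t-t/g(t,\eps)$, with $g(t,\eps)\to\infty$ slowly enough that $g(t,\eps)\,p(t/g(t,\eps),\eps)\to 0$, where $p(s,\eps)$ is the sup-over-$x$ failure probability from Corollary~\ref{cor::unifaverage}. At this time there are typically $O(g)$ particles with descendants at time $t$, and each resulting family has size $O(t/g)$, which is \emph{small} compared to $\rho t$. Then (i) a union bound over the $O(g)$ families, combined with the uniform-in-$x$ estimate of Corollary~\ref{cor::unifaverage}, shows every family average lies within $\eps/2$ of the target with high probability; and (ii) Lemma~\ref{lem::exp_nt2} plus Markov shows every family has size at most $t/g^{1/3}$ with high probability. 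On the intersection of these two events, any $N_{t,M}$ with $M\geq \rho t$ differs from a union of complete families by at most one partial family of size $\le t/g^{1/3}=o(\rho t)$, which is negligible. No iteration and no maximal inequality are needed; the point is simply that splitting close to $t$ rather than close to $0$ produces many small families instead of one large one.
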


\begin{proof}
	We will prove the lemma by looking at the system conditioned to survive until time $t$, and dividing $N_t$ into families, depending on whether or not particles have the same ancestor at some earlier time. This earlier time will be chosen such that with high probability, the average value of $f(X_v(t))$ over $v$ in any one of these families is close to $\ip{f}{\vp}/\ip{1}{\vp}$. This already shows that there are many subsets of $N_t$ over which the average value of $f(X_v(t))$ is close to what we want. To extend this to \emph{all} large enough subsets of $N_t$ (as in the statement of the lemma), we will show that the size of each of these families is very small compared to $t$. 
	
	\begin{figure}[h]
		\centering
		\includegraphics[scale=0.5]{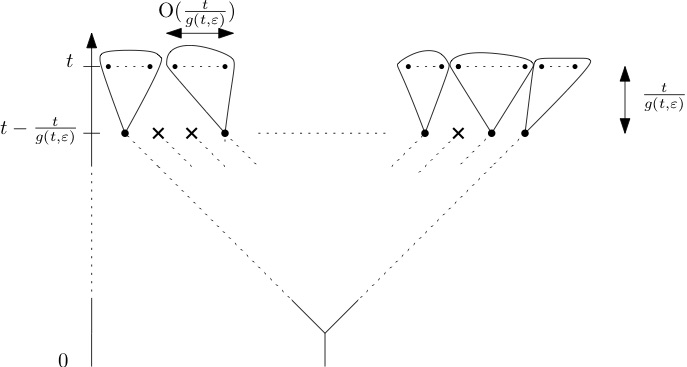}
		\caption{Sketch of the argument. There are $O(g(t,\eps))$ particles at time $t-t/g(t,\eps)$ with descendants at time $t$ (marked with dots). Each of these families is likely to be \emph{good} and has size $O(t/g(t,\eps))$.}
		
	\end{figure}	
	To do this, fix $\eps>0$ and write
	\[p(t,\eps):= \sup_{x\in D} \mathbb{P}^x \left(\left. \left|\frac{\sum_{u\in N_t} f(X_u(t))}{ |N_t|}-\frac{\ip{f}{\vp}}{\ip{1}{\vp}}\right|>\eps/2  \right| |N_t|>0 \right), \]
	which by Corollary \ref{cor::unifaverage}, converges to $0$ as $t\to \infty$. This means that we can choose a function $g(t,\eps)\ll t$ such that 
	$g(t,\eps)\to \infty$ as $t\to \infty$, but 
	\begin{equation} \label{eqn::scales} g(t,\eps) \, p\left(\frac{t}{g(t,\eps)},\eps\right)\to 0\end{equation}
	as $t\to \infty$. \footnote{Indeed, since $\hat{p}(t):=\sup\{p(u,\eps): u\geq t\}$ converges monotonically to $0$ as $t\to \infty$, we can choose $g(t,\eps) \le \sqrt{t}$ but still converging to $\infty$, such that $g(t,\eps)\hat{p}(\sqrt{t})\to 0$ as $t\to \infty$. Then since $p(t/g(t,\eps),\eps)\leq \hat{p}( t/g(t,\eps))\leq \hat{p}(\sqrt{t})$, the function $g$ will satisfy (\ref{eqn::scales}).}
	
	As mentioned above, we will break up the set $N_t$ into families. Two vertices will be in the same family if they have a common ancestor at time $t-t/g(t,\eps)$. For $1\leq i \leq |N_{t-t/g(t,\eps)}|$ we define \begin{equation*}
		m_i := \frac{\sum_{\{v\in N_t:w_i(t-t/g(t,\eps))\prec v\}} f(X_v(t)) }{\sigma_i} \;\;\; \text{ for } \;\;\; \sigma_i := |\{v\in N_t:w_i(t-t/g(t,\eps))\prec v\}|
	\end{equation*} to be the average value of $f$ among the $i$th of these families. If $w_i(t-t/g(t,\eps))$ has no descendants at time $t$ we set $m_i = (f,\vp)/(1,\vp)$. The key to the proof of this lemma will be to show that 
	\begin{equation} \label{eqn::averagesubtrees} \E{x}{A_t \, \big|\, |N_t|>0}:=\mathbb{P}^x \left(\left. \bigcup_{i=1}^{|N_{t-t/g(t,\eps)}|} \left\{\left|m_i-\frac{\ip{f}{\vp}}{\ip{1}{\vp}} \right|>\eps/2 \right\}\right||N_t|>0 \right) \to 0\end{equation}
	as $t\to \infty$.
	
To do this, it turns out to be more convenient to look at the unconditioned probability, and then use Theorem \ref{criticalsurvivalprob}. We write 
	\begin{equation*}
		\E{x}{A_t\cap \{|N_t|>0\}}  =\E{x}{\E{x}{A_t\cap\{|N_t|>0\} \, \big| \, \Fs{t-t/g(t,\eps)}}} 
	\end{equation*} which by Theorem \ref{criticalsurvivalprob}, a union bound, and the definition of $p(s,\eps)$, yields the inequality
	\begin{equation}\label{eqn::scales_arg_part1}
	\E{x}{A_t\cap \{|N_t|>0\}} \leq C \E{x}{|N_{t-t/g(t,\eps)}|}p(\frac{t}{g(t,\eps)},\eps) \frac{g(t,\eps)}{t}
	\end{equation}
	for some $C$ depending only on the branching diffusion. Moreover, by Lemma \ref{lem::many_to_one} and (\ref{eqn::exp_kernel_bound}), we have that $\E{x}{|N_{t-t/g(t,\eps)}|}\leq C'\vp(x)$ for some further $C'$, and so the expression on the right hand side of (\ref{eqn::scales_arg_part1}) is less than or equal to a constant times $t^{-1}\vp(x)p(t/g(t,\eps),\eps)g(t,\eps)$. Finally, dividing left hand side of (\ref{eqn::scales_arg_part1}) by $\E{x}{|N_t|>0}\sim t^{-1}\vp(x)$ and then applying (\ref{eqn::scales}), we obtain (\ref{eqn::averagesubtrees}).

	Now we prove that 
	\begin{equation} \label{eqn::shortgaps}
		\mathbb{P}_x\left(A_{t}' \giv |N_t|>0\right):=\mathbb{P}_x \big(\bigcup_{i=1}^{|N_{t-t/g(t,\eps)}|} \big\{\sigma_i > \frac{t}{(g(t,\eps))^{1/3}}\big\}\, \big| \, |N_t|>0\big) \to 0 \end{equation} as $t\to \infty$. For this we apply a similar argument to above. We write 
	
	\begin{align}\label{eqn::scales_arg_part2}
		\E{x}{A_t'\cap \{|N_t|>0\}}  & =\E{x}{\E{x}{A_t'\cap\{|N_t|>0\} \, \big| \, \Fs{t-t/g(t,\eps)}}} \\ \nonumber & \leq C \E{x}{|N_{t-t/g(t,\eps)}|} \sup_{y\in D} \E{y}{|N_{t/g(t,\eps)}|> \frac{t}{g(t,\eps)^{1/3}}}  \leq \frac{C\vp(x) }{g(t,\eps)^{1/3}t}
	\end{align}
	where the last inequality follows because for any $y\in D$, by Markov's inequality and the uniform bound on $\E{y}{|N_{t/g(t,\eps)}|^2}$ from Lemma \ref{lem::exp_nt2}, 
	\begin{equation*}
		\E{y}{|N_{t/g(t,\eps)}|> \frac{t}{g(t,\eps)^{1/3}}} \leq \frac{\E{y}{|N_{t/g(t,\eps)}|^2}g(t,\eps)^{2/3}}{t^2}\leq C\frac{\frac{t}{g(t,\eps)} g(t,\eps)^{2/3}}{t^2}.
	\end{equation*}
	 Now dividing through by $\E{x}{|N_t|>0}$ on the right hand side of (\ref{eqn::scales_arg_part2}) gives (\ref{eqn::shortgaps}).
	
	Finally, we are in a position to prove that $\mathbb{P}_x(B_t^{f,\eps}(\rho)\giv |N_t|>0)\to 0$ as $t\to \infty$. By the above work, and a union bound it is enough to show that \[B_t^{f,\eps}(\rho)\subset A_t\cup A_t'\] for all $t$ large enough. In fact, we will show that 
	\[ (A_t)^c \cap (A_t')^c \cap \{\rho t \le |N_t|\} \subset (B_t^{f,\eps}(\rho))^c,\] 
	for all $t$ large enough, which suffices, since by definition $(A_t)^c \cap (A_t')^c \cap \{\rho t >|N_t|\}\subset (B_t^{f,\eps}(\rho))^c$. 
	
	So, suppose we are on the event $(A_t)^c \cap (A_t')^c \cap \{\rho t \le |N_t|\}$ and for every $ M \leq |N_t|$, set $M\leq k(M):= \sigma_0+\sigma_1+\cdots + \sigma_{i+1}$, where $\sigma_0:= 0$, and $i$ is the unique integer such that $\sigma_0 + \cdots + \sigma_{i} < M \le \sigma_0+ \cdots + \sigma_{i+1}$. Then because we are on the event $(A_t)^c$ we have that 
	\[\left|\frac{\sum_{u\in N_{t,k(M)}} f(X_u(t))}{k(M)}-\frac{\ip{f}{\vp}}{\ip{1}{\vp}}\right| \leq \eps/2 \] for all $ \rho t\leq M \leq |N_t|$ simultaneously (in fact, for all $1\leq M \leq |N_t|$).
	Furthermore, since we are on the event $(A_t')^c$, for $M\geq \rho t$ we have 
	\begin{align*}\label{eqn::bounddifferences}\left|\frac{\sum_{u\in N_{t,M}} f(X_u(t))}{M} -\frac{\sum_{u\in N_{t,k(M)}} f(X_u(t))}{k(M)}\right|
		& \leq  \left|\frac{k(M)-M}{M} \right| \, \left(\big|\frac{\sum_{u\in N_{t,k(M)}} f(X_u(t))}{k(M)}\big|+\sup_{w\in D}|f(w)|\right) \\ \nonumber
		& \leq \frac{1}{\rho \, g(t,\eps)^{1/3}}(2\frac{(f,\vp)}{(1,\vp)}+\sup_{w\in D}|f(w)|).\end{align*}
	
	Thus, for all $t$ large enough, because $g(t,\eps)\to \infty$, this must be less than $\eps/2$ for all $M\geq \rho t$. Consequently, we must be on the event $(B_t^{f,\eps}(\rho))^c$. 
\end{proof}


\section{Convergence to the Brownian CRT}

From this point onwards, we will assume that $\vp\in C^1(\overline{D})$ as in the statement of Theorem \ref{thm::crtconv}. In particular this means that all the first order partial derivatives of $\vp$ are bounded on $D$.

Recall from Section \ref{sec::trees} that we associate to our branching diffusion $(T,l,X)$ a continuous plane tree $\mathbf{T}=\mathbf{T}(T,l,X)$. 
This is the continuum tree with branch lengths given by lifetimes of particles in the system. Let us first describe the continuous time \emph{depth first exploration} of $\mathbf{T}$; this is very closely related to the continuous exploration of $\mathbf{T}$ used to define the contour function in Section \ref{sec::trees}, but in this case we \emph{do not} ``backtrack'' along branches of the tree (see Figure \ref{fig::dfe}). 

Recall that if $|T|$ is the total number of individuals in $T$, then we can order them $\{u_\emptyset:=u_0,u_1,\cdots, u_{|T|}\}$ with respect to the depth first order. The depth first exploration is then the continuous time process $v_t$ on $[0,\sum_{0\leq j\leq |T|} l_{u_j}):=[0,L(T))$, taking values in $T$, defined by
\begin{equation}
	\label{eqn::dfe_defn}
	v_t := u_i \;\;\;\;\; \text{ if } \;\;\;\;\; \sum_{j<i} l_{u_j} \leq t <\sum_{j\le i} l_{u_j}
\end{equation}
where we recall the definition of the lifetime $l_v$ of an individual $v$ from Section \ref{sec::trees}. Informally, this process visits the individuals of $T$ in depth first order, and spends time $l_u$ visiting individual $u$. We let $\kappa_t=\sum_{j<i} l_{u_j}$ if $v_t=u_i$, so $\kappa_t=\inf\{s\leq t: v_s=v_t\}$ is the time at which the depth first exploration ``starts visiting" the individual $v_t$. Although this depth first exploration takes values in $T$, we should think of it as a continuous exploration of $\mathbf{T}$, where between visiting vertices, the branches of $\mathbf{T}$ are traversed at speed one. See Figure \ref{fig::dfe} for an illustration: note that we shall always refer to the exploration as an exploration of $\mathbf{T}$ rather than an exploration of $T$, in order to keep the correct intuition.

\begin{figure}[h]
	\centering
	\includegraphics[scale=0.5]{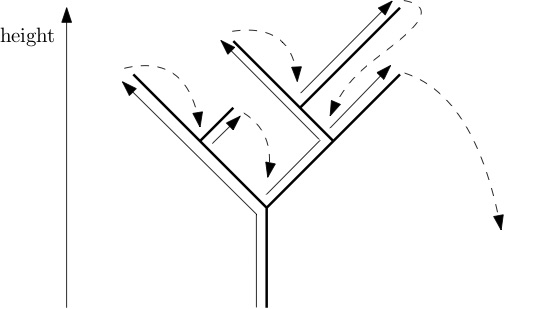}
	\caption{A sketch of the depth first exploration of a continuous tree. Full arrowed lines represent motion at speed one in the vertical direction. Dotted arrowed lines represent instantaneous jumps.}
	\label{fig::dfe}
\end{figure}

Now we describe the \emph{height function} associated to the depth first exploration. This is defined by 
\begin{equation} \label{eqn::height_function}
	H_t := (s_{v_t}-l_{v_t})+(t-\kappa_t),
\end{equation} 
so describes the evolution of \emph{height} as we traverse, if we think of our depth first exploration process as described in the previous paragraph (again see Figure \ref{fig::dfe}). We can think of this as a ``non-backtracking'' version of the contour function from Section \ref{sec::trees}. The height function will be useful for the proof of Theorem \ref{thm::crtconv} for the following reason: 

\begin{remark0} \label{rmk::h_metric} Consider the interval $[0,L(T))$ on which the depth first exploration is defined, and for $r,w\in [0,L(T))$ set \begin{equation}
		\label{defn::stard}
		d^\star(r,w)=H_r+H_w-2\inf_{s\in [r\wedge w, r\vee w]}H_s.
	\end{equation}
	Then this defines a pseudo-metric on $[0,L(T))$: we can have $d^\star(r,w)=0$ if $r$ and $w$ correspond to two times at which a ``branch point" on $\mathbf{T}$ is visited. We can therefore define a metric space by quotienting $[0,L(T))$ using the equivalence relation $r \sim^\star w \iff d^\star(r,w)=0$. It is easy to check that $(\frac{[0,L(T))}{\sim^\star}, d^\star)$ is $\mathbb{P}^x$-almost surely isometrically equivalent to $(\mathbf{T},d)$.
\end{remark0} In fact, for the proof of Theorem \ref{thm::crtconv}, in the end we will prove that if we look at $(\frac{[0,L(T))}{\sim^\star},\frac{1}{t}d^\star)$ under the law $\mathbb{P}^x(\cdot \, \big| \, |N_t|>0)$, then \emph{these} random metric spaces will converge, in distribution as $t\to \infty$, to the Brownian CRT $(\mathbf{T}_\ee, d_\ee)$. Remark \ref{rmk::h_metric} tells us that this implies the theorem. The reason we choose to study the height function rather than the contour function described in Section \ref{sec::trees} is because, as we will soon see, there is a natural way of defining a martingale that is very closely related to the height function (and as soon as we introduce backtracking into the exploration of the tree, the martingale structure is lost). 

 Finally, it should be clear that any time $t$ in our depth first exploration of the tree is naturally associated with a position in $D$. That is, if $v_t$ has associated motion $X_{v_t}: [s_{v_t}-l_{v_t},s_{v_t})\to D$, then we can write  
\begin{equation}
	\label{eqn::position_dfe}
	V_t:= X_{v_t} ((s_{v_t}-l_{v_t})+(t-\kappa_t)).
\end{equation}This process $V_t$ then describes the evolution of position in $D$, if we do a continuous depth first exploration of $\mathbf{T}$, and we follow the motion of an individual $u$ over its lifetime during the time that we are visiting it.
\\

Now we will start setting up for the proof of Theorem \ref{thm::crtconv}. A classical technique in proofs of this sort (see for example \cite{crtconvgd, legallrandomtrees}) is, instead of considering one tree conditioned to be large, to consider an i.i.d. sequence of trees without any conditioning. Describing the scaling limit of this process then allows one to also describe the scaling limit of a single ``large" tree.

So, we write $\bar{\mathbb{P}}^x$ for the law of a sequence of i.i.d trees $((T^1,l^1,X^1),(T^2,l^2,X^2), \cdots)$, where each $(T^i,l^i,X^i)$ has the same distribution as $(T,l,X)$ under $\mathbb{P}^x$. We concatenate the continuous depth first explorations of the trees in the natural way, and write $\bar{H}_t$ for the associated concatenation of the height processes. To show the convergence in Theorem \ref{thm::crtconv} it will be important to show that $\bar{H}_t$ when rescaled appropriately, looks like a reflected Brownian motion. To do this, we introduce a further process, $\bar{S}_t$, which will turn out to be a martingale. This can be thought of as an analogue of the Lukasiewicz path used in \cite{crtconvgd}.

In the following, we write 
\begin{equation}
	\label{eqn::index}
	\Lambda_t := \inf \{j\geq 1\, : \, \sum_{i=1}^j L(T^j) > t \}
\end{equation}
for the index of the tree being visited at time $t$ in the continuous depth first exploration. We also write $$\bar{v}_t:=(v_t,\Lambda_t)\in T^{\Lambda_t}\times \{\Lambda_t\} \subset\Omega \times \N,$$ where $v_t$ is the individual in $T^{\Lambda_t}$ being visited at time $t$ in the exploration of the sequence of trees. We still write $V_t$ for the associated position as defined in (\ref{eqn::position_dfe}). Finally, we say that for any $u,v\in \Omega$, $w$ is a \emph{younger sibling} of $u$, denoted $w<_{s} u$, if there exist $j<k\in \N$ and $v\in \Omega$ such that $u=vj$ and $w=vk$. If we have a tree $T$ and an individual $u\in T$ we write 
\begin{equation*}
	Y(u,T)=\{w\in T : w<_s v \text{ for some } v\prec u\}
\end{equation*}
for the set of younger siblings of ancestors of $u$.  
\begin{defn}\label{def::st} Suppose we have a sequence of trees $((T^i,l^i,X^i))_{i\geq 0}$. We define for $t\geq 0$
	\begin{equation} \label{eqn::bar_st_def}
		\bar{S}_t= \vp(V_t) - \sum_{i\le \Lambda_t}\vp(X^i_{\emptyset}(0)) + \sum_{w\in Y(v_t,T^{\Lambda_t})} \vp(X^{\Lambda_t}_{w}(s^{\Lambda_t}_w-l^{\Lambda_t}_w)).  
	\end{equation}
	
\end{defn}

Informally, the process $(\bar{S}_t)_{t\ge 0}$ can be defined as follows. We do our continuous depth first exploration of the sequence of trees and, until the first ``branching'' or ``death'' time, $\bar{S}_t$ is just equal to $-\vp(x)$, plus a term that follows $(\vp(V_t))_{t\ge 0}$: that is, $\vp$ applied to the position of the individual we are visiting at time $t$. The initial $-\vp(x)$ term is just included for convenience, so that $\bar{S}_0=0$. As the exploration evolves, we always keep the term that follows $\vp(V_t)$, but also, whenever we pass a branch point \footnote{that is, a point on the tree corresponding to one individual dying, and being replaced with a strictly positive number of offspring} at time $t$, we add on $\vp(V_t)$ times the number of particles born minus 1. Whenever we reach a leaf (but not the end of a tree) we jump down to the next particle to be visited in the depth first exploration, and then $\vp$ of the position of that particle, which was before included in the third term of (\ref{eqn::bar_st_def}), becomes the $\vp(V_t)$ term: the first term of (\ref{eqn::bar_st_def}). When we reach the end of a tree, we subtract $\vp(x)$, but then start the depth first exploration of the next tree (so in particular $\vp(V_t)$ at such a time becomes $\vp(x)$).

In fact, $(\bar{S}_t)_{t\ge 0}$ is very closely related to the martingale $(M_t)_{t\ge 0}$ defined in Section \ref{sec::mgales}. Essentially they are the same process, but ``explored" in different orders, and we will see that this is enough to preserve the martingale property. The overall idea is that we would like to approximate the height process $(\bar{H}_t)_{t\ge 0}$ by $(\bar{S}_t)_{t\ge 0}$, and then apply an invariance principle for the latter, which we can do because it is a martingale. This is an analogous idea to that used to prove convergence of Galton--Watson processes to the CRT in \cite{crtconvgd}, where $(\bar{S}_t)_{t\ge 0}$ here plays the role of the \emph{Lukasiewicz path.} We first record a property of this process, which will be essential to showing a relationship with the height function. It states that a slight modification of $(\bar{S}_t)_{t\ge 0}$ is equal to a certain explicit process, that will later turn out to be (roughly) proportional to $(H_t)_{t\ge 0}$.
\begin{lemma}\label{lemma::pathrel} For $t\ge 0$, let $\bar{S}'_t=\bar{S}_t-\vp(V_t)$ and $\bar{I}_t'=\inf_{0\leq s \leq t}\bar{S}_s'$. Then 
	\[ \bar{\mathbf{S}}_t:=\bar{S}_t'-\bar{I}_t'= \sum_{w\in Y(v_t,T^{\Lambda_t})} \vp(X^{\Lambda_t}_{w}(s^{\Lambda_t}_w-l^{\Lambda_t}_w))  \]
\end{lemma}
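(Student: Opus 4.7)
The plan is to reduce the lemma to the identity
\[\bar{I}_t' = -\sum_{i\le \Lambda_t}\vp(X^i_\emptyset(0)),\]
since the conclusion then follows by subtracting this from the definition of $\bar{S}_t'$. Two structural facts drive everything. First, $\vp$ is strictly positive on $D$, so each summand $\vp(X^{\Lambda_t}_w(s^{\Lambda_t}_w-l^{\Lambda_t}_w))$ appearing in the $Y$-term of $\bar{S}_s'$, and each $\vp(X^i_\emptyset(0))$, is positive. Second, $s\mapsto \Lambda_s$ is nondecreasing, so the subtracted sum $\sum_{i\le \Lambda_s}\vp(X^i_\emptyset(0))$ grows monotonically, acquiring one new contribution precisely at each tree-boundary time $\tau_j := \sum_{i<j} L(T^i)$.

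For the upper bound on $\bar{I}_t'$, I would evaluate $\bar{S}_s'$ at the specific time $s=\tau_{\Lambda_t}\le t$, at which the exploration has just begun the tree being visited at time $t$. At this instant $v_s=\emptyset\in T^{\Lambda_t}$, and by inspection of the definition of $Y$ the root $\emptyset$ has no younger siblings and only itself as an ancestor, so $Y(\emptyset,T^{\Lambda_t})=\emptyset$. Hence
\[\bar{S}_{\tau_{\Lambda_t}}' = -\sum_{i\le \Lambda_t}\vp(X^i_\emptyset(0)),\]
which gives the desired upper bound on $\bar{I}_t'$. The matching lower bound is immediate: for any $s\le t$, positivity of the $Y$-sum gives $\bar{S}_s' \ge -\sum_{i\le \Lambda_s}\vp(X^i_\emptyset(0))$, and since $\Lambda_s\le\Lambda_t$ with all $\vp(X^i_\emptyset(0))>0$, this is at least $-\sum_{i\le \Lambda_t}\vp(X^i_\emptyset(0))$. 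Taking the infimum over $s\in[0,t]$ completes the identification of $\bar{I}_t'$.

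I do not anticipate a real obstacle: the argument is a bookkeeping verification, whose content is that the subtracted sum increases only at tree boundaries (where the $Y$-term vanishes), while away from boundaries the $Y$-term can only raise $\bar{S}_s'$ above the value attained at the most recent boundary. The one minor care needed is a convention for the left/right value of $v_s$ at the boundary instant $\tau_{\Lambda_t}$, but this does not affect the infimum since either the pre- or post-jump value already realises the minimum claimed above.
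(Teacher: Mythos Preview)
Your proposal is correct and follows exactly the same approach as the paper: the paper's proof is the single sentence ``Since $\vp$ is positive, it is clear that $\bar{I}_t'=-\sum_{i \le \Lambda_t} \vp(X^i_\emptyset(0))$,'' and you have simply unpacked the two inequalities (upper bound via the time $\tau_{\Lambda_t}$ where the $Y$-sum vanishes, lower bound via positivity and monotonicity of $\Lambda_s$) that make this clear.
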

\begin{proof} Since $\vp$ is positive, it is clear that $\bar{I}_t'=-\sum_{i \le \Lambda_t} \vp(X^i_\emptyset(0))$. This implies the result.
\end{proof}

\begin{defn}\label{rmk::sonetree}
	If we have a single tree $(T,X,l)$, with continuous depth first exploration process $(v_t)_{t\ge 0}$, we define the corresponding processes $(S_t)_{t\ge 0}$ and $(\mathbf{S}_t)_{t\ge 0}$ by 
	\begin{equation*}
S_t:= \left\{
	\begin{array}{ll}
	\vp(V_t)+  \sum_{w\in Y(v_t,T)} \vp(X_w(s_w-l_w))-\vp(x);  &  t\leq L(T) \\
	-\vp(x); & t>L(T)
	\end{array}
	\right. \end{equation*} and 
		\begin{equation*}
	\mathbf{S}_t:= \left\{
	\begin{array}{ll}
	\sum_{w\in Y(v_t,T)} \vp(X_w(s_w-l_w));  &  t\leq L(T) \\
	0; & t>L(T).
	\end{array}
	\right. \end{equation*}

\end{defn}

\begin{remark0} \label{remark::excursions_trees} Note that $(\mathbf{S}_t)_{t\ge 0}$ is a process that starts and ends at $0$ (at time $L(T)$) and is positive in between, i.e. an \emph{excursion}. 
	Moreover, for $y>0$ the law of $(\mathbf{S}_t)_t$ under $\mathbb{P}^x$, conditioned on $\{\sup_{t} \mathbf{S}_t \geq y\}$, is the same as the law of $(\bar{\mathbf{S}}_t)_{t\ge 0}$ under $\bar{\mathbb{P}}^x$, restricted to the first excursion in which it exceeds $y$. 
\end{remark0}

\subsection{Martingale Convergence }

We let $(\bar{\mathcal{F}}_t)_{t\ge 0}$ be defined for each $t\ge 0$ by $\bar{\mathcal{F}}_t:=$
\begin{equation*}
	\label{eqn::df_filtration}
\sigma \{ \{\cup_{i<\Lambda_t} (T^i,l^i,X^i)\},\{(l^{\Lambda_t}_w,X^{\Lambda_t}_w,A^{\Lambda_t}_w) : \bar{v}_r=(w,\Lambda_t), r<\kappa_t\},\{X_{v_t}^{\Lambda_t}(r): r\in [s_{v_t}^{\Lambda_t}-l_{v_t}^{\Lambda_t}, s_{v_t}^{\Lambda_t}-l_{v_t}^{\Lambda_t}+(t-\kappa_t)]\} \}.
\end{equation*}
That is $(\bar{\mathcal{F}}_t)_{t\ge 0}$ is the filtration that knows everything about the continuous depth first search of the sequence of trees $((T^i,l^i,X^i))_i$ up to time $t$. It encodes: (i) which individuals in which trees are visited and when before time $t$; (ii) their spatial motions (although for $v_t$ only up until the point in its lifetime that has been explored by time $t$); and (iii) their progenies, or offspring, (apart from that of $v_t$).

\begin{lemma}
	\label{lemma::s_mgale}
	Under $\bar{\mathbb{P}}^x$ the process $(\bar{S}_t)_{t\geq 0}$ is a martingale with respect to $(\bar{\mathcal{F}}_t)_{t\geq 0}$. 
\end{lemma}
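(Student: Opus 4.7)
The plan is to show $\bar{S}_t$ is a local martingale via its semimartingale decomposition at criticality $\beta = \lambda/(m-1)$, and then upgrade to a true martingale using an $L^2$ bound on the quadratic variation. Adaptedness is immediate from the definitions of $\bar{S}_t$ in (\ref{eqn::bar_st_def}) and of $\bar{\mathcal{F}}_t$, so the substantive work is in verifying zero drift.

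While the DFS is visiting a single individual $u$, the only term of $\bar{S}_t$ that changes is $\vp(V_t)$: here $V_t$ is the $L$-diffusion along $u$'s motion, reparametrized at unit speed in DFS time. By the single-particle martingale property (\ref{eqn::single_particle_mgale}), $\vp(V_t)$ has instantaneous drift $-\lambda \vp(V_t)\,dt$ in this regime. Jumps of $\bar{S}_t$ occur at end-of-life times of individuals, and I would treat the cases separately: (i) when $u$ branches with $A_u \ge 1$ offspring, the DFS transitions to $u1$ with $V_t$ unchanged, and the younger-sibling sum picks up $\vp(X_u(s_u))$ for each of $u2,\ldots, uA_u$, giving $\Delta \bar S_t = (A_u-1)\vp(X_u(s_u))$; (ii) when $u$ dies without offspring (either $A_u = 0$ or $u$ hits $\partial D$), the DFS either moves to the next individual $v'$ in DFS order within the same tree, or, if $u$ is the rightmost leaf, begins the next tree (incrementing $\Lambda_t$ and subtracting $\vp(x)$). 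In each sub-case of (ii), a short bookkeeping check using that $v'$ leaves the younger-sibling set of $u$ gives $\Delta \bar S_t = -\vp(X_u(s_u))$, which vanishes at the boundary since $\vp \equiv 0$ on $\partial D$. All cases thus unify in the single formula $\Delta \bar S_t = (A_u-1)\vp(X_u(s_u))$.

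Next I would compute the predictable compensator of these jumps. Conditional on $\bar{\mathcal{F}}_t$ with $v_t = u$ in the interior, the life of $u$ terminates at instantaneous rate $\beta$ (from the exponential clock), and given termination $A_u$ is an independent sample from the offspring distribution. Hence the compensator of the jump part has density $\beta\,\mathbb{E}[A-1]\,\vp(V_t) = \beta(m-1)\,\vp(V_t)$, which equals $\lambda \vp(V_t)$ precisely at criticality $\beta = \lambda/(m-1)$. This exactly cancels the continuous drift $-\lambda \vp(V_t)\,dt$, so $\bar{S}_t$ has zero drift and is a local $\bar{\mathbb{P}}^x$-martingale with respect to $(\bar{\mathcal{F}}_t)_{t \ge 0}$.

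To upgrade to a true martingale, I would bound the expected quadratic variation at fixed $t$. The continuous part contributes at most $t\,\|\nabla\vp\|_\infty^2\,\sup_{i,j,x}|a^{ij}(x)|$ by $\vp\in C^1(\bar D)$ and boundedness of the ellipticity coefficients, while the jump compensator contributes at most $\beta\,\mathbb{E}[(A-1)^2]\,\|\vp\|_\infty^2\,t$, finite by the $L^2$ hypothesis (\ref{eqn::offspring}) on $A$. Hence $\bar{\mathbb{P}}^x[\bar S_t^2] < \infty$, upgrading the local martingale to a true one. The main technical obstacle is the careful case-by-case jump identification; once the unified formula is established, the critical cancellation is clean, exhibiting $\bar S_t$ as the depth-first-time analog of the real-time branching martingale $M_t$ from Lemma \ref{martingales}.
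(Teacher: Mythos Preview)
Your approach is correct but genuinely different from the paper's. The paper proceeds in three steps: first it computes $\mathbb{P}^x(S_\delta)$ for a single tree by a bare-hands case analysis on how many individuals the DFS has visited by time $\delta$, obtaining the explicit bound $|\mathbb{P}^x(S_\delta)-\vp(x)|\le c\delta(\mathbf{P}^x(\tau^D\le\delta)+\delta)$; second, it uses this to show $t\mapsto\mathbb{P}^x(S_t)$ has derivative zero everywhere (via a Markov-type decomposition at time $t\wedge s$), so that $\mathbb{P}^x(S_t)\equiv\vp(x)$; and third, it deduces the martingale property from this constant-expectation statement together with a branching decomposition of $\bar{S}_t-\bar{S}_s$ conditional on $\bar{\mathcal{F}}_s$. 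By contrast, you go straight to the semimartingale decomposition $\bar S=\bar S^{(c)}+\bar S^{(j)}$, identify the drift of the continuous piece as $-\lambda\vp(V_t)\,dt$ and the compensator of the jumps as $\beta(m-1)\vp(V_t)\,dt$, and observe that criticality makes them cancel. Your route is cleaner and essentially previews the decomposition the paper only introduces later in Lemma~\ref{lemma::mgaleqv} for the quadratic variation; it also gives the $L^2$ bound for free. The paper's route is more elementary in that it avoids It\^o's formula and compensators entirely, working only with expectations and the single-particle identity~(\ref{eqn::exp_single_particle_martingale}). One small point worth making explicit in your write-up: applying It\^o to $\vp(V_t)$ to extract the drift $-\lambda\vp$ needs $\vp\in C^2(D)$, which follows from interior elliptic regularity given $a^{ij}\in C^1(\bar D)$, but is stronger than the standing $C^1(\bar D)$ hypothesis and should be justified.
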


Before we prove the lemma, we  show that $(\bar{S}_t)_{t\ge 0}$ is locally square integrable, which will be of technical importance in what follows.

\begin{lemma}
	\label{lemma::s_sq_int} For every $x\in D$ and $R\ge 0$, 
	$\sup_{t \le R} \bar{\mathbb{P}}^x[(\bar{S}_t)^2]<\infty$.
\end{lemma}

\begin{proof2}{Lemma \ref{lemma::s_sq_int}}
	Fix $R\ge 0$. Since $\vp$ is bounded it suffices to show that $\Lambda_t$ and $|Y(v_t,\Lambda_t)|$ have uniformly bounded second moments under $\bar{\mathbb{P}}^x$ for $t\le R$. 
	\begin{itemize}
		\item For $\Lambda$, we simply note that we can stochastically dominate $\Lambda_t$ for any $t\le R$ by a geometric random variable with success probability $\mathbf{P}^x(\tau_D>R)$. 
	\item For $|Y|$, it is a little more complicated. First observe that, for any $t\ge 0$, if we condition on the time $L(T^1)+\cdots + L(T^{\Lambda_t-1})$ that we start exploring the $\Lambda_t^{\text{th}}$ tree in the sequence, the conditional law of the $\Lambda_t^{\text{th}}$ tree is just that of a single tree under $\mathbb{P}^x$, conditioned on having $|N_{t-L(T^1)+\cdots + L(T^{\Lambda_t-1})}|>0.$ Since the probability of this event is bounded below by $\mathbf{P}^x(\tau^D>R)$, and $|Y(v_t,\Lambda_t)|$ is bounded above by $\max\{|Y(u,\Lambda_t)|\, : \, u\in T^{\Lambda_t}, s_u^{\Lambda_t}-l_u^{\Lambda_t}\le R \}  $, it is enough to show that 
	\begin{equation}\label{eqn::Yl2bound} \mathbb{P}^x[(\max\{|Y(u,T)|\, : \, u\in T, s_u-l_u\le R \})^2]<\infty
	\end{equation} (note this is now an expectation for a single branching process). However this follows from the assumption that $A$ has finite variance. Indeed, consider a pure (i.e. no spatial motion) branching process in continuous time, that branches at rate $\lambda/(m-1)$ and has offspring distribution $A'=A+1$ (so that there is always a positive number of offspring born). Then if $\mathcal{N}_R$ is the total size of the population of this process at time $R$, it is clear that $\mathcal{N}_R$ stochastically dominates $\max\{|Y(u,T)|\, : \, u\in T, s_u-l_u\le R \}$. Moreover, it is easy to check that $\mathcal{N}_R$ has finite variance (for example, by conditioning on the first branching time). 
		\end{itemize}

\end{proof2}

\begin{proof2}{Lemma \ref{lemma::s_mgale}} Before commencing with the technicalities of the proof, we would like to emphasise that the reason $(\bar{S}_t)_{t\ge 0}$ is a martingale is exactly the same as the reason that $(M_t)_{t\ge 0}$ from \eqref{eqn::mgale_outline} is a martingale. The branching Markov structure of the process is preserved when we explore in the depth first order, and this means that $(\bar{S}_t)_{t\ge 0}$ being a martingale is equivalent to it having constant expectation. The fact that it has constant expectation is intuitively clear because, after a small time $\delta$, $\mathbb{P}^x(\vp(V_\delta))\approx \e^{-\lambda \delta}\vp(x)\approx \vp(x)(1-\lambda \delta)$ and branching contributes $\lambda \delta \vp(x)$ to the expectation (the fact that we are exploring in the depth first order does not affect this).
	
Let us now be more precise. The first step in the proof is to show that if $(S_t)_{t\ge 0}$ is the process $(\bar{S}_t)_{t\ge 0}$ for a single tree (as described in Definition \ref{rmk::sonetree}) then \begin{equation}\label{eqn::const_exp} \mathbb{P}^x[S_t]=0\;\; \forall \; t\ge 0\end{equation} We will then show how this implies the martingale property. 

To see \eqref{eqn::const_exp} we define a sequence of times $R_1, R_2,$... as follows: let $R_1=s_\emptyset$, and given $\{R_1,..., R_k\}$ set $R_{k+1}=R_k+l_{v_{R_k}}$. In words, $(R_k)_{k\ge 0}$ is the sequence of times at which the particle we are visiting during the depth first exploration of the tree either branches or exits the domain. We will prove by induction that for any $n\ge 1$ 
\begin{equation}\label{eqn::sonetree_constexp} \mathbb{P}^x[S_{t\wedge R_n}] = 0.  \end{equation}
Then since $\mathbb{P}^x[S_t]= \mathbb{P}^x[S_{t\wedge R_n}] + \mathbb{P}^x[S_t \I_{\{t> R_n\}}] - \mathbb{P}^x[S_{t\wedge R_n}\I_{\{t> R_n\}}]$ and the second two terms tend to $0$ as $n\to \infty$ (by Cauchy--Schwarz, \eqref{eqn::Yl2bound}, and the fact that $\mathbb{P}(t> R_n)\to 0$) this implies \eqref{eqn::const_exp}.

For the induction, first note that in the case $n=1$ we can immediately write $\mathbb{P}^x[S_{t\wedge R_1}]=\mathbb{P}^x[M_{t\wedge R_1}]-\vp(x)=0$.
Then for the inductive step we consider $\mathbb{P}^x[S_{t\wedge R_{n+1}}-S_{t\wedge R_n}]$, and condition on everything that has happened in the depth first exploration up to time $t\wedge R_n$. In the case that $t\le R_n$ or $t\ge L(T)$ the conditional expectation of $S_{t\wedge R_{n+1}}-S_{t\wedge R_n}$ is clearly equal to $0$. Moreover, on the complementary event we can use exactly the same argument as for the $n=1$ case to see that the conditional expectation is equal to $0$. This proves \eqref{eqn::sonetree_constexp} for all $n\ge 1$. 
	
Finally, we must use this to prove the martingale property. The idea is that for $t>s$, $\bar{\mathbb{P}}^x(\bar{S}_t-\bar{S}_s\, |\, \bar{\mathcal{F}}_s)$ can be written as a sum of expectations of the form \eqref{eqn::const_exp}: one for each of the subtrees rooted at the elements of $Y(v_s,\Lambda_s)$ plus one for each  $(T^j)_{j>\Lambda_s}$, where the time $t$ in \eqref{eqn::const_exp} is replaced in each expectation by a random time that is (importantly) independent of the tree/subtree being considered. More precisely, we can write for $0\leq s \leq t$ 
	\begin{align*}\label{eqn::mgale_s}
		\bar{\mathbb{P}}^x(\bar{S}_t-\bar{S}_s \, |\, \bar{\mathcal{F}}_s) & = \E{V_s}{S_{(t-s)}}-\vp(V_s) + \sum_{w\in Y(v_s,\Lambda_s)} \big((\mathbb{P}^{X^{\Lambda_t}_{w}(s^{\Lambda_t}_w-l^{\Lambda_t}_w)}\otimes \mu_w )(S_{\sigma(w)})-\vp(X^{\Lambda_t}_{w}(s^{\Lambda_t}_w-l^{\Lambda_t}_w))
		\big) \\ \nonumber 
		& + \sum_{i=1}^\infty \big((\mathbb{P}^x \otimes \mu_i)(S_{\sigma_i})-\vp(x)\big)
	\end{align*}
	where $\otimes$ represents the independent product measure, the $\mu_w$ are random laws on times $\sigma(w)$ for each $w\in Y(v_s,\Lambda_s)$, and the $\mu_i$ are random laws on times $\sigma_i$ for each $i\geq 1$. \footnote{For example, if $w$ is the oldest younger sibling of $v_t$ (so $v_t=uj$ and $w=u(j+1)$ for some $j\ge 1$) then $\sigma(w)$ would be $(t-s)$ minus the minimum of $(t-s)$ and the total length of the subtree rooted at $v_s$.}
By \eqref{eqn::const_exp} we see that the right hand side of the above is equal to $0$.
\end{proof2}

\begin{lemma}\label{lemma::mgaleqv}
	For any $x\in D$, $(\bar{S}_t)_{t\geq 0}$ is a locally square-integrable martingale under $\bar{\mathbb{P}}^x$. Its predictable quadratic variation is given by 
	\[\langle \bar{S} \rangle_t = \int_0^t \frac{\lambda}{m-1}\mathbb{E}(A^2-2A+1)\vp(V_s)^2+ 2\sum_{i,j} a^{ij}(V_s) \frac{\st \vp}{\st x_i} \frac{\st \vp}{\st x_j}(V_s)  \, ds.\]
\end{lemma}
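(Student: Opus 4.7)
The plan is to decompose $\bar{S}_t = M^c_t + M^d_t$ into its continuous and purely discontinuous local martingale parts and compute the quadratic variation of each separately. Between successive death events within a single tree, the terms $-\sum_{i\leq \Lambda_t}\vp(X^i_\emptyset(0))$ and $\sum_{w\in Y(v_t,T^{\Lambda_t})}\vp(X^{\Lambda_t}_w(s^{\Lambda_t}_w-l^{\Lambda_t}_w))$ are constant, so $\bar{S}_t$ evolves purely through $\vp(V_t)$, where $V_t$ follows the diffusion with generator $L$. Applying It\^o's formula to $\vp(V_t)$ and using $-L\vp = \lambda\vp$ yields
\[d\vp(V_t) = -\lambda\vp(V_t)\,dt + dN_t,\]
where $N$ is a continuous local martingale satisfying $d\langle N\rangle_t = \sum_{i,j}a^{ij}(V_t)\frac{\st\vp}{\st x_i}(V_t)\frac{\st\vp}{\st x_j}(V_t)\,dt$; concatenating across exploration intervals yields $\langle M^c\rangle_t = \int_0^t \sum_{i,j}a^{ij}(V_s)\frac{\st\vp}{\st x_i}(V_s)\frac{\st\vp}{\st x_j}(V_s)\,ds$.

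For the jump part, I would identify the jumps of $\bar{S}$ by a case analysis at each particle-death event. When the spine particle branches with $A\geq 1$ offspring, $V_t$ is continuous (the exploration proceeds to the first child at the same spatial point) while the $Y$-term gains $A-1$ younger siblings at position $V_{t-}$, producing a net jump of $(A-1)\vp(V_{t-})$. When the particle dies with $A=0$ (which includes boundary kills, since by construction $A=0$ in that case), a more delicate bookkeeping shows that the simultaneous jumps in $\vp(V_t)$ (as the exploration moves to the next depth-first particle) and in the $Y$-term---or in $-\sum\vp(X^i_\emptyset(0))$ when a tree ends---combine to a net change of $-\vp(V_{t-})$, matching $(A-1)\vp(V_{t-})$ with $A=0$. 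Boundary kills contribute zero since $\vp(V_{t-})=0$ on $\partial D$. Since branching events occur at rate $\beta=\lambda/(m-1)$ with $A$ drawn independently from the offspring distribution, the predictable compensator of $\sum_{s\leq t}(\Delta\bar{S}_s)^2$ is
\[\langle M^d\rangle_t = \int_0^t \frac{\lambda}{m-1}\mathbb{E}[(A-1)^2]\vp(V_s)^2\,ds.\]

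Combining, and using that $\langle M^c,M^d\rangle\equiv 0$ since $M^c$ is continuous and $M^d$ purely discontinuous, gives the formula stated in the lemma. As a consistency check, the drift $-\lambda\vp(V_s)\,ds$ from the continuous part cancels exactly the compensator $\beta\mathbb{E}[A-1]\vp(V_s)\,ds = \lambda\vp(V_s)\,ds$ of the jump part, reconfirming the martingale property from Lemma \ref{lemma::s_mgale}. Since $\vp$ and $\st_{x_i}\vp$ are bounded on $\overline{D}$ by the $C^1(\overline{D})$ assumption, and the $a^{ij}$ are bounded as well, the integrand in $\langle\bar{S}\rangle_t$ is uniformly bounded, so $\langle\bar{S}\rangle_t\leq Ct$ for some finite $C$, which via standard criteria makes $\bar{S}$ in fact a globally square-integrable martingale. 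The main obstacle will be the careful bookkeeping at transitions---particularly at $A=0$ deaths and at the ends of trees---where $\vp(V_t)$ itself has jumps that must be tracked alongside the $Y$-term and tree-count transitions; verifying that these combine cleanly into the unified expression $(A-1)\vp(V_{t-})$ for the jumps of $\bar{S}$ is the main case-by-case work of the proof.
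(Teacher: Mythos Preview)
Your approach is essentially the same as the paper's: both identify the jumps of $\bar{S}$ as $(A-1)\vp(V_{t-})$ via exactly the case analysis you describe (the paper's cases (I)--(III) correspond to your boundary kill, $A=0$ death, and $A\geq 1$ branching), decompose $\bar{S}$ into continuous and jump parts, and obtain $\langle\bar{S}\rangle_t$ as the diffusion quadratic variation of $\vp(V_t)$ plus the compensator of the squared jumps. The one difference is that the paper establishes local square integrability \emph{first}, via direct crude bounds on $\bar{\mathbb{P}}^x(\bar{S}_t^2)$ (dominating $\Lambda_t$ by a geometric and $S_r^2$ by the total progeny of a Yule-type process), whereas you infer it at the end from the boundedness of the integrand---which is slightly circular since $\langle\bar{S}\rangle$ is only defined once local square integrability is known, so you should either work with the optional quadratic variation $[\bar{S}]$ first or give a separate local $L^2$ bound as the paper does.
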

\begin{proof}
We already know from Lemmas \ref{lemma::s_mgale} and \ref{lemma::s_sq_int} that $(\bar{S}_t)_{t\geq 0}$ is a locally square-integrable martingale. To calculate its predictable quadratic variation, let us look at the definition a little more closely, and try to break up $(\bar{S}_t)_{t\ge 0}$ into continuous and jump parts. 

It is clear that all the terms in (\ref{eqn::bar_st_def}), apart from the $\vp(V_t)$ term, are constant except at times where $\bar{v}_t$ (the vertex we are visiting at time $t$ in the depth-first exploration) changes.  It is also clear that $\vp(V_t)$ is continuous away from these times. Note that the number of such times is a.s. finite on any finite time interval (for example, since the expectation of the number of such times is finite). We write $V_{t-}=\lim_{s\uparrow t} V_s$, which exists since $(V_t)_t$ is cadlag. Similarly, since the process $t\mapsto \bar{v}_t=(v_t,\Lambda_t)$ is cadlag with respect to the discrete topology on $\Omega \times \N$, we can define $\bar{v}_{t-}=\lim_{s\uparrow t} \bar{v}_s$. By the above discussion, any discontinuities of $\bar{S}_t$ must occur at times $t$ such that $\bar{v}_{t-}\ne \bar{v}_t$. 
	
	We consider the different possibilities for such times: 
	\begin{enumerate}[(I)]
		\item $V_{t-} \in \partial D$. This means that $\vp(V_{t-})=0$, and so $ \vp(V_t)-\vp(V_{t-})=\vp(V_t)$. In this case, there are two further possibilities: (a) $\Lambda_{t-}=\Lambda_t$ and $\bar{v}_t=(w,\Lambda_{t-})$ for some $w\in Y(v_{t-},T^{\Lambda_{t-}})$ or (b) $\bar{v}_t=(\emptyset, \Lambda_t)$ with $\Lambda_t\ne \Lambda_{t-}$. In case (a) we see that the third term on the right hand side of (\ref{eqn::bar_st_def}) also decreases by $\vp(V_t)$ at time $t$, and so $ \bar{S}_t-\bar{S}_{t-} = 0$. Similarly, in case (b) the second term on the right hand side of (\ref{eqn::bar_st_def}) decreases by $\vp(x)=\vp(V_t)$ at time $t$, so again $\bar{S}_t-\bar{S}_{t-}$ is equal to $0$. Overall, \emph{such a situation does not actually correspond to a discontinuity of $\bar{S}_t$.}
		\item $V_{t-}\notin \partial D$ but $V_{t-}\ne V_t$. Then $A_{v_{t-}}=0$ and again $\bar{v}_t$ satisfies either (a) or (b) defined above. By  the same reasoning, we have that $\bar{S}_t-\bar{S}_{t-}=-\vp(V_{t-})=(A_{v_{t-}}-1)\vp(V_{t-})$.
		\item $V_{t-}=V_{t}$ but $\bar{v}_t\ne \bar{v}_{t-}$. This corresponds to the continuous depth first exploration reaching a branching point at time $t$ that is not the end of a branch. In this case we have $A_{v_{t-}}\ge 1$ and $\bar{S}_t-\bar{S}_{t-}=(A_{v_{t-}}-1)\vp(V_{t})$. 
	\end{enumerate} 
	With this is mind, we define 
	\begin{equation}\label{eqn::s_decomp}
		\bar{S}_t^{(c)}:=\vp(V_t)-\vp(x) -\sum_{s\le t} (\vp(V_s)-\vp(V_{s-})) \;\;\; \text{and} \;\;\; \bar{S}_t^{(j)}:= \sum_{\substack{ (w,i)=\bar{v}_s \\ \text{ for some } s<\kappa_t}} \vp(X_w^i(s^i_w))(A^i_w-1).
	\end{equation}
	Observe that \[\bar{S}_t-\bar{S}_t^{(c)}:= \sum_{s\le t} (\vp(V_s)-\vp(V_{s-}))- \sum_{i\le \Lambda_t}\vp(X^i_{\emptyset}(0))+ \sum_{w\in Y(v_t,T^{\Lambda_t})} \vp(X^{\Lambda_t}_{w}(s^{\Lambda_t}_w-l^{\Lambda_t}_w))\] is equal to the sum of the jumps of $\vp(V_s)$ that occur before time $t$, plus the sum of the jumps of $\bar{S}_s-\vp(V_s)$ that occur before time $t$. 
	This implies that $(\bar{S}^{(c)}_t)_{t\ge 0}$ is a continuous process. Moreover, together with our previous considerations, it tells us that $\bar{S}_t-\bar{S}_t^{(c)}$  is equal to $\bar{S}_t^{(j)}$ for each $t$. Indeed, at times where (I) is satisfied, the two types of jump that make up $\bar{S}_t-\bar{S}_t^{(c)}$ cancel each other out, and at times where (II) or (III) is satisfied, the jumps are precisely the jumps of $\bar{S}_t^{(j)}$. 
	
	In conclusion, we can write $\bar{S}_t = \bar{S}_t^{(c)}+\bar{S}_t^{(j)}$ for $t\ge 0$, thus decomposing $\bar{S}$ into a continuous process and a jump process. This means that if we write $\upsilon(\omega, dt, dx)$ for the predictable compensator of $\bar{S}_t^{(j)}$ (see \cite[\S II, Theorem 1.8]{limitthms} for the definition) then we have \cite[\S II, Proposition 2.29]{limitthms}
	\begin{equation}\label{eqn::predictable_quad_var}
		\langle \bar{S} \rangle_t = [\bar{S}^{(c)}]_t+\int_0^t \int_{\R} x^2 \upsilon(\omega, ds, dx). 
	\end{equation}
	where $[\cdot]$ denotes the ordinary quadratic variation. Since $\bar{S}^{(c)}$ is continuous, and its increments are that of $\vp$ applied to an $L$-diffusion in $D$ , we have that
	\begin{equation*}
		[\bar{S}^{(c)}]_t = 2\int_0^t \sum_{i,j} a^{ij}(V_s) \frac{\st \vp}{\st x_i} \frac{\st \vp}{\st x_j}(V_s)  \, ds.
	\end{equation*}
	Moreover, one can check (straight from the definition in \cite{limitthms}) that \begin{equation}
		\label{eqn::predic_compensator}
		\upsilon(\omega, ds, dx)=\sum_{k\geq 0} \mathbb{P}(A=k) \delta_{(k-1)\vp(V_{s-}(\omega))}(dx) \times \flm ds .
	\end{equation}
	From this and (\ref{eqn::predictable_quad_var}), the lemma then follows.
\end{proof}

\begin{propn}\label{propn::mgaleinv} 
	Let \[ \sigma^2 = \frac{2b^{-1} }{(1,\vp)} = \frac{\lambda (1,\vp^3) (\mathbb{E}(A^2)-\mathbb{E}(A))}{(m-1) (1,\vp)}. \] 
	Then \[ \left( \frac{\bar{S}_{nt}}{\sqrt{n}} \right)_{t\geq 0} \to \left( \sigma B_t \right)_{t\geq 0} \]
	in distribution as $n\to \infty$, with respect to the Skorohod topology, where $(B_t)_t$ is a standard one dimensional Brownian motion.
\end{propn}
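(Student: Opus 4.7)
The plan is to deduce the proposition from a functional central limit theorem for locally square-integrable martingales, e.g.\ \cite[Theorem VIII.3.11]{limitthms}, applied to the rescaled martingale $\bar{S}^{(n)}_t := \bar{S}_{nt}/\sqrt{n}$. By Lemmas \ref{lemma::s_mgale} and \ref{lemma::mgaleqv}, $\bar{S}^{(n)}$ is a locally square-integrable c\`adl\`ag $\bar{\mathbb{P}}^x$-martingale with predictable quadratic variation
\[
\langle \bar{S}^{(n)}\rangle_t \;=\; \frac{1}{n}\int_0^{nt}F(V_s)\,ds, \qquad F(y) \;:=\; \flm \mathbb{E}[(A-1)^2]\vp(y)^2 + \sum_{i,j}a^{ij}(y)\st_{x_i}\vp(y)\st_{x_j}\vp(y).
\]
It therefore suffices to verify (a) a Lindeberg-type condition on the jumps of $\bar{S}^{(n)}$, and (b) the convergence $\langle \bar{S}^{(n)}\rangle_t \to \sigma^2 t$ in probability for each $t\geq 0$.

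For (a), the case analysis in the proof of Lemma \ref{lemma::mgaleqv} shows that every jump of $\bar{S}$ has the form $(A_v-1)\vp(V_{s-})$ at some branching event $v$ in the depth-first exploration, hence is bounded by $\|\vp\|_\infty(A_v-1)$. A many-to-one type estimate shows that the expected number of branching events in $[0,nt]$ is at most $\lambda nt/(m-1)$ (branching occurs along the exploration at rate $\lambda/(m-1)$). Combined with $\mathbb{E}(A^2)<\infty$ and dominated convergence, this gives
\[
\frac{1}{n}\bar{\mathbb{P}}^x\Bigl[\sum\nolimits_{s\leq nt}(\Delta \bar{S}_s)^2 \I_{\{|\Delta \bar{S}_s|>\eps\sqrt{n}\}}\Bigr]
\;\leq\; \frac{\lambda t\|\vp\|_\infty^2}{m-1}\mathbb{E}\Bigl[(A-1)^2\I_{\{(A-1)\|\vp\|_\infty>\eps\sqrt{n}\}}\Bigr]
\;\xrightarrow{n\to\infty}\; 0,
\]
more than enough for the standard Lindeberg-type hypothesis.

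For (b), integrating by parts on $D$ using the Dirichlet condition $\vp|_{\st D}=0$ together with $-L\vp=\lambda\vp$ yields $(\sum_{i,j}a^{ij}\st_{x_i}\vp\,\st_{x_j}\vp,\,\vp)_{L^2(D)} = \lambda(1,\vp^3)$, and a short algebraic check using $\mathbb{E}((A-1)^2)+(m-1)=\mathbb{E}(A^2)-m$ then gives $(F,\vp)=\sigma^2(1,\vp)$. The goal thus reduces to the ergodic statement
\begin{equation*}
\frac{1}{n}\int_0^{nt}F(V_s)\,ds \;\underset{n\to\infty}{\longrightarrow}\; \frac{(F,\vp)}{(1,\vp)}\,t \quad \text{in probability.}
\end{equation*}
Using the identity $\int_0^{L(T^i)}F(V_s^{(i)})\,ds = \int_0^\infty \sum_{u\in N^i_r}F(X_u^i(r))\,dr$ for each tree $T^i$, the strategy is to split the i.i.d.\ trees into \emph{short} ones (extinct before a threshold $\tau_n\to\infty$) and \emph{long} ones. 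For short trees, a many-to-one bound combined with the $1/t$ decay of the survival probability from Theorem \ref{criticalsurvivalprob} shows that their total contribution to $\int_0^{nt}F(V_s)\,ds$ is $o(n)$. For long trees, Corollary \ref{uniformparticle} (in the uniform-in-$x$ form of Corollary \ref{cor::unifaverage}) ensures that $\sum_{u\in N_r}F(X_u(r))/|N_r|\approx(F,\vp)/(1,\vp)$, so that $\int_0^{L(T^i)}F(V_s^{(i)})\,ds\approx(F,\vp)/(1,\vp)\cdot L(T^i)$. Since $nt - L(T^{\Lambda_{nt}}) \leq \sum_{i<\Lambda_{nt}}L(T^i) \leq nt$, summing yields the limit in expectation, and a second-moment bound (via a many-to-two computation in the spirit of Lemma \ref{lem::exp_nt2}) upgrades this to convergence in probability.

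The main obstacle is this last ergodic step: because each single tree contributes a heavy-tailed, infinite-mean integral $\int_0^{L(T)}F(V_s)\,ds$, no naive law of large numbers applies across trees, and the short/long tree decomposition together with uniform-in-$x$ control of the spatial ergodicity of long-surviving trees is essential. The quantitative form of Corollary \ref{cor::unifaverage}, and if needed the stronger Lemma \ref{lemma::average_lln}, are the key tools for making this rigorous.
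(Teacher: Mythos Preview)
Your overall architecture matches the paper exactly: apply a martingale functional CLT, verify a Lindeberg-type jump condition (the paper does this via the predictable compensator \eqref{eqn::predic_compensator}, but your argument is equivalent), and reduce the quadratic-variation convergence to the ergodic statement $\tfrac{1}{t}\int_0^t F(V_s)\,ds\to (F,\vp)/(1,\vp)$. The paper isolates this last step as Proposition~\ref{propn::qvconv} and then recovers $\sigma^2$ by applying \eqref{eqn::evalue} with $v=\vp^2$, just as you do.

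Where you diverge is in the strategy for the ergodic step, and here your sketch has a real gap. You propose a \emph{tree-by-tree} decomposition (short trees negligible, long trees satisfy $\int_0^{L(T)}F(V_s)\,ds\approx c\,L(T)$), whereas the paper works \emph{height-by-height}: it samples a uniform exploration time $U_t$, conditions on the height $H_{U_t}$ and tree index $\Lambda_{U_t}$, and shows that the average of $F$ over the vertices at that height \emph{visited so far} is close to $c$. The crucial point is that the last tree $T^{\Lambda_{nt}}$ is only partially explored and, being size-biased, typically carries a positive fraction of the total exploration time --- you cannot discard it via your sandwich $nt-L(T^{\Lambda_{nt}})\le\sum_{i<\Lambda_{nt}}L(T^i)\le nt$. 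At each height of this partial tree only a \emph{prefix} of $N_r$ has been visited, and this is precisely the regime of Lemma~\ref{lemma::average_lln} (averages over depth-first prefixes $N_{t,M}$), which you correctly flag as the key tool. But that lemma plugs naturally into a height-wise argument, not a tree-wise one: it gives uniformity over prefix sizes $M$ at a \emph{fixed} height, not uniformity of $\sum_{u\in N_r}F(X_u(r))/|N_r|$ over heights $r$, which is what your inference ``$\int F(V_s)\,ds\approx c\,L(T)$'' would need. The paper's proof of Proposition~\ref{propn::qvconv} is accordingly organised around Lemma~\ref{lemma::average_lln} together with the height estimates of Lemmas~\ref{lemma::length} and~\ref{lemma::ubigheight}; your tree-wise route would require a separate uniform-in-$r$ statement that is not available from the cited results.
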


\begin{remark0}\label{rmk::nnotint}
	In fact, $n$ need not be an integer here (ie. we can let $n\to \infty$ in $\R$). We use the notation $n$ so as to clearly distinguish it from the continuous time parameters used to describe the evolution of the process. The same remark applies to Propositions \ref{propn::convtoex} and \ref{propn::convtoex2} below.
\end{remark0}

\begin{proof}
	This follows from the functional central limit theorem for martingales \cite[\S VIII, Theorem 3.22]{limitthms} once we can show that for all $t\geq 0$
	\begin{equation}\label{eqn::pred_conv}\langle \bar{S}^n \rangle_t \to \sigma^2 t  \end{equation}
	in probability as $n\to \infty$. Here $\left(\bar{S}_t^n \right)_{t\geq 0} = \left( \bar{S}_{nt}/\sqrt{n} \right)_{t\geq 0}$. \footnote{In fact, since $(\bar{S}_t^n)_t$ may have jumps, we also need to verify an extra condition in order to apply the functional central limit theorem. However this condition, see \cite[\S VIII, Eq.(3.23)]{limitthms}, is simply that we have, for every $t>0$,
	$$\flm \int_0^{t} \vp(V_{s-}(w))^2\mathbb{E}\big((A-1)^2 \I_{\{(A-1)^2 \vp(V_{s-}(w))^2>n\eps\}}\big) \, ds \to 0 $$
	almost surely as $n\to \infty$ (where the expectation $\mathbb{E}$ is only over $A$). Since $A$ has finite variance, this does indeed hold.} 
	
	To show the convergence (\ref{eqn::pred_conv}), we observe that we can write
	\[ \frac{\langle \bar{S}^n \rangle_t}{t} = \frac{1}{nt} \int_0^{nt}\frac{\lambda}{m-1}\mathbb{E}(A^2-2A+1)\vp(V_s)^2+ 2\sum_{i,j} a^{ij}(V_s) \frac{\st \vp}{\st x_i} \frac{\st \vp}{\st x_j}(V_s)  \, ds. \]
	Then, since $\vp$ and all of its first order derivatives are bounded (recall we are assuming this in this section), the result follows immediately from Proposition \ref{propn::qvconv} below. Indeed the proposition gives that 
	\[ \frac{\langle \bar{S}^n \rangle_t}{t} \to \frac{\lambda \mathbb{E}[A^2-2A+1]}{(1,\vp) (m-1)} \int_D \vp(x)^3 \, dx + \frac{2}{(1,\vp)} \int_D \vp(x) \sum_{i,j} a^{ij} (x)\frac{\st \vp}{\st x_i} \frac{\st \vp}{\st x_j}(x) \, dx, \]
	as $t\to \infty$, where the right hand side, by applying (\ref{eqn::evalue}) with $v(x)=\vp(x)^2$, is equal to 
	\[ \frac{\lambda (1,\vp^3)}{(m-1)(1,\vp)} \big( \mathbb{E}[A^2-2A+1] - (m-1) \big)=\sigma^2.\]
\end{proof}

\begin{propn}
	\label{propn::qvconv}
	Suppose that $f$ is a bounded, measurable function. Then 
	\[Q_t := \frac{1}{t}\int_0^t f(V_s) \, ds \to \frac{\ip{f}{\vp}}{\ip{1}{\vp}}\] in $\bar{\mathbb{P}}^x$-probability as $t\to \infty$. 
\end{propn}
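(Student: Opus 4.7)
Write $f = \bar f + g$ with $\bar f := \ip{f}{\vp}/\ip{1}{\vp}$ and $\ip{g}{\vp} = 0$. Since $\frac{1}{t}\int_0^t \bar f\,ds = \bar f$, it suffices to show that
\[ \frac{1}{t}\int_0^t g(V_s)\,ds \to 0 \]
in $\bar{\mathbb{P}}^x$-probability, for every bounded measurable $g$ with $\ip{g}{\vp} = 0$.

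The natural strategy is a second-moment estimate: by Chebyshev's inequality, it suffices to prove $\bar{\mathbb{P}}^x\bigl[(\int_0^t g(V_s)\,ds)^2\bigr] = o(t^2)$. Writing this by Fubini as
\[ 2\int_0^t \int_0^r \bar{\mathbb{P}}^x\bigl[g(V_r)\,g(V_{r'})\bigr]\,dr'\,dr, \]
one splits the two-point function according to whether $V_r$ and $V_{r'}$ belong to the same tree of the i.i.d.\ sequence. In the different-tree case, independence factors the expectation, and the single-point mean $\bar{\mathbb{P}}^x[g(V_r)]$ is small: the eigenfunction expansion (\ref{eqn::f_decomp}) applied to a single tree combined with many-to-one (Lemma \ref{lem::many_to_one}) yields a series in which the dominant $i=1$ term vanishes by $\ip{\vp}{g}=0$, leaving only pieces that decay exponentially in $r$ at the rate of the spectral gap $\gamma := \lambda_2 - \lambda > 0$. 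In the same-tree case, the two-point function is controlled by the many-to-two lemma (Lemma \ref{lem::many_to_two}) combined once more with (\ref{eqn::f_decomp}): the $i=1$ contributions are again killed by $\ip{\vp}{g}=0$, and the remaining terms decay exponentially in the relevant time differences.

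The main obstacle is the contribution from the ``current'' tree being visited at DFS-time $t$. Because $\bar{\mathbb{P}}^x(L(T)) = \infty$ (the $i=1$ term of (\ref{eqn::f_decomp}) with $f \equiv 1$ produces a divergent integral), the inspection paradox forces $L(T^{\Lambda_t})$ to be typically of order $t$, so any estimate has to be uniform in the size of that tree. This is precisely where Lemma \ref{lemma::average_lln} enters: for the current tree, the partial integral $\int_0^{t-\tau_{\Lambda_t-1}} g(V_s)\,ds$ can be rewritten by Fubini as $\int_0^\infty \sum_{u \in N_r^{*}} g(X_u(r))\,dr$, where $N_r^{*}$ is an appropriate DFS-prefix of the particles of $N_r$ in the current tree. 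Lemma \ref{lemma::average_lln} guarantees that, on survival, the average of $g$ over any sufficiently large DFS-prefix of $N_r$ is close to $\bar g = 0$, so the inner sums are negligible compared to $|N_r|$. Combined with the uniform second-moment bound on $|N_r|$ from Lemma \ref{lem::exp_nt2} (to dispatch the contribution of small or exceptional values of $|N_r|$) and the asymptotics of Theorem \ref{thm::convergenceconditionednt} / Corollary \ref{uniformparticle} (to describe the joint size of $|N_r|$ and the empirical particle distribution in a long-surviving tree), integrating these estimates over $r$ yields the required $o(t)$ bound on $\bigl|\int_0^t g(V_s)\,ds\bigr|$ in probability. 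The most delicate point is piecing the ingredients together so as to absorb error contributions from the boundary regimes (small $|N_r|$, and the beginning and end of the exploration of a tree).
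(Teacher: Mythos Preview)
Your outline contains the right ingredients---in particular the Fubini identity $\int_0^L g(V_s)\,ds=\int_0^\infty \sum_{u\in N_r}g(X_u(r))\,dr$ and the appeal to Lemma~\ref{lemma::average_lln} for DFS-prefixes---but the packaging via the two-point function $\bar{\mathbb{P}}^x[g(V_r)g(V_{r'})]$ has a genuine gap. The claim that ``the single-point mean $\bar{\mathbb{P}}^x[g(V_r)]$ is small'' because ``many-to-one yields a series in which the $i=1$ term vanishes, leaving pieces that decay exponentially in $r$'' confuses DFS time with branching-diffusion time. The many-to-one lemma computes expectations of sums over $N_t$ for \emph{real} time $t$; it says nothing about $g(V_r)$ for fixed DFS-time $r$, whose law depends on all tree lengths up to $\Lambda_r$ in a complicated way. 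There is no spectral-gap decay in the DFS-time variable. Relatedly, the different-tree/same-tree split does not decouple: whether $V_r$ and $V_{r'}$ lie in different trees is a random event depending on the lengths of earlier trees, so the expectation does not simply factor. A further obstacle you have not addressed is that the second moment of a single completed-tree integral $\int_0^{L(T)} g(V_s)\,ds$ is likely infinite (its tail is governed by the extinction time, which has no second moment), so one cannot treat the completed-tree sum by a variance bound.

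The paper avoids all of this by a randomisation trick that you should compare with: introduce an independent uniform time $U_t$ on $[0,t]$, so that $\bar{\mathbb{P}}^x[Q_t]=\bar{\mathbb{P}}^x\!\otimes\mu_t^U[f(V_{U_t})]$. Now condition on the \emph{height} $H_{U_t}$, the tree index $\Lambda_{U_t}$, and the tree itself; the conditional expectation of $f(V_{U_t})$ becomes exactly the DFS-prefix average $m_t$ of $f$ at level $H_{U_t}$---precisely the object controlled by Lemma~\ref{lemma::average_lln}. One then only needs that $H_{U_t}$ is typically of moderate size (between $R(t)\to\infty$ and $C\sqrt t$), which follows from Lemmas~\ref{lemma::length} and~\ref{lemma::ubigheight} and a law of large numbers. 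The second moment of $Q_t$ is handled by using two independent uniform times, reducing to the same estimate. This route never touches the two-point function of $V_r$ in DFS time, and that is what makes it go through cleanly.
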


Before we prove Proposition \ref{propn::qvconv}, let us record some of the consequences of Proposition \ref{propn::mgaleinv}. 

\begin{propn} 
	\label{propn::convtoex}
	Under $\bar{\mathbb{P}}^x$, we have the joint convergence
	\begin{equation}\label{eqn::jointconv}
		\left( \frac{\bar{\mathbf{S}}_{nt}}{\sqrt{n}}, \frac{\Lambda_{nt}}{\sqrt{n}}\right)_{t\geq 0} \to \left(\sigma |\beta_t|, \frac{\sigma}{\vp(x)}L_t^0(\beta)\right)_{t\geq 0} 
	\end{equation}
	as $n\to \infty$, in distribution with respect to the Skorohod topology. Here, $\beta$ is a standard Brownian motion started at $0$ and $L_t^0(\beta)$ is the local time of $\beta$ at $0$.\end{propn}

\begin{propn} \label{propn::convtoex2} For any $y>0$, under $\mathbb{P}^x( \cdot \, \big| \, \{\sup_t \mathbf{S}_t\geq \sqrt{n} y\} )$ (note that we are now considering the law of a single tree), we have 
	\begin{equation}
		\label{eqn::excconv}
		\left(\frac{\mathbf{S}_{nt}}{\sqrt{n}}\right)_{t\geq 0} \overset{(d)}{\underset{n\to \infty}{\longrightarrow}} \left(\sigma \ee^{\ge y/\sigma}_t\right)_{t\geq 0} 
	\end{equation}
	where $(\ee^{\ge y/\sigma}_t)_{t\geq 0}$ is a Brownian excursion conditioned to reach (at least) height $y/\sigma.$ 
\end{propn}

\begin{proof2}{Proposition \ref{propn::convtoex}}
	From the definition of $\bar{S}'_t=\bar{S_t}-\vp(V_t)$, we have that $|\bar{S}_{nt}'/\sqrt{n}-\bar{S}_{nt}/\sqrt{n}|\leq \|\vp\|_\infty/\sqrt{n}$ for all $t\geq 0$ and so Proposition \ref{propn::mgaleinv} implies that 
	\[ \left( \frac{\bar{S}_{nt}'}{\sqrt{n}} \right)_{t\geq 0} \to \left( \sigma B_t \right)_{t\geq 0} \] as $n\to \infty$ as well. Writing $\underline{B}_t=\inf_{0\leq s\leq t} B_s$ this implies the joint convergence 
	\begin{equation}\label{eqn::jc1}  \left( \frac{\bar{\mathbf{S}}_{nt}}{\sqrt{n}}, -\frac{\bar{I}_{nt}'}{\sqrt{n}}\right)_{t\geq 0} =\left( \frac{\bar{S}_{nt}'-\bar{I}_{nt}'}{\sqrt{n}}, -\frac{\bar{I}_{nt}'}{\sqrt{n}}\right)_{t\geq 0} \overset{(d)}{\underset{n\to \infty}{\longrightarrow}} \left(\sigma (B_t-\underline{B}_t),-\sigma\underline{B}_t \right)_{t\geq 0} \end{equation}
	where the right hand  side by L\'{e}vy's theorem, see for example \cite[\S VI, Theorem VI.2.3]{revuzyor}, is equal in distribution to \[\left(\sigma |\beta_t|, \sigma L_t^0(\beta)\right)_{t\geq 0}. \] However, we know that $\Lambda_t=-\bar{I}_t'/\vp(x)$, and so (\ref{eqn::jointconv}) follows. 
\end{proof2}

\begin{proof2}{Proposition \ref{propn::convtoex2}} For this result, we follow \cite[Proposition 2.5.2]{crtconvgd}. It is well known that you can construct the process $\ee^{\ge y/\sigma}$ from a standard Brownian motion $\beta$ by taking \[\ee_t^{\ge y/\sigma}=|\beta_{(G+t)\wedge D}|\]where $T=\inf\{t\geq 0: |\beta_t|\geq y/\sigma\}$, $G=\sup\{t\leq T: \beta_t=0\}$ and $D=\inf\{t\geq T: \beta_t=0\}.$ By the Skorohod representation theorem and (\ref{eqn::jointconv}) we also know that there exists a process
	\[\left(Z^{(n)}_t, \Lambda^{(n)}_t\right)_{t\geq 0}\overset{(d)}{=}\left( \frac{\bar{S}_{nt}'-\bar{I}_{nt}'}{\sqrt{n}}, \frac{\Lambda_{nt}}{\sqrt{n}}\right)_{t\geq 0} \]
	such that 
	\[ \left(Z^{(n)}_t, \Lambda^{(n)}_t\right)_{t\geq 0}\underset{n\to \infty}{\longrightarrow} \left(\sigma |\beta_t|, \frac{\sigma}{\vp(x)}L_t^0(\beta)\right)_{t\geq 0} \]
	uniformly on every compact set almost surely. This is because Skorohod convergence is equivalent to local uniform convergence when the limit is continuous. Define $T^{(n)}=\inf\{t\geq 0: Z^{(n)}\geq y\}$ for this sequence of processes, and $G^{(n)}$, $D^{(n)}$ in the same way as $G$ and $D$ above. By Remark \ref{remark::excursions_trees} and (\ref{eqn::jointconv}), if we can prove that $G^{(n)}\to G$ and $D^{(n)}\to D$ almost surely, we will be done. To do this, first note that since $\beta$ must exceed $y/\sigma$ immediately after time $T$, we have that $T^{(n)}\to T$ almost surely. This implies straight away that for all $t<D$ we have $t\leq D^{(n)}$ for all $n$ large enough almost surely. Now we must show that for all $t>D$ we have $t\geq D^{(n)}$ for all $n$ large enough almost surely. These facts together (along with the corresponding results for $G$) are enough to prove the convergence. To see the final claim, we use the convergence of the local time. For any $t>D$, we have using basic properties of Brownian local time that $L_t^0>L_D^0=L_T^0$. The convergence of the local time therefore tells us that $\Lambda_t^{(n)}> \frac{\sigma}{\vp(x)} \,L_T^0$ for all $n$ large enough almost surely. Since \[\Lambda_{T^{(n)}}^{(n)}\to \frac{\sigma}{\vp(x)}\,L_T^0\] almost surely, this implies that we have also have $\Lambda_t^{(n)}>\Lambda_{T^{(n)}}^{(n)}$ for all $n$ large enough almost surely. Using the fact that $\Lambda^{(n)}$ stays constant on $[T^{(n)},D^{(n)})$, we see that $t\geq D^{(n)}$. 
\end{proof2}

\subsubsection{Proof of Proposition \ref{propn::qvconv}}

The rest of this subsection will be devoted to the proof of Proposition \ref{propn::qvconv}. We begin by stating some preliminary lemmas that we will need for the proof. The first lemma gives a lower bound of order $t^{-1/2}$ for the probability that a single tree $T$ under $\mathbb{P}^x$ has total size (as measured by the total length $L(T)$ of the continuous depth first exploration of $T$) bigger than $t$. This is a consequence (see the proof) of the fact that, if $T$ is conditioned on reaching height at least $s$ with $s$ large, $T$ will have size of order $s^2$: the same scaling relation that holds for a Brownian excursion. The second and third say that if we pick a time uniformly in $[0,t]$ and look at the height of the particle visited at this time in the concatenated depth first exploration of a sequence of trees under $\bar{\mathbb{P}}^x$, then with high probability the height will not be of order bigger than $\sqrt{t}$, but it will go to $\infty$. Again this is heuristically what one should expect if Theorem \ref{thm::crtconv} is true, since the same holds for the value of a reflected Brownian motion at a uniformly chosen time in $[0,t]$. 
\begin{lemma}
	\label{lemma::length}
	For any $t_0>0$, there exists a constant $c_{t_0}\in (0,\infty)$ such that for all $x\in D$ and all $t\ge t_0$
	\[\mathbb{P}^x(L(T)>t)\geq c_{t_0} \frac{\vp(x)}{\sqrt{t}}.\]
\end{lemma}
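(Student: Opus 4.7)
The idea is to bound $L(T) \geq \int_{\sqrt{t}}^{2\sqrt{t}}|N_s|\,ds$ from below and to show this integral exceeds a positive constant multiple of $t$ with probability at least $c\,\vp(x)/\sqrt{t}$. For bounded $t\in[t_0,T]$, with any fixed $T$, the claim is immediate from $L(T) \geq t\,\I_{\{|N_t|>0\}}$ together with Theorem \ref{criticalsurvivalprob}, which gives $\mathbb{P}^x(L(T)>t) \geq \mathbb{P}^x(|N_t|>0) \asymp \vp(x)/t$, and on any bounded interval this is $\geq c_T\,\vp(x)/\sqrt{t}$. So the interesting case is $t>T$.

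For $t>T$ the plan has two steps. First, I would combine Theorem \ref{criticalsurvivalprob} with the uniform-in-$x$ form of Theorem \ref{thm::convergenceconditionednt} (applied to $f\equiv 1$, as extracted in the proof of the theorem, since the convergence of the Laplace transform (\ref{eqn::convergence_conditioned_laplace}) is shown to be uniform in $x$) to deduce that for some fixed constants $0<\kappa<K$ and $\rho>0$,
\begin{equation*}
\mathbb{P}^x\bigl(\kappa\sqrt{t} \leq |N_{\sqrt{t}}| \leq K\sqrt{t}\bigr) \;\geq\; \rho\,\vp(x)/\sqrt{t},
\end{equation*}
because conditionally on $\{|N_{\sqrt{t}}|>0\}$ the ratio $|N_{\sqrt{t}}|/\sqrt{t}$ is close in distribution to a non-degenerate exponential random variable. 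On this event, the uniform-in-$x$ form of Corollary \ref{uniformparticle} (which is the same uniformity underlying Corollary \ref{cor::unifaverage}) ensures that $M_{\sqrt{t}} := \sum_{u\in N_{\sqrt{t}}}\vp(X_u(\sqrt{t})) \asymp \sqrt{t}$ with high conditional probability, since the empirical mean $M_{\sqrt{t}}/|N_{\sqrt{t}}|$ is close to $\mu(\vp) = \int\vp^2/\int\vp >0$.

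Second, I would restart the process at time $\sqrt{t}$ via the strong Markov property and apply a Paley--Zygmund argument to $Y := \int_{\sqrt{t}}^{2\sqrt{t}}|N_s|\,ds$. The many-to-one lemma combined with the estimate $\mathbf{P}^y(\tau^D>r)\asymp \vp(y)\e^{-\lambda r}$ (from Lemma \ref{lem::spineconvergence}) gives $\mathbb{E}[Y \mid \mathcal{F}_{\sqrt{t}}] \asymp M_{\sqrt{t}}\sqrt{t}$, while the many-to-two lemma together with the second-moment bound of Lemma \ref{lem::exp_nt2} yields
\begin{equation*}
\mathbb{E}[Y^2 \mid \mathcal{F}_{\sqrt{t}}] \;\lesssim\; t\,M_{\sqrt{t}}^2 + t^{3/2} M_{\sqrt{t}},
\end{equation*}
where the first term comes from the ``cross'' contributions $\sum_{u\neq v}\mathbb{E}^{X_u}[\int N]\mathbb{E}^{X_v}[\int N]$ and the second from the ``diagonal'' contributions $\sum_u \mathbb{E}^{X_u}[(\int N)^2]$. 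On the event from Step 1 both right-hand-side terms are $\lesssim t^2$, while $\mathbb{E}[Y \mid \mathcal{F}_{\sqrt{t}}] \gtrsim t$, so Paley--Zygmund gives $\mathbb{P}(Y \geq ct \mid \mathcal{F}_{\sqrt{t}}) \geq c'$ bounded away from $0$ for some $c,c'>0$. Combining this with Step 1 gives $\mathbb{P}^x(L(T)>ct) \geq \mathbb{P}^x(Y>ct) \gtrsim \vp(x)/\sqrt{t}$, which is the desired bound after absorbing $c$ into the constant.

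The main obstacle is carrying all of these estimates out uniformly in $x \in D$: since $\vp$ vanishes on $\partial D$, $\vp(x)$ can be arbitrarily small and every intermediate bound must be uniform. The uniform forms of Theorem \ref{thm::convergenceconditionednt} and Corollary \ref{uniformparticle} (already extracted in their respective proofs and used in Corollary \ref{cor::unifaverage} and Lemma \ref{lemma::average_lln}) are essential. The most delicate piece is the conditional second-moment estimate for $Y$; controlling the cross contribution by $M_{\sqrt{t}}^2$ and the diagonal by $M_{\sqrt{t}}$ (via Lemma \ref{lem::exp_nt2}) and then comparing both to $(\mathbb{E}[Y \mid \mathcal{F}_{\sqrt{t}}])^2$ on the ``good'' event of Step 1 is precisely what makes Paley--Zygmund effective.
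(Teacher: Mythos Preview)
Your approach is correct and follows essentially the same route as the paper: condition on survival to time of order $\sqrt{t}$, then run a second-moment argument on the time-integral of $|N_s|$ to show it is $\gtrsim t$ with positive conditional probability. The ingredients you identify (Theorems~\ref{criticalsurvivalprob} and~\ref{thm::convergenceconditionednt} in their uniform-in-$x$ forms, Lemma~\ref{lem::exp_nt2}, and the many-to-one/two lemmas) are exactly what the paper uses.

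There are two organisational differences worth noting. First, rather than tracking $M_{\sqrt t}$ directly, the paper restricts to particles lying in a compactly contained subdomain $D'\Subset D$ at time $s\sim\sqrt t$; this gives a uniform lower bound $\inf_{y\in D'}\mathbb{P}^y(|N_r|)\geq c>0$ and so avoids having to control both $|N_{\sqrt t}|$ and $M_{\sqrt t}$ separately. Second, and more substantively, the paper integrates over a \emph{short} window $[s,s+acs]$ rather than your long window $[\sqrt t,2\sqrt t]$. Over the short window the conditional variance of the integral scales like $c^3s^4$ while the squared mean scales like $c^2s^4$, so Chebyshev gives a failure probability $O(c)$. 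This yields the stronger statement $\mathbb{P}^x(L(T)\le cs^2\mid |N_s|>0)\to 0$ as $c\to 0$ uniformly in $s$ and $x$ (equation~(\ref{eqn::biglength})), which is in fact what is needed later in the proof of Lemma~\ref{lemma::goodcondtree}. Your Paley--Zygmund bound over $[\sqrt t,2\sqrt t]$ gives only a fixed positive lower bound on this conditional probability, which suffices for the lemma as stated but not for~(\ref{eqn::biglength}); to recover the stronger statement you would need to shorten the integration window in the same way.
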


Now, let $U_t$ be a sample from the uniform distribution on $[0,t]$, and write $\mu^U_t$ for its law. In the following we write 
\begin{equation}
	H_t^*=H_{U_t} \;\;\; ; \;\;\; \Lambda_t^*= \Lambda_{U_t} \;\;\; \text{and} \;\;\; V_t^*=V_{U_t}
\end{equation}
to simplify notation.

\begin{lemma}\label{lemma::ubigheight} For all $x\in D$
	
	\[\bar{\mathbb{P}}^x \otimes \mu^U_t \left(H_t^* \geq C\sqrt{t}\right) \to 0\]
	as $C\to \infty$, uniformly in $\{t\ge t_0\}$ for any $t_0>0$, where $\otimes$ represents the independent product measure. 
\end{lemma}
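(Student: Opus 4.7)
I will prove the stronger statement
\[
\bar{\mathbb{P}}^x\bigl(\sup_{s\le t} H_s \ge C\sqrt{t}\bigr)\ \longrightarrow\ 0 \quad\text{as }C\to\infty,\ \text{uniformly in }t\ge t_0,
\]
which implies the lemma since $H_t^* = H_{U_t} \le \sup_{s\le t}H_s$. The plan is to split based on two ingredients: a bound on the number of trees $\Lambda_t$ entered by time $t$ (showing $\Lambda_t = O(\sqrt t)$ with high probability), and a union bound on the depths of those trees using Theorem~\ref{criticalsurvivalprob}.

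\textbf{Step 1 (controlling $\Lambda_t$).} By Lemma~\ref{lemma::pathrel}, the running infimum of $\bar{S}'_s = \bar{S}_s - \vp(V_s)$ satisfies $\bar{I}'_s = -\vp(x)\Lambda_s$, so
\[
\vp(x)\Lambda_s \le \sup_{u\le s}|\bar{S}'_u| \le \sup_{u\le s}|\bar{S}_u| + \|\vp\|_\infty.
\]
Since $\vp\in C^1(\overline D)$, all first-order partial derivatives of $\vp$ are bounded on $D$, and the integrand in $\langle \bar{S}\rangle_t$ (Lemma~\ref{lemma::mgaleqv}) is bounded by some constant. Hence $\bar{\mathbb{P}}^x(\bar{S}_t^2) = \bar{\mathbb{P}}^x(\langle \bar{S}\rangle_t) \le Kt$, and Doob's $L^2$ inequality gives $\bar{\mathbb{P}}^x(\sup_{s\le t}|\bar{S}_s|^2) \le 4Kt$. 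Markov's inequality then yields
\[
\bar{\mathbb{P}}^x\bigl(\Lambda_t > M\sqrt{t}\bigr)\ \le\ \frac{4Kt}{(\vp(x) M\sqrt{t} - \|\vp\|_\infty)^2}\ \le\ \frac{16K}{\vp(x)^2 M^2}
\]
for $M$ large enough (depending only on $x$ and $t_0$).

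\textbf{Step 2 (controlling tree depths).} Clearly $\sup_{s\le t} H_s \le \max_{j\le \Lambda_t + 1} D(T^j)$, where $D(T^j) := \inf\{h\ge 0 : |N^{(j)}_h| = 0\}$ is the depth of the $j$th tree. By Theorem~\ref{criticalsurvivalprob}, for any $\eps>0$ there exists $h_0 = h_0(\eps)$ such that
\[
\mathbb{P}^x(D(T) \ge h) = \mathbb{P}^x(|N_h|>0) \le \frac{(b^{-1}+\eps)\vp(x)}{h}\qquad \forall\, h\ge h_0.
\]
Since the trees $(T^j,l^j,X^j)$ are i.i.d.\ under $\bar{\mathbb{P}}^x$ with the law of $(T,l,X)$ under $\mathbb{P}^x$, a union bound gives, for any integer $N\ge 1$ and any $h\ge h_0$,
\[
\bar{\mathbb{P}}^x\bigl(\max_{j\le N} D(T^j) \ge h\bigr)\ \le\ N\cdot\frac{(b^{-1}+\eps)\vp(x)}{h}.
\]

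\textbf{Step 3 (combining).} Splitting on $\{\Lambda_t \le M\sqrt{t}\}$ and using Steps 1--2 with $N = \lceil M\sqrt{t}\rceil + 1$ and $h = C\sqrt{t}$, we obtain for all $t$ with $C\sqrt{t}\ge h_0$,
\[
\bar{\mathbb{P}}^x\bigl(\sup_{s\le t} H_s \ge C\sqrt{t}\bigr)\ \le\ \frac{16K}{\vp(x)^2 M^2} + \frac{2M(b^{-1}+\eps)\vp(x)}{C}.
\]
Choosing $M = \sqrt{C}$ yields a bound of order $C^{-1} + C^{-1/2}$, which tends to $0$ as $C\to\infty$. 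For $C\ge h_0/\sqrt{t_0}$ the condition $C\sqrt{t}\ge h_0$ holds for all $t\ge t_0$, giving the required uniformity.

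The only mildly delicate point is Step~1: it is essential that the predictable quadratic variation of $\bar{S}$ grows at most linearly in $t$, which is where the hypothesis $\vp\in C^1(\overline D)$ enters (ensuring the integrand in Lemma~\ref{lemma::mgaleqv} is bounded). Everything else is a direct union bound combined with the sharp asymptotic $\mathbb{P}^x(|N_h|>0)\sim \text{const}\cdot\vp(x)/h$ from Theorem~\ref{criticalsurvivalprob}.
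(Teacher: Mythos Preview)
Your proof is correct and takes a genuinely different route from the paper's. Both arguments reduce to the same skeleton: the event $\{H_t^*\ge C\sqrt{t}\}$ (or your stronger $\{\sup_{s\le t}H_s\ge C\sqrt{t}\}$) forces one of the first $\Lambda_t$ trees to have depth at least $C\sqrt{t}$, and each tree does so with probability $O(\vp(x)/(C\sqrt{t}))$ by Theorem~\ref{criticalsurvivalprob}. The difference lies in how $\Lambda_t$ is controlled. The paper uses Lemma~\ref{lemma::length} (a lower bound on tree length, whose proof goes through Theorem~\ref{thm::convergenceconditionednt}) to obtain the \emph{expectation} bound $\bar{\mathbb{P}}^x(\Lambda_t)\le K\sqrt{t}/\vp(x)$; it then exploits the independence of $\{i\le \Lambda_t\}$ from the $i$th tree to write
\[
\bar{\mathbb{P}}^x\otimes\mu_t^U(H_t^*\ge C\sqrt{t}) \le \sum_{i\ge 1}\bar{\mathbb{P}}^x(i\le \Lambda_t)\,\mathbb{P}^x(|N_{C\sqrt{t}}|>0) = \bar{\mathbb{P}}^x(\Lambda_t)\cdot O\Big(\frac{\vp(x)}{C\sqrt{t}}\Big) \le \frac{K^2}{C}.
\]
You instead obtain a \emph{tail} bound on $\Lambda_t$ directly from the martingale structure: since $\vp(x)\Lambda_t = -\bar I_t'$ and $\langle \bar S\rangle_t\le Kt$ deterministically (this is where $\vp\in C^1(\overline D)$ enters, via Lemma~\ref{lemma::mgaleqv}), Doob's inequality gives $\bar{\mathbb{P}}^x(\Lambda_t>M\sqrt{t})=O(M^{-2})$, and you then split and optimise over $M$. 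Your approach bypasses Lemma~\ref{lemma::length} entirely and is more self-contained, at the cost of a weaker final rate ($O(C^{-1/2})$ versus the paper's $O(C^{-1})$) and an explicit reliance on the $C^1$ hypothesis. A minor slip: the constant in Step~2 should be $(b+\eps)$ rather than $(b^{-1}+\eps)$, since Theorem~\ref{criticalsurvivalprob} gives $\mathbb{P}^x(|N_h|>0)\sim b\vp(x)/h$; this is immaterial to the argument.
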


	\begin{lemma}\label{lemma::rtoinfinity} For $\delta>0$, define $R^\delta(t)>0$ to be such that $\pbu{x}{H_t^*\leq R^\delta(t)} = \frac{\delta}{3}.$ Then $R^\delta(t)\to \infty$ as $t\to \infty$ for any $x\in D$. \end{lemma}

We will now show how we may deduce Proposition \ref{propn::qvconv} from these lemmas, and then go on to prove them. In the following we set $N_s^i=\{u\in T^i: s\in [s_u^i-l_u^i, s_u^i)\}$ and $\tilde{N}_{s,t}^{i} = \{u\in T^i: \{s\in [s_u^i-l_u^i, s_u^i)\} \cap \{(u,T^i)=\bar{v}_r \text{ for some } r\leq t\}\}$. In words, $N_s^i$ is simply the set $N_s$ for the $i$th tree, and $\tilde{N}_{s,t}^i\subset N_s^i$ is the set of individuals in $N_s^i$ that have been visited in the depth first exploration of the sequence of trees before time $t$.\medbreak

\begin{proof2}{Proposition \ref{propn::qvconv}} The main idea of the proof is to view $Q_t$ as the ``average value'' of $f$ taken over all the positions $(V_s)_{s\in [0,t]}$ discovered before time $t$ in the depth first exploration of the i.i.d. sequence of trees. Recall from Corollary \ref{cor::unifaverage} that if we take the average value of $f$ over all posiitons of particles at height $t$ in a single tree (conditioned to reach height at least $t$) then this converges to $(f,\vp)/(1,\vp)$ in probability as $t\to \infty$. Roughly speaking, this convergence carries over to the average $Q_t$ because the depth first exploration spends most of its time in large trees at relatively large heights.  
	
	 To prove this properly, we define 
	\begin{equation*}
		m_t := \big(\big|\tilde{N}_{H_t^*,\,t}^{\Lambda_t^*}\big|\big)^{-1} \sum\nolimits_{u\in \tilde{N}_{H_t^*,\,t}^{\Lambda_t^*}} f(X_u^{\Lambda_t^*}(H^*_t) )
	\end{equation*} to be the average value of $f$ among the vertices at height $H_t^*$ of the $\Lambda_t^{*\text{th}}$ tree, that have been visited \emph{before time $t$} in the depth first exploration. 
	Observe that  
	\[Q_t=\pbu{x}{m_t \, | \, \sigma((T^i,X^i,l^i)_{i\ge 1})}.\]
	This means that if $m_t$ converges in $\bar{\mathbb{P}}\otimes \mu_t^U$-probability to $(f,\vp)/(1,\vp)$, then it will also converge in $L^2(\bar{\mathbb{P}}\otimes \mu_t^U)$ (because $f$ is bounded), and so by conditional Jensen's inequality, $Q_t$ will converge in $L^2$ and hence in probability to $(f,\vp)/(1,\vp)$ as well. Thus if we can show that for any fixed $\eps>0$,
	\begin{equation}
		\label{eqn::convprob} \pbu{x}{A_t}:= \pbu{x}{| m_t - \frac{\ip{\vp}{f}}{\ip{\vp}{1}}|>\eps} \to 0\end{equation}
	as $t\to \infty$, then Proposition \ref{propn::qvconv} will follow. 
	
	To do this, also fix a $\delta>0$: we will show that $\pbu{x}{A_t}<\delta$ for all large enough $t$, and hence prove \eqref{eqn::convprob}.  First, by Lemma \ref{lemma::ubigheight} we can choose $C$ such that $\pbu{x}{H_t^*\geq C\sqrt{t}} < \frac{\delta}{3}$ for all $t\ge 1$. We can also define $R(t):=R^\delta(t)$ to be such that $\pbu{x}{H_t^*\leq R(t)} = \frac{\delta}{3}$, and by Lemma \ref{lemma::rtoinfinity}, we have that $R(t)\to \infty$ as $t\to \infty$. Now, for the $i$th tree in our exploration and $s,t>0$, let $$m_{s,t}^i=|\tilde{N}_{s,t}^i|^{-1} \sum\nolimits_{u\in \tilde{N}_{s,t}^i} f(X_u^i(s))$$ be the average value of $f$ among the individuals of tree $i$ at height $s$ that are visited before time $t$ in the depth first exploration (so that $m_t=m_{H_t^*,\, t}^{\Lambda_t^*}$). Also write $A_{s,t}^i$ for the event that $|m_{s,t}^i-\ip{\vp}{f}/\ip{\vp}{1}|>\eps$.  Then the above considerations tell us that we can write
	\begin{equation}\label{eqn::mainprob}
		\pbu{x}{A_t} \leq \frac{2\delta}{3} + \frac{1}{t} \pb{x}{\int_{R(t)}^{C\sqrt{t}} \sum_{i=1}^{\infty} \I_{\{i\leq \Lambda_t\}} \I_{A_{s,t}^i} |\tilde{N}_{s,t}^i| \, ds},
	\end{equation} for all $t\ge 1$,
	where we have obtained the expression for the second term on the right hand side by first conditioning on $\{H_t^*,\Lambda_t^*\}$, and then taking expectation over them.
	
	Now by Fubini (since everything is bounded by $1$) we can rewrite 
	\begin{equation}\label{eqn::breakdownprob} \frac{1}{t} \pb{x}{\int_{R(t)}^{C\sqrt{t}} \sum_{i=1}^{\infty} \I_{\{i\leq \Lambda_t\}} \I_{A_{s,t}^i} |\tilde{N}_{s,t}^i| \, ds}=\frac{1}{t} \int_{R(t)}^{C\sqrt{t}} \sum_{i=1}^\infty \pb{x}{\I_{\{i\leq \Lambda_t\}} \I_{A_{s,t}^i}|\tilde{N}_{s,t}^i|} \, ds. \end{equation}
	The idea is, for each $i\geq 1$, to condition on 
	$\mathcal{K}_{i-1}=\sigma(\cup_{j\leq i-1} (T^{j},X^{j},l^{j}))$. Note that $\mathcal{K}_{i-1}$ is independent of the $i$th tree. Moreover, the event $\{i\leq \Lambda_t\}$ and the start time $\tau_i=L(T^1)+\cdots+L(T^{i-1})$ of the $i$th tree, are measurable with respect to $\mathcal{K}_{i-1}$. Thus we can write, for each $i\geq 1$,
	\begin{align} \label{eqn::conditioned_seq_trees}
		\pb{x}{\I_{\{i\leq \Lambda_t\}} \I_{A_{s,t}^i} |\tilde{N}_{s,t}^i| \, \big| \, \mathcal{K}_{i-1}} & =\I_{{\{i\leq \Lambda_t\}}} \E{x}{\I_{A_s(t-\tau_i)} |N_s(t-\tau_i)|} \\ \nonumber
		& = \I_{{\{i\leq \Lambda_t\}}} \E{x}{ \I_{A_s(t-\tau_i)} |N_s(t-\tau_i)| \, \big| \, |N_s|>0}\E{x}{|N_s|>0}
	\end{align}
	where the expectation in the final term is now with respect to a \emph{single} branching diffusion under $\mathbb{P}^x$, $N_s(r):=\{u\in N_s: u=v_h \text{ for some } h\leq r\}$ is the number of particles at level $s$ in this tree that are visited before time $r$ in a depth first exploration of it, and $A_s(r)=\{ \big| |N_s(r)|^{-1}\sum_{u\in N_s(r)} f(X_u(s))-(\vp,f)/(\vp,1)\big|>\eps\}$ is the event that the average of $f$ among the positions of these particles is more than $\eps$ away from $(\vp,f)/(\vp,1)$. 
	
	We record here that by Theorem \ref{criticalsurvivalprob} and Lemma \ref{lem::exp_nt2}, there exists a $K$ such that 
	\begin{equation} \label{eqn::csbounds} \mathbb{P}^x\left[ \left. |N_t|^2 \right| |N_t|>0 \right]^{1/2} \leq Kt, \;\;\; \mathbb{P}^x(|N_t|>0)\leq K/t\end{equation} 
	for all $t\ge 1$. We can also, by Lemma \ref{lemma::length}, choose this $K$ such that 
	\begin{equation}\label{eqn::expT} \bar{\mathbb{P}}^x(\Lambda_t) \leq \frac{K\sqrt{t}}{\vp(x)}\end{equation} for all $t\ge 1$. Indeed, $\Lambda_t$ can be stochastically dominated by a geometric random variable, whose success rate is $\E{x}{L(T)>t}\ge c_{1}\vp(x)/\sqrt{t}.$ 
	
	Now, decomposing on whether or not $|N_s(t-\tau_i)|$ is bigger than $\delta s/6CK^2 $ (recall the definition of $C$ from earlier in the proof) we have 
	\begin{eqnarray*} \E{x}{\left. \I_{A_s(t-\tau_i)} |N_s(t-\tau_i)| \right| |N_s|>0}& \leq & \frac{\delta s}{6CK^2} + \E{x}{\left.\I_{B_s^{f,\eps}(\delta/6CK^2)} |N_s(t-\tau_i)| \right| |N_s|>0 }\\
		& \leq & \frac{\delta s}{6CK^2} + \E{x}{ \left. |N_s|^2 \right| |N_s|>0}^{1/2} \E{x}{\left.B_s^{f,\eps}(\delta/6CK^2)\right| |N_s|>0}^{1/2}
	\end{eqnarray*} 
	where $B_s^{f,\eps}(\cdot)$ is the event from Lemma \ref{lemma::average_lln}. Using (\ref{eqn::conditioned_seq_trees}), (\ref{eqn::csbounds}) and the above we can therefore deduce that (as long as $s\ge 1$ and $t$ is big enough that $R(t)\ge 1$)
	\begin{equation}
		\pb{x}{\I_{\{i\leq \Lambda_t\}} \I_{A_{s,t}^i}|\tilde{N}_{s,t}^i| }\leq \left(\frac{\delta}{6CK}+K^2\E{x}{B_s^{f,\eps}(\delta/6CK^2)\, \big| \, |N_s|>0}^{1/2} \right)\pb{x}{i\leq \Lambda_t}.
	\end{equation}
	
	This means that the right hand side of (\ref{eqn::breakdownprob}) is less than 
	\[ \left( \frac{\delta}{6CK}+K^2\sup_{s\geq R(t)} \left\{ \mathbb{P}^x\left( \left.B_s(\delta/6K^2)\right| \, |N_s|>0\right)^{1/2}\right\} \right)\frac{1}{t} \int_{R(t)}^{C\sqrt{t}} \sum_{i=1}^\infty \pb{x}{\I_{\{i\leq \Lambda_t\}}}.
	\]
	However, by Fubini and (\ref{eqn::expT}), this is less than 
	\[ \vp(x)^{-1} \big(\frac{\delta}{6}+ CK^3\sup_{s\geq R(t)} \left\{ \mathbb{P}^x\left( \left.B_s(\delta/6CK^2)\right| N_s>0\right)^{1/2}\right\}\big). \] Using Lemma \ref{lemma::average_lln}, and the fact that $R(t)\to \infty$, we see than this is less than $\frac{\delta}{3}$ for all $t$ large enough. Substituting in to (\ref{eqn::mainprob}) proves that $\pbu{x}{A_t}<\delta$ for such $t$, and hence completes the proof.
	
\end{proof2}

\begin{proof2}{Lemma \ref{lemma::length}}  In fact, we will prove a slightly stronger statement, as it will also be of use later on. We will show that
	\begin{equation}
		\label{eqn::biglength}
		\mathbb{P}^x(L(T)\leq cs^2 \, \big| \, |N_s|>0) \to 0 
	\end{equation}
	as $c\to 0$, uniformly in $s\ge 0$ and $x\in D$. Then by choosing $c$ such that $\mathbb{P}^x(L(T)\geq cs^2 \, \big| \, |N_s|>0)\ge 1/2$ for all $s\ge 0$ we can write 
	\begin{equation*}
		\E{x}{L(T)\ge t} \geq \E{x}{L(T)\ge t \, \big| \, |N_{\sqrt{\frac{t}{c}}}|>0}\E{x}{|N_{\sqrt{\frac{t}{c}}}|>0}\geq \frac{1}{2} \E{x}{|N_{\sqrt{\frac{t}{c}}}|>0}
	\end{equation*} for all $t$. Applying Theorem \ref{criticalsurvivalprob}, Lemma \ref{lemma::length} follows.
	
	To prove (\ref{eqn::biglength}) we will show that for any $\delta>0$ there exists a $c>0$ and $S>0$ such that $\mathbb{P}^x(L(T)\leq cs^2 \, \big| \, |N_s|>0)<\delta$ for all $x\in D$ and $s\geq S$. Clearly we can then redefine $c$ such that this holds for all $s\ge 0$ (indeed for all $s\leq S$, we have  $\mathbb{P}^x(L(T)\leq cs^2 | |N_s|>0)\leq \mathbb{P}^x(|N_S|>0)^{-1} \mathbb{P}^x(L(T) \leq cS^2)$ which tends to $0$ as $c\to 0$). 
	
	The idea of the proof is that, conditionally on $|N_s|>0$, $N_s$ will have size $O(s)$ and in fact, $O(s)$ of the particles alive at time $s$ will be well away from the boundary of $D$ (cf. Theorem \ref{thm::convergenceconditionednt}). Then, at least in expectation, each of subtrees rooted at the individuals of $N_s$ will contribute $O(s)$ to $L(T)$. Putting this together with a law of large numbers type argument tells us that $L(T)\ge cs^2$ with high probability for small enough $c$.
	
	So, we fix $\delta>0$ and also pick some $D'\Subset D$ (meaning that $\bar{D'}$ is compact and contained in $D$). We write 
	$N_s^{\subset D'}:=\{u\in N_s: X_u(s)\in D'\}$ and first claim that we can choose $\eta>0$ and $S_1> 0$ such that 
	\begin{equation}
		\label{eqn::lotsinD'}
		\E{x}{|N_s^{\subset D'}|<\eta s \, \big| \, |N_s|>0 } < \delta/2
	\end{equation}
	for all $x\in D$ and $s\ge S_1$. To see that this is possible, write 
	\begin{equation*}
		\E{x}{|N_s^{\subset D'}|< \eta s \, \big| \, |N_s|>0 }  = \E{x}{\e^{-\frac{1}{\eta s} \sum_{u\in N_s} \I_{\{X_u(s)\in D'\}}} > \e^{-1}\, \big| \, |N_s|>0} \end{equation*}	
	and note that by conditional Markov's inequality, the right hand side is less than or equal to 
	\begin{equation} \label{eqn::lotsinD'2}  \e \E{x}{\e^{-\frac{1}{\eta s} \sum_{u\in N_s} \I_{\{X_u(s)\in D'\}}} \, \big|\, |N_s|>0}.
	\end{equation}
	Since we know from Theorem \ref{thm::convergenceconditionednt} that \eqref{eqn::lotsinD'2} converges to $\e (b\int_{D'} \vp)(b\int_{D'}\vp + \eta^{-1})^{-1}$ as $s\to \infty$, uniformly in $x$ (uniformity is a consequence of the proof), we can choose $\eta>0$ small enough and $S_1>0$ large enough so that (\ref{eqn::lotsinD'}) holds for all $x\in D$ and $s\ge S_1$.
	
	Next we pick $a>0$ (very large) such that $\inf_{y\in D'} \E{y}{|N_r|}\ge 2/a\eta$ for all $r\geq 0$ (which is possible by the many-to-one Lemma \ref{lem::many_to_one}, Lemma \ref{lem::spineconvergence}, and the fact that $D' \Subset D$). For $c>0$ and $u\in \Omega$, we set $L^u_{c,s} = \int_0^{acs} |\{ w\in N_{s+r}: u\prec w \}| \, dr$, so that no matter what the value of $c$, $L(T)\geq \sum_{u\in N_s^{\subset D'}} L^u_{c,s}$. This means that we can write 
	\begin{equation}\label{eqn::final_big_length_eqn}
		\E{x}{L(T)\leq cs^2 \giv |N_s|>0} \le \frac{\delta}{2} + \frac{\E{x}{\{\sum\nolimits_{u\in N_s^{\subset D'}} L_{c,s}^u \leq cs^2 \} \cap \{|N_s^{\subset D'}|\geq \eta s\} }}{\E{x}{|N_s|>0}}
	\end{equation}
	
	To proceed, we will apply a (conditional) law of large numbers type argument (in fact in the end we will just use conditional Markov's inequality) to the random variables $\{L_{c,s}^u; u\in N_s^{\subset D'}\}$. For this, we make the following observations: 
	\begin{itemize}
	\item conditionally on $\mathcal{F}_s$, the random variables $\{L^u_{c,s}\, : \, u\in N_s^{\subset D'}\}$ are \textbf{independent};
	\item by definition of $a$ they all have \textbf{conditional mean} $\ge 2cs/\eta$; 
	\item for any $r\ge r'\ge 1$ and $y\in D$, by conditioning on $\mathcal{F}_{r'}$ we have
	\begin{equation*}
	\E{y}{|N_r||N_{r'}|}\leq \sup_{z\in D}\sup_{u>0}\E{z}{|N_u|} \times \E{y}{|N_{r'}|^2},
	\end{equation*} 
	which by Lemma \ref{lem::exp_nt2} and the fact that $\sup_{z\in D}\sup_{u>0}\E{z}{|N_u|}<\infty$ (an easy consequence of (\ref{eqn::exp_kernel_bound})) is less than or equal to $K_1 r'$ for some $K_1$ not depending on $y,r,r'$. From here, an easy integration gives that each of the $\{L^u_{c,s}\, : \, u\in N_s^{\subset D'}\}$, has  \textbf{conditional variance} given $\mathcal{F}_s$ less than $ K c^3 s^3 a^3$ for some $K<\infty$ not depending on $c$ or $s$. 
	\end{itemize}
	
	Putting all of this together we see that 
	\begin{equation}
		\label{eqn::cond_lln}
		\E{x}{\sum\nolimits_{u\in N_s^{\subset D'}} L^u_{c,s}\leq cs^2 \, \big| \, \mathcal{F}_s}
	\end{equation} is the probability that a sum of $N_s^{\subset D'}$ independent positive, random variables (with means and variances as above) is less than or equal to $cs^2$. Let us suppose that $N_s^{\subset D'}=n$ and $n\geq  \eta s$. Then since the conditional mean of $\sum_{u\in N_s^{\subset D'}} L^u_{c,s}$ is bigger than or equal to $2csn/\eta$, we see that the probability in (\ref{eqn::cond_lln}) is less than or equal to the conditional probability that  ``the absolute value of ($\sum_{u\in N_s^{\subset D'}} L^u_{c,s}$ minus its conditional mean) is greater than or equal to $2csn/\eta - cs^2$". By conditional Markov's inequality, using conditional independence and the upper bound on the conditional variances, we see that this is less than or equal to 
	$$ \frac{Kn c^3 s^3a^3}{(2csn/\eta - cs^2)^2} 
	\leq \frac{cK\eta a^3}{2-\frac{\eta s}{n}} \leq cK\eta a^3 $$
	where for both inequalities we have used the fact that $n\geq \eta s$. Substituting this in, we see that the second term on the right hand side of (\ref{eqn::final_big_length_eqn}) is less than or equal to $\frac{\delta}{2}$ for all $c$ small enough. This concludes the proof. 
\end{proof2}

\medskip

\begin{proof2}{Lemma \ref{lemma::ubigheight}}
	Pick $t_0>0$. As explained in the proof of Proposition \ref{propn::qvconv}, see \eqref{eqn::expT}, Lemma \ref{lemma::length} immediately implies that
	$\pb{x}{\Lambda_t}\le \frac{K\sqrt{t}}{\vp(x)}$ for some $K=K(t_0)$ and all $x\in D$, $t\geq t_0$.  By Theorem \ref{criticalsurvivalprob} we can also choose this $K$ such that $\E{x}{|N_r|>0} \leq K\vp(x)/r$ for all $x\in D$ and $r\geq \sqrt{t_0}$. 
	To proceed, we observe that $\pbu{x}{H_t^*\geq C\sqrt{t}}$ is less than the probability that \emph{some} tree visited before time $t$ in the depth first exploration has height exceeding $C\sqrt{t}$. Thus, by a union bound we can write
	\[  \pbu{x}{H_t^*\geq C\sqrt{t}}\leq \pb{x}{\sum_{i=1}^\infty \I_{\{i\leq \Lambda_t\}} \I_{\{|N_{C\sqrt{t}}^i|>0\}}}. \]
	We will show that we can apply Fubini to the right hand side of the above, and that the expression we get converges to $0$ as $C\to \infty$. To do this, we write for each $i\geq 1$, since the event $\{i\leq \Lambda_t\}$ does not depend on the $i$th tree, $$\pb{x}{ \{i\leq \Lambda_t\} \cap \{|N_{C\sqrt t}^i|>0\}}\le \pb{x}{i\leq \Lambda_t}\E{x}{|N_{C\sqrt{t}}|>0}.$$
	Then as long as $t\geq t_0$ and $C>1$, our definition of $K$ means that the right hand side of the above is less than or equal to $K \pb{x}{i\leq \Lambda_t} \frac{\vp(x)}{C\sqrt{t}}$. Therefore by Fubini, since $\sum_{i=1}^\infty \pb{x}{i\leq \Lambda_t}=\pb{x}{\Lambda_t}\leq K\sqrt{t}/\vp(x)$, we have 
	\begin{equation*} 
		\pbu{x}{H_t^*\geq C\sqrt{t}}\leq \frac{K^2}{C}
	\end{equation*}
	for all $x\in D$, $t\geq t_0$ and $C\geq 1$. This concludes the proof.
\end{proof2}	
 
 \medskip 
 
\begin{proof2}{Lemma \ref{lemma::rtoinfinity}} This follows from the law of large numbers. Indeed, observe that for any fixed $K>0$, the amount of time in $[0,t]$ that the depth first exploration spends below height $K$ in the $i$th tree is equal to $\int_0^K |\tilde{N}_{r,t}^i| \, dr$ for every $i\leq \Lambda_t$. Thus by conditioning on the entire sequence of trees, we have
	\begin{equation*}
	\pbu{x}{H_t^*\leq K} = \pb{x}{\frac{1}{t} \sum_{i=1}^{\Lambda_t} \int_0^K |\tilde{N}_{r,t}^i| \, dr}.
	\end{equation*}
	 Moreover, since $t\geq \sum_{i\leq \Lambda_t} L(T^i)$, by definition of $\Lambda_t$, we can bound
	\begin{equation}\label{eqn::lln_height_df}
	\frac{1}{t} \sum_{i=1}^{\Lambda_t} \int_0^K |\tilde{N}_{r,t}^i| \, dr \le \frac{1}{t} \sum_{i=1}^{\Lambda_t} \int_0^K |N_r^i| \, dr \leq \frac{\sum_{i=1}^{\Lambda_t} \int_0^K |N_r^i| \, dr}{\sum_{i=1}^{\Lambda_t-1} L(T^i)}.
	\end{equation}
	However, for each $i$ the random variables $\int_0^K |N_r^i| \, dr$ are i.i.d with finite variance (by Fubini), and the random variables $L(T^i)$ are i.i.d with infinite mean (by Lemma \ref{lemma::length}). Since $\Lambda_t\to \infty$ a.s. as $t\to \infty$ (each tree is finite almost surely), the strong law of large numbers allows us to conclude that the right (and therefore left) hand side of (\ref{eqn::lln_height_df}) converges to $0$ almost surely under $\bar{\mathbb{P}}^x$. By dominated convergence (the random variable on the left hand side of (\ref{eqn::lln_height_df}) is a.s. less than or equal to $1$) we therefore have $\pbu{x}{H_t^*\leq K}\to 0$, and hence $R(t)\to \infty$ \end{proof2}

\subsection{Connection with the height function}
\label{sec::connection}

In order to make use of the above invariance principle, we will now make the connection between the height function $(H_t)_{t\ge 0}$, and the process $(\mathbf{S}_t)_{t\ge 0}$ from Definition \ref{rmk::sonetree}, corresponding to the depth first exploration of a \emph{single tree $T$ conditioned to be large.} For a vertex $u\in T$ we use the notation $$\mathbf{S}(u):= \sum_{w\in Y(u,T)} \vp(X_w(s_w-l_w))$$ so that $\mathbf{S}_t=\mathbf{S}(v_t)$.
We will show that for large $t$, and for an overwhelming proportion of the individuals $u\in N_t$ (given survival), $\mathbf{S}(u)$ is close to a constant times $t$.  Our approach will use an ergodicity property for the spine particle in the system under $\tilde{\mathbb{Q}}^x$, and is inspired from \cite{spinelln}.

In the following, given $\eta>0$, we will say that a pair $(u,t)$ with $u\in N_t$ is $\eta$-\emph{bad} if 
\begin{equation}\label{eqn::etabad}
	\left|\frac{\mathbf{S}(u)}{t}-b^{-1}\right|>\eta
\end{equation} 
(recalling the definition of $b$ from (\ref{eqn::defnb})). We also say, for given $R\geq 0$ and $t\geq R$, that $(u,t)$ is \emph{$\eta_R$-bad} if some $(v,s)$ with $R\leq s \leq t$, $v\in N_s$ and $v\prec u$ is $\eta$-bad. That is, $(u,t)$ is $\eta_R$-good if all of the ancestors of $u$ after time $R$ are $\eta$-good. We have the following estimate for the proportion of $u\in N_t$ such that $(u,t)$ is $\eta_R$-bad:

\begin{propn}\label{propn::etabad} Fix $\eps,\eta>0$ and write $N_t^{\eta_R}:=
	\{u\in N_t: (u,t) \text{ is } \eta_R-\text{bad }\}$. Then we have
	\begin{equation}
		\label{eqn::etafbad}
		\sup_{t \ge R} \mathbb{P}^x \left( \left.\frac{|N_t^{\eta_R}|}{|N_t|}>\eps \right| |N_t|>0 \right) \to 0
	\end{equation}
	as $R\to \infty$, for any $x\in D$.
\end{propn}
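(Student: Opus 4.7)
The strategy is to pass through the spine change of measure of Section \ref{sec::com}. Since $\sum_{u\in N_t}\mathbf{1}_{\{(u,t) \text{ is } \eta_T\text{-bad}\}}\vp(X_u(t))$ is $\mathcal{F}_t$-measurable, the definitions \eqref{eqn::def_ptilde} of $\tilde{\mathbb{P}}^x$ and \eqref{eqn::com_enhanced} of $\tilde{\mathbb{Q}}^x$ at criticality yield the spine many-to-one identity
\begin{equation*}
\mathbb{P}^x\Bigl(\sum_{u\in N_t^{\eta_T}}\vp(X_u(t))\Bigr)=\vp(x)\,\tilde{\mathbb{Q}}^x\Bigl(\sup_{s\in[T,t]}\bigl|\mathbf{S}(\psi_s)/s-b^{-1}\bigr|>\eta\Bigr),
\end{equation*}
where $\mathbf{S}(\psi_s)$ denotes the sum of $\vp$ evaluated at the birth positions of younger siblings along the spine's ancestry up to time $s$. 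The core of the argument is therefore an almost sure law of large numbers for $\mathbf{S}(\psi_t)/t$ under $\tilde{\mathbb{Q}}^x$.

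To establish this LLN, recall from Section \ref{sec::com} that under $\tilde{\mathbb{Q}}^x$ the spine $\xi$ has transition density $K_s^D$, which is geometrically ergodic with stationary density $\vp^2$ by Lemma \ref{lem::spineconvergence}, while branching events along the spine occur at rate $m\beta$, produce $\hat{A}_s$ offspring (size-biased from $A$), and the new spine is chosen uniformly at index $J_s\in\{1,\ldots,\hat{A}_s\}$. Hence
\begin{equation*}
\mathbf{S}(\psi_t)=\sum_{s\leq t\,:\,\text{spine split}}(\hat{A}_s-J_s)\vp(\xi_s),
\end{equation*}
a pure-jump process with $\tilde{\mathbb{Q}}^x$-predictable compensator $\tfrac{1}{2}\beta(\mathbb{E}[A^2]-m)\int_0^t\vp(\xi_s)\,ds$, using $\mathbb{E}[\hat{A}-J]=(\mathbb{E}[A^2]-m)/(2m)$. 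The centred martingale part has linearly growing predictable quadratic variation (since $A\in L^2$ and $\vp$ is bounded), so divided by $t$ it tends to $0$ in $L^2$ and then almost surely, for example via Borel--Cantelli along a geometric subsequence combined with Doob's maximal inequality. Geometric ergodicity of the spine gives $t^{-1}\int_0^t\vp(\xi_s)\,ds\to\int_D\vp^3$ a.s., and combining with the compensator asymptotic yields $\mathbf{S}(\psi_t)/t\to\tfrac{1}{2}\beta(\mathbb{E}[A^2]-m)\int_D\vp^3=b^{-1}$ almost surely under $\tilde{\mathbb{Q}}^x$. Writing $\epsilon_T:=\tilde{\mathbb{Q}}^x(\sup_{s\geq T}|\mathbf{S}(\psi_s)/s-b^{-1}|>\eta)$, we deduce $\epsilon_T\to 0$ and $\mathbb{P}^x(\sum_{u\in N_t^{\eta_T}}\vp(X_u(t)))\leq\vp(x)\epsilon_T$ uniformly in $t\geq T$.

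To pass from this $\vp$-weighted bound to the unweighted proportion I will decompose, for $\delta,c>0$,
\begin{equation*}
\frac{|N_t^{\eta_T}|}{|N_t|}\leq\frac{1}{\delta|N_t|}\sum_{u\in N_t^{\eta_T}}\vp(X_u(t))+\frac{|\{u\in N_t:\vp(X_u(t))<\delta\}|}{|N_t|}.
\end{equation*}
Choosing $\delta$ small enough that $\int_{\{\vp<\delta\}}\vp\,dy<(\eps/4)\int_D\vp\,dy$, Corollary \ref{uniformparticle} applied to $f=\mathbf{1}_{\{\vp<\delta\}}$ implies that for all $t\geq T$ (with $T$ large) the conditional probability given $\{|N_t|>0\}$ that the second term exceeds $\eps/2$ is at most $\eps/3$. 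By Theorem \ref{thm::convergenceconditionednt}, $c>0$ can be chosen so that $\mathbb{P}^x(|N_t|<ct\,|\,|N_t|>0)<\eps/3$ for all $t$ large; on the complementary event, a conditional Markov's inequality combined with the $\vp$-weighted bound and the survival asymptotic $\mathbb{P}^x(|N_t|>0)\sim C\vp(x)/t$ of Theorem \ref{criticalsurvivalprob} shows that the first term exceeds $\eps/2$ with conditional probability at most $O(\epsilon_T/(\delta c\eps))$. Sending $T\to\infty$ yields $\sup_{t\geq T}\mathbb{P}^x(|N_t^{\eta_T}|/|N_t|>\eps\,|\,|N_t|>0)\to 0$, as required.

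The main technical obstacle is the almost sure LLN $\mathbf{S}(\psi_t)/t\to b^{-1}$ under $\tilde{\mathbb{Q}}^x$, and in particular its uniform-in-$t$ version used above. This ultimately rests on ergodicity of the augmented process (spine diffusion together with its superimposed marked Poisson process of branching events) over the infinite time interval, and requires combining the geometric mixing from Lemma \ref{lem::spineconvergence} with maximal/martingale inequalities tailored to the compound-Poisson-type jump structure of $\mathbf{S}(\psi)$.
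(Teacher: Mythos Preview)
Your proof is correct and follows essentially the same route as the paper: both pass through the spine change of measure to reduce the $\vp$-weighted estimate to the ergodic law of large numbers $\mathbf{S}(\psi_t)/t\to b^{-1}$ under $\tilde{\mathbb{Q}}^x$, and then transfer to the unweighted ratio via Corollary~\ref{uniformparticle}. The only organisational difference is that the paper works under $\mathbb{Q}^x$ with the ratio $\sum_{u\in N_t^{\eta_T}}\vp(X_u(t))/M_t$ and a uniform-integrability bound on $Y_t$, whereas you bound the first moment $\mathbb{P}^x\bigl(\sum_{u\in N_t^{\eta_T}}\vp(X_u(t))\bigr)$ directly and then apply conditional Markov together with $\{|N_t|\geq ct\}$; these are equivalent manoeuvres.
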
 

\begin{proof}
	We will first show that for any $\eps>0$, setting $E_{R,t}^\eps:= \big\{\frac{\sum_{u\in N_t} \vp(X_u(t))\I_{\{(u,t) \text{ is } \eta_R-\text{bad}\}}}{\sum_{u\in N_t} \vp(X_u(t))}>\eps\big\}$, we have
	\begin{equation} \label{eqn::weighted_average_ntbad} \sup_{t\geq R} \mathbb{P}^x\left(E_{R,t}^\eps\, \big| \, |N_t|>0\right)\to 0\end{equation} 
	as $R\to \infty$. The reason for this approach is that it technically more convenient to work with $E_{R,t}^\eps$ than $\{|N_t^{\eta_R}|/|N_t|>\eps\}$ and, as we will see in the end, the events are essentially the same. 
	
	To show \eqref{eqn::weighted_average_ntbad}, we will use the description of the system under the measure $\tilde{\mathbb{Q}}^x$ given in Section \ref{sec::com}. Recalling that $\mathbb{Q}^x=\tilde{\mathbb{Q}}^x\big|_{\mathcal{F}_\infty}$ has 
	$ (d\mathbb{Q}^x/d\mathbb{P}^x)|_{\mathcal{F}_t}=\vp(x)^{-1}\sum_{u\in N_t} \vp(X_u(t))$ we see that
	\[ \mathbb{P}^x(E_{R,t}^\eps \giv |N_t|>0 ) = \mathbb{Q}^x \left[\frac{\vp(x)/\mathbb{P}^x(|N_t|>0)}{\sum_{u\in N_t} \vp(X_u(t))} \I_{E_{R,t}^\eps} \right]= \mathbb{Q}^x \left[ Y_t \,\I_{E_{R,t}^\eps}\right],\]
	where $Y_t:=\frac{\vp(x)/\mathbb{P}^x(|N_t|>0)}{\sum_{u\in N_t} \vp(X_u(t))}$.
	To see that this converges to $0$ it is enough to prove that: 
	\begin{enumerate}[(i)]
		\item $\sup_{t \ge R}\mathbb{Q}_x(E_{R,t}^\eps)\to 0$ as $R\to \infty$; and 
		\item For every $\delta>0$, there exists $R'$ and $K$ positive, such that 
		$\mathbb{Q}^x\left( Y_t \I_{|Y_t|>K} \right) \leq \delta$ for all $t\geq R'$.
	\end{enumerate}
	Point (i) comes, roughly speaking, from the fact that if $\psi_t$ is the \emph{spine particle} under $\tilde{\mathbb{Q}}^x$ at time $t$, and $t\geq R$, then $(\psi_t, t)$ is very unlikely to be $\eta_R$-bad for large $R$. More precisely, by (\ref{eqn::spine_prob}), we have that 
	\begin{equation*}
		\frac{\sum_{u\in N_t} \vp(X_u(t))\I_{\{(u,t) \text{ is } \eta_R-\text{bad}\}}}{\sum_{u\in N_t} \vp(X_u(t))} = \tilde{\mathbb{Q}}^x((\psi_t,t) \text{ is } \eta_R-\text{bad} \giv \mathcal{F}_t)
	\end{equation*}
	and so by Markov's inequality 
	\begin{equation*}
		\mathbb{Q}^x(E_{R,t}^\eps)= \tilde{\mathbb{Q}}^x(E_{R,t}^\eps) \leq \eps^{-1} \tilde{\mathbb{Q}}^x(\frac{\sum_{u\in N_t} \vp(X_u(t))\I_{\{(u,t) \text{ is } \eta_R-\text{bad}\}}}{\sum_{u\in N_t} \vp(X_u(t))}) = \eps^{-1} \tilde{\mathbb{Q}}^x((\psi_t,t) \text{ is } \eta_R-\text{bad}).
	\end{equation*}
	Moreover, we claim (see explanation below) that \begin{equation} \label{eqn::spine_not_bad} \sup_{t\geq R}\tilde{\mathbb{Q}}^x((\psi_t,t) \text{ is } \eta_R-\text{bad})\to 0\end{equation} as $R\to \infty$. To see this, recall that under $\tilde{\mathbb{Q}}^x$ we know (from Section \ref{sec::com}) that the spine particle evolves as a diffusion conditioned to remain in $D$, and that at constant rate $\frac{m}{m-1}\lambda $ a random number of ``younger siblings'' (see just before Definition \ref{def::st}) is born along its trajectory (each with initial position equal to the position of the spine at the time that they are born). The random number of younger siblings born at each time is always independent from everything else and has finite mean $(2m)^{-1}(\mathbb{E}(A^2)-\mathbb{E}(A))$ (coming from the description in Section 
	\ref{sec::com} again). On the other hand, we have that for any $r\ge 0$, $\mathbf{S}(\psi_r)=\sum_{y\in Y} \vp(y)$, where $Y$ is the set of initial positions (counted with multiplicity) of younger siblings born along the trajectory of the spine before time $r$. Since by Lemma \ref{lem::spineconvergence} and \cite{pin}, the trajectory of the spine is a Markov process with invariant density $\vp^2$, (\ref{eqn::spine_not_bad}) follows by a straightforward ergodicity argument. This proves (i).
	
Point (ii) essentially says that $(Y_t)_{t\geq 0}$ is $\mathbb{Q}^x$- uniformly integrable (in fact it is, but we only need the weaker statement). To prove it, one can use the change of measure between $\mathbb{Q}^x$ and $\mathbb{P}^x$ again to write 
	\[ \mathbb{Q}^x ( Y_t \I_{\{|Y_t|>K\}} ) = \frac{\mathbb{P}^x\left( \{|Y_t|>K\}\cap\{|N_t|>0\}\right)}{\mathbb{P}^x(|N_t|>0)}=\mathbb{P}^x\left(|Y_t|>K \giv |N_t|>0\right).\]
	Since $\vp(x)/(t\mathbb{P}_x(N_t>0))$ is uniformly bounded above for $t\geq 1$ (say) by Theorem \ref{criticalsurvivalprob}, we just need to show that for any $\delta>0$ there exists $K$ and $R'$ such that  
	\begin{equation}\label{eqn::unifexample} \sup_{t\geq R'}\,\mathbb{P}^x\left( \frac{\sum_{u\in N_t}\vp(X_u(t))}{t}< 1/K\giv|N_t|>0 \right) \leq \delta . \end{equation}
	However, this is a direct consequence of the convergence given by Theorem \ref{thm::conditionedsystem}, since we know that for fixed $K$ the probability in \eqref{eqn::unifexample} converges, as $t\to \infty$, to the probability that an exponential random variable is less than $1/K$. This completes the proof of (\ref{eqn::weighted_average_ntbad}). 
	
	We must now deduce from (\ref{eqn::weighted_average_ntbad}) that \[ \sup_{t \ge R} \mathbb{P}^x \left( \left.\frac{|N_t^{\eta_R}|}{|N_t|}>\eps \right| |N_t|>0 \right) \to 0 \text{ as } R\to \infty.\] The idea behind this is that $\sum_{u\in N_t} \vp(X_u(t))\I_{\{(u,t) \text{ is } \eta_R-\text{bad}\}}/\sum_{u\in N_t} \vp(X_u(t))$ is a reasonable approximation to $|N_t^{\eta_R}|/|N_t|$ at large times. Roughly speaking they should at least be proportional, since $\vp$ is bounded above, and on any subdomain (where most of the particles will be at large times, given survival) it is also bounded below. 
	
	More precisely, by Corollary \ref{uniformparticle} we know that given $\delta>0$, if $D_r=\{y\in D: \vp(y)<1/r\}$ and $N_t^{D_r}:=\{u\in N_t: X_u(t)\in D_r\}$, there exists $r>0$ such that
	\begin{equation}\label{eqn::unif3}
		\sup_{t\geq R} \, \mathbb{P}^x \big(  \frac{|N_t^{D_r}|}{|N_t|}>\frac{\eps}{2} \giv |N_t|>0 \big) \leq \delta/2,
	\end{equation} 
	for all $R$ large enough. Moreover, we can write $|N_t^{\eta_R}|\leq |N_t^{D_r}|+|N_t^{\eta_R}\setminus (N_t^{\eta_t}\cap N_t^{D_r})|$, where we have 
	$$\frac{|N_t^{\eta_R}\setminus (N_t^{\eta_t}\cap N_t^{D_r})|}{|N_t|}\leq \frac{\sum_{u\in N_t} \vp(X_u(t))\I_{\{(u,t) \text{ is } \eta_R-\text{bad}\}}}{\sum_{u\in N_t} \vp(X_u(t))} \|\vp\|_\infty r.$$ Finally, by (\ref{eqn::weighted_average_ntbad}) we can bound
	$$ \sup_{t\ge R}\mathbb{P}^x\big( \frac{|N_t^{\eta_R}\setminus (N_t^{\eta_t}\cap N_t^{D_r})|}{|N_t|} > \eps/2 \big) < \delta/2$$
	for all $R$ large enough. Putting this together with (\ref{eqn::unif3}) allows us to conclude.
\end{proof}

\begin{remark0}\label{rmk::rangeoft}
	Now suppose we are given $c>0$ and $x\in D$. Then for any $t$ such that $ct\wedge t\geq R$ 
	\begin{align*} \mathbb{P}^x \big( \{\frac{|N_{ct}^{\eta_R}|}{|N_{ct}|}>\eps\}\, \cap \, \{|N_{ct}|>0\}\giv |N_t|>0 \big) & =  \frac{\mathbb{P}^x \big( \{\frac{|N_{ct}^{\eta_R}|}{|N_{ct}|}>\eps\}\, \cap \, \{|N_{ct}|>0\}\, \cap \, \{ |N_t |>0 \} \big)}{\mathbb{P}^x(|N_{t}|>0)} \\
		& \leq  \mathbb{P}^x \big( \{\frac{|N_{ct}^{\eta_R}|}{|N_{ct}|}>\eps\}\, \cap \, \{|N_{t}|>0\}\giv |N_{ct}|>0 \big) \frac{\E{x}{|N_{ct}|>0}}{\E{x}{|N_t|>0}} \\
		& \leq \left(\sup_{s\geq R} \mathbb{P}^x \big( \frac{|N_s^{\eta_R}|}{|N_s|}>\eps \giv |N_s|>0 \big)\right)\frac{\E{x}{|N_{ct}|>0}}{\E{x}{|N_t|>0}} \\
		& \leq \frac{F(R)}{c} \left(\sup_{s\geq R} \mathbb{P}^x \big( \frac{|N_s^{\eta_R}|}{|N_s|}>\eps \giv |N_s|>0 \big)\right)
	\end{align*} where
	$$F(R)=\frac{\sup_{s\geq R} sb^{-1}\vp(x)^{-1}\E{x}{|N_s|>0} }{\inf_{s\ge R} sb^{-1}\vp(x)^{-1}\E{x}{|N_s|>0} }\longrightarrow 1$$ as $R\to \infty$, by Theorem \ref{criticalsurvivalprob}. Note that we are allowing $c<1$ here.
\end{remark0}

The next lemma provides the key connection between $(\mathbf{S}_t)_{t\ge 0}$ and the height function $(H_t)_{t\ge 0}$ for a critical branching diffusion under $\mathbb{P}^x$, that is conditioned to survive for a long time. We write $\mathcal{P}^x_1$ for the law of a tree $(T,X,l)$ under $\mathbb{P}^x$ plus a random variable $t^1$, which conditionally on $(T,X,l)$ is chosen uniformly from $[0,L(T)):=[0,L)$.
\begin{lemma} \label{lemma::goodcondtree}
	For any $\eta>0$ we have 
	\begin{equation*} 
		\lim_{R\to \infty}\lim_{t \to \infty} \mathcal{P}_1^x\big( (v_{t^1},H_{t^1}) \text{ is } \eta_R-\text{bad} \giv |N_t|>0 \big) \to 0.
	\end{equation*}
\end{lemma}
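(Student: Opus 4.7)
The plan is to reduce the statement to an estimate on the proportion of $\eta_T$-bad individuals at a fixed time $h$, where Proposition~\ref{propn::etabad} and Remark~\ref{rmk::rangeoft} apply. The starting point is a change-of-variable identity for the depth-first exploration: the vertex $v_r$ is visited for $r\in[\kappa_u,\kappa_u+l_u)$ with height $H_r=(s_u-l_u)+(r-\kappa_u)$, so for any non-negative measurable $f$,
\[
\int_0^L f(H_r,v_r)\,dr \;=\; \sum_{u\in T}\int_{s_u-l_u}^{s_u}f(h,u)\,dh \;=\; \int_0^\infty\sum_{u\in N_h}f(h,u)\,dh.
\]
Taking $f(h,u)=\I_{\{(u,h)\text{ is }\eta_T\text{-bad}\}}$ and noting that $(u,h)$ cannot be $\eta_T$-bad when $h<T$ (the ancestor interval $[T,h]$ is empty), this yields
\[
\mathcal{P}_1^x\big((v_{t^1},H_{t^1})\text{ is }\eta_T\text{-bad}\big) \;=\; \mathbb{E}^x\Big[\tfrac{1}{L}\int_T^\infty |N_h^{\eta_T}|\,dh\Big].
\]
For $\eps>0$, set $B_h:=\{|N_h^{\eta_T}|/|N_h|>\eps\}$ (with $\I_{B_h}=0$ when $|N_h|=0$). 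The pointwise bound $|N_h^{\eta_T}|\leq \eps|N_h|+|N_h|\I_{B_h}$ together with $\int_T^\infty |N_h|/L\,dh\leq 1$ gives
\[
\tfrac{1}{L}\int_T^\infty |N_h^{\eta_T}|\,dh \;\leq\; \eps + \tfrac{1}{L}\int_T^\infty |N_h|\,\I_{B_h}\,dh.
\]

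Next I would split the remaining integral at a cutoff $h=Mt$ for a large parameter $M$. For the tail $h>Mt$, note the crude but crucial bound $\tfrac{1}{L}\int_{Mt}^\infty |N_h|\,\I_{B_h}\,dh\leq \I_{\{|N_{Mt}|>0\}}$ (the integrand vanishes once $|N_{Mt}|=0$ and is otherwise bounded by $L/L=1$), whose expectation under $\mathbb{P}^x(\cdot\mid |N_t|>0)$ is at most $2/M$ for $t$ large by Theorem~\ref{criticalsurvivalprob}. For the bulk $h\in[T,Mt]$, I would condition on the ``macroscopic length'' event $G_A:=\{L>t^2/A\}$, which by (\ref{eqn::biglength}) satisfies $\mathbb{P}^x(G_A^c\mid |N_t|>0)\leq\delta_A$ with $\delta_A\to 0$ as $A\to\infty$, uniformly in $t$. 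On $G_A$, bound $1/L\leq A/t^2$, and estimate $\mathbb{E}^x[|N_h|\I_{B_h}\mid |N_t|>0]$ via Cauchy--Schwarz: Lemma~\ref{lem::exp_nt2} together with Theorem~\ref{criticalsurvivalprob} gives $\mathbb{E}^x[|N_h|^2\mid |N_t|>0]\leq Cht$, and Remark~\ref{rmk::rangeoft} (applied with $c=h/t$, which satisfies $h\wedge t\geq T$) combined with Proposition~\ref{propn::etabad} gives
\[
\mathbb{P}^x(B_h\mid |N_t|>0)\;\leq\;\tfrac{F(T)}{h/t}\,p_T, \qquad p_T:=\sup_{s\geq T}\mathbb{P}^x\big(|N_s^{\eta_T}|/|N_s|>\eps\,\big|\,|N_s|>0\big)\underset{T\to\infty}{\longrightarrow}0.
\]
Therefore $\mathbb{E}^x[|N_h|\,\I_{B_h}\mid |N_t|>0]\leq C t\sqrt{F(T)p_T}$, \emph{independent of} $h$; integrating over $[T,Mt]$ and multiplying by $A/t^2$ produces a contribution bounded by $CAM\sqrt{F(T)p_T}\to 0$ as $T\to\infty$. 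On $G_A^c$, the full integral is $\leq 1$, contributing $\leq \delta_A$.

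Combining the four pieces gives
\[
\limsup_{T\to\infty}\limsup_{t\to\infty}\mathcal{P}_1^x\big((v_{t^1},H_{t^1})\text{ is }\eta_T\text{-bad}\,\big|\,|N_t|>0\big)\;\leq\;\eps+\tfrac{2}{M}+\delta_A,
\]
and sending $M\to\infty$, $A\to\infty$, $\eps\to 0$ in that order concludes the proof. The main difficulty is controlling the contribution from very large heights $h\gg t$: a naive first-moment estimate fails because at criticality $\mathbb{E}^x[|N_h|]$ is bounded away from $0$ as $h\to\infty$, so $\int_{Mt}^\infty\mathbb{E}^x[|N_h|]\,dh=\infty$. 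The resolution is to exploit the normalisation by $L$ together with the trivial inequality $\tfrac{1}{L}\int_{Mt}^\infty |N_h|\,dh\leq \I_{\{|N_{Mt}|>0\}}$, whose conditional probability decays like $1/M$.
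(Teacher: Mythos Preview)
Your proof is correct and follows essentially the same strategy as the paper: rewrite the conditional probability as an expectation of $\tfrac{1}{L}\int |N_h^{\eta_T}|\,dh$, restrict to the event $\{L>t^2/A\}$ via (\ref{eqn::biglength}), and control the main contribution by conditional Cauchy--Schwarz using Lemma~\ref{lem::exp_nt2} for the second moment and Remark~\ref{rmk::rangeoft} together with Proposition~\ref{propn::etabad} for the probability of $B_h$.

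Your decomposition is in fact slightly more economical than the paper's. The paper restricts to $h\in[ct,Ct]$ and must separately bound $\mathcal{P}_1^x(H_{t^1}\leq ct\mid |N_t|>0)$; you avoid the lower cutoff entirely by using the $h$-dependent bound $\mathbb{P}^x(B_h\mid |N_t|>0)\leq F(T)\,t\,p_T/h$, which exactly cancels the factor $\sqrt{h}$ from the second moment and yields a bound uniform in $h$. Your treatment of the upper tail via $\tfrac{1}{L}\int_{Mt}^\infty|N_h|\,dh\leq\I_{\{|N_{Mt}|>0\}}$ is equivalent to the paper's $\mathcal{P}_1^x(H_{t^1}>Ct\mid\cdot)\leq\mathbb{P}^x(|N_{Ct}|>0\mid\cdot)$.

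One minor point of convention: the paper explicitly declares $(v_{t^1},H_{t^1})$ to be $\eta_T$-bad when $H_{t^1}<T$, contrary to your assertion that the ancestor interval is empty. So under the paper's convention your displayed identity should carry an extra term $\tfrac{1}{L}\int_0^T|N_h|\,dh$. This is harmless: on $G_A$ its conditional expectation is at most $\tfrac{A}{t^2}\cdot\tfrac{CT}{\mathbb{P}^x(|N_t|>0)}=O(1/t)$, and on $G_A^c$ it is at most $\delta_A$, so it vanishes in the double limit exactly as your other terms do.
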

\begin{remark0}In the case that $H_{t^1}< R$ we also say that $(v_{t^1},H_{t^1})$ is $\eta_R$-bad (with an abuse of notation; recall we only defined the notion of $\eta_R$-badness for $(u,t)$ with $t\geq R$). However, since the probability of this event goes to $0$ as $t\to \infty$ for any fixed $R$, this will not play a role; we only introduce the convention for notational convenience. \end{remark0}

\begin{proof} By decomposing the probability space into the three disjoint events $\{L\le at^2\}$, $\{L>at^2\}\cap \{H_{t^1}\notin [ct,Ct]\}$ and $\{L>at^2\}\cap \{H_{t^1}\in [ct,Ct]\}$ we can write for any $a,c,C>0$
	\begin{align}\label{eqn::uparticleRbad}	\mathcal{P}_1^x\big( & (v_{t^1},H_{t^1}) \text{ is } \eta_R-\text{bad} \giv |N_t|>0 \big) \leq \mathcal{P}^x_1\big( L\leq a t^2 \giv |N_t|>0\big) + \mathcal{P}^x_1\big( \{H_{t^1} \notin [ct,Ct]\}\cap\{L>at^2\}\giv |N_t|>0\big) \nonumber \\
		& + \mathcal{P}^x_1\big( \{(v_{t^1},H_{t^1}) \text{ is } \eta_R-\text{bad}\}\cap \{L>at^2\} \cap \{H_{t^1}\in[ct,Ct]\}\giv |N_t|>0\big).
	\end{align} We deal with each term separately. To begin, by putting together (\ref{eqn::biglength}) and the fact that
	$\E{x}{|N_{Ct}|>0 \giv |N_t|>0} \to 0$ we see that the first term on the right hand side of \eqref{eqn::uparticleRbad} converges to $0$, uniformly in $t\ge 1$ (say), as $a\to 0$ and then $c\to 0,C\to \infty$. For the second, by definition of the conditional probability we have 
	\begin{equation*}
		\mathcal{P}^x_1\big(\{H_t^1\leq ct\}\cap\{L>at^2\} \giv |N_t|>0 \big) \le (\E{x}{|N_t|>0})^{-1} \, \E{x}{\{L\geq at^2\}\cap\{H_{t^1}\leq ct\} } 
		\end{equation*} which, by conditioning on $\mathcal{F}_\infty = \sigma((T,l,X))$, is equal to 
	\begin{equation*} 
	\E{x}{\I_{\{L\geq at^2\}} \mathcal{P}^x_1(H_{t^1}\leq ct \giv \mathcal{F}_\infty)}. 
	\end{equation*}
	Using the lower bound on $L$ from the indicator function, the lower bound on $\mathbb{P}^x(|N_t|>0)$ that one obtains from Theorem \ref{criticalsurvivalprob}, and the fact that $\sup_{z\in D} \sup_{u>0} \E{z}{|N_u|}<\infty$ (which we saw in the proof of Lemma \ref{lemma::length}) we obtain that the above is less than or equal to 
		\begin{equation*} (\E{x}{|N_t|>0})^{-1} \,  \E{x}{\I_{\{L\geq at^2\}} \frac{\int_0^{ct}|N_s| \, ds}{L}}  \leq \frac{cK}{a\vp(x)}
	\end{equation*}
	for some finite $K$ and all $t\geq 1$. This means that the second term on the right hand side of \eqref{eqn::uparticleRbad} also converges to $0$ uniformly in $t\ge 1$, as $a\to 0$ and then $c\to 0,C\to \infty.$
	
	Thus, we are left to prove that for any fixed $a,c,C$
	\begin{equation}\label{eqn::u_vertex_bad_finaleq}
		\lim_{R\to \infty}\lim_{t\to \infty}\mathcal{P}^x_1\big( \{(v_{t^1},H_{t^1}) \text{ is } \eta_R-\text{bad}\}\cap \{L>at^2\} \cap \{H_{t^1}\in[ct,Ct]\}\giv |N_t|>0\big)=0.
	\end{equation}
	From now on we assume that $ct\geq R$ (which is without loss of generality, since we are letting $t\to \infty$ first). By conditioning on $\mathcal{F}_\infty$ again, we see that 
	\begin{align}\label{eqn::nrbadeq} & \mathcal{P}^x_1\big( \{(v_{t^1},H_{t^1}) \text{ is } \eta_R-\text{bad}\}\cap \{L>at^2\} \cap \{H_{t^1}\in[ct,Ct]\}\giv |N_t|>0\big) \nonumber \\ & = \E{x}{\I_{\{L> at^2\}}\int_{ct}^{Ct} \frac{|N_u^{\eta_R}|}{|N_u|}\frac{|N_u|}{L}\, du \, \giv |N_t|>0} \nonumber \\
		& \leq \delta + (at^2)^{-1}\E{x}{\int_{ct}^{Ct} \I_{\{\frac{|N_u^{\eta_R}|}{|N_u|}>\delta\}}|N_u| \, du \giv |N_t|>0}
	\end{align}
	for any $\delta>0$. Therefore, we just need to prove that the second term in the last expression of \eqref{eqn::nrbadeq}, for fixed $\delta$, converges to 0 as $t\to \infty$ and then $R\to \infty$. However, we can write this term (by Fubini) as 
	\begin{equation*}
		(at^2)^{-1} \int_{ct}^{Ct} \E{x}{|N_u|\I_{\{\frac{|N_u^{\eta_R}|}{|N_u|}>\delta\}} \giv |N_t|>0} \, du \end{equation*} which by conditional Cauchy--Schwarz, Remark \ref{rmk::rangeoft}, and the assumption that $ct\geq R$ is less than 
		\begin{equation*}\frac{\sqrt{F(R)}}{\sqrt{c}}\sup_{s\geq R} \E{x}{\frac{|N_s^{\eta_R}|}{|N_s|}>\delta \giv |N_s|>0}^{1/2} \times (at^2)^{-1}\int_{ct}^{Ct} \E{x}{|N_u|^2||N_t|>0}^{1/2} \, du.
	\end{equation*}  Finally, we observe that by (\ref{eqn::csbounds}) and Theorem \ref{criticalsurvivalprob}, we have $\E{x}{|N_u|^2||N_t|>0}^{1/2}\leq Mu$ for some constant $M=M(c,C')$ and for any $u\in [ct,Ct]$, $ct\geq R\geq 1$. Hence, by integrating and applying Proposition \ref{propn::etabad}, we obtain (\ref{eqn::u_vertex_bad_finaleq}).
\end{proof}

Now we extend this result. Let $\mathcal{P}^x_k$ be the law of a tree $(T,X,l)$ under $\mathbb{P}^x$, together with $k$ random variables $(t^1,\cdots, t^k)$ chosen conditionally independently and uniformly at random from $[0,L(T))=[0,L)$. We also define the $k\times k$ matrices 
\begin{align}\label{eqn::matrices} (D_t^S)_{ij} & := t^{-1}\big( \mathbf{S}(v_{t^i})+\mathbf{S}(v_{t^j})-2\mathbf{S}(v^{ij}) \big) \text{ and } \\ \nonumber
	(D_t^H)_{ij} & := t^{-1} \big(H_{t^i}+H_{t^j}-2h^{ij}\big)
\end{align}
where $v^{ij}=v_{t^i}\wedge v_{t^j}$ is the most recent common ancestor of $v_{t^i}$ and $v_{t^j}$, and $h^{ij}=s_{v^{ij}}$ is its ``death time" (see Section \ref{sec::bds} for definitions of these objects.) The next proposition says that conditioned on survival up to a large time $t$, these matrices are essentially the same up to a constant.

\begin{propn}\label{prop::distancematrices}
	Let $k\geq 1$ and $D_t^S$ and $D_t^H$ be as defined above. Then for any $\eps>0$ 
	\begin{equation}\label{eqn::distsecondconv}
		\mathcal{P}^x_k\left(\left. \|b^{-1}D_t^H-D_t^S\|>\eps \right| |N_t|>0 \right) \to 0
	\end{equation}
	as $t\to \infty$, where the distance is the Euclidean distance between $k\times k$ matrices.
\end{propn}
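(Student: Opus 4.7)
\noindent\textbf{Proof plan for Proposition \ref{prop::distancematrices}.}

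The plan is to reduce the comparison of $D_t^S$ and $b^{-1}D_t^H$ to the entrywise identity
\[
t\bigl[(D_t^S)_{ij}-b^{-1}(D_t^H)_{ij}\bigr]
=E(v_{t^i},H_{t^i})+E(v_{t^j},H_{t^j})-2\,E(v^{ij},h^{ij}),
\]
where $E(u,s):=\mathbf{S}(u)-b^{-1}s$. I will show that on a high-probability event each term on the right is at most $\eta\cdot (\text{relevant time})$, and then argue the relevant times are $O(t)$ with high probability, so the entries of $D_t^S-b^{-1}D_t^H$ are $O(\eta)$.

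First I would set up the good event. Fix $\eps>0$ and $\delta>0$. Using the tail estimate for $H_{t^1}$ obtained inside the proof of Lemma \ref{lemma::goodcondtree} (the bound on $\mathcal{P}_1^x(\{H_{t^1}\notin[ct,Ct]\}\cap\{L>at^2\}\,|\,|N_t|>0)$, which extends to $\mathcal{P}_k^x$ by a union bound over the $k$ uniform points), I can choose $C$ large such that with probability $\geq 1-\delta$ one has $H_{t^i}\le Ct$ simultaneously for all $1\le i\le k$, uniformly in $t$. Next, fix $\eta>0$ so small that $4\eta C<\eps/2$, and apply Lemma \ref{lemma::goodcondtree} together with a union bound over $i=1,\ldots,k$: there is a $T$ such that for all $t$ sufficiently large,
\[
\mathcal{P}_k^x\bigl(\text{some }(v_{t^i},H_{t^i})\text{ is }\eta_T\text{-bad}\,\big|\,|N_t|>0\bigr)<\delta.
\]

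On the good event $G$ where (a) $H_{t^i}\in[T,Ct]$ for all $i$ and (b) every $(v_{t^i},H_{t^i})$ is $\eta_T$-good, I bound each $E$-term. For the diagonal pieces, since $v_{t^i}\in N_{H_{t^i}}$ with $H_{t^i}\in[T,Ct]$, the $\eta_T$-good property directly yields $|E(v_{t^i},H_{t^i})|\le\eta H_{t^i}\le\eta Ct$. For the cross term involving $v^{ij}$ I split on whether $h^{ij}\ge T$: if yes, then $v^{ij}$ is an ancestor of $v_{t^i}$ at time $\ge T$, so by the $\eta_T$-good hypothesis $|E(v^{ij},h^{ij})|\le\eta h^{ij}\le\eta Ct$. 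If $h^{ij}<T$, I exploit the monotonicity $\mathbf{S}(u)\le\mathbf{S}(u')$ whenever $u\preceq u'$ (immediate from the definition $\mathbf{S}(u)=\sum_{w\in Y(u,T)}\vp(X_w(s_w-l_w))$), together with the fact that the ancestor $w_T^i$ of $v_{t^i}$ at time $T$ is $\eta$-good, to get $\mathbf{S}(v^{ij})\le\mathbf{S}(w_T^i)\le(b^{-1}+\eta)T$; combined with $h^{ij}<T$ this gives $|E(v^{ij},h^{ij})|\le(2b^{-1}+\eta)T$.

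Putting the pieces together, on $G$ each entry satisfies
\[
\bigl|(D_t^S)_{ij}-b^{-1}(D_t^H)_{ij}\bigr|\le \frac{1}{t}\bigl(2\eta Ct+2(2b^{-1}+\eta)T\bigr)\le 2\eta C+\frac{C'T}{t},
\]
which is less than $\eps/k$ once $t$ is large enough (for fixed $T$). Hence $\|b^{-1}D_t^H-D_t^S\|<\eps$ outside an event of probability $\le 2\delta$, and letting $\delta\downarrow 0$ concludes the proof. The one slightly delicate point, and the main obstacle, is the case $h^{ij}<T$, where $v^{ij}$ itself is not covered by the $\eta_T$-good hypothesis; the monotonicity of $\mathbf{S}$ along ancestry combined with $\eta_T$-goodness of the ancestor at time $T$ circumvents this cleanly, at the cost of an error $O(T/t)$ which is absorbed by first choosing $\eta,T$ and then taking $t\to\infty$.
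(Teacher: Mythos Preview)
Your proof is correct and follows essentially the same strategy as the paper: reduce to entrywise control via the decomposition into the three $E$-terms, use $\eta_T$-goodness (from Lemma \ref{lemma::goodcondtree}) to handle the terms at $v_{t^i}$ and at $v^{ij}$ when $h^{ij}\ge T$, and treat the shallow-ancestor case $h^{ij}<T$ separately as an $O(T/t)$ error.

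The one genuine difference is in how the case $h^{ij}<T$ is handled. The paper introduces an additional good event $\{\int_0^T |N_s|\,ds < K\}$ (whose conditional probability is close to $1$ by Proposition \ref{thm::conditionedsystem}) and uses the crude bound $\mathbf{S}(v^{ij})\le \|\vp\|_\infty K$ on it. You instead observe that $u\preceq u'$ implies $Y(u,T)\subset Y(u',T)$ and hence $\mathbf{S}(u)\le\mathbf{S}(u')$, so $\mathbf{S}(v^{ij})$ is dominated by $\mathbf{S}$ evaluated at the time-$T$ ancestor of $v_{t^i}$, which is already controlled by $\eta_T$-goodness. This is a cleaner argument: it avoids introducing the extra event and the extra parameter $K$, and it makes the $O(T/t)$ nature of the error completely transparent. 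Both routes yield the same conclusion, but yours is a modest simplification of the paper's.
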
 

\begin{proof} 
	We prove this in the case $k=2$: the general result following by a union bound. Note that by symmetry, and since $(D_t^H)_{ii}=(D_t^S)_{ii}=0$ for $i=1,2$, we need only control the distance $|b^{-1}(D_t^H)_{12}-(D_t^S)_{12}|$. Given $\delta>0$, we:
	\begin{itemize}
		\item choose $M\geq 1$ such that $\lim_{t \to \infty} \E{x}{|N_{Mt}|>0\giv |N_t|>0}\leq \delta/4$, which is possible by Theorem \ref{criticalsurvivalprob};
		\item set $\eta:=\eps/(4M)$ and pick $R$ large enough that $\lim_{t \to \infty}\mathcal{P}^x_2( \{(v_{t^1},H_{t^1}) \text{ is } \eta_R-\text{bad}\}\cup \{(v_{t^2},H_{t^2}) \text{ is } \eta_R-\text{bad}\} \giv |N_t|>0)\leq \delta/4$, which is possible by a union bound and Lemma \ref{lemma::goodcondtree};
		\item pick $K$ large enough that $\lim_{t \to \infty} \E{x}{\int_{0}^R |N_s|\geq K \giv |N_t|>0} \leq \delta/4$, which is possible by Theorem \ref{thm::conditionedsystem} and Markov's inequality.
	\end{itemize} 
	
	Putting this together, we see that for all $t$ large enough, if we set
	\begin{equation*} A_t:= \{|N_{Mt}|>0\} \cup \{\int\nolimits_{[0,R]} |N_s| \geq K\} \cup\{(v_{t^1},H_{t^1}) \text{ is } \eta_R-\text{bad}\}\cup \{(v_{t^2},H_{t^2}) \text{ is } \eta_R-\text{bad}\} \end{equation*} then $	\mathcal{P}^x_2(A_t\giv |N_t|>0)  \leq \delta $. This implies that $\mathcal{P}^x_2(A_t\giv |N_t|>0) \to 0$ as $t\to \infty$.
	
	Now consider the complementary event $A_t^c$. Straight from the definitions, we know that on this event we have
	\begin{equation}\label{eqn::bounds_1_matrices}
		H_{t^1}\leq Mt\, , \;\;
		H_{t^2}\leq Mt \, , \;\; 
		|\frac{\mathbf{S}(v_{t^1})}{H_{t^1}}-b^{-1}|\leq \eta\, , \; \text{ and }\; |\frac{\mathbf{S}(v_{t^2})}{H_{t^2}}-b^{-1}|\leq \eta
	\end{equation}
	where $\eta=\eps/(4M)$. Then there are two possibilities: we are either on the event $B:=\{h^{ij}>R\}$ or the event $B^c:=\{h^{ij}\leq R\}$. We will show that for large enough $t$, $B\cap A_t^c=B^c\cap A_t^c=\emptyset$, thus completing the proof.
	
	To do this, observe that on $\{B\cap A_t^c\}$, by definition of $A_t$ and ``$\eta_R$-badness", we have that
	\begin{equation*}
		h^{12}\leq Mt \; \text{ and } |\frac{\mathbf{S}(v^{12})}{h^{12}}-b^{-1}|\leq \eta.
	\end{equation*}
	Putting this together with (\ref{eqn::bounds_1_matrices}) gives the deterministic bound
	$$|(D_t^S)_{12}-b^{-1}(D_t^H)_{12}| \leq \frac{1}{t} \times 4Mt \times \eta \leq \eps \quad \text{on} \quad \{B\cap A_t^c\}$$ and so we have $B\cap A_t^c = \emptyset$.  Furthermore, on the event $\{B^c\cap A_t^c\}$, we have $h^{ij}\leq R$ and $\mathbf{S}(v^{ij})\leq \|\vp\|_\infty K$ (by a very crude bound). This means that 
	$$ |\mathbf{S}(v^{12})-b^{-1}h^{12}| \leq K + Rb^{-1} $$ and so 
	$$ |(D_t^S)_{12}-b^{-1}(D_t^H)_{12}| \leq \frac{\eps}{2}+ \frac{K + Rb^{-1}}{t}. $$  Since the second term on the right hand side above is less than $\eps/2$ for all $t$ large enough, we see that $B^c\cap A_t^c=\emptyset$ for all such $t$. 
\end{proof}

\subsection{Convergence to the CRT}

\subsubsection{Preliminaries on convergence of metric measure spaces}

Before we can prove Theorem \ref{thm::crtconv}, we need to introduce various notions of convergence for metric spaces, and more generally, for metric measure spaces. Although our aim is to prove convergence of conditioned genealogical trees in the sense of Gromov--Hausdorff distance between metric spaces, it turns out to be helpful to go through the framework of metric measure spaces. We first recall the definition of the Gromov--Hausdorff metric on $\mathbb{X}_c$: the space of (isometry classes of) compact metric spaces. 

\begin{defn}\label{defn::ghmetric}
	The \emph{Gromov--Hausdorff} distance between $(X,r_X)$ and $(Y,r_Y)$ in $\mathbb{X}_c$ is given by 
	\[ d_{GH}\left((X,r_X),(Y,r_Y)\right)=\inf_{g_X,g_Y,Z} d_H^{(Z,r_Z)}(g_X(X),g_Y(Y)),\]
	where the infinum is taken over all isometric embeddings $g_X$ and $g_Y$ from $X$ and $Y$ to a common metric space $(Z,r_Z)$, and $d_H^{(Z,r_Z)}$ is the usual Hausdorff distance on $(Z,r_Z)$. 
\end{defn}

For us, a \emph{metric measure space} $(X,r,\mu)$ will be a compact metric space $(X,r)$ equipped with a finite Borel measure $\mu$. These will be considered modulo the equivalence $\sim$, where $(X,r,\mu)\sim (X',r',\mu')$ if there exists a measure preserving isometry between $X$ and $X'$. We denote the set of (equivalence classes) of these spaces by $\mathbb{X}$. We will be interested in the \emph{Gromov--Prohorov} metric and the \emph{Gromov--Hausdorff--Prohorov} metric on $\mathbb{X}$. We begin by defining the so-called \emph{Gromov--weak} topology.

\begin{defn}{\cite[Definition 2.3]{mmspaces}}
	\label{defn::poly}
	We will call a function $\Phi:\mathbb{X}\to \mathbb{R}$ a \emph{polynomial} if there exists an $k\in \mathbb{N}$ and a bounded continuous function $\phi:[0,\infty)^{k\choose2}\to \R$ such that 
	\[\Phi((X,r,\mu))=\int \mu^{\otimes k}(d(x_1,\cdots, x_n))\phi((r(x_i,x_j))_{1\leq i<j\leq k}),\]
	where $\mu^{\otimes k}$ is the product measure of $\mu$. We write $\Pi$ for the set of all polynomials.
\end{defn}
\begin{defn}{\cite[Definition 2.8]{mmspaces}}\label{defn::gwtop}
	A sequence $\mathcal{X}_n\in \mathbb{X}$ is said to converge to $\mathcal{X}\in \mathbb{X}$ with respect to the \emph{Gromov--weak topology} if and only if $\Phi(\mathcal{X}_n)$ converges to $\Phi(\mathcal{X})$ in $\mathbb{R}$, for all polynomials $\Phi \in \Pi$. 
\end{defn}
It is known, see \cite[Theorem 5]{mmspaces}, that this topology is metrised by the \emph{Gromov--Prohorov metric}, that we now define. In the following, if $f:X_1\to X_2$ is an embedding and $\mu$ is a measure on $X_1$, we write $f_*\mu$ for the image measure of $\mu$ under $f$ on $X_2$: defined by $f_*\mu(A)=\mu(f^{-1}(A))$ for $A\subset X_2$. 
\begin{defn} \label{defn::gpmetric} The \emph{Gromov--Prohorov} distance between $\mathcal{X}=(X,r_X,\mu_X)$ and $\mathcal{Y}=(Y,r_Y,\mu_Y)$ in $\mathbb{X}$ is given by 
	\[ d_{GP}(\mathcal{X},\mathcal{Y})=\inf_{g_X,g_Y,Z} d_{Pr}^{(Z,r_Z)}((g_X)_*(\mu_X),(g_Y)_*(\mu_Y)),\]
	where the infinum is as in Definition \ref{defn::ghmetric} and $d_P^{(Z,r_Z)}$ is the Prohorov distance between probability measures on $(Z,r_Z)$. 
\end{defn} 
Finally, we define the Gromov--Hausdorff--Prohorov metric \cite{ghptop, mieghp} on $\mathbb{X}$.
\begin{defn}\label{defn::ghpmetric} Let $\mathcal{X},\mathcal{Y}$ be as in Definition \ref{defn::gpmetric}.  The \emph{Gromov--Hausdorff--Prohorov} distance between $\mathcal{X}$ and $\mathcal{Y}$ is defined by 
	\[ d_{GHP}(\mathcal{X},\mathcal{Y})=\inf_{g_X,g_Y,Z} \left(d_{Pr}^{(Z,r_Z)}((g_X)_*(\mu_X),(g_Y)_*(\mu_Y)) + d_H^{(Z,r_Z)}(g_X(X),g_Y(Y))\right). \]
\end{defn}
\begin{remark0}\label{rmk::ghpimplies}
	It is clear from the above definitions that convergence in the Gromov--Hausdorff--Prohorov metric implies convergence in both the Gromov--Hausdorff metric and the Gromov--Prohorov metric.
\end{remark0}

We will need a couple of facts for our proof:
\begin{lemma}{\cite[Corollary 3.1]{mmspaces}}\label{lemma::gwconv}
	A sequence $\{\mathbb{P}_n\}_{n\in \N}$ of probability measures on $\mathbb{X}$ converges weakly to a probability measure $\mathbb{P}$ with respect to the Gromov--weak topology, if and only if 
	\begin{enumerate}[(i)]
		\item The family $\{\mathbb{P}_n\}_{n\in \N}$ is relatively compact in the space of probability measures on $\mathbb{X}$. 
		\item For all polynomials $\Phi\in \Pi$, $\mathbb{P}_n[\Phi]\to \mathbb{P}[\Phi]$ in $\R$ as $n\to \infty$.
	\end{enumerate}
\end{lemma}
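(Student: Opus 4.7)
\begin{proof3}
The plan is to apply the standard subsequence characterisation of weak convergence of probability measures on a Polish space, here $\mathbb{X}$ equipped with the Gromov--Prokhorov metric $d_{GP}$ (which, as noted, metrises the Gromov--weak topology). The forward implication is essentially immediate. Any weakly convergent sequence is relatively compact, giving (i). For (ii), one checks that each polynomial $\Phi \in \Pi$ is a bounded continuous function on $(\mathbb{X}, d_{GP})$: boundedness is inherited from the test function $\phi$ in Definition \ref{defn::poly}, and continuity follows because if $(X_n,r_n,\mu_n) \to (X,r,\mu)$ Gromov--weakly then by Definition \ref{defn::gpmetric} we may jointly embed all spaces into a common $(Z,r_Z)$ in such a way that the pushed-forward measures converge in Prokhorov distance, whence $\mu_n^{\otimes k} \to \mu^{\otimes k}$ weakly on $Z^k$, and we integrate the bounded continuous function $\phi \circ (r_Z \times \cdots \times r_Z)$ against these product measures.

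For the reverse implication, suppose (i) and (ii) hold. By (i), any subsequence of $\{\mathbb{P}_n\}$ admits a further subsequence $\{\mathbb{P}_{n_k}\}$ converging weakly to some probability measure $\mathbb{P}^\star$ on $\mathbb{X}$. By the already-established forward implication applied to this subsequence, $\mathbb{P}_{n_k}[\Phi] \to \mathbb{P}^\star[\Phi]$ for every polynomial $\Phi$. Comparing with the hypothesis (ii) gives $\mathbb{P}^\star[\Phi] = \mathbb{P}[\Phi]$ for all $\Phi \in \Pi$. Thus, to conclude $\mathbb{P}^\star = \mathbb{P}$ (and hence $\mathbb{P}_n \to \mathbb{P}$, since every subsequence has a further subsequence converging to the same limit), it suffices to prove that the family $\Pi$ of polynomials is \emph{measure-determining} on $\mathbb{X}$.

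The main obstacle, and the key step, is therefore this measure-determining property. The strategy is to identify each metric measure space $(X,r,\mu) \in \mathbb{X}$ (after normalising $\mu$, treating the total mass separately via the constant polynomial) with its \emph{distance matrix distribution}: the law $\nu_{(X,r,\mu)}$ on $[0,\infty)^{\binom{\mathbb{N}}{2}}$ of the random array $(r(\xi_i,\xi_j))_{i<j}$, where $(\xi_i)_{i\ge 1}$ is i.i.d.\ with common law $\mu$. Gromov's reconstruction theorem asserts that $\nu_{(X,r,\mu)}$ determines $(X,r,\mu)$ up to measure-preserving isometry, i.e.\ as an element of $\mathbb{X}$. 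On the other hand, by Fubini the integral $\mathbb{P}[\Phi]$ against a polynomial built from $\phi:[0,\infty)^{\binom{k}{2}}\to \mathbb{R}$ is precisely the expectation, under $\int \nu_{(X,r,\mu)} \, d\mathbb{P}(X,r,\mu)$, of $\phi$ applied to the first $\binom{k}{2}$ coordinates. Since bounded continuous functions on finite products determine the finite-dimensional marginals of a law on $[0,\infty)^{\binom{\mathbb{N}}{2}}$, and these marginals determine the full law, we conclude that $(\mathbb{P}[\Phi])_{\Phi \in \Pi}$ determines the law of $\nu_{(X,r,\mu)}$ under $\mathbb{P}$, and by Gromov's theorem, determines $\mathbb{P}$ itself.

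The only genuinely nontrivial input here is Gromov's reconstruction result; everything else is a packaging of standard weak-convergence machinery, plus the subsequence principle. This is exactly the structure of the proof in \cite{mmspaces}, so no new ideas are needed.
\end{proof3}
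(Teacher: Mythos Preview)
The paper does not prove this lemma; it is stated with a citation to \cite[Corollary 3.1]{mmspaces} and used as a black box. So there is no proof in the paper to compare against. Your sketch is a correct outline of the argument in the cited reference: the forward direction is immediate from the fact that polynomials are bounded continuous on $(\mathbb{X},d_{GP})$, and the reverse direction combines relative compactness with the measure-determining property of $\Pi$, the latter resting on Gromov's reconstruction theorem. That is exactly how the result is established in \cite{mmspaces}.
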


\begin{lemma}{\cite[Theorem 2.4]{ghptop}, \cite[Theorem 7.4.15]{hauscomp}}\label{lemma::ghptight}
	A set $\mathbb{K}\subset \mathbb{X}$ is relatively compact with respect to the Gromov--Hausdorff--Prohorov metric if and only if \begin{enumerate}[(i)]
		\item There is a constant D such that $\emph{\text{diam}}(\mathcal{X})<D$ for all $\mathcal{X}\in \mathbb{K}$.
		\item For every $\delta>0$ there exists $N=N_\delta$ such that for all $\mathcal{X}\in \mathbb{K}$, $X$ can be covered by $N_\delta$ balls of radius $\delta$.
		\item $\sup_{\mathcal{X}\in \mathbb{K}} \mu_X(X) < + \infty$
	\end{enumerate}
\end{lemma}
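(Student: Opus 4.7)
The plan is to argue the two directions of the equivalence separately, reducing as much as possible to known compactness results for metric spaces (Gromov's precompactness theorem) and for measures (Prohorov's theorem), and linking them through a simultaneous isometric embedding into a common compact Polish space.

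For the forward direction ($\Rightarrow$), suppose $\mathbb{K}$ is relatively compact in $d_{GHP}$. By Remark \ref{rmk::ghpimplies}, $\mathbb{K}$ is relatively compact in $d_{GH}$, and Gromov's classical precompactness theorem for compact metric spaces then gives (i) and (ii) directly. For (iii), the key observation is that total mass is continuous in the Gromov--Prohorov (hence GHP) topology: if $(X,r,\mu)$ and $(X',r',\mu')$ are close in $d_{GHP}$, then under the optimal embeddings into a common $Z$, the pushforward measures are within $\varepsilon$ in the Prohorov distance, so their total masses differ by at most $\varepsilon$ (take the full space as the test set in the definition of $d_{Pr}$). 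A relatively compact set is in particular bounded under a continuous real-valued map, which yields (iii).

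For the harder direction ($\Leftarrow$), I would take a sequence $(X_n,r_n,\mu_n)\in\mathbb{K}$ and extract a convergent subsequence in two stages. \emph{Stage 1:} By Gromov's theorem and conditions (i), (ii), the metric spaces $(X_n,r_n)$ are relatively compact in $d_{GH}$, so a subsequence (relabel) converges to some compact $(X,r)$. Then I would invoke the standard embedding lemma (e.g.\ \cite[Lemma A.1]{mieghp} or the construction via the Urysohn space) to produce isometric embeddings $g_n\colon X_n\hookrightarrow Z$, $g\colon X\hookrightarrow Z$ into a common compact metric space $Z$ such that $g_n(X_n)\to g(X)$ in the Hausdorff distance on $Z$. \emph{Stage 2:} The pushforward measures $(g_n)_*\mu_n$ are finite Borel measures on the compact space $Z$; by (iii) they have uniformly bounded total mass, so after rescaling by a common constant they form a tight family of sub-probability measures on the compact space $Z$. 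Prohorov's theorem yields a further subsequence converging weakly to some finite measure $\nu$ on $Z$, and one checks that $\nu$ is supported on $g(X)$ using the Hausdorff convergence of supports. Setting $\mu=(g^{-1})_*\nu$, the triple $(X,r,\mu)$ is a limit of $(X_n,r_n,\mu_n)$ in $d_{GHP}$, since the embeddings $(g_n,g,Z)$ simultaneously realize the Hausdorff convergence of the spaces and the Prohorov convergence of the measures.

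The main obstacle is precisely this simultaneous realisation: Gromov's theorem and Prohorov's theorem each operate in their own category, and one must be careful that the subsequence chosen via Gromov's theorem for the metric convergence is also the one along which the measure extraction is performed, using a common embedding space. Once the embedding lemma is available (which reduces compact GH-convergent sequences to genuine Hausdorff convergence in a single compact $Z$), both compactness arguments can be combined cleanly, and the verification that the candidate limit $(X,r,\mu)$ lies in $\mathbb{X}$ (i.e.\ $\mu$ is a finite Borel measure on $(X,r)$) is a routine consequence of weak convergence on the compact space $Z$.
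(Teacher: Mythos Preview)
The paper does not give its own proof of this lemma: it is stated with citations to \cite[Theorem 2.4]{ghptop} and \cite[Theorem 7.4.15]{hauscomp} and used as a black box. So there is nothing in the paper to compare your argument against.

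That said, your outline is correct and is essentially the standard argument one finds in those references. The forward direction is clean; for the backward direction the genuine content is exactly where you flag it --- the simultaneous embedding of a GH-convergent sequence into a single compact Polish space (as in \cite{mieghp}), after which Prohorov on the compact ambient space handles the measures. One small point worth tightening: in the forward direction you deduce (iii) from ``total mass is continuous under $d_{GHP}$, hence bounded on a relatively compact set.'' Continuity alone gives boundedness on a \emph{compact} set, not a relatively compact one; you should phrase it as a Lipschitz/uniform-continuity estimate (which your Prohorov computation actually gives: $|\mu_X(X)-\mu_Y(Y)|\le d_{GHP}(\mathcal{X},\mathcal{Y})$), so that boundedness passes to the closure.
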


\subsubsection{Proof of the main theorem}

We first need to reformulate the main theorem into a statement that we are better equipped to prove. Recall the definitions of $d^*$ and $\sim^*$ from Remark \ref{rmk::h_metric}: in this remark we observed that $(\frac{[0,L(T))}{\sim^*}, d^*)$ is $\mathbb{P}^x$-almost surely isometrically equivalent to $(\mathbf{T}(T,X,l),d(T,X,l))$ as defined in Section \ref{sec::trees}. Also write $\mathbf{e}$ for a Brownian excursion conditioned to reach height $1$ and $\hate$ for $b\sigma$ times a Brownian excursion conditioned to reach height $(b\sigma)^{-1}$ (where $\sigma$ is the constant from Proposition \ref{propn::mgaleinv}). Then it is clear from Brownian scaling that $(\mathbf{T}_{\mathbf{e}},d_{\mathbf{e}})$ is isometrically equivalent to $(\mathbf{T}_{\hate}, d_{\hate} )$ (where as in Section \ref{sec::trees}, $(\mathbf{T}_C, d_C)$ is the real tree with contour function $C$). 

Therefore, to prove Theorem \ref{thm::crtconv}, it is enough to prove that if $(\mathbf{T}^{(t)},d^{(t)})$ has the law of $\big(\frac{[0,L(T))}{\sim^*}, \frac{d^*}{t}\big)$ under $\mathbb{P}^x\big(\cdot \, \giv \, |N_t|>0\big)$ then 
\begin{equation} \label{eqn::main_thm_reformulate}
	\big(\mathbf{T}^{(t)},d^{(t)}\big) \overset{(d)}{\longrightarrow} \big(\mathbf{T}_{\hate}, d_{\hate} \big)
\end{equation}
as $n\to \infty$, with respect to the Gromov--Hausdorff topology.

We equip $(\mathbf{T}^{(t)},d^{(t)})$ with the measure $\mu^{(t)}$, which is defined to be the push forward of uniform measure on $[0,L(T))$ under the equivalence relation $\sim^*$. We also equip $(\mathbf{T}_{\hate},d_{\hate})$ with the measure $\mu_{\hate}$, where if $\hat{\tau}$ is the length of $\hate$ and $(\mathbf{T}_{\hate}, d_{\hate})=(\frac{[0,\hat\tau)}{\sim},d_{\hate})$ as described in Section \ref{sec::trees}, then $\mu_{\hate}$ is the push forward of uniform measure on $[0,\hat{\tau})$ under $\sim$. For the proof of Theorem \ref{thm::crtconv} we then need the following two ingredients: 
\begin{lemma}\label{lem::mainproof_1}
	The family \[\big( \mathbf{T}^{(t)},d^{(t)},\mu^{(t)}\big)_{t\ge 0}\] is tight with respect to the \emph{Gromov--Hausdorff--Prohorov} metric.
\end{lemma}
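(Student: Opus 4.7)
\begin{proof3}
By Lemma \ref{lemma::ghptight}, tightness in the Gromov--Hausdorff--Prohorov metric reduces to proving, uniformly in $n$ and with high $\mathbb{P}^x(\cdot\mid |N_n|>0)$-probability, that \textbf{(i)} the diameter of $\mathbf{T}^{(n)}$ is bounded, \textbf{(ii)} for each fixed $\delta>0$ the minimum number of $\delta$-balls needed to cover $\mathbf{T}^{(n)}$ is bounded, and \textbf{(iii)} the total mass $\mu^{(n)}(\mathbf{T}^{(n)})$ is bounded.

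Condition (iii) is immediate once we take $\mu^{(n)}$ to be the pushforward of the uniform \emph{probability} measure on $[0,L(T))$, so the total mass is $1$. Condition (i) follows directly from Theorem \ref{criticalsurvivalprob}: the diameter is at most $2\sup_{t\in [0,L(T))}H_t/n$, and using that $\{\sup_t H_t \geq Cn\} = \{|N_{Cn}|>0\}$,
\[\mathbb{P}^x\!\left(\sup_t H_t \geq Cn \,\Big|\, |N_n|>0\right) = \frac{\mathbb{P}^x(|N_{Cn}|>0)}{\mathbb{P}^x(|N_n|>0)} \underset{n\to \infty}{\longrightarrow} \frac{1}{C}.\]

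The heart of the proof is (ii). The first reduction is straightforward: if $0=t_0<t_1<\cdots<t_N = L(T)$ is a subdivision with $\sup_{s,t\in[t_{i-1},t_i]}|H_s-H_t|/n \leq \delta/2$ for every $i$, then for any $s,t$ in the same subinterval $d^{(n)}(s,t) = (H_s + H_t - 2\inf_{[s\wedge t,s\vee t]}H)/n \leq \delta$, so the $\{t_i\}$ project to a $\delta$-net in $\mathbf{T}^{(n)}$. Hence (ii) reduces to tightness in probability of the modulus of continuity of $H/n$ on $[0,L(T))$.

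To achieve this, the plan is to exploit the process $\mathbf{S}$. Applying Proposition \ref{propn::convtoex} with the scaling parameter replaced by $N=n^2$ and invoking Remark \ref{remark::excursions_trees}, one obtains for each fixed $y_0>0$ that $(\mathbf{S}_{n^2 s}/n)_{s\geq 0}$ converges in distribution to $\sigma \ee^{\geq y_0/\sigma}$ under $\mathbb{P}^x(\cdot \mid \sup_t \mathbf{S}_t \geq y_0 n)$; in particular the modulus of continuity of $\mathbf{S}/n$ is tight under this conditioning. Using the approximation $H \approx b\mathbf{S}$ on typical vertices supplied by Proposition \ref{prop::distancematrices} and Lemma \ref{lemma::goodcondtree}, together with Theorem \ref{criticalsurvivalprob}, one can argue that the same tightness persists under our original conditioning $\mathbb{P}^x(\cdot \mid |N_n|>0)$. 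The main obstacle is then the final transfer: upgrading the process-level tightness of $\mathbf{S}/n$ to a corresponding tightness of the modulus of continuity of $H/n$, since Proposition \ref{prop::distancematrices} only supplies a $k$-point comparison rather than a uniform one. My strategy for this step is to combine the structural features of the depth-first exploration --- namely that $H$ is piecewise linear with unit slope between jumps, and that its jumps occur only at branching events with sizes comparable to those of $\mathbf{S}$ and controlled by $\|\vp\|_\infty$ times offspring numbers (whose second moment is finite by (\ref{eqn::offspring})) --- with Proposition \ref{propn::etabad}, which implies that only a vanishing fraction of vertices in $N_t$ are $\eta_T$-bad, and a union bound over a polynomial-in-$n$ grid of times.
\end{proof3}
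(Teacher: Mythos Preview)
Your handling of (i) and (iii) matches the paper exactly. The gap is in (ii), and it is a real one.

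Your reduction says: a partition $0=t_0<\cdots<t_N=L(T)$ on whose pieces the oscillation of $H/n$ is at most $\delta/2$ yields a $\delta$-net of size $N$. That is true, but it cannot give a bounded $N$. Between jumps $H$ has slope $+1$, so the oscillation on any interval of length $\ell$ is at least $\ell$; hence $N\ge 2L(T)/(\delta n)$. Under $\{|N_n|>0\}$ the total length $L(T)$ is of order $n^2$ (this is Theorem \ref{thm::convergenceconditionednt} with $f=1$, or equation (\ref{eqn::biglength})), so $N\gtrsim n/\delta$, which diverges. Equivalently, the rescaled process $(H_{n^2 s}/n)_s$ has slope of order $n$ between jumps, so its modulus of continuity is \emph{not} tight. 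Your claim that the jumps of $H$ are ``comparable to those of $\mathbf{S}$'' is also incorrect: the downward jumps of $H$ occur when the exploration leaves a leaf and drops to the birth height of the next younger sibling along the ancestry, and these can be of order $n$; the jumps of $\mathbf{S}$ are always $O(\|\vp\|_\infty)$ times an offspring number, hence $O(1)$. So no sample-path transfer from $\mathbf{S}$ to $H$ is available.

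The paper avoids the height process altogether for (ii). It slices the height axis $[0,2Rn]$ into $4R/\delta$ levels of width $n\delta/2$ and, at each level $j$, looks at the set $S_j$ of particles in $N_{jn\delta/2}$ with a descendant at the next level. Any point of the tree is within $d^*$-distance $n\delta$ of some such survivor, so $\sum_j|S_j|$ bounds the covering number. By the branching property and Theorem \ref{criticalsurvivalprob}, $\mathbb{P}^x(|S_j|)\le \sup_r\mathbb{P}^x(|N_r|)\cdot\sup_y\mathbb{P}^y(|N_{n\delta/2}|>0)=O((n\delta)^{-1})$, hence $\mathbb{P}^x(|S_j|\mid|N_n|>0)=O(\delta^{-1})$; Markov's inequality and a union bound over the $O(\delta^{-1})$ levels then give a covering number at most $M\delta^{-4}$ with probability $\ge 1-\delta\eps/2$, uniformly in $n$. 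Summing over dyadic $\delta$ yields the tight set $\mathbb{K}_{R,M}$.
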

\begin{lemma}\label{lem::mainproof_2}
	$(\mathbf{T}^{(t)},d^{(t)},\mu^{(t)})_t$ converges to  $(\mathbf{T}_{\hate},d_{\hate},\mu_{\hate})$ as $t\to \infty$, with respect to the \emph{Gromov--Prohorov metric}.
\end{lemma}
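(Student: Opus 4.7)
The plan is to exploit the characterization of Gromov--Prohorov convergence via the Gromov--weak topology: by Lemma~\ref{lemma::gwconv}, together with the tightness established in Lemma~\ref{lem::mainproof_1}, it suffices to verify convergence of polynomials (Definition~\ref{defn::poly}). A polynomial $\Phi$ evaluated on $(\mathbf{T}^{(n)},d^{(n)},\mu^{(n)})$ amounts to sampling $k$ points $t^1,\dots,t^k$ independently and uniformly from $[0,L(T))$ and taking the expectation of $\phi\bigl((d^{(n)}(t^i,t^j))_{1\le i<j\le k}\bigr)$ under $\mathcal{P}_k^x(\,\cdot\,|\,|N_n|>0)$, for a bounded continuous $\phi$. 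The analogous polynomial for the CRT is the expectation of $\phi\bigl((d_{\hate}(s^i,s^j))_{1\le i<j\le k}\bigr)$ where $s^1,\dots,s^k$ are i.i.d.\ uniform on $[0,\hat{\tau})$, $\hat\tau$ being the length of $\hate$. So the task reduces to showing joint convergence of these rescaled distance matrices.

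The first step is to replace $d^{(n)}(t^i,t^j)$ by $(D_n^H)_{ij}$, where $D_n^H$ is defined in \eqref{eqn::matrices}. These quantities differ by at most $l_{v^{ij}}/n$, the rescaled lifetime of the common ancestor, which is stochastically dominated by an exponential random variable and so vanishes in probability after dividing by $n$. Proposition~\ref{prop::distancematrices} then lets me further replace $(D_n^H)_{ij}$ by $b(D_n^S)_{ij}$, since $\|D_n^H-bD_n^S\|\to 0$ in probability under the conditioning $\{|N_n|>0\}$.

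It remains to identify the limit of $b\,D_n^S$. Under the rescaling $t\mapsto n^2 t$, $\mathbf{S}\mapsto \mathbf{S}/n$, the process $(\mathbf{S}_{n^2 t}/n)_{t\ge 0}$ under $\mathbb{P}^x(\,\cdot\,|\,|N_n|>0)$ converges in distribution to $(\sigma\,\ee^{\ge 1/(b\sigma)}_t)_{t\ge 0}$. Indeed, Remark~\ref{remark::excursions_trees} identifies this conditional law with the law of $\bar{\mathbf{S}}$ restricted to its first excursion exceeding a given level, and the second assertion of Proposition~\ref{propn::convtoex} provides the convergence, once the conditioning $\{|N_n|>0\}$ is identified with $\{\sup_t \mathbf{S}_t\ge n/b\}$ in the limit. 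This identification is itself a direct consequence of the comparison $H\approx b\mathbf{S}$ furnished by Proposition~\ref{prop::distancematrices}: the tree reaches height $n$ exactly when $\mathbf{S}$ reaches roughly $n/b$. Via a Skorohod coupling, $L/n^2\to\hat\tau$ a.s., the rescaled uniform samples $t^i/n^2$ converge jointly to i.i.d.\ samples $s^i$ uniform on $[0,\hat{\tau})$, and $(D_n^S)_{ij}\to \sigma\,d_{\ee^{\ge 1/(b\sigma)}}(s^i,s^j)$. Multiplying by $b$ and using $\hate=b\sigma\,\ee^{\ge 1/(b\sigma)}$ produces $d_{\hate}(s^i,s^j)$, as required.

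The main technical obstacle is handling coherently the joint convergence of the process $\mathbf{S}$ with the sampling measure. Because the total exploration length $L$ scales like $n^2$ with a random limit $\hat\tau$, and because the minimum of $\mathbf{S}$ over an interval $[t^i\wedge t^j,t^i\vee t^j]$ is only approximately equal to $\mathbf{S}(v^{ij})$, care is required in the coupling: one needs to check that $|\min_{[t^i\wedge t^j,t^i\vee t^j]}\mathbf{S}-\mathbf{S}(v^{ij})|$ is controlled by a contribution from finitely many siblings of the common ancestor, which is $o(n)$ after rescaling. Once these technical points are handled -- following the Skorohod representation strategy already used in the proof of Proposition~\ref{propn::convtoex} -- the triangle inequality combined with the bounded continuity of $\phi$ yields convergence of the polynomial, and hence the lemma.
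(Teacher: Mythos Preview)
Your proposal is correct and follows essentially the same route as the paper: reduce to polynomial convergence via Lemma~\ref{lemma::gwconv} and the tightness from Lemma~\ref{lem::mainproof_1}, pass from $D_n^H$ to $bD_n^S$ via Proposition~\ref{prop::distancematrices}, swap the conditioning $\{|N_n|>0\}$ for $\{\sup_t\mathbf{S}_t\ge b^{-1}n\}$, and then invoke the excursion convergence \eqref{eqn::excconv}. You are in fact more explicit than the paper on two points it glosses over: the small gap between $d^*(t^i,t^j)/n$ and $(D_n^H)_{ij}$, and the gap between $\mathbf{S}(v^{ij})$ and $\min_{[t^i\wedge t^j,\,t^i\vee t^j]}\mathbf{S}$ (both controlled by a single lifetime or a single offspring count, hence $o(n)$). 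The one place where the paper is sharper is the conditioning swap: rather than appealing loosely to $H\approx b\mathbf{S}$, it shows $\mathbb{P}^x(A_n\mid |N_n|>0)\to 1$ using Proposition~\ref{propn::etabad} and then matches the asymptotic of $\mathbb{P}^x(A_n)$ (computed from Proposition~\ref{propn::convtoex}) with that of $\mathbb{P}^x(|N_n|>0)$ from Theorem~\ref{criticalsurvivalprob}; you may want to tighten that step accordingly.
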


Let us first see how this gives Theorem \ref{thm::crtconv}.
\\

\begin{proof2}{Theorem \ref{thm::crtconv}} 
	Since Gromov--Hausdorff--Prohorov convergence implies Gromov--Prohorov convergence, Lemma \ref{lem::mainproof_2} characterises subsequential limits with respect to the Gromov--Hausdorff--Prohorov metric. Thus we have the convergence in distribution 
	\[(\mathbf{T}^{(t)},d^{(t)},\mu^{(t)}) \underset{t\to \infty}{\longrightarrow} (\mathbf{T}_{\hate},d_{\hate},\mu_{\hate}) \] with respect to the Gromov--Hausdorff--Prohorov metric. This then implies (\ref{eqn::main_thm_reformulate}) by Remark \ref{rmk::ghpimplies} (GHP convergence implies GH convergence).
\end{proof2}
\\

All that remains therefore is to verify Lemmas \ref{lem::mainproof_1} and \ref{lem::mainproof_2}. In the following, we write $\mathbb{P}^x_{t}$ for the law of $(\mathbf{T}^{(t)},d^{(t)},\mu^{(t)})$ and $\mathbb{P}_{\hate}$ for the law of $(\mathbf{T}_{\hate},d_{\hate},\mu_{\hate})$. 
\\

\begin{proof2}{Lemma \ref{lem::mainproof_1}}
	We need to show that for any $\eps>0$ there exists a relatively compact $\mathbb{K}\subset \mathbb{X}$ (with respect to the Gromov--Hausdorff--Prohorov metric) such that \[\inf_{t\ge 0} \mathbb{P}^x_{t}((\mathbf{T}^{(t)},d^{(t)},\mu^{(t)})\in \mathbb{K})\geq 1-\eps. \] To do this, fix $\eps>0$ and let $\mathbb{K}_{R,M}$ be the relatively compact subset of $\mathbb{X}$ (by Lemma \ref{lemma::ghptight}), defined by $$\{ (X,r,\mu) \in \mathbb{X}: \text{diam}(X)\leq 2R\, ,\, \mu(X)=1\, , \, \forall k\geq 1 \text{ can cover } X \text{ with less than } 2^{4k}M \text{ balls of radius } 2^{-k} \}. $$

	We will prove that we can pick $R$ and $M$ large enough such that $\mathbb{P}^x_t((\mathbf{T}^{(t)},d^{(t)},\mu^{(t)})\in \mathbb{K}_{R,M})\geq 1-\eps$ for all $t\ge 0$. The key to the proof is the following claim:
	\begin{claim}\label{claim::tightness} For $M=M(\eps)$ and $R=R(\eps)$ large enough, we have 
	\begin{align}\label{eqn::tightness_diam}
		& \mathbb{P}^x_{t}\big( \text{diam}(T^{(t)})>2R\big)\leq \eps/2; \text{ and }
	\\
	 \label{eqn::tightness_main} & \mathbb{P}^x_{t}\big(\{\text{\emph{cannot} cover } (\mathbf{T}^{(t)},d^{(t)}) \text{ with } < \delta^{-4}M \text{ balls of radius } \delta\}\cap \{\text{diam}(\mathbf{T}^{(t)})\le 2R\}\big) \leq \delta \eps/2 \end{align} for all $\delta>0$ and $t\ge 0$. 
	\end{claim} With this in hand, since $\mu^{(t)}(\mathbf{T}^{(t)})=1$ for all $t\ge 0$, summing over $\delta=2^{-k}$ concludes the proof of tightness. 
	\medskip
	
	To prove Claim \ref{claim::tightness}, we will reformulate the probabilities in (\ref{eqn::tightness_diam}) and (\ref{eqn::tightness_main}) as conditional probabilities under $\mathbb{P}^x(\cdot \giv |N_t|>0)$. Recall that the law of $(\mathbf{T}^{(t)},d^{(t)})$ under $\mathbb{P}^x_t$ is that of $(\frac{[0,L(T))}{\sim^*},\frac{d^*}{t})$ under $\E{x}{\cdot \giv |N_t|>0}$. It is therefore immediate from the definition of $d^*$ that 
	$$\mathbb{P}^x_t\big( \text{diam}(T^{(t)})>2R\big) \leq \E{x}{|N_{Rt}|>0 \giv |N_t|>0}$$
	and so by Theorem \ref{criticalsurvivalprob}, we can take $R=R(\eps)$ large enough that this is less than $\eps/2$ for all $t$.
	
	Next, we look at \eqref{eqn::tightness_main}. Fixing $R$ from the previous paragraph, and given $\delta>0$, we divide $[0,2Rt]$ into intervals of length $t\delta/2=:b^{t,\delta}$ (so there are $4R/\delta$ of them.) 
	We will show that on the event
	\begin{equation} \label{eqn::event1} 
	\{\text{diam}(\frac{[0,L(T))}{\sim^*},\frac{d^*}{t})\leq 2R\}\, \bigcap \, \cap_{j=0}^{(4R/\delta)-1}\{ |S_j|\leq M(4R\delta^3)^{-1}\},
	\end{equation}
	where $S_j:=\{u\in N_{jb^{t,\delta}}: \exists w\in N_{(j+1)b^{t,\delta}} \text{ with } u\prec w \}$, we can cover $(\frac{[0,L(T))}{\sim^*},\frac{d^*}{t})$ with fewer than $\delta^{-4}M$ balls of radius $\delta$. 
	To do this,
	let $S$ be the set of equivalence classes of $\frac{[0,L(T))}{\sim^*}$ that contain a point $s$ such that $(v_s,H_{s})=(u,jb^{t,\delta})$ for some $u\in S_j$ and $j\in\{0,\cdots, (4R/
	\delta)-1\}$. On the event \eqref{eqn::event1} we have $|S|\leq \delta^{-4}M$, and so it suffices to show that any element of $(\frac{[0,L(T))}{\sim^*})$ has $(d^*/t)$ distance less than $\delta$ from some point in $S$. However, this holds since any point of $\frac{[0,L(T))}{\sim^*} $ corresponds to an individual $u\in T$, which must be in $N_r$ for some $r\in [jb^{t,\delta},(j+1)b^{t,\delta}]$, and $0\le j \le (4R/\delta)-1$. Such an individual must be a descendant of an individual $v\in S_{j-1}$ (or of $\emptyset$ if $j=0$), which in turn corresponds to a point of $\frac{[0,L(T))}{\sim^*}$ contained in $S$. Because $u\in N_r$ and $v\in N_{r'}$ with $|r-r'|\le \delta t$, the ($d^*/t$) distance between the corresponding points of $\frac{[0,L(T))}{\sim^*}$ is less than $\delta$, as required. 
	
	The above considerations mean that the probability on the left hand side of (\ref{eqn::tightness_main}) is  less than or equal to 
	\begin{equation}\label{eqn::reformulate_tightness_prob}
		\E{x}{\bigcup_{j=0}^{(4R/\delta)-1} \big\{|S_j| > \frac{M}{4R\delta^3}\big\} \giv |N_t|>0}
	\end{equation}
	and hence, we just need to estimate (\ref{eqn::reformulate_tightness_prob}). For any given $j,t,\delta$ we can bound, using Theorem \ref{criticalsurvivalprob} and the fact that $\sup_r \E{x}{|N_r|}<\infty$,
	\begin{equation*}
		\E{x}{|S_j|}  =\E{x}{\E{x}{|S_j|  \giv \mathcal{F}_{jb^{t,\delta}}}}=\E{x}{\sum\nolimits_{u\in N_{jb^{t,\delta}}} \E{X_u(jb^{t,\delta})}{|N_{b^{t,\delta}}|>0}} \le C(t\delta)^{-1} \end{equation*}
	for a constant $C$ that does not depend on $j,t$ or $\delta$ (and may now change from line to line). Then by Theorem \ref{criticalsurvivalprob} again, conditioning on the event $|N_t|>0$, we have
	$\E{x}{|S_j| \giv |N_t| >0} \leq C\delta^{-1},$ 
	and so by conditional Markov's inequality,
	\begin{equation*}
		\E{x}{|S_j| > \frac{M}{4R\delta^3} \giv |N_t|>0} \leq \frac{4RC\delta^2}{M}.
	\end{equation*} Finally, by a union bound we can write
	\begin{equation*}
		\E{x}{\bigcup_{j=0}^{(4R/\delta)-1} \big\{|\{u\in N_{jb^{t,\delta}}: \exists w\in N_{(j+1)b^{t,\delta}} \text{ with } u\leq w \}| > \frac{M}{4R\delta^3}\big\} \giv |N_t|>0} \leq \frac{16R^2 C \delta}{M},
	\end{equation*}
	and so choosing $M=M(R,\eps)=M(\eps)$ such that $16R^2CM^{-1}\leq \eps/2$ then gives (\ref{eqn::tightness_main}) and proves Claim \ref{claim::tightness}.
\end{proof2}
\\

\begin{proof2}{Lemma \ref{lem::mainproof_2}} For this we would like to show that $(\mathbf{T}^{(t)},d^{(t)},\mu^{(t)})$ converges to $(\mathcal{T}_{\hate},d_{\hate},\mu_{\hate})$ in the Gromov--Prohorov metric, or equivalently, with respect to the Gromov--weak topology. We will consider the latter formulation, in order to use the characterisation given by Lemma \ref{lemma::gwconv}. Since convergence in the Gromov--Hausdorff--Prohorov sense implies convergence in the Gromov--Prohorov/Gromov--weak sense, Lemma \ref{lem::mainproof_1} already shows that part (i) of the characterisation in Lemma \ref{lemma::gwconv} (relative compactness of the laws) is satisfied. Therefore, we need only show that for any polynomial $\Phi \in \Pi$, we have $\mathbb{P}^x_t[\Phi]\to \mathbb{P}_{\hate}[\Phi]$ as $t\to \infty$. To do this, we use Proposition \ref{propn::convtoex2}, and Proposition \ref{prop::distancematrices}.  
	
	Fix a polynomial 
	\begin{equation*}\Phi((X,r,\mu))=\int \mu^{\otimes k}(d(x_1,\cdots, x_n))\phi((r(x_i,x_j))_{1\leq i<j\leq k}),\end{equation*}
	where $\phi:[0,\infty)^{k\choose 2}\to \R$ is continuous and bounded.
	Examining the definitions, we see that 
	\begin{equation}\label{eqn::conv_1}
		\mathbb{P}^x_t[\Phi]=\mathcal{P}^x_k \big( \phi(D_t^H) \giv |N_t|>0 \big) 
	\end{equation}
	where $\mathcal{P}^x_k$ and the matrix $D_t^H$ are defined just before Proposition \ref{prop::distancematrices} \footnote{interpreting $\phi(D_t^H)=\phi(((D_t^H)_{ij})_{1\leq i<j\leq k})$}. Using this proposition, and also recalling the definition of the matrix $D_t^S$, we see that since $\phi$ is continuous and bounded,
	\begin{equation}\label{eqn::conv_2}
		\mathcal{P}^x_k \big( \phi(D_t^H) \giv |N_t|>0 \big)- \mathcal{P}^x_k \big( \phi(bD_t^S) \giv |N_t|>0 \big)\to 0
	\end{equation}
	as $t\to \infty$. Let us suppose for now that conditioning on $\{|N_t|>0\}$ is comparable to conditioning on the event $A_t:=\{\sup_t \mathbf{S}_t \geq b^{-1}t \}$, in the sense that we have 
	\begin{equation}\label{eqn::conv_3}
		\mathcal{P}^x_k \big( \phi(bD_t^S) \giv |N_t|>0 \big)- \mathcal{P}^x_k \big( \phi(bD_t^S) \giv A_t \big)\to 0
	\end{equation}
	as $t\to \infty$. Then since by \eqref{eqn::excconv} and Remark \ref{rmk::nnotint} we have 	\begin{equation*}
	\mathcal{P}^x_k \big( \phi(bD_t^S) \giv \sup_t \mathbf{S}_t \geq b^{-1}t \big) \to \mathbb{P}_{\hate}[\Phi]
	\end{equation*} as $t\to \infty$, we can combine (\ref{eqn::conv_1}), (\ref{eqn::conv_2}) and (\ref{eqn::conv_3}) to conclude that $\mathbb{P}^x_t[\Phi]\to \mathbb{P}_{\hate}[\Phi]$ as desired.
	
	Therefore, we are left to justify \eqref{eqn::conv_3}. In fact, it is enough (because $\phi$ is bounded by assumption) to show that 
	\begin{equation}\label{eqn::equiv_events}
	\E{x}{A_t \giv |N_t|>0}\to 1 \text{ and } \E{x}{|N_t|>0 \giv A_t}\to 1
	\end{equation}
	as $t\to \infty$. For the first statement, observe that $$\E{x}{A_t \giv |N_t|>0}\geq \E{x}{A_t \giv |N_{t(1+\delta)}|>0} \frac{\E{x}{|N_{t(1+\delta)}|>0}}{\E{x}{|N_{t}|>0}}$$ for any $\delta>0$, where for any $\delta>0$ the first term in the product on the right hand side converges to $1$ as $t\to \infty$ by Proposition \ref{propn::etabad}, and the second converges to $(1+\delta)^{-1}$ as $t\to \infty$ by Theorem \ref{criticalsurvivalprob}. To finish showing (\ref{eqn::equiv_events}), it is therefore enough to show that $\E{x}{A_t}\sim \E{x}{|N_t|>0}$ as $t\to \infty$. For this, we observe that we have an asymptotic for $\E{x}{|N_t|>0}$ from Theorem \ref{criticalsurvivalprob}, and also that Proposition \ref{propn::convtoex} allows us to compute an asymptotic for $\mathbb{P}^x(A_t)$, just as in for example \cite[Section 1.4, p263]{legallrandomtrees}. It is easy to check that these asymptotics agree, and hence the proof is complete.
\end{proof2}


\bibliographystyle{alpha_abbrvsort}
\bibliography{bbm_bibliography}

\begin{thebibliography}{BDMM07}

\bibitem[ADH13]{ghptop}
R.~Abraham, J.-F. Delmas, and P.~Hoscheit.
\newblock A note on the {G}romov-{H}ausdorff-{P}rokhorov distance between
  (locally) compact metric measure spaces.
\newblock {\em Electron. J. Probab.}, 18(14):21pp, 2013.

\bibitem[Ald91]{crti}
D.~Aldous.
\newblock The continuum random tree. {I}.
\newblock {\em Ann. Probab.}, 19(1):1--28, 1991.

\bibitem[Ald93]{crtiii}
D.~Aldous.
\newblock The continuum random tree. {III}.
\newblock {\em Ann. Probab.}, 21(1):248--289, 1993.

\bibitem[AH83]{ah}
S.~Asmussen and H.~Hering.
\newblock {\em Branching processes}, volume~3 of {\em Progress in Probability
  and Statistics}.
\newblock Birkh\"auser Boston, Inc., Boston, MA, 1983.

\bibitem[Bn91]{baniuc}
R.~Ba\~nuelos.
\newblock Intrinsic ultracontractivity and eigenfunction estimates for
  {S}chr\"odinger operators.
\newblock {\em J. Funct. Anal.}, 100(1):181--206, 1991.

\bibitem[BBS11]{nearcriticalabsorptionbbs}
J.~Berestycki, N.~Berestycki, and J.~Schweinsberg.
\newblock Survival of near-critical branching {B}rownian motion.
\newblock {\em J. Stat. Phys.}, 143(5):833--854, 2011.

\bibitem[BBS13]{bbmabsorption}
J.~Berestycki, N.~Berestycki, and J.~Schweinsberg.
\newblock The genealogy of branching {B}rownian motion with absorption.
\newblock {\em Ann. Probab.}, 41(2):527--618, 2013.

\bibitem[BBS14]{criticalabsorptionbbs}
J.~Berestycki, N.~Berestycki, and J.~Schweinsberg.
\newblock Critical branching {B}rownian motion with absorption: survival
  probability.
\newblock {\em Probab. Theory Related Fields}, 160(3-4):489--520, 2014.

\bibitem[BBS15]{criticalabsorptionbbsparticles}
J.~Berestycki, N.~Berestycki, and J.~Schweinsberg.
\newblock Critical branching {B}rownian motion with absorption: particle
  configurations.
\newblock {\em Ann. Inst. Henri Poincar\'e Probab. Stat.}, 51(4):1215--1250,
  2015.

\bibitem[Ber09]{NBcoasurvey}
N.~Berestycki.
\newblock {\em Recent progress in coalescent theory}, volume~16 of {\em Ensaios
  Matem\'aticos [Mathematical Surveys]}.
\newblock Sociedade Brasileira de Matem\'atica, Rio de Janeiro, 2009.

\bibitem[Big77]{Biggins}
J.~D. Biggins.
\newblock Martingale convergence in the branching random walk.
\newblock {\em J. Appl. Probability}, 14(1):25--37, 1977.

\bibitem[BS98]{bscoa}
E.~Bolthausen and A.-S. Sznitman.
\newblock On {R}uelle's probability cascades and an abstract cavity method.
\newblock {\em Comm. Math. Phys.}, 197(2):247--276, 1998.

\bibitem[BDMM06]{bdmm1}
\'E. Brunet, B.~Derrida, A.~H. Mueller, and S.~Munier.
\newblock Noisy traveling waves: effect of selection on genealogies.
\newblock {\em Europhys. Lett.}, 76(1):1--7, 2006.

\bibitem[BDMM07]{bdmm2}
\'E. Brunet, B.~Derrida, A.~H. Mueller, and S.~Munier.
\newblock Effect of selection on ancestry: an exactly soluble case and its
  phenomenological generalization.
\newblock {\em Phys. Rev. E (3)}, 76(4):041104, 20, 2007.

\bibitem[BBI01]{hauscomp}
D.~Burago, Y.~Burago, and S.~Ivanov.
\newblock {\em A course in metric geometry}, volume~33 of {\em Graduate Studies
  in Mathematics}.
\newblock American Mathematical Society, Providence, RI, 2001.

\bibitem[CR88]{chauvinrouault}
B.~Chauvin and A.~Rouault.
\newblock K{PP} equation and supercritical branching {B}rownian motion in the
  subcritical speed area. {A}pplication to spatial trees.
\newblock {\em Probab. Theory Related Fields}, 80(2):299--314, 1988.

\bibitem[Dav89]{davies}
E.~B. Davies.
\newblock {\em Heat kernels and spectral theory}, volume~92 of {\em Cambridge
  Tracts in Mathematics}.
\newblock Cambridge University Press, Cambridge, 1989.

\bibitem[DS84]{daviessimon}
E.~B. Davies and B.~Simon.
\newblock Ultracontractivity and the heat kernel for {S}chr\"odinger operators
  and {D}irichlet {L}aplacians.
\newblock {\em J. Funct. Anal.}, 59(2):335--395, 1984.

\bibitem[DLG02]{crtconvgd}
T.~Duquesne and J.-F. Le~Gall.
\newblock Random trees, {L}\'evy processes and spatial branching processes.
\newblock {\em Ast\'erisque}, (281):vi+147, 2002.

\bibitem[EK04]{eng}
J.~Engl\"ander and A.~E. Kyprianou.
\newblock Local extinction versus local exponential growth for spatial
  branching processes.
\newblock {\em Ann. Probab.}, 32(1A):78--99, 2004.

\bibitem[Eva10]{evans}
L.~C. Evans.
\newblock {\em Partial differential equations}, volume~19 of {\em Graduate
  Studies in Mathematics}.
\newblock American Mathematical Society, Providence, RI, second edition, 2010.

\bibitem[GT83]{gilbargtrudinger}
D.~Gilbarg and N.~S. Trudinger.
\newblock {\em Elliptic partial differential equations of second order}, volume
  224 of {\em Grundlehren der Mathematischen Wissenschaften [Fundamental
  Principles of Mathematical Sciences]}.
\newblock Springer-Verlag, Berlin, second edition, 1983.

\bibitem[GPW09]{mmspaces}
A.~Greven, P.~Pfaffelhuber, and A.~Winter.
\newblock Convergence in distribution of random metric measure spaces
  ({$\Lambda$}-coalescent measure trees).
\newblock {\em Probab. Theory Related Fields}, 145(1-2):285--322, 2009.

\bibitem[HH09]{spineapproach}
R.~Hardy and S.~C. Harris.
\newblock A spine approach to branching diffusions with applications to
  {$L^p$}-convergence of martingales.
\newblock In {\em S\'eminaire de {P}robabilit\'es {XLII}}, volume 1979 of {\em
  Lecture Notes in Math.}, pages 281--330. Springer, Berlin, 2009.

\bibitem[HH07]{harrissurvivalprobs}
J.~W. Harris and S.~C. Harris.
\newblock Survival probabilities for branching {B}rownian motion with
  absorption.
\newblock {\em Electron. Comm. Probab.}, 12:81--92, 2007.

\bibitem[HHK16]{hesse}
S.~C. Harris, M.~Hesse, and A.~E. Kyprianou.
\newblock Branching {B}rownian motion in a strip: survival near criticality.
\newblock {\em Ann. Probab.}, 44(1):235--275, 2016.

\bibitem[HR14]{spinelln}
S.~C. Harris and M.~I. Roberts.
\newblock A strong law of large numbers for branching processes: almost sure
  spine events.
\newblock {\em Electron. Comm. Probab.}, 19(28):6pp, 2014.

\bibitem[HR17]{manytofew}
S.~C. Harris and M.~I. Roberts.
\newblock The many-to-few lemma and multiple spines.
\newblock {\em Ann. Inst. Henri Poincar\'e Probab. Stat.}, 53(1):226--242,
  2017.

\bibitem[Her74]{hering1d}
H.~Hering.
\newblock Limit theorem for critical branching diffusion processes with
  absorbing barriers.
\newblock {\em Math. Biosci.}, 19:355--370, 1974.

\bibitem[Her78a]{heringmulti}
H.~Hering.
\newblock Multigroup branching diffusions.
\newblock In {\em Branching processes ({C}onf., {S}aint {H}ippolyte, {Q}ue.,
  1976)}, volume~5 of {\em Adv. Probab. Related Topics}, pages 177--217.
  Dekker, New York, 1978.

\bibitem[Her78b]{heringup}
H.~Hering.
\newblock Uniform primitivity of semigroups generated by perturbed elliptic
  differential operators.
\newblock {\em Math. Proc. Cambridge Philos. Soc.}, 83(2):261--268, 1978.

\bibitem[JS87]{limitthms}
J.~Jacod and A.~N. Shiryaev.
\newblock {\em Limit theorems for stochastic processes}, volume 288 of {\em
  Grundlehren der Mathematischen Wissenschaften [Fundamental Principles of
  Mathematical Sciences]}.
\newblock Springer-Verlag, Berlin, 1987.

\bibitem[Kes78]{kesten}
H.~Kesten.
\newblock Branching {B}rownian motion with absorption.
\newblock {\em Stochastic Processes Appl.}, 7(1):9--47, 1978.

\bibitem[Kol38]{kolmogorov}
A.~N. Kolmogorov.
\newblock Zur {L}\"{o}sung einer biologischen {A}ufgabe.
\newblock {\em Izv. Nau\v{cn}.-Issled. Inst. Mat. i Meh. Tomsk. Gosudarstv.
  Univ.}, 2:1--6, 1938.

\bibitem[Kyp04]{Kyprianou}
A.~E. Kyprianou.
\newblock Travelling wave solutions to the {K}-{P}-{P} equation: alternatives
  to {S}imon {H}arris' probabilistic analysis.
\newblock {\em Ann. Inst. Henri Poincar\'e Probab. Stat.}, 40(1):53--72, 2004.

\bibitem[LG05]{legallrandomtrees}
J.-F. Le~Gall.
\newblock Random trees and applications.
\newblock {\em Probab. Surv.}, 2:245--311, 2005.

\bibitem[Lyo97]{Lyons}
R.~Lyons.
\newblock A simple path to {B}iggins' martingale convergence for branching
  random walk.
\newblock In {\em Classical and modern branching processes ({M}inneapolis,
  {MN}, 1994)}, volume~84 of {\em IMA Vol. Math. Appl.}, pages 217--221.
  Springer, New York, 1997.

\bibitem[McK75]{mckean}
H.~P. McKean.
\newblock Application of {B}rownian motion to the equation of
  {K}olmogorov-{P}etrovskii-{P}iskunov.
\newblock {\em Comm. Pure Appl. Math.}, 28(3):323--331, 1975.

\bibitem[Mie08]{mie}
G.~Miermont.
\newblock Invariance principles for spatial multitype {G}alton-{W}atson trees.
\newblock {\em Ann. Inst. Henri Poincar\'e Probab. Stat.}, 44(6):1128--1161,
  2008.

\bibitem[Mie09]{mieghp}
G.~Miermont.
\newblock Tessellations of random maps of arbitrary genus.
\newblock {\em Ann. Sci. \'Ec. Norm. Sup\'er. (4)}, 42(5):725--781, 2009.

\bibitem[Pin85]{pin}
R.~G. Pinsky.
\newblock On the convergence of diffusion processes conditioned to remain in a
  bounded region for large time to limiting positive recurrent diffusion
  processes.
\newblock {\em Ann. Probab.}, 13(2):363--378, 1985.

\bibitem[RY99]{revuzyor}
D.~Revuz and M.~Yor.
\newblock {\em Continuous martingales and {B}rownian motion}, volume 293 of
  {\em Grundlehren der Mathematischen Wissenschaften [Fundamental Principles of
  Mathematical Sciences]}.
\newblock Springer-Verlag, Berlin, third edition, 1999.

\bibitem[Rob10]{spineroberts}
M.~I. Roberts.
\newblock Spine changes of measure and branching diffusions.
\newblock {\em PhD thesis, University of {B}ath,
  http://people.bath.ac.uk/mir20/thesis.pdf}, 2010.

\bibitem[Rud76]{rudin}
W.~Rudin.
\newblock {\em Principles of mathematical analysis}.
\newblock McGraw-Hill Book Co., New York-Auckland-D\"usseldorf, third edition,
  1976.
\newblock International Series in Pure and Applied Mathematics.

\bibitem[Sev58]{sevastyanov}
B.~A. Sevast'yanov.
\newblock Branching stochastic processes for particles diffusing in a bounded
  domain with absorbing boundaries.
\newblock {\em Teor. Veroyatnost. i Primenen.}, 3:121--136, 1958.

\bibitem[Wat65]{watanabe}
S.~Watanabe.
\newblock On the branching process for {B}rownian particles with an absorbing
  boundary.
\newblock {\em J. Math. Kyoto Univ.}, 4:385--398, 1965.

\end{thebibliography}

\end{document}